\documentclass[11pt]{amsart}
\usepackage[latin1]{inputenc}
\usepackage{amsfonts}
\usepackage[toc,page,title,titletoc,header]{appendix}
\usepackage{graphicx,psfrag,epsfig,multirow,caption,subcaption}
\usepackage{amssymb,amsmath,amscd,amsthm,amssymb,verbatim,setspace, mathtools}
\usepackage{dsfont} % For identity
\numberwithin{equation}{section}
\usepackage{mathrsfs,wrapfig}
\usepackage{indentfirst}
\usepackage{extarrows}
\usepackage{float}
\usepackage{xcolor}
\usepackage{enumerate}
\usepackage{enumitem} % Customize enumerate and itemize
\usepackage{romannum}
\usepackage{marginnote}

\usepackage[colorlinks=true, backref=page]{hyperref}

\headheight=6.15pt \textheight=8in \textwidth=6.5in
\oddsidemargin=0in \evensidemargin=0in \topmargin=0in

\setcounter{section}{0}

\newtheorem{theorem}{Theorem}[section]
\newtheorem{definition}[theorem]{Definition}
\newtheorem{conjecture}[theorem]{Conjecture}
\newtheorem{proposition}[theorem]{Proposition}
\newtheorem{lemma}[theorem]{Lemma}
\newtheorem{remark}[theorem]{Remark}
\newtheorem{corollary}[theorem]{Corollary}

\newtheorem{Thm}[theorem]{Theorem}
\newtheorem*{Claim}{Claim}

\newtheorem{Lem}[theorem]{Lemma}
\newtheorem{Prop}[theorem]{Proposition}
\newtheorem{Cor}[theorem]{Corollary}
\newtheorem{Rem}[theorem]{Remark}

\newtheorem*{remark*}{Remark}

\numberwithin{equation}{section}

\newcommand{\mfk}{\mathfrak}

\newcommand{\eps}{\varepsilon}

\newcommand{\al}{\alpha}

\newcommand{\bM}{\mathbf{M}}
\newcommand{\bS}{\mathbb{S}}

\newcommand{\BB}{\mathbb{B}}

\newcommand{\RR}{\mathbb{R}}
\newcommand{\SSp}{\mathbb{S}}

\newcommand{\cA}{\mathcal{A}}

\newcommand{\cC}{\mathcal{C}}

\newcommand{\cE}{\mathcal{E}}
\newcommand{\cF}{\mathcal{F}}

\newcommand{\cH}{\mathcal{H}}
\newcommand{\cI}{\mathcal{I}}

\newcommand{\cL}{\mathcal{L}}
\newcommand{\cM}{\mathcal{M}}
\newcommand{\cN}{\mathcal{N}}

\newcommand{\cP}{\mathcal{P}}
\newcommand{\cQ}{\mathcal{Q}}

\newcommand{\cS}{\mathcal{S}}
\newcommand{\cT}{\mathcal{T}}

\newcommand{\cZ}{\mathcal{Z}}

\newcommand{\scU}{\mathscr{U}}
\newcommand{\scV}{\mathscr{V}}

\newcommand{\mbfA}{\mathbf{A}}
\newcommand{\mbfB}{\mathbf{B}}
\newcommand{\mbfC}{\mathbf{C}}

\newcommand{\mbfL}{\mathbf{L}}
\newcommand{\mbfM}{\mathbf{M}}

\newcommand{\mbfQ}{\mathbf{Q}}

\newcommand{\mbfU}{\mathbf{U}}

\newcommand{\rmB}{\mathrm{B}}

\newcommand{\rmD}{\mathrm{D}}

\newcommand{\rmG}{\mathrm{G}}

\newcommand{\rmO}{\mathrm{O}}

\newcommand{\rmR}{\mathrm{R}}

\newcommand{\rmT}{\mathrm{T}}

\newcommand{\rmW}{\mathrm{W}}

\newcommand{\bL}{\mathbf{L}}
\newcommand{\bQ}{\mathbf{Q}}
\newcommand{\mbf}{\mathbf{f}}
\newcommand{\bc}{\mathbf{c}}
\newcommand{\bl}{\mathbf{\ell}}
\newcommand{\bq}{\mathbf{q}}
\newcommand{\bu}{\mathbf{u}}

\newcommand{\bx}{\mathbf{x}}
\newcommand{\by}{\mathbf{y}}

\newcommand{\sN}{\mathscr{N}}

\newcommand{\R}{\mathbb{R}}

\newcommand{\pr}{\partial}

\newcommand{\mbfd}{\mathbf{d}}
\newcommand{\mbfx}{\mathbf{x}}

\newcommand{\graph}{\mathrm{graph}}

\newcommand{\orig}{\mathbf{0}}

\newcommand{\spine}{\operatorname{spine}}
\newcommand{\odist}{\overline{\dist}}
\newcommand{\Jet}{\operatorname{Jet}}

\newcommand{\Rot}{\operatorname{Rot}}
\newcommand{\sgn}{\operatorname{sgn}}

\newcommand{\Span}{\operatorname{span}}
\newcommand{\gfrd}{\mathbf{r}}
\newcommand{\dg}{\mathrm{dg}}
\newcommand{\ndg}{\mathrm{ndg}}
\newcommand{\Id}{\ensuremath{\mathds{1}}}% Identity

\DeclareMathOperator{\area}{Area}

\DeclareMathOperator{\spt}{spt}

\DeclareMathOperator{\dist}{dist}

\DeclareMathOperator{\loc}{loc}
\DeclareMathOperator{\tr}{tr}

\DeclareMathOperator{\id}{\operatorname{Id}}

\DeclareMathOperator{\Tr}{\operatorname{tr}}

\title{Regularity of cylindrical singular sets of mean curvature flow}
\author{Ao Sun}
\address{Lehigh University, Department of Mathematics, Chandler-Ullmann Hall, Bethlehem, PA 18015}
\email{aos223@lehigh.edu}

\author{Zhihan Wang}
\address{Cornell University, Department of Mathematics, 310 Malott Hall, Ithaca, NY 14853}
\email{zw782@cornell.edu}

\author{Jinxin Xue}
\address{New Cornerstone Science Laboratory, Department of Mathematics, Rm A115, Tsinghua University, Haidian District, Beijing, 100084}
\email{jxue@tsinghua.edu.cn}

\date{}

\begin{document}
	\pagenumbering{arabic}
	
	\begin{abstract}
		In this paper, we study the $k$-cylindrical singular set of mean curvature flow in $\mathbb R^{n+1}$ for each $1\leq k\leq n-1$. We prove that they are locally contained in a $k$-dimensional $C^{2,\alpha}$-submanifold after removing some lower-dimensional parts. Moreover, if the $k$-cylindrical singular set is a $k$-submanifold, then its curvature is determined by the asymptotic profile of the flow at these singularities. As a byproduct, we provide a detailed asymptotic profile and graphical radius estimate at these singularities. The proof is based on a new $L^2$-distance non-concentration property that we introduced in \cite{SunWangXue1_Passing}, modified into a relative version that allows us to modulo those low eigenmodes that are not decaying fast enough and do not contribute to the curvature of the singular set.
	\end{abstract}
	
	\maketitle
	
	\setcounter{tocdepth}{1}
	\tableofcontents

	\section{Introduction} \label{Sec_Intro}
	This is the second in a series of papers to study the connections between geometry, topology, and dynamics of cylindrical singularities of mean curvature flow (MCF). In \cite{SunXue2022_generic_cylindrical, SunWangXue1_Passing}, we observed that the dynamics of the cylindrical singularities influence the shape of mean curvature flows near the singularities, and moreover, we introduced the notion of {\it nondegenerate cylindrical singularities}, proved that they are isolated in spacetime, and provided a relatively complete picture about the geometry and topology change of the mean curvature flows near such singularities. 
	On the other hand, the famous ``marriage ring'' example (see Figure 17 in \cite{ColdingMinicozziPedersen15_MCFSurvery}), which is the evolution of a very thin torus that collapses to a round circle as the singular set under the mean curvature flow, shows that the singular set can be a smooth curve. As we shall see later, these singularities are {\it degenerate}. 
	
	To state our main theorem, let us begin with some basic settings. We consider mean curvature flows in $\R^{n+1}$, $n\geq 2$. Throughout this paper, ``mean curvature flow'' is always referred to as a unit-regular $n$-dimensional Brakke flow $t\mapsto \bM(t)$ with finite entropy (see these terminologies in Section \ref{Subsec_Brakke}). For readers who are not experts in geometric measure theory, one can view it as a time-dependent family of hypersurfaces satisfying the mean curvature flow equation \[
	\pr_t X=\vec{H}_{\bM(t)}(X) \,,
	\]
	but the hypersurfaces can have singularities. 
	
	Given a singularity $p_\circ = (X_\circ, t_\circ)$ of $\bM$, Huisken \cite{Huisken90} introduced the ``rescaled mean curvature flow (RMCF)" of $\bM$ based at $p_\circ$: \[
	\tau\mapsto \cM^{p_\circ}(\tau)=e^{\tau/2}(\bM(t_\circ-e^{-\tau})-X_\circ)\,,
	\]
	whose long-time subsequential (weak) limits, known as \textit{self-shrinkers}, model the infinitesimal behavior of the flow $\bM$ near $p_\circ$ and hence are called tangent flows of $\bM$ at $p_\circ$. A typical family of self-shrinkers that we focus on in this paper is the \textbf{round (generalized) cylinders}: \[
	\cC_{n,k}:=S^{n-k}\left(\sqrt{2(n-k)}\right)\times\R^k
	\] 
	defined for $k=1,\cdots,n-1$. We say $p_\circ$ is a {\bf $k$-cylindrical singularity} if $\cM^{p_\circ}(\tau)$ locally smoothly converges to a rotation of $\cC_{n,k}$. It was proved by \cite{CM15_Lojasiewicz} that if a subsequence $\cM^{p_\circ}(\tau_i)$ locally smoothly converges to $\cC_{n,k}$, then there exist radius $R(\tau)\to +\infty$ as $\tau\to +\infty$ such that for $\tau\gg 1$, $\cM^{p_\circ}(\tau)$ can be written as a graph of some smooth function $u(\cdot,\tau)$ over $\cC_{n,k}$ in $B_{R(\tau)}$, and that $u(\cdot,\tau)\to 0$ smoothly as $\tau\to +\infty$. We call a cylindrical singularity $p_\circ$ {\bf degenerate} if $u(\cdot, \tau)$ decays exponentially in $\tau$. 
	
	Given a mean curvature flow $\bM$, we define 
	\begin{align}
		\begin{split}
			\cS_{k}(\bM) & := \{p:  p \text{ is a $k$-cylindrical singularity}\} \,;    \\
			\cS_{k}(\bM)_{+} & := \{p\in \cS_{k}(\bM): \text{$p$ is degenerate}\} \,;    \\
			\cS_{k}(\bM)_0 & := \cS_{k}(\bM)\backslash \cS_{k}(\bM)_{+} \,.
		\end{split} \label{Equ_Intro_cS^k, cS^k_+, cS^k_0}
	\end{align}
	
	Our main theorem describes the regularity of the $k$-cylindrical singular sets. In the following, $\dim_H$ denotes the space Hausdorff dimension and $\dim_\cP$ denotes the spacetime parabolic Hausdorff dimension in $\R^{n+1}\times\R$ (see the definition in Section \ref{Subsec_Parab Dist}). 
	
	\begin{theorem} \label{Thm_Intro_C^2 Reg}
		Let $\bM$ be a mean curvature flow in $\R^{n+1}$. Then $\forall\, 1\leq k\leq n-1$, 
		\begin{enumerate}[label={\normalfont(\roman*)}]
			\item\label{Item_Intro_(cS_k)_0} $\dim_\cP \cS_{k}(\bM)_0 \leq k-1$; moreover, when $k=1$, $\cS_{k}(\bM)_0$ consists of isolated points;
			\item\label{Item_Intro_(cS_k)_+ close} $\cS_{k}(\bM)_+$ is relatively closed in $\cS_{k}(\bM)$;
			\item\label{Item_Intro_(cS_k)_+ reg} $\forall\, \al\in (0, \min\{1, \frac2{n-k}\})$, $\cS_{k}(\bM)_+$ is locally contained in a $k$-dimensional $C^{2,\al}$-submanifold, i.e. for every $p\in \cS_k(\bM)_+$, there is a neighborhood $U\subset \R^{n+1}\times \R$ of $p$ and a $k$-dimensional $C^{2,\al}$-submanifold $\Gamma_p$ properly embedded in $U$ such that $\cS_k(\bM)_+\cap U \subset \Gamma_p$;
			\item\label{Item_Intro_t|(cS_k)_+ Holder} let $\mathfrak{t}:\R^{n+1}\times\R\to \R$, $(X,t) \mapsto t$ be the time function, then
			\[
			\dim_H(\mathfrak{t}(\cS_k(\bM)_+))\leq \left(3+\min\left\{1,\, \frac{2}{n-k}\right\} \right)^{-1}\cdot k \,.
			\]
		\end{enumerate}
	\end{theorem}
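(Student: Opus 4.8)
\emph{Proof proposal.} Everything hinges on a sharp asymptotic analysis of the rescaled flow $\cM^{p}(\tau)$ at a $k$-cylindrical singularity $p=(X_p,t_p)$, which I would carry out via the relative $L^2$-distance non-concentration property. For the set-up, fix the rotated model $\cC_{n,k}$, write $\cM^{p}(\tau)$ as the normal graph of $u_p(\cdot,\tau)$ over $\cC_{n,k}\cap B_{R_p(\tau)}$ with $R_p(\tau)\to\infty$ following \cite{CM15_Lojasiewicz}, and expand $u_p$ in eigenfunctions of the Jacobi operator $\cL$ of $\cC_{n,k}$, organised by sphere-degree $j$ on $S^{n-k}$ and Hermite-degree $m$ on $\R^k$, with eigenvalue $1-\tfrac{j(j+n-k-1)}{2(n-k)}-\tfrac m2$; let $P$ denote the $L^2$-orthogonal projection onto $\ker\cL$. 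One records: at a $k$-cylindrical singularity all positive-eigenvalue modes vanish; $\ker\cL$ consists of the neutral \emph{rotational} modes $(j,m)=(1,1)$, which are infinitesimal tilts of the axis, together with the modes $(j,m)=(0,2)$; and the first negative eigenvalue of $\cL$ equals $-\min\{\tfrac1{n-k},\tfrac12\}$ (attained by the elliptical cross-sectional perturbation $(j,m)=(2,0)$ once $n-k\ge 3$, and by modes of total degree at most $3$ when $n-k\le 2$), so that twice its absolute value is precisely $\min\{1,\tfrac2{n-k}\}$. The relative non-concentration estimate of \cite{SunWangXue1_Passing} is tailored so that, after quotienting out the rotational modes (which do not contribute to the curvature of the singular set), the surviving part of $u_p$ behaves as though it had a genuine spectral gap. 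From it I expect to extract the effective decay
\[
\big\|(\Id-P)u_p(\cdot,\tau)\big\|_{C^{2}(B_{R_p(\tau)})}\ \le\ C\,e^{-\left(\min\{\frac1{n-k},\frac12\}-\eps\right)\tau}\,,
\]
an improved lower bound for the graphical radius $R_p(\tau)$, and the convergence of the best-fitting axis $L_p(\tau)\to L_p$ and center $X_p(\tau)\to X_p$ at a comparable rate. The main obstacle is precisely this step: one must show that the neutral rotational modes, although undamped by the linearisation, are slaved to the faster-decaying modes through the quadratic nonlinearity of the flow, and that all the constants above are locally uniform in the base point $p$ over $\cS_k(\bM)$; this uniformity is what powers every subsequent step.

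Granting the asymptotics, the dichotomy distinguishing $\cS_k(\bM)_+$ from $\cS_k(\bM)_0$ becomes transparent: $p\in\cS_k(\bM)_+$ iff $Pu_p$ too decays exponentially, whereas $p\in\cS_k(\bM)_0$ iff $Pu_p$ is dominated by a kernel mode decaying only polynomially, the generic rate for a degenerate critical point. For \ref{Item_Intro_(cS_k)_0} I would argue: in the polynomial case the dominant kernel mode singles out a proper sub-collection of the $\R^k$-translation directions along which the flow is genuinely, if slowly, non-translation-invariant; then a quantitative-stratification and covering argument, using the polynomial lower bound on that mode to prevent concentration and building on \cite{SunWangXue1_Passing, SunXue2022_generic_cylindrical}, forces $\dim_\cP\cS_k(\bM)_0\le k-1$. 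For $k=1$ the set is thus $0$-dimensional, and combining the isolatedness of nondegenerate cylindrical singularities from \cite{SunXue2022_generic_cylindrical} with the same polynomial lower bound (which rules out accumulation) upgrades it to a set of isolated points.

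I would then prove \ref{Item_Intro_(cS_k)_+ reg} and \ref{Item_Intro_(cS_k)_+ close} together. To each $p\in\cS_k(\bM)_+$ the analysis assigns a $2$-jet: the point $X_p$, the tangent $k$-plane $L_p$, and a symmetric $2$-tensor $\kappa_p$ extracted from the $e^{\tau/2}$-rescaled leading order of the drift $L_p(\tau)\to L_p$ (equivalently, from the $(j,m)=(1,2)$, eigenvalue $-\tfrac12$, term of $u_p$), which records the bending of the singular set. Comparing the expansions at two nearby points $q_0,q_1\in\cS_k(\bM)_+$ at the spatial scale $\rho=|X_{q_0}-X_{q_1}|$: beyond scale $\rho$ both flows are $O(\rho^{\min\{1,2/(n-k)\}})$-close to their respective model cylinders, so their $2$-jets differ by $O(\rho^{\al})$ for every $\al<\min\{1,\tfrac2{n-k}\}$; in other words $(X,L,\kappa)$ is a $C^{0,\al}$-coherent $2$-jet field along $\cS_k(\bM)_+$. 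The Whitney extension theorem for $C^{2,\al}$ then produces, near each $p$, a properly embedded $k$-dimensional $C^{2,\al}$-submanifold $\Gamma_p$ whose $2$-jet at every point of $\cS_k(\bM)_+\cap U$ agrees with the prescribed one; in particular $\cS_k(\bM)_+\cap U\subset\Gamma_p$, and when the singular set is itself a $k$-submanifold $\kappa_p$ is its second fundamental form, which is the asserted link between the curvature and the asymptotic profile. Statement \ref{Item_Intro_(cS_k)_+ close} is then immediate from the local uniformity in the first step: the effective exponential decay at $p$ persists, with the same rate and comparable constants, at all $q\in\cS_k(\bM)$ near $p$, so a limit of degenerate $k$-cylindrical singularities is again degenerate.

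Finally, for \ref{Item_Intro_t|(cS_k)_+ Holder}: since $\cC_{n,k}$ is invariant under the $\R^k$-translations, the tangent plane $L_p$ lies in the time-slice $\{t=t_p\}$, so $\mathfrak t$ restricted to $\Gamma_p$ has vanishing gradient at $p$; pushing the asymptotic expansion one order further at a degenerate singularity and using the exponential decay of the neutral modes, I expect the sharper estimate
\[
\big|\mathfrak t(q_0)-\mathfrak t(q_1)\big|\ \le\ C\,\big|X_{q_0}-X_{q_1}\big|^{\,3+\al}\qquad\text{for }q_0,q_1\in\cS_k(\bM)_+\text{ near }p\,,
\]
valid for every $\al<\min\{1,\tfrac2{n-k}\}$ (the gain of the full exponent $3+\al$ over the parabolic $2$ reflects the spacelikeness of the tangent plane together with the exponential decay of the neutral modes at a degenerate point). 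Then $\mathfrak t|_{\cS_k(\bM)_+}$ is, in a $C^{2,\al}$ chart for $\Gamma_p$, H\"older of exponent $3+\al$ on a set of Hausdorff dimension at most $k$; covering $\cS_k(\bM)_+$ by countably many such charts, using that a H\"older-$s$ image of a set of Hausdorff dimension $d$ has Hausdorff dimension at most $d/s$, and letting $\al\uparrow\min\{1,\tfrac2{n-k}\}$, gives $\dim_H(\mathfrak t(\cS_k(\bM)_+))\le k\big/\big(3+\min\{1,\tfrac2{n-k}\}\big)$.
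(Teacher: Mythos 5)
Your overall architecture is the paper's: assign to each degenerate singularity a $2$-jet (tangent plane from the spine, quadratic form from the eigenvalue-$\tfrac12$ modes $\theta_i(y_{j_1}y_{j_2})$, $\theta_i(y_j^2-2)$), prove a compatibility estimate for the jets at two nearby points, and invoke Whitney extension; the blow-up argument for (i), a semicontinuity statement for (ii), and the H\"older-image covering for (iv) are likewise the paper's routes. The genuine gap is in the asymptotic analysis that everything rests on. You quotient out only the neutral rotational modes and assert that $(\Id-P)u_p$ then sees an effective spectral gap with decay $e^{-(\min\{1/(n-k),1/2\}-\eps)\tau}$. For $n-k\ge 2$ this leaves untouched the low spherical modes $(j,m)=(2,0)$ (the functions in $\rmW_{\SSp}$), which decay exactly at that rate --- no faster than, and for $n-k>2$ strictly slower than, the $e^{-\tau/2}$ modes carrying the second fundamental form. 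Consequently the a priori bound on the quadratic error in the graph equation is only $O(e^{-2\tau/(n-k)})$, and the decay order relative to $\cC_{n,k}$ is stuck at $\gamma_{n,k}=1/(n-k)$; for $n-k\ge 4$ the nonlinear error does not even decay faster than $e^{-\tau/2}$, so the $\rmW_{1/2}$ coefficient defining $\kappa_p$ cannot be isolated by projection, and for $n-k=3$ the achievable jet-comparison exponent degrades below the claimed $\min\{1,2/(n-k)\}$. This is precisely why the paper constructs an exact finite-dimensional family of ``low spherical flows'' $\varphi_\bu$ (Lemma \ref{Lem_L^2Noncon_LowSphericalMode}, Appendix \ref{App:ExistenceSphericalArrival}) and runs the entire monotonicity/decay-order machinery \emph{relative to} $\varphi_\bu$, extracting $\psi_p\in\rmW_{1/2}\cap\rmW_{\SSp}^\perp$ from $v-\varphi_{\bu_p}$ rather than from $v$; the ``relative'' in the non-concentration property refers to these spherical flows, not to the rotational kernel. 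Your proposal has no substitute for this step, so parts (iii) and (iv) are not established once $n-k\ge 3$.

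Two further problems. First, the estimate $\|(\Id-P)u_p\|_{C^2}\lesssim e^{-(\cdot)\tau}$ claimed ``locally uniform over $\cS_k(\bM)$'' is false at nondegenerate points: there $Pu_p\sim \tau^{-1}h_2$ and the nonlinearity slaves $(\Id-P)u_p$ only to $O(\tau^{-2})$, which is not exponential; such statements can only hold on $\cS_k(\bM)_+$, and the uniformity you rely on must be reformulated accordingly. Second, your argument for (ii) --- exponential decay at $p$ persists at all nearby $q$ --- is an \emph{openness} statement and does not give relative closedness; what is needed, and what the paper proves, is upper semicontinuity of the decay order $p\mapsto\cN_{\mbfU_p}(\infty,\cM^p)$ (Proposition \ref{prop:semi_continuity_decay_order}), obtained from continuity of the finite-time decay order plus the discrete monotonicity, so that a limit of points with decay order at least $\gamma_{n,k}$ again has decay order at least $\gamma_{n,k}$.
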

	
	In addition, if $\cS_{k}(\bM)_+$ is a $k$-dimensional submanifold near $p_\circ\in \cS_k(\bM)_+$, then its second fundamental form at $p_\circ$ is explicitly determined by the asymptotics of the graphical function of the rescaled mean curvature flow $\cM^{p_\circ}$. We will discuss more about this after we discuss the local asymptotics in Theorem \ref{thm:AsyProfile}, see Remark \ref{Rem_Intro_Asymp Profile vs 2nd Fund Form}.
	
	It might be a surprise to see that the singular set carries some regularity, because it does not satisfy any PDE. The proof is based on the connection between the local dynamics of the singularity and the shape of the mean curvature flow near the singularity. The key observation is that the better we know about the local dynamics of the singularity, the better the regularity of the singular set we can get.
	
	As the singularity model, the round cylinders $\cC_{n,k}$ are prevalent from the following aspects:
	
	\begin{enumerate}
		\item the round sphere $S^n(\sqrt{2n})$ and round cylinders $\cC_{n,k}$ are the only stable singularity models from the dynamic point of view; c.f. \cite{ColdingMinicozzi12_generic, SunXue2021_initial_closed, SunXue2021_initial_conical}. Recently, with the work of Chodosh-Choi-Mantoulidis-Schulze and Chodosh-Choi-Schulze \cite{CCMS20_GenericMCF, CCS23_LowEntropy_ii} and the resolution of the multiplicity one conjecture of mean curvature flows in $\R^3$ by Bamler and Kleiner \cite{BamlerKleiner23_Multiplicity1}, it is known that starting from a generic closed embedded surface in $\R^3$, the mean curvature flow has only cylindrical or spherical singularities;
		\item a mean convex mean curvature flow has only cylindrical or spherical singularities; c.f. the work of White \cite{White00, White03, White15_SubseqSing_MeanConvex}, and other approaches by Sheng-Wang \cite{ShengWang09_SingMCF}, Andrews \cite{Andrews12_Noncollapsing} and Haslhofer-Kleiner \cite{HaslhoferKleiner17}. In addition, Huisken-Sinestrari \cite{HuiskenSinestrari99_Acta} proved that the $m$-convex\footnote{$m$-convex means the sum of the smallest $m$ number of principal curvatures at every point is nonnegative; mean convex is equivalent to $n$-convex of hypersurfaces in $\R^{n+1}$} mean curvature flow has only $\cC_{n,k}$ as the tangent flow when $k\leq m-1$;
		\item a mean curvature flow in $\R^3$ starting from a closed embedded surface $\Sigma$ has tangent flow at each point in the top stratum of the singular set to be a round cylinder; and if $\Sigma$ has genus $0$, then the flow can have only cylindrical or spherical singularities, based on the work of Brendle \cite{Brendle16_genus0} and the recent resolution of the multiplicity one conjecture \cite{BamlerKleiner23_Multiplicity1};
		\item a rotational symmetric mean curvature flow has only cylindrical or spherical singularities; c.f. \cite{AAG95_RotationMCF}.
	\end{enumerate}
	
	Let us discuss some interesting special cases of our main theorem. In many situations mentioned above, the singularities of the flow must be cylindrical, and we can apply our main theorem. When the ambient space is $\R^3$ or the initial data of the mean curvature flow is $2$-convex, the lower strata consist of isolated points, and the statement of our main theorem can be simplified. 
	
	\begin{corollary} 
		Suppose $\bM$ is a mean curvature flow in $\R^{n+1}$ starting from a closed embedded hypersurface $\bM(0)$. Then $\forall\, \al\in (0, 1)$, any non-$C^{2,\alpha}$-curve can not be the singular set of $\bM$ if
		\begin{enumerate}
			\item $n=2$ and $\bM(0)$ is mean convex or has genus $0$;
			\item $n\geq 3$ and $\bM(0)$ is $2$-convex.
		\end{enumerate}
	\end{corollary}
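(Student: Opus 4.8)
The plan is to deduce the corollary directly from Theorem~\ref{Thm_Intro_C^2 Reg}, of which it is essentially the contrapositive: I will show that whenever $\Sing\bM$ happens to be a curve, it is automatically a $C^{2,\alpha}$-curve. So suppose $\Sing\bM$ is an embedded (a priori only topological) $1$-dimensional submanifold $\gamma\subset\R^{n+1}\times\R$. The first move is to reduce to $1$-cylindrical singularities. Under hypothesis (1) --- $n=2$ with $\bM(0)$ mean convex (White \cite{White00,White03}) or of genus $0$ (Brendle \cite{Brendle16_genus0} together with the multiplicity-one theorem \cite{BamlerKleiner23_Multiplicity1}) --- and under hypothesis (2) --- $n\geq 3$ with $\bM(0)$ $2$-convex (Huisken--Sinestrari \cite{HuiskenSinestrari99_Acta}, White \cite{White00,White03}) --- every singular point of $\bM$ has tangent flow either the round sphere $S^n(\sqrt{2n})$ or the cylinder $\cC_{n,1}$; in particular no $\cC_{n,k}$ with $k\geq 2$ occurs. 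A spherical singularity is an isolated point of $\Sing\bM$ (classical: near it the flow is a shrinking round sphere that becomes extinct at a point), whereas the $1$-manifold $\gamma$ has no isolated points. Hence $\gamma$ contains no spherical singularities, i.e.\ $\gamma=\cS_1(\bM)$.

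Next I would discard the non-degenerate part. Writing $\gamma=\cS_1(\bM)=\cS_1(\bM)_+\sqcup\cS_1(\bM)_0$, part~\ref{Item_Intro_(cS_k)_+ close} of Theorem~\ref{Thm_Intro_C^2 Reg} makes $\cS_1(\bM)_0=\gamma\setminus\cS_1(\bM)_+$ relatively open in $\gamma$, while part~\ref{Item_Intro_(cS_k)_0} with $k=1$ makes it consist of isolated points, hence at most countable. Since a nonempty relatively open subset of the $1$-manifold $\gamma$ contains an arc and so is uncountable, this forces $\cS_1(\bM)_0=\emptyset$, i.e.\ $\gamma=\cS_1(\bM)_+$. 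Finally I would invoke part~\ref{Item_Intro_(cS_k)_+ reg} of Theorem~\ref{Thm_Intro_C^2 Reg} with $k=1$, for $\alpha$ in its admissible range $(0,\min\{1,\tfrac{2}{n-1}\})$ --- which is all of $(0,1)$ when $n\le 3$: it provides, for each $p\in\gamma$, a neighborhood $U$ and a properly embedded $1$-dimensional $C^{2,\alpha}$-submanifold $\Gamma_p\subset U$ with $\gamma\cap U\subset\Gamma_p$. After shrinking $U$, the set $\gamma\cap U$ is an arc through $p$, and its inclusion into $\Gamma_p$ is a continuous injection between $1$-manifolds, hence an open map by invariance of domain; thus $\gamma\cap U$ is relatively open in $\Gamma_p$, so $\gamma$ coincides with $\Gamma_p$ near $p$. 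Therefore $\gamma$ is a $C^{2,\alpha}$-submanifold, which is the assertion.

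I expect no serious obstacle in this argument: all the real content sits in Theorem~\ref{Thm_Intro_C^2 Reg} and in the cited classification results. The two points that need a little care are both soft --- using the mean-convexity / genus-$0$ / $2$-convexity hypotheses to rule out the higher cylinders $\cC_{n,k}$ with $k\geq 2$ (and to note that the round sphere, being compact, cannot be the model at a non-isolated singular point), since otherwise one lands in strata where Theorem~\ref{Thm_Intro_C^2 Reg} only yields regularity of lower dimension or with a worse H\"older exponent; and the elementary point-set fact that a curve lying inside a $C^{2,\alpha}$ $1$-submanifold and having no isolated points must locally coincide with it.
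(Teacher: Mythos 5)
Your argument is correct and is exactly the intended derivation: the paper states this corollary without proof as an immediate consequence of Theorem \ref{Thm_Intro_C^2 Reg} together with the classification facts listed just before it, and your chain of deductions --- spherical singularities are isolated hence absent from a curve; $\cS_1(\bM)_0$ is relatively open by \ref{Item_Intro_(cS_k)_+ close} and consists of isolated points by \ref{Item_Intro_(cS_k)_0}, hence is empty when the singular set is a curve; then \ref{Item_Intro_(cS_k)_+ reg} plus invariance of domain upgrades containment in $\Gamma_p$ to local coincidence --- is the right one.

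The one point you half-notice but do not resolve is the H\"older exponent when $n\geq 4$. Theorem \ref{Thm_Intro_C^2 Reg} \ref{Item_Intro_(cS_k)_+ reg} with $k=1$ only allows $\al\in\bigl(0,\min\{1,\tfrac{2}{n-1}\}\bigr)$, which equals $(0,1)$ precisely for $n\leq 3$, whereas case (2) of the corollary asserts the conclusion for all $\al\in(0,1)$ and all $n\geq 3$. Your proof therefore establishes the corollary in full for $n=2,3$, but for $n\geq 4$ it only excludes curves that fail to be $C^{2,\al}$ for some $\al<\tfrac{2}{n-1}$. This shortfall is inherited from the statement of Theorem \ref{Thm_Intro_C^2 Reg} itself rather than from any error in your reasoning, but it should be flagged explicitly rather than left as a parenthetical about ``$n\le 3$''.
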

	
	It would be interesting to know if $\cS_{k}(\bM)_+$ is not only \textbf{contained} in a $k$-dimensional submanifold, but itself \textbf{is} a $k$-dimensional submanifold. When $n=2$ and $k=1$, this is a part of a conjecture by White \cite[Page 533]{White02ICM}. When $k\geq 2$, we expect that $\cS_{k}(\bM)_+$ could be a non-smooth variety with dimension strictly less than $k$, such as the union of two curves that intersect at a point; see Remark \ref{rem_h_3_dominant}. In semilinear PDE, singularities with such an asymptotic profile have been constructed by Merle-Zaag in \cite{MerleZaag22_DegBlowup} (although their singular set is still isolated). 
	
	We conjecture the following:
	\begin{conjecture}
		Suppose $n\geq 3, k\geq 2$. There exists a mean curvature flow $\bM$ in $\R^{n+1}$ so that $\cS_{k}(\bM)_+$ is not a $k$-dimensional submanifold but a nonsmooth subvariety of dimension $\leq (k-1)$.
	\end{conjecture}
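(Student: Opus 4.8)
This is a construction problem, and the plan is to invert the dictionary established in this paper between the local structure of $\cS_k(\bM)_+$ near a point $p_\circ = (X_\circ, t_\circ)$ and the leading asymptotic mode of the graphical function $u(\cdot,\tau)$ of the rescaled flow $\cM^{p_\circ}$ over $\cC_{n,k}$. For a degenerate $k$-cylindrical singularity $u$ decays exponentially, so it is dominated by a \emph{stable} eigenmode $\phi$ of the linearized operator $\mc L$ (eigenvalue $\la < 0$), the neutral modes --- $\{z_iz_j\}$ and $\{z_i\varphi_a(y)\}$, of eigenvalue $0$ --- being excluded by definition. When $k = n-1$ the slowest stable rate is $\la = -\tfrac12$, and the corresponding eigenspace contains, modulo lower-order Hermite corrections, all homogeneous cubics $P_3$ on the $k$-dimensional spine; this is the $h_3$-dominant case (cf. Remark \ref{rem_h_3_dominant}). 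By Theorem \ref{thm:AsyProfile} and the discussion around Remark \ref{rem_h_3_dominant}, in this case the portion of $\cS_k(\bM)_+$ near $p_\circ$ is modeled, after rescaling, on a fixed algebraic cone $Z_{P_3} \subset \R^k$ built from $P_3$: for $k = 1$ the only homogeneous cubic is $z^3$ and $Z_{P_3}$ is a smooth line (as for the marriage ring), but for $k \geq 2$ and suitable $P_3$ the cone $Z_{P_3}$ is nonsmooth of dimension $k-1$ --- e.g. for $k = 2$, $P_3(z_1,z_2) = z_1^2 z_2$, for which $Z_{P_3}$ is a union of two transverse curves through $p_\circ$, or any cubic with a node. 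The conjecture thus reduces to producing a single mean curvature flow in $\R^{n+1}$ (with $k = n-1$, say) carrying a degenerate $k$-cylindrical singularity that is $h_3$-dominant with such a $P_3$; for $2 \leq k < n-1$ one additionally needs the slower spherical modes $\varphi_2(y)$ of $\mc L$ suppressed, which can be arranged by imposing rotational symmetry on the $S^{n-k}$ factor.

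To build such a flow I would adapt the degenerate-blow-up construction of Merle--Zaag \cite{MerleZaag22_DegBlowup} for semilinear heat equations. In the rescaled frame $u$ solves a quasilinear equation $\pr_\tau u = \mc L u + \mc Q(u,\nabla u,\nabla^2 u)$ on a moving graphical domain $B_{R(\tau)} \cap \cC_{n,k}$ with $R(\tau) \to \infty$; I would write $u = e^{\la\tau}\phi + w$, with $\phi$ the chosen cubic mode and $w$ in a weighted parabolic H\"older space that forces $w = o(e^{\la\tau})$ in the bulk while still controlling $u$ out to the edge where it becomes $O(1)$, and solve for $w$ by a contraction-mapping argument using the spectral gap of $\mc L$ strictly below $\la$. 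The genuinely new points compared with \cite{MerleZaag22_DegBlowup} are: the quasilinearity of $\mc Q$; the coupling between the spine-variable modes and the $S^{n-k}$-spherical-harmonic modes, which must be shown \emph{not} to regenerate the neutral modes a degenerate singularity forbids; and the control of $\mc Q$ near the moving graphical edge, where the singular set forms and which has no analogue in the semilinear problem. A second, less analytic route is a ``generalized marriage ring'' ansatz: take a thin $S^{n-k}$-tube about a $k$-dimensional core submanifold chosen with a symmetry group that forces every neutral mode (and, if needed, every slow spherical mode) to vanish and whose curvature is concentrated so that the tube pinches precisely along $Z_{P_3}$; one then runs the flow and reads off the asymptotics directly.

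Next I would globalize and verify. The local rescaled solution unrescales to a smooth mean curvature flow on a spacetime neighbourhood of $p_\circ$; I would glue it into a closed embedded hypersurface in $\R^{n+1}$ whose flow agrees with the model near $p_\circ$ and remains smooth elsewhere up to $t_\circ$, using barrier and avoidance arguments together with the fact that the constructed piece is itself a unit-regular Brakke flow. It then remains to check: (a) the singularity at $p_\circ$ is genuinely degenerate and $h_3$-dominant --- exponential decay together with the absence of the neutral and slow modes, exactly where the spectral gap and the symmetry enter; (b) by Theorem \ref{thm:AsyProfile} and Remark \ref{rem_h_3_dominant}, $\cS_k(\bM)_+$ near $p_\circ$ is contained in the nonsmooth variety modeled on $Z_{P_3}$; and (c), the harder direction, that every branch of $Z_{P_3}$ genuinely consists of singular points of $\bM$, plausibly via a topological degree or intersection argument forcing the $S^{n-k}$-cross-section to pinch along each branch. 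Items (a)--(c) together identify $\cS_k(\bM)_+$ near $p_\circ$ with $Z_{P_3}$, a nonsmooth subvariety of dimension $\le k-1$, which --- since $k \geq 2$ --- is not a $k$-submanifold.

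The main obstacle is the construction, and within it the joint task of keeping the neutral modes (and, when $k < n-1$, the slow spherical modes) genuinely absent --- not merely at the linear level but through the full nonlinear rescaled flow, and surviving the gluing into a closed-hypersurface flow --- while simultaneously controlling $u$ sharply enough near the moving graphical edge to read off the singular set there. Merle--Zaag's argument is already delicate for the semilinear heat equation; the quasilinearity of mean curvature flow, the spine/sphere mode coupling, and the requirement that the profile be realized inside an honest closed embedded flow are where the substantial new work lies.
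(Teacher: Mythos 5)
The statement you are proving is stated in the paper as a \emph{conjecture}: the paper offers no proof of it, and its own results (Theorem \ref{Thm_Intro_C^2 Reg}, Lemma \ref{lem:Partial_Nondeg_Nearby_Sing}, Remark \ref{rem_h_3_dominant}) only ever give \emph{containment} of the singular set in a model variety, never existence of singularities along that variety. Your text is accordingly a research program rather than a proof: the construction of the flow (the quasilinear Merle--Zaag-type scheme, the suppression of neutral and slow spherical modes through the full nonlinear evolution, the gluing into a closed embedded flow) and, above all, your step (c) --- showing that every branch of the model cone actually consists of singular points --- are exactly the open content of the conjecture, and you defer all of them. So there is no proof here to compare with the paper's; there is a plausible plan whose hardest steps remain unexecuted.

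Beyond that, there is a concrete error in the dictionary you set up. The mechanism in Section \ref{Subsec_cS_0 lower dim} and Remark \ref{rem_h_3_dominant} localizes nearby singularities in the \emph{critical set} $\cZ=\{\by\in\R^k:\nabla q(\by)=0\}$ of the top-degree part $q$ of the dominant Hermite polynomial --- the obstruction comes from the linear ($-1/2$-eigenvalue) term $\nabla q(\by)\cdot y$ generated by translating the flow by $\by$ --- not in the zero set of $q$. Your proposed cubic $P_3(z_1,z_2)=z_1^2z_2$ has $\nabla P_3=(2z_1z_2,\,z_1^2)$, whose zero set is the single line $\{z_1=0\}$, a linear subspace; it therefore does \emph{not} produce a nonsmooth model variety by this mechanism (and more generally it is not easy to make $\{\nabla q=0\}$ nonsmooth for a homogeneous cubic in two variables). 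This is precisely why the paper's own example in Remark \ref{rem_h_3_dominant} uses the degree-four Hermite polynomial $h=(y_1^2-2)(y_2^2-2)$ with $q=y_1^2y_2^2$, for which $\{\nabla q=0\}=\{y_1y_2=0\}$ is a genuinely nonsmooth union of two lines. If you pursue this program, the dominant mode should be chosen with the critical set, not the zero set, in mind.
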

	
	A key ingredient in proving Theorem \ref{Thm_Intro_C^2 Reg} is a general asymptotic profile of cylindrical singularity. In \cite{SunXue2022_generic_cylindrical}, the first two authors proved a normal form theorem for rescaled mean curvature flow at a cylindrical singularity in a ball of size $O(\sqrt{\tau})$; c.f. \cite{Gang21_meanconvexity, Gang22_dynamics} for some special generalized cylinders, and \cite{AngenentDaskalopoulosSesum19_AncientConvexMCF, ChoiHaslhoferHershkovits18_MeanConvNeighb, ChoiHaslhoferHershkovitsWhite22_AncientMCF, DuZhu22_quantization} for the study of ancient asymptotically cylindrical flows. Motivated by the study of rotationally symmetric mean curvature flow by Angenent-Vel{\'a}zquez \cite{AngenentVelazquez97_DegenerateNeckpinches}, it is expected that if the asymptotic profile has a dominant term that decays exponentially fast, then the graphical region should expand exponentially. 
	
	To state our theorem on asymptotic profile, we introduce the following notations. Given a round cylinder $\cC_{n,k}$, we use $\theta=(\theta_1,\theta_2,\cdots,\theta_{n-k+1})$ to denote the coordinates on the sphere factor, and use $y=(y_1,y_2,\cdots,y_k)$ to denote the coordinates on $\R^k$. Note that the outward unit normal vector of $\cC_{n,k}$ at $(\theta, y)$ is $\hat\theta:= \theta/|\theta|$. It's usually convenient to work within the following cube centered at $X=(x, y)\in \R^{n-k+1}\times \R^k$ instead of balls: \[
	Q_r(X):= B^{n-k+1}_r(x) \times B^k_r(y) \,,
	\]
	and we shall omit $X$ if $X=0$. 
	
	We shall use the following convention for ``graphical functions": given a hypersurface $\Sigma\subset \R^{n+1}$, $\delta\in (0, 1]$ and $r>\sqrt{2(n-k)}$, we call $\Sigma$ a \textbf{$\delta$-graph} over $\cC_{n,k}$ in $Q_r$ if there exists a function $v\in C^2(\cC_{n,k}\cap Q_r)$ with $\|v\|_{C^2}\leq \delta$ such that \[
	\Sigma\cap Q_r = \graph_{\cC_{n,k}}(v) := \left\{\left(\theta+v(\theta, y)\hat\theta, y\right):(\theta, y)\in \cC_{n,k}\cap Q_r \right\}\,.
	\]
	The maximal possible $r$ that makes $\Sigma$ a $\delta$-graph is called the \textbf{$\delta$-graphical radius} of $\Sigma$ over $\cC_{n,k}$, denoted by $\gfrd_\delta(\Sigma)$; and the corresponding $v\in C^2(\cC_{n,k}\cap Q_r)$ (extended by $0$ to be defined on the whole $\cC_{n,k}$) is called the \textbf{graphical function} of $\Sigma$ over $\cC_{n,k}$. (If there's no such $r$ that makes $\Sigma$ a $\delta$-graph, we shall use the convention that $\gfrd_\delta(\Sigma)=0$ and the graphical function is also $0$.)
	Similar notions can be generalized to Radon measures $\Sigma$.

	The linearization of the graphical rescaled mean curvature flow equation over $\cC_{n,k}$ is given by $\pr_\tau u=L_{n,k}u$, where \[
	L_{n,k} u = \Delta_{\cC_{n,k}} u - \frac{1}{2}\langle X,\nabla_{\cC_{n,k}} u\rangle + u \,. \]
	We denote by $\sigma(\cC_{n,k})$ the collection of Gaussian $L^2$-eigenvalues of $-L_{n,k}$, and for each $\gamma\in \sigma(\cC_{n,k})$, denote by $\rmW_\gamma(\cC_{n,k})$ the space of Gaussian $L^2$-eigenfunctions of $-L_{n,k}$ with eigenvalue $\gamma$. A more precise discussion of them is included in Section \ref{Subsec_CylinderGeom}.
	
	Using the technique in this paper, we proved the following trichotomy.
	
	\begin{theorem}[Asymptotic profile]\label{thm:AsyProfile}
		Let $\tau\mapsto\cM(\tau)$ be a RMCF in $\RR^{n+1}$ over $\RR_{\geq 0}$ which $C_{\loc}^\infty$-converges to $\cC_{n,k}$ as $\tau\to\infty$. Then one of the following holds for the graphical function $v(\cdot, \tau)$ and $1$-graphical radius $\gfrd_1(\cM(\tau))$ of $\cM(\tau)$ over $\cC_{n,k}$:
		\begin{enumerate}[label={\normalfont(\roman*)}] 
			\item\label{item_Intro_PolynDecay} there exists $\cI\subset\{1,2,\cdots,k\},\ \cI\not=\emptyset$, and some constant $K=K(n,k)>0$ such that after possibly a rotation in $\R^k$ factor, for any $\tau$ sufficiently large, we have $\gfrd_1(\cM(\tau))\geq K\sqrt{\tau}$, and \[
			\left\|v(\cdot,\tau)- \frac{\sqrt{2(n-k)}}{4\tau}\sum_{i\in\cI}(y_i^2-2)\right\|_{L^2(\cC_{n,k})}
			=o(\tau^{-1});
			\footnote{In \cite{SunXue2022_generic_cylindrical}, the definition of the graphical radius and the $C^1$-asymptotic form is in slightly different shapes; see  \cite[Theorem 1.3]{SunXue2022_generic_cylindrical}.}
			\]
			\item\label{item_Intro_ExpDecay} there exist:
			\begin{itemize}
				\item an eigenvalue $\gamma\in \sigma(\cC_{n,k}) \cap [1/2,\infty)$ of $-L_{n,k}$,
				\item an eigenfunction $\psi\in \rmW_\gamma(\cC_{n,k})$ of $L_{n,k}$ with eigenvalue $\gamma$,
				\item a rescaled mean curvature flow called ``low spherical flow'', whose graphical function over $\cC_{n,k}$ is denoted by $\varphi_\bu(\cdot, \tau)\in C^2(\cC_{n,k})$;
				\item an explicit constant $\gamma^+ := \gamma^+(n,k,\gamma) > \gamma$ (defined in \ref{Item_AsyPrNew_Assum2} in Section \ref{Sec_AsympProfile}),
			\end{itemize}
			such that for every $\eps\in (0, \gamma)$ and sufficiently large $\tau$, we have $\gfrd_1(\cM(\tau)) \geq e^{\frac{\gamma-\eps}{2(\gamma+1)} \cdot\tau}$,
			
			and \[
			\left\|v(\cdot,\tau)- (\varphi_\bu(\cdot,\tau) + e^{-\gamma\tau}\psi)\right\|_{L^2(\cC_{n,k})}=O(e^{-(\gamma^+-\eps)\tau})\,;
			\]
			\item\label{item_Intro_SuperExpDecay} there exists a low spherical flow, whose graphical function is $\varphi_\bu$, such that for any $\lambda>0$ and sufficiently large $\tau$, we have $\gfrd_1(\cM(\tau)) \geq e^{\frac{\lambda}{2(\lambda+1)}\cdot \tau}$ and,
			\[
			\|v(\cdot,\tau)- \varphi_\bu(\cdot,\tau)\|_{L^2(\cC_{n,k})}=o(e^{-\lambda\tau})\,.
			\]
		\end{enumerate}
	\end{theorem}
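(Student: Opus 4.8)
The plan is to study the graphical function $v(\cdot,\tau)$ of $\cM(\tau)$ over $\cC_{n,k}$ through its spectral decomposition with respect to $-L_{n,k}$ and then to feed the resulting $L^2$-decay into the relative $L^2$-distance non-concentration of \cite{SunWangXue1_Passing} to produce the graphical radius bounds. \textbf{Step 1 (set-up and reduction).} By \cite{CM15_Lojasiewicz}, for $\tau\gg 1$ the flow $\cM(\tau)$ is, on $Q_{R(\tau)}$ with $R(\tau)\to\infty$, the graph of $v(\cdot,\tau)$ over $\cC_{n,k}$ with $v\to 0$ in $C^\infty_\loc$. After cutting off at scale $R(\tau)/2$ so all Gaussian-weighted quantities are finite, $v$ solves $\partial_\tau v=L_{n,k}v+\cE[v]$ with $\cE[v]$ quadratic in $(v,\nabla v,\nabla^2 v)$ to leading order and supported where $v$ is already small. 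One first fixes the base point and the rotation of the cylinder axis so as to suppress the unstable modes --- the sphere-constant mode and the degree-one modes $y_i,\theta_a$ --- using convergence to $\cC_{n,k}$ together with the standard Merle--Zaag / Colding--Minicozzi dichotomy between the unstable, neutral and stable parts of $\|v(\cdot,\tau)\|_{L^2(\cC_{n,k})}$ \cite{ColdingMinicozzi12_generic}. One then subtracts the \emph{low spherical flow} $\varphi_\bu$ (constructed in Section \ref{Sec_AsympProfile}): an auxiliary RMCF whose graph carries precisely the slowly-decaying low eigenmodes that do not enter the leading asymptotics, in the spirit of the Angenent--Vel\'azquez degenerate neckpinch \cite{AngenentVelazquez97_DegenerateNeckpinches}. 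Set $w:=v-\varphi_\bu$.

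\textbf{Step 2 (trichotomy).} The outcome is governed by the slowest surviving mode of $v$ in case \ref{item_Intro_PolynDecay} and of $w$ in cases \ref{item_Intro_ExpDecay}--\ref{item_Intro_SuperExpDecay}. If the neutral eigenspace dominates, then after discarding the rotational modes $\theta_ay_i$ and performing a fixed rotation in $\RR^k$ diagonalizing the leading quadratic form, the Colding--Minicozzi ODE analysis of the neutral coefficients drives each nonzero one to $\frac{\sqrt{2(n-k)}}{4\tau}$, yielding the profile and the index set $\cI$ of \ref{item_Intro_PolynDecay} (cf.\ \cite{SunXue2022_generic_cylindrical}). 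Otherwise $w$ is dominated by some $\gamma\in\sigma(\cC_{n,k})\cap[1/2,\infty)$, and a Merle--Zaag-type argument for the high modes of $w$ (relative to the low spherical flow) gives $w(\cdot,\tau)=e^{-\gamma\tau}\psi+O(e^{-(\gamma^+-\eps)\tau})$ in $L^2(\cC_{n,k})$ with $\psi\in\rmW_\gamma(\cC_{n,k})$, where $\gamma^+$ is the first effective eigenvalue above $\gamma$ --- possibly limited by the quadratic self-interaction at $2\gamma$ --- as defined in \ref{Item_AsyPrNew_Assum2}; this is \ref{item_Intro_ExpDecay}. If no such $\gamma$ survives, $w=o(e^{-\lambda\tau})$ for every $\lambda$, which is \ref{item_Intro_SuperExpDecay}.

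\textbf{Step 3 (graphical radius).} The key new input is that the relative $L^2$-distance non-concentration forbids the Gaussian $L^2$-mass of $w$ from concentrating near $\partial Q_{R(\tau)}$ along the $\RR^k$-factor; combined with parabolic interior estimates, a bound $\|w(\cdot,\tau)\|_{L^2(\cC_{n,k})}\lesssim e^{-\gamma\tau}$ upgrades to $|w(X,\tau)|\lesssim e^{-\gamma\tau}(1+|y|)^{2(\gamma+1)}$ on an expanding region, the exponent $2(\gamma+1)$ being the highest degree in $y$ attained by eigenfunctions of eigenvalue $\gamma$. Since the leading term $e^{-\gamma\tau}\psi$ grows no faster, the graphicality condition $|v|\le 1$ persists while $e^{-\gamma\tau}(1+|y|)^{2(\gamma+1)}\lesssim 1$, i.e.\ for $|y|\lesssim e^{\frac{\gamma-\eps}{2(\gamma+1)}\tau}$, which is the claimed lower bound on $\gfrd_1(\cM(\tau))$. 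The same scheme gives $\gfrd_1(\cM(\tau))\gtrsim\sqrt\tau$ in case \ref{item_Intro_PolynDecay} (where $v\sim\tau^{-1}(y_i^2-2)$) and $\gfrd_1(\cM(\tau))\gtrsim e^{\frac{\lambda}{2(\lambda+1)}\tau}$ for every $\lambda$ in case \ref{item_Intro_SuperExpDecay}.

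\textbf{Main obstacle.} The crux is Step 3: the nonlinear error $\cE[v]$ has Gaussian norms that themselves depend on the graphical radius, so the radius bound and the $L^2$-decay of $w$ are coupled. I would close the loop by an induction in $\tau$ --- a crude radius bound controls $\cE[v]$, this sharpens the $L^2$-decay of $w$, and the relative non-concentration then improves the radius bound, bootstrapping up to the stated exponents. A subsidiary difficulty is to construct $\varphi_\bu$ precisely enough that $w=v-\varphi_\bu$ is provably admissible for the relative non-concentration --- that is, that $\varphi_\bu$ genuinely absorbs every mode that is ``not decaying fast enough'' and leaves a remainder decaying at the rate dictated by the first mode not already accounted for.
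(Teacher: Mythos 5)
Your Steps 1--2 are broadly aligned with the paper's architecture (subtract a low spherical flow $\varphi_\bu$, locate the dominant eigenvalue of $-L_{n,k}$, then extract the leading eigenfunction with an $O(e^{-(\gamma^+-\eps)\tau})$ error), although the paper identifies the dominant mode via the discrete monotonicity of the decay order $\cN_\bu(\tau,\cM)$ (Corollary \ref{Cor_L^2 Mono_Discrete Growth Mono}) rather than a Merle--Zaag ODE dichotomy, and obtains the refined expansion by solving the inhomogeneous parabolic Jacobi equation with weighted pointwise bounds (Propositions \ref{Prop_App_Solve Parab Jac equ w spacetime C^0 est}--\ref{Prop_App_Parab equ Spacetime C^0 bd from initial data}) rather than by another Merle--Zaag step; these are legitimate alternative routes. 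You also omit how the \emph{correct} low spherical flow is selected (the paper matches $\bu$ to the $\rmW_\SSp$-component of the leading eigenfunction of $v$ over the round cylinder and then shows every other $\bu'\in\scU$ has $\cN_{\bu'}(\infty,\cM)=\gamma_{n,k}$), and you skip the argument that $\cI\neq\emptyset$ in case \ref{item_Intro_PolynDecay}, but these are secondary.

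The genuine gap is Step 3, which you yourself flag as the ``main obstacle'' but whose proposed mechanism would fail as stated. A Gaussian $L^2$ bound $\mbfd_\bu(\tau,\cM)\lesssim e^{-\gamma\tau}$ carries essentially no information at $|y|\sim e^{c\tau}$, because the weight $e^{-|y|^2/4}$ there is doubly-exponentially small; and the non-concentration estimate of Lemma \ref{Lem_L^2 Noncon_for RMCF} only gains a \emph{polynomial} weight $|X|^2$, not the exponential gain needed to reach an exponentially expanding region. Parabolic interior estimates cannot rescue this, since applying them at $|y|\sim e^{c\tau}$ presupposes the graphicality you are trying to prove. The paper breaks this circularity with a translation argument (Lemma \ref{Lem_AsympProf_Rough C^2 est}): for $|\by|\leq e^{-\tau_\circ/2}$ the translate $\cM^{(0,\by,0)}(\tau)=\cM(\tau)-(\orig,e^{\tau/2}\by)$ is again a RMCF; the non-concentration applied at time $\tau_\circ$ gives $\mbfd_\bu(\tau_\circ,\cM^{(0,\by,0)})\lesssim \mbfd_\bu(\tau_\circ,\cM)^{1-\eps''}\lesssim e^{-(\mu-\eps/2)(1-\eps'')\tau_\circ}$, and the universal forward growth bound $\cN_\bu\geq -1-\eps$ (Corollary \ref{Cor_L^2 Mono_cN>-1}, reflecting that $-1$ is the bottom of $\sigma(\cC_{n,k})$) propagates this to time $\tau$ at cost $e^{(1+\eps)(\tau-\tau_\circ)}$; choosing $\tau_\circ=(1+\eps)(\mu+1)^{-1}\tau$ makes the product small, so by $\eps$-regularity the translated flow is graphical in the fixed cube $Q_{2n}$, i.e.\ $\cM(\tau)$ is graphical in $Q_{2n}(0,e^{\tau/2}\by)$. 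Ranging over $\by$ produces exactly the radius $e^{\frac{\mu-\eps}{2(\mu+1)}\tau}$. This one-shot argument --- not an induction in $\tau$ --- is the key new idea of the section, and without it (or an actual working substitute) your exponent in Step 3 is only a dimensional-analysis heuristic.
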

	
	In the statement of this theorem, the ``low spherical flows'' are rescaled mean curvature flows near $\cC_{n,k}$, whose graphical functions $\varphi_\bu$ over $\cC_{n,k}$ satisfy 
	\[
	\begin{cases}
		\|\varphi_\bu(\cdot,\tau)\|_{C^4}\lesssim e^{-\frac{\tau}{n-k}}, & \text{ when $n-k\geq 2$;}
		\\
		\varphi_\bu(\cdot,\tau)=0, & \text{ when $n-k=1$}.
	\end{cases}
	\]
	They are called ``spherical flows'' because they are invariant in $\R^k$ directions, and hence only the spherical part matters; the word ``low'' indicates that when $n-k\geq 2$, they are the rescaled mean curvature flow converging to the sphere factor with the slowest rate. We construct a family of low spherical flows in Lemma \ref{Subsec_Mod Low SphericalMode} based on Appendix \ref{App:ExistenceSphericalArrival}. We include the term $\varphi_\bu$ in our asymptotic profile because it represents the lowest order asymptotics when $n-k\geq 2$, but it does not contribute to the geometry of the singular set. We also remark that when $n-k=1$, there's no spherical flow (except $\cC_{n,k}$ itself) that represents the lowest order asymptotics on the cylinder, and we do not need them in our analysis, so we set them to be zero. 
	
	\begin{Rem} \label{Rem_Intro_super exp decay rigid}
		The case \ref{item_Intro_PolynDecay} of Theorem \ref{thm:AsyProfile} has been proved in \cite{SunXue2022_generic_cylindrical}. Using our stratification of cylindrical singular set \eqref{Equ_Intro_cS^k, cS^k_+, cS^k_0}, a singularity $p\in \cS_k(\bM)_+$ if and only if its RMCF $\cM^p$ satisfies case \ref{item_Intro_ExpDecay} or \ref{item_Intro_SuperExpDecay} after some rotation in $\R^{n+1}$. 
		
		In \cite{AngenentVelazquez97_DegenerateNeckpinches}, Angenent-Vel{\'a}zquez constructed examples of rotationally symmetric mean curvature flows that have the asymptotic profile as in \ref{item_Intro_ExpDecay} near their singularities, and the graphical radii are proportional to $e^{\frac{\gamma}{2(\gamma+1)}\tau}$. Therefore, our graphical radius bound $e^{\frac{\gamma-\eps}{2(\gamma+1)}\tau}$ obtained in \ref{item_Intro_ExpDecay} is almost optimal. We believe that in case \ref{item_Intro_ExpDecay}, the $\eps$ in the graphical radius bound can be removed. Nevertheless, as it is not necessary in this paper, we do not pursue this sharp graphical radius estimate. 
		
		The case \ref{item_Intro_SuperExpDecay} is the case of super-exponential decay rate, and by the spirit of unique continuation of PDEs, it seems plausible that it only happens if $(v-\varphi_\bu)(\cdot, \tau)\equiv 0$. 
	\end{Rem}
	
	\begin{conjecture}
		Suppose $\cM$ is a rescaled mean curvature flow in $\R^{n+1}$ without boundary, and suppose the case \ref{item_Intro_SuperExpDecay} of Theorem \ref{thm:AsyProfile} holds. Then $\cM$ coincides with some low spherical flow.
	\end{conjecture}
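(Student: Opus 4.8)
The plan is to treat the conjecture as a strong-unique-continuation statement for the graphical rescaled mean curvature flow equation. Let $v(\cdot,\tau)$ be the graphical function of $\cM(\tau)$ over $\cC_{n,k}$ and $\varphi_\bu(\cdot,\tau)$ that of the low spherical flow produced by case \ref{item_Intro_SuperExpDecay}, and set $w:=v-\varphi_\bu$. Both $v$ and $\varphi_\bu$ solve the quasilinear equation $\pr_\tau u=\cF_{n,k}[u]$ with $D\cF_{n,k}[0]=L_{n,k}$, so writing $\cF_{n,k}[v]-\cF_{n,k}[\varphi_\bu]=\big(\int_0^1 D\cF_{n,k}[\varphi_\bu+sw]\,ds\big)w$ exhibits $w$ as a solution of a \emph{linear} parabolic equation
\[
\pr_\tau w = L_{n,k}w + \cP_\tau w \qquad\text{on}\quad \cC_{n,k}\cap Q_{R(\tau)},\qquad R(\tau):=\gfrd_1(\cM(\tau))\to\infty,
\]
where $\cP_\tau$ is second order with coefficients bounded by $C\big(\|v(\cdot,\tau)\|_{C^2}+\|\varphi_\bu(\cdot,\tau)\|_{C^2}\big)$; by case \ref{item_Intro_SuperExpDecay} and interior parabolic estimates these are small, and on any fixed compact subset of $\cC_{n,k}$ decay at least like $e^{-\tau/(n-k)}$ (with $\varphi_\bu\equiv 0$ when $n-k=1$), hence integrably in $\tau$. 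The goal is to prove $w\equiv0$.

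The engine I would use is an Almgren-type parabolic frequency monotonicity for $w$ relative to the \emph{fixed} Gaussian-weighted $L^2$ structure on $\cC_{n,k}$, $\|w\|^2:=\int_{\cC_{n,k}}w^2\,e^{-|X|^2/4}$. With $N(\tau):=\langle -L_{n,k}w,w\rangle/\|w\|^2$, which is bounded below by $\inf\sigma(\cC_{n,k})>-\infty$, one has $N'\leq 0$ for the unperturbed flow by self-adjointness of $L_{n,k}$ and Cauchy--Schwarz, while $\cP_\tau$ contributes an error bounded by $\delta(\tau):=\|\cP_\tau\|$ times a bounded power of $1+|N(\tau)|$. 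Since $\delta$ is small and integrable, one expects $N(\tau)$ to stay bounded, $N(\tau)\leq\bar N<\infty$; feeding this into $(\log\|w\|^2)'=-2N(\tau)+O\big(\delta(\tau)(1+|N(\tau)|)\big)$ gives $\|w(\cdot,\tau)\|\geq c\,e^{-C\tau}$, contradicting the super-exponential decay of case \ref{item_Intro_SuperExpDecay} and forcing $w\equiv0$. Two inputs are needed to run this: first, that $w\not\equiv0$ implies $w(\cdot,\tau)\neq0$ for all $\tau$, which is a backward-uniqueness statement for the linear equation above (in the spirit of Escauriaza--Seregin--\v{S}ver\'ak, the coefficients being smooth with the drift structure of $L_{n,k}$) combined with forward uniqueness; second, the frequency estimate itself. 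An alternative to the monotonicity is a Merle--Zaag ODE analysis carried out on the eigenspaces $\rmW_\gamma(\cC_{n,k})$: it yields the dichotomy that the lowest active eigenmode either dominates --- producing \emph{exactly} exponential decay $\|w\|\sim e^{-\gamma\tau}$, again contradicting case \ref{item_Intro_SuperExpDecay} --- or vanishes identically, and iterating removes every mode.

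The main obstacle I expect is the \emph{non-compactness}: $\cM(\tau)$ is a graph over $\cC_{n,k}$ only inside $Q_{R(\tau)}$, so every integration by parts in the frequency computation, the localization of $\cP_\tau$ (whose coefficients decay only on fixed compact regions, not uniformly over $Q_{R(\tau)}$), and the cutoff reduction to the full cylinder all generate terms supported near $\pr Q_{R(\tau)}$ of size $\sim e^{-R(\tau)^2/4}\cdot\mathrm{poly}(R(\tau))$ --- the Gaussian weight there times the fixed $C^2$-bound of $w$ on the graphical region. Case \ref{item_Intro_SuperExpDecay} only supplies $R(\tau)\geq e^{(\frac12-\eta)\tau}$ for each $\eta>0$, so these terms are merely \emph{doubly} exponentially small, while the hypothesis excludes decay of $\|w(\cdot,\tau)\|$ faster than every single exponential but not, a priori, faster than a double exponential; hence the boundary contributions are not obviously negligible against $\|w(\cdot,\tau)\|$. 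Closing this gap appears to require either a bootstrap --- first proving, by a cruder localized energy estimate tracking the boundary flux, a lower bound $\|w(\cdot,\tau)\|\geq e^{-o(R(\tau)^2)}$, after which the frequency argument applies verbatim (here the relative $L^2$-non-concentration technique of this paper and of \cite{SunWangXue1_Passing} is a natural candidate for supplying the quantitative control near $\pr Q_{R(\tau)}$) --- or a genuine parabolic Carleman inequality for the drift operator $L_{n,k}$ on the expanding cylinder, calibrated to the growth of $R(\tau)$. Equivalently, in the unrescaled picture this is the assertion that two mean curvature flows which are infinitely tangent at a common singular point must coincide, a strong-unique-continuation phenomenon \emph{at a final-time singularity}; that is where the essential difficulty lies. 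The subcase $n-k=1$, where $\varphi_\bu\equiv0$ and the comparison is with the static cylinder $\cC_{n,k}$, should be the most accessible instance, the reference solution and coefficient estimates being simplest there.
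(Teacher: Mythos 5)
This statement is stated as a \emph{conjecture} in the paper and is not proved there; the authors only remark (Remark \ref{Rem_Intro_super exp decay rigid}) that it ``seems plausible'' by the spirit of unique continuation. So there is no proof to compare against, and what you have written is, by your own framing, a strategy sketch rather than a proof. As a sketch it is sensible and aligned with how one would expect such a rigidity statement to be attacked, but it does not close the conjecture, and the gaps are exactly the ones you flag.

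The decisive one is circular as currently set up: the frequency (Rayleigh-quotient) argument produces the lower bound $\|w(\cdot,\tau)\|\geq c\,e^{-C\tau}$ only if $N(\tau)$ stays bounded, and keeping $N(\tau)$ bounded requires the cutoff and boundary-flux errors near $\pr Q_{R(\tau)}$ --- of size roughly $e^{-R(\tau)^2/4}\cdot\mathrm{poly}(R(\tau))$ with $R(\tau)\gtrsim e^{(\frac12-\eta)\tau}$ --- to be negligible against $\|w(\cdot,\tau)\|^2$. Case \ref{item_Intro_SuperExpDecay} only forbids decay faster than every single exponential; it does not forbid doubly exponential decay of $\|w\|$, in which case those boundary terms are not small relative to $\|w\|^2$ and the monotonicity breaks down precisely in the regime you need it. Your proposed bootstrap (first establish $\|w(\cdot,\tau)\|\geq e^{-o(R(\tau)^2)}$ by a cruder localized estimate) is the right shape of a fix, but it is itself an unproved unique-continuation-type input, not a consequence of anything in the paper. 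Two further points: the Merle--Zaag alternative, as stated, does not conclude --- ``iterating removes every mode'' over the unbounded spectrum of $-L_{n,k}$ only shows that $w$ decays faster than every exponential, which is the hypothesis rather than a contradiction, so without a genuine Carleman or backward-uniqueness input you cannot pass from ``all modes vanish'' to $w\equiv 0$ on the noncompact cylinder; and the backward-uniqueness statement you invoke (that $w\not\equiv 0$ forces $w(\cdot,\tau)\neq 0$ for all $\tau$) for a drift operator with Gaussian weight on an expanding graphical domain is itself a substantial open ingredient, not an off-the-shelf application of Escauriaza--Seregin--\v{S}ver\'ak. In short: this is a reasonable research plan for the conjecture, but the essential difficulty --- strong unique continuation at a final-time singularity with only super-exponential (not doubly exponential) closeness --- remains untouched.
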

	
	\begin{Rem} \label{Rem_Intro_Asymp Profile vs 2nd Fund Form}
		A particularly interesting situation in \ref{item_Intro_ExpDecay} is that $\gamma=1/2$ (and for simplicity $n-k\neq 2$). In this situation, the eigenspace $\rmW_{1/2}(\cC_{n,k})$ is spanned by $\theta_i (y_j^2-2)$, $\theta_i y_{j_1}y_{j_2}$, and $y_j^3-6y_j$. If we collect all the terms of the first two types, we can write any such eigenfunction $\psi\in \rmW_{1/2}$ as \[
		\psi(\theta, y) = \sum_{i=1}^{n-k+1}|\theta|^{-1}\theta_i(\by^\top \cA_{i} \by-2\tr\cA_{i}),
		\]
		where $\cA_i$'s are symmetric $k\times k$ matrices. These coefficients indeed record the geometric information of the singular set: if near a $k$-cylindrical singularity $p$, the singular set $\cS_k(\bM)$ is a $k$-dimensional submanifold, then by Theorem \ref{Thm_Intro_C^2 Reg} \ref{Item_Intro_(cS_k)_0} and \ref{Item_Intro_(cS_k)_+ close}, in a small neighborhood $U_p$ of $p$, we have $\cS_k(\bM)\cap U_p=\cS_k(\bM)_+\cap U_p$. After an appropriate rotation, let us assume the rescaled mean curvature flow $\cM^p$ converges to $\cC_{n,k}$, then the tangent space $T_p (\cS_k(\bM)_+)$ is $\{0\}\times \R^k$, and our proof of Theorem \ref{Thm_Intro_C^2 Reg} also yields:
		\begin{itemize}
			\item if case \ref{item_Intro_ExpDecay} happens with $\gamma=1/2$, then $2\cA_i$ is the projection of the second fundamental form of $\cS_k(\bM)_+$ at $p$ to the normal direction $\partial_i$;
			\item if case \ref{item_Intro_ExpDecay} happens with $\gamma>1/2$ or case \ref{item_Intro_SuperExpDecay} happens, then the second fundamental form of $\cS_k(\bM)_+$ at $p$ vanishes.
		\end{itemize}
		See the general statement in Proposition \ref{Prop_C^2 Reifenberg}.
		
		In contrast, the terms $(y_j^3-6y_j)$ could be obstructions to the $k$-dimensionality and the manifold structure of the singular set. The mechanism behind this is similar to the singularities in $\cS_k(\bM)_0$. We refer the readers to Remark \ref{rem_h_3_dominant}.
	\end{Rem}    
	
	\subsection{Regularity of singular sets of mean curvature flows}
	As a classical problem, the fine structure of singular sets has been studied in many other geometric and PDE problems, such as (far from a complete list) the minimal surfaces \cite{Federer70_DimReduc, Almgren83_BAMS, Simon93_Cylin, White97_Stratif}, 
	semilinear PDEs \cite{Zaag02_C1Reg, Zaag02_C1alphaReg, Zaag06_C2Reg}, and obstacle problem \cite{FigalliSerra19_FineStructObstable, FranceschiniZaton25_CInfty_Reg_Obstacle}.
	
	Let us review some history on the study of the cylindrical singular set of mean curvature flow. Given a self-shrinker $\Sigma\times\R^k$ with $\Sigma$ being non-minimal, the $\R^k$ factor is usually called the spine\footnote{In \cite{White97_Stratif}, the spine is defined for a general shrinking mean curvature flow, including the static or quasi-static cases.} of this shrinker. In \cite{White97_Stratif}, White proved that the singular set of mean curvature flow has a stratification based on the dimension of the spine. Furthermore, he proved that the singular set that is modeled by shrinkers with a $k$-dimensional spine has parabolic Hausdorff dimension at most $k$. Later, Cheeger-Haslhofer-Naber \cite{CheegerHaslhoferNaber13} refined White's result and proved a quantitative stratification theorem; for mean convex flow, this estimate was further refined recently by Hanbing Fang and Yu Li \cite{FangLi25_VolMCFSing}.

	The regularity of the cylindrical singular set was first studied by Colding-Minicozzi in their seminal work \cite{ColdingMinicozzi16_SingularSet}. They proved that the $k$-th strata $\cS_{k}(\bM)$ are contained in finitely many $k$-dimensional Lipschitz submanifolds. Their proof shows that these submanifolds can be taken to be $C^1$, see the footnote of \cite[Page 815]{ColdingMinicozzi18_Regularity_LSF}. The proof used their earlier result on the uniqueness of cylindrical tangent flow \cite{CM15_Lojasiewicz}, which allows one to compare the spines at two nearby cylindrical singularities.

	It would be interesting to compare our result with the pioneering work in semilinear PDE by Zaag \cite{Zaag02_C1Reg, Zaag02_C1alphaReg, Zaag06_C2Reg}. Zaag proved the $C^2$-regularity of the singular set of semilinear PDEs under the assumption that the singular set is a submanifold. The mechanism is similar to getting higher regularity of $\cS_k(\bM)_0$ for mean curvature flows, which we do not address in the current paper. The curvature of this singular set is due to the rotation of the degenerate direction within the spine. In contrast, Remark \ref{Rem_Intro_Asymp Profile vs 2nd Fund Form} suggests that the curvature of the degenerate cylindrical singular set originates from the spherical part, which is different from the semilinear PDE setting. Also, the semilinear PDE has only singularities for the first singular time. In contrast, the weak flow theory of MCF allows us to study all time singularities, even those appearing after the first singular time.

	\subsection{Sketch of the proof}
	Our strategy of deriving higher regularity of singular sets is inspired by \cite{FigalliSerra19_FineStructObstable}. Intuitively, the singularities in $\cS_{k}(\bM)_+$ can only show up near the spine directions of each other. To show that $\cS_k(\bM)_+$ is locally contained in some $C^{\ell,\al}$ submanifold $\Gamma$, we need to associate to each such singularity some ``Whitney data" that will approximate $\Gamma$ up to order $\ell$, estimate how close these assigned data are at different singularities, and then apply Whitney extension theorem to construct such $\Gamma$. When $\ell=2$, the Whitney data associated to $p\in \cS_k(\bM)_+$ are: 
	\begin{itemize}
		\item a $k$-dimensional linear subspace $|\bL_p|$, which will be the tangent direction of $\Gamma$ at $p$, given by the spine of tangent flow of $\bM$ at $p$;
		\item an $|\bL_p|^\perp$-valued quadratic form $\bq_p$ on $|\bL_p|$, which will be the second fundamental form of $\Gamma$ at $p$, determined by the asymptotic profile of the RMCF $\cM^p$ as in Remark \ref{Rem_Intro_Asymp Profile vs 2nd Fund Form}. (For later reference, also let $\bQ_p$ be the pre-composition of $\bq_p$ with the orthogonal projection onto $|\bL_p|$, which is an $\R^{n+1}$-valued quadratic form on $\R^{n+1}$.)
	\end{itemize}
	Moreover, the ``closeness" condition of Whitney data at two different singularities will (by a translation and rotation) pin down to the following location estimate: if
	\begin{itemize}
		\item $(\orig,0)\in \cS_k(\bM)_+$ has Whitney data $|\bL_{(\orig,0)}|=\{0\}\times \R^k$, $\bq_{(\orig,0)}=\bq_\circ$ (an $\R^{n-k+1}$-valued quadratic form on $\R^k$) and $\bQ_{(\orig,0)}=\bQ_\circ$;
		\item $\bar p\in \cS_k(\bM)_+$ has Whitney data $\bar\bL, \bar\bq, \bar\bQ$, and set $\bar p = (\bar x, \bar y, \bar t)\in \R^{n-k+1}\times \R^k\times \R$, suppose $\max\{|\bar x|^2+|\bar y|^2, |\bar t|\} =: r^2\in (0, 1]$.
	\end{itemize}
	Then there exists a linear map $\bl: \R^k\to \R^{n-k+1}$ (also viewed as an $(n-k+1)\times k$-matrix) such that the rotation $e^{\bar\mbfA}$ maps the linear subspace $|\bL_\circ|$ to $|\bar\bL|$, where $\bar\mbfA:= \begin{bmatrix} 0 & \bl \\ -\bl^\top & 0 \end{bmatrix}$; and the following estimate holds for every $0<\eps\ll 1$:
	\begin{align}
		r^{-2}|\bar t| + r^{-1}|\bar x - \bq_\circ(\bar y)| + \|\bar\bl - \nabla\bq_\circ(\bar y)\| + r\|\bar \bQ - \bQ_\circ\| \lesssim_{\eps} r^{2\gamma^+_{n,k} - 2\eps}\,, \label{Equ_Intro_Location Est}
	\end{align}
	where $\gamma^+_{n,k}=\gamma^+(n,k,1/2)>1/2$ is determined in the last bullet point of \ref{item_Intro_ExpDecay} in Theorem \ref{thm:AsyProfile}.

	\subsubsection{Proof of location estimate}
	Taking Theorem \ref{thm:AsyProfile} for granted, we now describe the proof idea of the location estimate. Recall that $(\orig,0)$ being a degenerate cylindrical singularity implies, by Theorem \ref{thm:AsyProfile} \ref{item_Intro_ExpDecay} and \ref{item_Intro_SuperExpDecay}, that the RMCF $\tau\mapsto \cM(\tau)$ based at $(0, 0)$ has its graphical function $v(\cdot, \tau)$ satisfying the following asymptotic estimate, 
	\begin{align}
		v(\cdot, \tau) = \varphi_{\bu_\circ}(\cdot, \tau) + e^{-\tau/2}\psi_\circ + O(e^{-(\gamma^+_{n,k}-\eps)\tau})\,,  \label{Equ_Intro_v sim e^(-tau/2)psi} 
	\end{align}
	where $\varphi_{\bu_\circ}$ is the graphical function over $\cC_{n,k}$ of a low spherical flow, $\psi_\circ\in \rmW_{1/2}(\cC_{n,k})$ is an eigenfunction with eigenvalue $1/2$.
	Note that we allow the scenario when $\psi_\circ = 0$, which corresponds to faster asymptotics $\gamma>1/2$ in case \ref{item_Intro_ExpDecay} or case \ref{item_Intro_SuperExpDecay} of Theorem \ref{thm:AsyProfile}. 
	
	On the other hand, for every $r, \bar p$ specified as above, we can write the RMCF $\cM^{\bar p}$ based at $\bar p$ as the translation, dilation and time-reparametrization of $\cM$: \[
	\cM^{\bar p}(\tau) = \sqrt{1-\bar t e^{\tau}}\cdot
	\cM(\tau-\log(1 - \bar t e^\tau))-e^{\tau/2}(\bar x, \bar y) \,.
	\]
	By a quantitative uniqueness of tangent flow by Colding and Minicozzi \cite{ColdingMinicozzi25_quantitativeMCF}, when $\bar p$ is close enough to $(0, 0)$, we shall have
	\begin{itemize}
		\item $|\bar t|, |\bar x|^2 \ll |\bar y|^2 \sim r^2$;
		\item $|\bar\bL|$ is close to $|\bL_\circ|$.
	\end{itemize}
	The second bullet point guarantees that there exists such a linear map $\bl$ as described above \eqref{Equ_Intro_Location Est}. In particular, the rotation $e^{-\bar\mbfA}$ maps $\cM^{\bar p}$ to a RMCF $\tau\mapsto \bar \cM(\tau):= (e^{-\bar\mbfA})_\sharp\cM^{\bar p}(\tau)$ which locally smoothly converges to $\cC_{n,k}$ as $\tau\to +\infty$. 
	We proved in Appendix \ref{Append_Graph over Cylinder} that the graphical function $\bar v(\cdot, \tau)$ of $\bar\cM(\tau)$ is related to $v$ by unwrapping these translation, dilation and rotation: \[
	\bar v(\theta, y; \tau) \sim v(\theta, y + \by, \tau') + \varrho(\bar\lambda -1) - \psi_{\bar\bx} - \psi_{\bar\mbfA}\,,
	\]
	where $\varrho:= \sqrt{2(n-k)}$ and
	\begin{align*}
		\tau':= \tau-\log(1 - \bar t e^\tau)\,, & &
		\bar\lambda := \sqrt{1-\bar t e^{\tau}}\,, & &
		\bar\mbfx := e^{\tau/2}\bar x\,, & & 
		\bar\by := e^{\tau/2}\bar y \,;
	\end{align*}
	and $\psi_{\bar\mbfx}, \psi_{\bar\mbfA}$ are eigenfunctions of $-L_{n,k}$ induced by translation and rotation (see Section \ref{SSubsec_Spectrum of L} for their precise definitions). Combined with \eqref{Equ_Intro_v sim e^(-tau/2)psi}, this implies that if we choose $\tau$ so that $|\bar\by|\sim 1$ (which means $r\sim |\bar y|\sim e^{-\tau/2}$), then
	\begin{align}
		\begin{split}
			\bar v(\theta, y; \tau) \sim \varphi_{\bu_\circ}(\theta; \tau') + e^{-\tau'/2}\psi_\circ(\theta, y+\by) + \varrho(\bar\lambda -1) - \psi_{\bar\bx} - \psi_{\bar\mbfA} + O(e^{-(\gamma^+_{n,k}-\eps)\tau})  \,.
		\end{split} \label{Equ_Intro_bar v = psi_o(by+ ) + ...}
	\end{align}
	
	On the other hand, since $\bar p$ is also a degenerate cylindrical singularity for $\bM$, Theorem \ref{thm:AsyProfile} \ref{item_Intro_ExpDecay}, \ref{item_Intro_SuperExpDecay} also applies to $\bar\cM$, which gives the asymptotic estimate, \[
	\bar v(\cdot, \tau) = \varphi_{\bar\bu}(\cdot, \tau) + e^{-\tau/2}\bar\psi + O(e^{-(\gamma^+_{n,k}-\eps)\tau})
	\]
	for some low spherical flow $\varphi_{\bar\bu}$ and eigenfunction $\bar\psi\in \rmW_{1/2}(\cC_{n,k})$. Combined with \eqref{Equ_Intro_bar v = psi_o(by+ ) + ...}, this implies (modulo estimates on the errors),
	\begin{align*}
		\left\|(\varphi_{\bu_\circ}-\varphi_{\bar \bu})(\cdot, \tau) + e^{-\tau'/2}\left(\psi_\circ(\cdot+\by)-\bar\psi \right) + \left(\varrho(\bar\lambda -1) - \psi_{\bar\bx} - \psi_{\bar\mbfA}\right) \right\|_{L^2} \lesssim e^{-(\gamma^+_{n,k}-\eps)\tau}\,.
	\end{align*}
	Projecting the left-hand side onto the eigenspaces of $-L_{n,k}$ with eigenvalues $-1, -1/2, 0, 1/(n-k), 1/2$ proves the estimates in each term of the location estimate \eqref{Equ_Intro_Location Est}.
	
	Let us finish this discussion by comparing our approach with that employed in the obstacle problem by Figalli-Serra \cite{FigalliSerra19_FineStructObstable} and Franceschini-Zato{\'n} \cite{FranceschiniZaton25_CInfty_Reg_Obstacle}. 
	In these works, the object of study is a function $u$ solving a certain PDE. At each singularity belonging to the subset with improved regularity, one can approximate $u$ by polynomials to arbitrarily high order; singular points that do not admit such increasingly accurate polynomial approximations are shown to form a lower-dimensional set.
	In contrast, in the setting of MCF $\bM$, due to the nature of nonlinearity, it is hard to exhaust the ``approximate solutions" at the level of the flow that would play the role of polynomials\footnote{To obtain $C^{2,\al}$ regularity, our approximations are tangent flows, low spherical flows and eigenfunctions of Jacobi operator with eigenvalue $1/2$}. For example, higher-order expansions at singularities may naturally contain spherical modes: these do not influence the geometry of the singular set (and thus the singularity need not belong to a lower stratum), yet they still contribute to the approximation of the flow.
	Moreover, translating such approximations of $\bM$ at the level of flow into Whitney data that capture the singular set and verifying the compatibility condition is more involved than in the polynomial case.
	For these reasons, we only get $C^{2,\al}$ regularity for singular sets in the MCF setting instead of $C^\infty$.

	\subsubsection{Some ingredients in proving Theorem \ref{thm:AsyProfile}} 
	Let us briefly describe the proof of the asymptotic profile Theorem \ref{thm:AsyProfile} in the case $n-k=1$ (where we don't need to modulo low spherical flows) and the flow decays exponentially toward the cylinder. For a RMCF $\tau\mapsto \cM(\tau)$ with entropy $\leq \eps^{-1}$, in \cite{SunWangXue1_Passing}, we introduced the ``(Gaussian) $L^2$-distance" $\mbfd_{n,k}(\cM(\tau))$ to the cylinder $\cC_{n,k}$ and the ``decay order" (a discrete parabolic analog of Almgren's frequency function): \[
	\cN_{n,k}(\tau, \cM) := \ln \left(\frac{\mbfd_{n,k}(\cM(\tau))}{\mbfd_{n,k}(\cM(\tau+1))} \right) \,,
	\] 
	which measures how fast $\mbfd_{n,k}(\cM(\tau))$ is decaying near $\tau$-time slice. In \cite[Section 3]{SunWangXue1_Passing}, we proved a new $L^2$-nonconcentration property of rescaled mean curvature flow and, as an application, a discrete monotonicity of decay order, which roughly speaking has the following useful consequences: 
	\begin{itemize}
		\item if $\cM(\tau)$ converges locally smoothly to $\cC_{n,k}$ as $\tau\to +\infty$, then $\lim\limits_{\tau\to +\infty} \cN_{n,k}(\tau, \cM)$ is either $\infty$ or an eigenvalue of $L_{n,k}$;
		\item for every $\tau'\geq 0$, we have the following universal bound (recall that $-1$ is the smallest eigenvalue of $-L_{n,k}$) \[
		\mbfd_{n,k}(\cM(\tau+\tau')) \lesssim_{n, \eps} e^{(1+\eps)\tau'} \mbfd_{n,k}(\cM(\tau)) \,.
		\]
	\end{itemize}

	With these two bullet points and a novel translation argument, we can prove the graphical radius of a RMCF $\cM(\tau)$ over $\cC_{n,k}$ is $\gtrsim e^{\frac{\gamma-\eps}{2(\gamma+1)}\cdot\tau}$ when $\lim\limits_{\tau\to +\infty} \cN_{n,k}(\tau, \cM)= \gamma< +\infty$, see a precise discussion in Lemma \ref{Lem_AsympProf_Rough C^2 est}. After estimating the graphical radius, we can write $\cM(\tau)$ as a graph of some $v(\cdot, \tau)\in C^2$ over $\cC_{n,k}$ in an exponentially expanding cube. The RMCF equation on $\cM$ can be interpreted as a parabolic equation of $v$, \[
	(\partial_\tau - L_{n,k}) v = \cQ(u, \nabla u, \nabla^2 u) \,,
	\]
	where the right-hand side is a quadratic error term. The argument above also gives an a priori spacetime $C^2$ estimate on $v$ of form \[
	\|v(\cdot, \tau)\|_{C^2(\cC_{n,k}\cap Q_R)} \lesssim_{n, \eps} e^{-(\gamma-\eps)\tau} R^{2(1+\gamma)} \,,
	\]  
	which leads to an upper bound on the error term decaying like $e^{-2(\gamma-\eps)\tau}$, faster than $e^{-\gamma\tau}$. Making use of this, the principal decaying mode can be extracted from $v(\cdot, \tau)$ with an error estimate by a purely PDE argument. 
	
	As a final remark about the proof of Theorem \ref{thm:AsyProfile}, when $n-k\geq 2$, one needs to replace the $L^2$-distance and decay order above by the ones relative to a low spherical flow to reprove the discrete monotonicity and its consequences. This is done in Section \ref{Sec_L^2 Noncon}. 
	
	\subsubsection{Structure of $\cS_{k}(\bM)_0$}
	Finally we discuss the structure of $\cS_{k}(\bM)_0$. These singularities have asymptotic profiles as indicated in item \ref{item_Intro_PolynDecay} of Theorem \ref{thm:AsyProfile}. When $\cI=\{1,2,\cdots,k\}$, the singularities are called nondegenerate, and have been studied extensively in \cite{SunWangXue1_Passing}. In particular, they are isolated in spacetime. When $\emptyset\neq \cI\subsetneqq\{1,2,\cdots,k\}$, the singularities are called {\bf partially nondegenerate}, and they carry the structure of nondegenerate singularities in the directions generated by $\{y_i|i\in\cI\}$ in the spine. In particular, the asymptotic profile allows us to conclude that the nearby singularities can not approach a partially nondegenerate singularity from those nondegenerate directions. Equivalently, this shows that the partially nondegenerate singularities can only stay close to the degenerate directions in the spine. Hence, the Hausdorff dimension of partially nondegenerate singularities must be smaller.

	It is plausible to get higher regularity of $\cS_{k}(\bM)_0$, for example, by combining the ideas in the current paper with the idea of Zaag in \cite{Zaag02_C1Reg, Zaag02_C1alphaReg, Zaag06_C2Reg}. Nevertheless, the technical details can be very different from the current paper.

	\subsection{Organization of the paper}
	After a preliminary section, Section \ref{Sec_Prelim}, we define and discuss the family of low spherical flows, and basic properties of the relative decay order $\cN_{\bu}$ in Section \ref{Sec_L^2 Noncon}. In Section \ref{Sec_AsympProfile}, we discuss the key asymptotic profile analysis at the singularities in $\cS_{k}(\bM)_+$ and the proof of Theorem \ref{thm:AsyProfile}. In Section \ref{Sec_C^(2,al) Reg}, we discuss how to use the results in Section \ref{Sec_AsympProfile} to estimate the location of singularities in $\cS_{k}(\bM)_+$. In Section \ref{Sec_PfMainThm}, we prove Theorem \ref{Thm_Intro_C^2 Reg} by using the Whitney extension theorem, based on the location estimates derived in Section \ref{Sec_C^(2,al) Reg}.  Finally, we construct in Appendix \ref{App:ExistenceSphericalArrival} a continuous family of convex mean curvature flows with prescribed asymptotics to the round sphere, and prove some key properties of this family. We collect basic properties on parabolic Jacobi fields and geometry of graph over the round cylinder in Appendices \ref{Append_Ana Parab Jac} and \ref{Append_Graph over Cylinder}. These will be used throughout the paper.
	
	\subsection{Acknowledgment}
	We thank Sigurd Angenent, Toby Colding, Alessio Figalli, and Bill Minicozzi for their interest.
	A.S. is supported by the AMS-Simons Travel Grant. J. X. is supported by NSFC grants (No. 12271285) in China, the New Cornerstone investigator program and the Xiaomi endowed professorship of Tsinghua University. 
	
	\section{Preliminary} \label{Sec_Prelim}
	
	\subsection{Parabolic metric spacetime and Hausdorff measure} \label{Subsec_Parab Dist}
	We recall the following geometric structure of the parabolic spacetime. Throughout this paper, the ambient space is $\R^{n+1}$, and the ambient spacetime is $\R^{n+1}\times\R$.    
	For $X_\circ\in \R^{n+1}$ and $\lambda>0$, we let $\eta_{X_\circ, \lambda}: X\mapsto X_\circ+\lambda X$ denote the translation and dilation map. For a subset $S\subset \R^{n+1}$ or a varifold $V$ in $\R^{n+1}$, we may simply denote their translation and dilation by $\lambda S + X_\circ := \eta_{X_\circ, \lambda}(S)$, $\lambda V + X_\circ:= (\eta_{X_\circ, \lambda})_\sharp V$.
	
	For $(X_1,t_1),\,(X_2,t_2)\in \R^{n+1}\times\R$, the parabolic distance between them is defined to be \[
	d_{\cP}((X_1,t_1),(X_2,t_2))=\max\{|X_1-X_2|,|t_1-t_2|^{1/2}\}.
	\]
	In particular, the metric ball of radius $r$ centered at the spacetime origin $(\orig,0)$ is the parabolic neighborhood $B_r(\orig)\times[-r^2,r^2]$, denoted by $P_r(\orig,0)$.

	Recall that given $\beta>0$ and $\delta \in (0,\infty]$, the parabolic Hausdorff premeasures $\mathcal H^{\beta}_{\cP,\delta}(E)$ of a subset $E\subset \R^{n+1}\times \R$ are defined as follows (up to a dimensional constant multiple):
	\begin{equation}
		\mathcal H^{\beta}_{\cP,\delta}(E):=\inf\biggl\{\sum_i {\rm diam}(E_i)^\beta\,:\,E\subset \bigcup_i E_i,\,{\rm diam}(E_i)<\delta \biggr\}.
	\end{equation}
	Here ${\rm diam}(E_i)$ is the diameter of $E_i$ measured under $d_\cP$. The parabolic Hausdorff measure is then defined as $\cH^\beta_{\cP}(E):= \lim\limits_{\delta\to 0}\cH^\beta_{\cP,\delta}(E)$, and the parabolic Hausdorff dimension is \[
	\dim_\cP(E) := \inf\{\beta\geq 0: \cH^\beta_{\cP}(E)=0\} \,.
	\]
	Note that if $K\subset\R^{n+1}$, then $\cH^k(K\times\{0\})$ is just the usual Hausdorff measure of $K$ in the Euclidean space; on the other hand, the time axis $\{\orig\}\times\R$ has parabolic Hausdorff dimension $2$.
	
	The following lemma is a variant of \cite[Lemma 3.5]{FigalliSerra19_FineStructObstable} in the parabolic setting, whose proof also generalizes directly, and we skip it here. It will be used to estimate $\dim_\cP \cS_k(\bM)_0$ in Section \ref{Subsec_cS_0 lower dim}.
	\begin{Lem}[{\cite[Lemma 3.5]{FigalliSerra19_FineStructObstable}}] \label{lem:HausdorffMeasure}
		Let $E\subset \R^n\times \R$ be a set with $\cH^\beta_{\cP}(E)>0$ for some $\beta\in (0, n+2]$. Then for $\cH^\beta_{\cP}$-almost every point $p_\circ \in E$, there is a sequence $r_j\downarrow 0$ such that the following ``accumulation set'' for $E$ at $0$: \[
		\mathcal A_{E, p_\circ} := \left\{z\in \overline{P_{1/2}} \,:\exists\,(j_\ell)_{\ell \ge 1}\nearrow +\infty, (z_\ell)_{\ell\ge 1} \text{ s.t. $z_{\ell}\in r_{j_\ell}^{-1}.(E-p_\circ) \cap P_{1/2}$ and $z_{\ell}\to z$} \right\}, 
		\]
		(where $r_{j_\ell}^{-1}. E:= \{(r_{j_\ell}^{-1}X, r_{j_\ell}^{-2}t): (X, t)\in E\}$ denotes the parabolic dilation of $E$) satisfies \[
		\cH_{\cP}^\beta(\mathcal A_{E, p_\circ}) >0 \,.
		\]
	\end{Lem}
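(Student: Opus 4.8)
The plan is to follow the proof of \cite[Lemma 3.5]{FigalliSerra19_FineStructObstable}, working throughout in the complete separable metric space $(\R^{n+1}\times\R, d_\cP)$ and replacing Euclidean balls by the parabolic cylinders $P_r$. Two facts make everything go through verbatim: the parabolic dilations $z\mapsto r^{-1}.(z-p_\circ)$ multiply $d_\cP$ by $r^{-1}$, hence $\cH^\beta_\cP$ by $r^{-\beta}$; and the $5r$-covering lemma, the density estimates for Hausdorff measure, and the mass distribution principle all hold in an arbitrary metric space. One also notes that $P_r(p_\circ)$ and the genuine $d_\cP$-ball of radius $r$ about $p_\circ$ differ only along the spatial sphere $\{|X-X_\circ|=r\}$, which carries no $\cH^\beta_\cP$-mass of a fixed finite-measure set for all but countably many $r$; so they are interchangeable in $\limsup$-type densities after an arbitrarily small change of radius. (The restriction $\beta\le n+2$ only serves to make the hypothesis $\cH^\beta_\cP(E)>0$ non-vacuous.)

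Here is the argument. First I would reduce to finite measure and then refine. By the subset theorem for Hausdorff measures there is a Borel $E'\subset E$ with $0<\cH^\beta_\cP(E')<\infty$; set $\mu:=\cH^\beta_\cP\llcorner E'$ and, for $M\ge 1$, $E'_M:=\{p\in E':\mu(\bar B_s(p))\le M s^\beta\text{ for all }0<s<1/M\}$. Since $\cH^\beta_\cP(E')<\infty$, the union $\bigcup_{M\ge 1}E'_M$ equals $\{p:\Theta^{*\beta}(\mu,p)<\infty\}$ (here $\Theta^{*\beta}$ is the upper $\beta$-density), which has full $\mu$-measure; hence for $M$ large $\cH^\beta_\cP(E'_M)=\mu(E'_M)>0$. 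Writing $\nu:=\cH^\beta_\cP\llcorner E'_M$ and covering an arbitrary $\bar B_s(x)$ by a ball centred in $E'_M$ (at the cost of doubling the radius), the defining bound upgrades to the \emph{uniform} Frostman-type estimate $\nu(\bar B_s(x))\le 2^\beta M s^\beta$ for all $x$ and all $0<s<1/(2M)$. By the standard lower density estimate, $\Theta^{*\beta}(\nu,p_\circ)>0$ for $\nu$-a.e.\ $p_\circ$; fix such a $p_\circ$ and choose $r_j\downarrow 0$ with $r_j^{-\beta}\nu(P_{r_j/2}(p_\circ))\to 2^{-\beta}\Theta^{*\beta}(\nu,p_\circ)>0$. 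Let $\nu_j$ denote the pushforward of $r_j^{-\beta}\,\nu\llcorner P_{r_j/2}(p_\circ)$ under the parabolic rescaling $z\mapsto r_j^{-1}.(z-p_\circ)$; these are finite Borel measures carried by $\overline{P_{1/2}}$, with $\nu_j(\overline{P_{1/2}})\ge c>0$ for all large $j$, and, by the uniform estimate, $\nu_j(\bar B_\rho(x))\le 2^\beta M\rho^\beta$ for every $x$, every $\rho\le 1$, and every large $j$.

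Now pass to a subsequence (relabelling the $r_j$ accordingly) so that $\nu_j\rightharpoonup\nu_\infty$ weakly-$*$ on the compact set $\overline{P_{1/2}}$. Then $\nu_\infty(\overline{P_{1/2}})=\lim_j\nu_j(\overline{P_{1/2}})\ge c>0$ and, letting open balls decrease to closed ones, $\nu_\infty(\bar B_\rho(x))\le 2^\beta M\rho^\beta$ for all $x,\rho$. The crucial point is that $\operatorname{supp}\nu_\infty\subseteq\mathcal A_{E,p_\circ}$: if $z\in\overline{P_{1/2}}\setminus\mathcal A_{E,p_\circ}$ then, by the very definition of the accumulation set, there are $\rho>0$ and $J$ with $\bigl(r_j^{-1}.(E-p_\circ)\bigr)\cap P_{1/2}\cap B_\rho(z)=\emptyset$ for all $j\ge J$; since $E'_M\subset E$, the (closed) support of each such $\nu_j$ also misses $B_\rho(z)$, whence $\nu_\infty(B_\rho(z))\le\liminf_j\nu_j(B_\rho(z))=0$. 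Applying the mass distribution principle to $\nu_\infty$ — a finite measure supported in $F:=\operatorname{supp}\nu_\infty\subseteq\mathcal A_{E,p_\circ}$ with $\nu_\infty(\bar B_\rho(x))\le 2^\beta M\rho^\beta$ — gives $\cH^\beta_\cP(\mathcal A_{E,p_\circ})\ge\cH^\beta_\cP(F)\ge\nu_\infty(F)/(2^\beta M)\ge c/(2^\beta M)>0$. This holds for $\nu$-a.e.\ $p_\circ\in E'_M$, i.e.\ on a subset of $E$ of positive $\cH^\beta_\cP$-measure; exhausting $E$ by such refined pieces $E'_M$ (possible whenever $E$ is $\cH^\beta_\cP$-$\sigma$-finite, which it is in the intended applications) upgrades this to $\cH^\beta_\cP$-a.e.\ $p_\circ\in E$. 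The only real work is soft-analysis bookkeeping: checking that the density and covering machinery survives the passage to the parabolic metric (it does, by the remarks above), tracking the anisotropic rescaling $r^{-1}.(X,t)=(r^{-1}X,r^{-2}t)$ together with the compensating factor $r^{-\beta}$ in $\cH^\beta_\cP$, and dealing with the harmless discrepancy between $P_r$ and honest $d_\cP$-balls; there is no difficulty beyond \cite[Lemma 3.5]{FigalliSerra19_FineStructObstable}, which is why omitting the proof is reasonable.
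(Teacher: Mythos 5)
The paper never writes this proof out: it asserts that the argument of \cite[Lemma 3.5]{FigalliSerra19_FineStructObstable} ``generalizes directly'' to the parabolic metric and stops there. Your write-up is therefore a genuine reconstruction, and it is correct, but it takes a different (more measure-theoretic) route than the argument being invoked. Figalli--Serra work with the Hausdorff \emph{content}: at a point where the upper content-density of $E$ is positive they argue by contradiction, covering the accumulation set by finitely many balls of small total $\beta$-content and using compactness of the closed unit ball to show that, for $j$ large, the rescaled set $r_j^{-1}.(E-p_\circ)\cap P_{1/2}$ lies in the doubled cover, contradicting the density lower bound. That argument needs only the \emph{lower} density estimate and produces an explicit dimensional constant. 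You instead extract a Frostman piece $E'_M$ (so you also need the upper density bound), take a weak-$*$ limit of the rescaled measures, locate its support inside $\mathcal A_{E,p_\circ}$ via the inclusion $E'_M\subset E$, and finish with the mass distribution principle; the resulting constant depends on the Frostman constant $M$ rather than only on $\beta$, but positivity is all that is claimed. Both routes are standard, and both transplant to $(\R^{n+1}\times\R,\, d_\cP)$ for exactly the reasons you list: the parabolic dilations scale $d_\cP$ linearly and hence $\cH^\beta_\cP$ by $r^{-\beta}$, the covering and density machinery is metric-space general, and $P_r$ versus the honest $d_\cP$-ball costs at most a fixed factor.

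Two small points. First, the reduction to a Borel $E'$ with $0<\cH^\beta_\cP(E')<\infty$ via the subset theorem requires $E$ to be analytic (Davies/Howroyd); this is harmless for the Borel singular sets arising in the paper and is the same implicit hypothesis as in the original. Second, your closing upgrade from ``$\nu$-a.e.\ point of $E'_M$'' to ``$\cH^\beta_\cP$-a.e.\ point of $E$'' by exhaustion is the one shaky step, since $E$ need not be $\sigma$-finite. The clean fix is the usual contradiction: if the bad set $B\subset E$ had $\cH^\beta_\cP(B)>0$, run the entire construction with $B$ in place of $E$ to produce some $p_\circ\in B$ and $r_j\downarrow 0$ with $\cH^\beta_\cP(\mathcal A_{B,p_\circ})>0$; since $\mathcal A_{B,p_\circ}\subset \mathcal A_{E,p_\circ}$, this $p_\circ$ is not bad after all. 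With that one-line repair the argument is complete.
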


	\subsection{Brakke flow and rescaled mean curvature flow} \label{Subsec_Brakke}
	We recall that a (classic) mean curvature flow in $\R^{n+1}$ over an interval $I$ is a family of smooth embeddings $F: M \times I \to \R^{n+1}$ of an $n$-dimensional manifold $M$, satisfying the equation $\pr_t F(x,t)=\vec{H}(F(x,t))$. 
	In order to study singular sets, we need to work with weak flows. 
	\begin{definition}
		An $n$-dimensional \textbf{(integral) Brakke flow} in $\RR^{n+1}$ over an interval $I\subset \RR$ is a one-parameter family of Radon measures $\{\Sigma_t\}_{t\in I}$, such that
		\begin{enumerate}
			\item for almost every $t\in I$, $\Sigma_t$ is a Radon measure associated to an $n$-dimensional integral varifold with mean curvature $\vec H_t\in L^2(\Sigma_t)$, 
			\item for every non-negative function $\Phi\in C^1_c(\R^{n+1}\times I)$, we have
			\begin{equation}
				\frac{d}{dt}\int \Phi\ d\Sigma_t
				\leq 
				\int \left(
				\frac{\partial \Phi}{\partial t}
				+\nabla \Phi\cdot \vec{H}_t
				-\Phi |\vec H_t|^2
				\right)
				\ d\Sigma_t, \label{Equ_Pre_DefBrakke}
			\end{equation}
			in the distribution sense.
		\end{enumerate}
		The support of the Brakke flow is defined to be $\spt (\Sigma):= \overline{\bigcup_t \spt(\Sigma_t)\times\{t\}}$, where the closure is taken in the spacetime.
	\end{definition}
	
	\begin{definition}
		A point $p_\circ = (X_\circ, t_\circ)\in \R^{n+1}\times \R$ in the support of $\bM$ is called a \textbf{regular point} of $\bM$ if there exists $r>0$ such that in $B_r(X_\circ)$, $t\mapsto \bM(t)$ is the area measure associated to a classical mean curvature flow over $(t_\circ-r^2, t_\circ]$; otherwise, $p_\circ$ is called a \textbf{singularity} of $\bM$. 
	\end{definition}

	We say that a sequence of flow $\{\Sigma_t^j\}_{t\in I}$ converges to $\{\Sigma_t^\infty\}_{t\in I}$ \textbf{in the Brakke sense}, if $\Sigma^j_t$ measure-converges to $\Sigma^\infty_t$ for all $t\in I$, and the associated varifolds converge for a.e. $t\in I$. 
	We will use the important Brakke-White's $\eps$-regularity \cite{White05_MCFReg}, which implies that if $\{\Sigma_t^j\}_{t\in I}$ converges to $\{\Sigma_t^\infty\}_{t\in I}$ in the Brakke sense and $\{\Sigma_t^\infty\}_{t\in I}$ is a classic mean curvature flow of smooth hypersurfaces, then for every compact subset $K\subset \R^{n+1}\times I$ and $j\gg 1$ (depending on $K$), inside $K$, $\{\Sigma_t^j\}_{t\in I}$ is also a classic mean curvature flow of smooth hypersurfaces, and the convergence is smooth.

	We remark that another important weak flow is the so-called level set flow, which is closely related to the Brakke flow whenever it does not fatten. We refer the readers to the preliminary section of \cite{SunWangXue1_Passing} for a more comprehensive preliminary study of weak flows of mean curvature flow, and \cite{Brakke78, Ilmanen94_EllipReg, White95_WSF_Top} for detailed discussions. In particular, the level set flow of convex mean curvature flow can be formulated as the {\bf arrival time function}. Let $\Omega\subset \RR^{n+1}$ be a convex open subset, then a twice differentiable function $\bu:\Omega\to \RR$ is called the arrival time function if it satisfies the following equation
	\begin{align}
		|\nabla \bu|^2(\Delta \bu + 1) = \nabla^2\bu (\nabla \bu, \nabla \bu)\,. 
	\end{align}
	We refer the readers to \cite{EvansSpruck91, Ilmanen92_LSF, White95_WSF_Top, ColdingMinicozzi16_Differentialbility}, among other references, for further discussions of the arrival time functions.
	
	Huisken \cite{Huisken90} introduced the ``rescaled mean curvature flow" to study singularities. Given a Brakke flow $t\mapsto \mbfM(t)$ and a spacetime point $p_\circ=(X_\circ,t_\circ)$, one can define a new flow $\cM^{p_\circ}: \tau\mapsto \cM^{p_\circ}(\tau)$ associated to $\mbfM$, called the {\bf rescaled mean curvature flow (RMCF)} of $\bM$ based at $p_\circ$, whose time slices are defined by
	\begin{equation}
		\cM^{p_\circ}(\tau)=e^{\tau/2}(\mbfM(t_\circ-e^{-\tau})-X_\circ).
	\end{equation}
	Huisken observed that the rescaled mean curvature flow is a gradient flow of the Gaussian area, which is known as Huisken's monotonicity formula. Recall that given a Radon measure $\Sigma$ in $\R^{n+1}$, its $n$-dimensional {\bf Gaussian area} is defined by \[
	\cF[\Sigma]:=\int_{\RR^{n+1}} (4\pi)^{-n/2}e^{-\frac{|X|^2}{4}}d\Sigma(X).
	\]
	In particular, for a hypersurface $M$, we denote for simplicity $\cF[M]:= \cF[\|M\|]$, where $\|M\|$ is the $n$-dimensional volume measure associated to $M$. As a consequence of Huisken's monotonicity formula, we define the {\bf Gaussian density} of a mean curvature flow at the spacetime point $p$ as $$\Theta_{\bM}(p):=\lim\limits_{\tau\to+\infty}\cF[\cM^{p}(\tau)] \,.  $$
	
	It is worth noting that the Gaussian density is upper semi-continuous in the following sense: for a sequence of MCFs $\bM_j$ converging to $\bM_\infty$ in Brakke sense, and a sequence of spacetime points $p_j$ converging to $p_\infty$, we have $\Theta_{\bM_\infty}(p_\infty)\geq \limsup_j \Theta_{\bM_j}(p_j)$; see \cite[Page 4]{IlmanenWhitw15_DensityBd}.
	As a consequence, every $p\in \spt(\bM)$ has Gaussian density $\Theta_\bM(p)\geq 1$, and Brakke-White's $\eps$-regularity guarantees that equality holds if and only if $p$ is a regular point of $\bM$.
	We say an integral Brakke flow is {\bf unit regular} if every point with Gaussian density $1$ has a space-time neighborhood where it is smooth. By White \cite{White05_MCFReg}, being unit regular is preserved under Brakke convergence of MCF.
	
	Colding-Minicozzi \cite{ColdingMinicozzi12_generic} introduced the {\bf entropy} of a hypersurface (or an $n$-varifold): \[
	\lambda[M]:=\sup_{(X_\circ,t_\circ)\in\R^{n+1}\times\R_{>0}} \cF[t_\circ^{-1}(M-X_\circ)].
	\]
	By Huisken's monotonicity formula, entropy is monotone nonincreasing along a Brakke flow. For a Brakke flow $t\mapsto \bM(t)$ or a RMCF $\tau\mapsto\cM(\tau)$ over $I$, we define their entropy as \[
	\lambda[\bM]:= \sup_{t\in I} \lambda[\bM(t)]\,, \qquad \lambda[\cM]:= \sup_{t\in I} \lambda[\cM(t)] \,.
	\]
	
	An important feature of the Gaussian area is that it decays very fast at infinity. In fact, we will frequently use the following proposition, especially when $\Sigma=\cC_{n,k}$:
	
	\begin{proposition} \label{prop:entropy control outside ball of radius R}
		Suppose $\eps\in(0,1)$ and $\Sigma\subset\R^{n+1}$ is a hypersurface (or an $n$-varifold) with $\lambda[\Sigma]< +\infty$. Then for any $N>0$ and $R>0$,
		\[
		\Psi_{\Sigma,N}(R):= \int_{\Sigma\backslash B_R}(1+|X|^N)e^{-\frac{|X|^2}{4}}d\cH^n(X) \lesssim_{n, N, \delta} \lambda[\Sigma]\cdot e^{-\frac{R^2}{4+\delta}}.
		\]
	\end{proposition}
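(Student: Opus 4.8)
The plan is to reduce the estimate to the elementary fact that a finite entropy bound forces Euclidean volume growth, and then sum over dyadic annuli; I write $\delta$ for the small parameter called $\eps$ in the statement, and assume $0<\delta<1$. The starting point is that for every $X_\circ\in\R^{n+1}$ and $r>0$, applying the definition of entropy at the scale $(X_\circ,r^2)$ gives
\[
\lambda[\Sigma]\ \geq\ \cF\bigl[r^{-1}(\Sigma-X_\circ)\bigr]\ \geq\ (4\pi)^{-n/2}e^{-1/4}\,r^{-n}\,\cH^n\bigl(\Sigma\cap B_r(X_\circ)\bigr),
\]
so that $\cH^n(\Sigma\cap B_r(X_\circ))\lesssim_n\lambda[\Sigma]\,r^n$ for all $X_\circ,r$, where on the last step I used $|X-X_\circ|^2/4r^2\leq 1/4$ on $B_r(X_\circ)$.

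First I would fix a threshold $R_0=R_0(n,N,\delta)\geq 1$. For $R\leq R_0$ the claim is immediate: $e^{-R^2/(4+\delta)}$ is bounded below, and by monotonicity $\Psi_{\Sigma,N}(R)\leq\Psi_{\Sigma,N}(0)$, which one bounds by $\lesssim_{n,N}\lambda[\Sigma]$ by decomposing all of $\Sigma$ into $\Sigma\cap B_1$ and the dyadic annuli $\Sigma\cap(B_{2^{j+1}}\setminus B_{2^j})$, $j\geq0$, and using the volume bound together with $e^{-|X|^2/4}\leq e^{-4^j/4}$ on each piece (the resulting series $\sum_j 2^{(j+1)(n+N)}e^{-4^j/4}$ converges). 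For $R\geq R_0$ I would write $\Sigma\setminus B_R=\bigcup_{j\geq0}A_j$ with $A_j:=\Sigma\cap(B_{2^{j+1}R}\setminus B_{2^jR})$; on $A_j$ one has $|X|\geq 2^jR\geq1$, so $(1+|X|^N)\lesssim_N(2^{j+1}R)^N$, $e^{-|X|^2/4}\leq e^{-4^jR^2/4}$, and $\cH^n(A_j)\lesssim_n\lambda[\Sigma](2^{j+1}R)^n$, giving
\[
\Psi_{\Sigma,N}(R)\ \lesssim_{n,N}\ \lambda[\Sigma]\sum_{j\geq0}(2^{j+1}R)^{n+N}\,e^{-4^jR^2/4}.
\]
For $j=0$, since $\tfrac14-\tfrac1{4+\delta}=\tfrac{\delta}{4(4+\delta)}>0$, polynomial-beats-exponential gives $(2R)^{n+N}\leq e^{\frac{\delta}{4(4+\delta)}R^2}$ once $R\geq R_0(n,N,\delta)$, so this term is $\leq e^{-R^2/(4+\delta)}$. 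For $j\geq1$, $4^jR^2/4=4^{j-1}R^2$ with $4^{j-1}\geq1$ and $4^{j-1}-\tfrac1{4+\delta}\geq\tfrac34\cdot4^{j-1}=\tfrac3{16}(2^j)^2$ (as $\tfrac14\cdot4^{j-1}\geq\tfrac1{4+\delta}$), so $e^{-4^{j-1}R^2}\leq e^{-R^2/(4+\delta)}e^{-\frac3{16}(2^jR)^2}$; since $2^jR\geq2^j$, the factor $(2^{j+1}R)^{n+N}e^{-\frac3{16}(2^jR)^2}\lesssim_{n,N}e^{-\frac3{32}4^j}$, which is summable in $j$. Summing the two contributions yields $\Psi_{\Sigma,N}(R)\lesssim_{n,N,\delta}\lambda[\Sigma]\,e^{-R^2/(4+\delta)}$.

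The only genuine point to watch --- and the reason the rate degrades from the sharp $e^{-R^2/4}$ to $e^{-R^2/(4+\delta)}$ --- is the need to absorb the polynomial prefactor $(1+|X|^N)$ together with the $r^n$ coming from the volume bound into the Gaussian weight; this costs an arbitrarily small fraction of the exponent, which is exactly what the parameter $\delta$ provides. Apart from the entropy--volume inequality, no further input (in particular no PDE) is required, so I expect no serious obstacle beyond bookkeeping the dyadic sum.
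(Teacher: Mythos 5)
Your proof is correct and follows essentially the same route as the paper: both reduce the claim to the Euclidean volume growth bound $\cH^n(\Sigma\cap B_r(X_\circ))\lesssim_n\lambda[\Sigma]\,r^n$ forced by finite entropy, decompose the exterior region into annuli, and sum the resulting series, with the loss from $e^{-R^2/4}$ to $e^{-R^2/(4+\delta)}$ absorbing the polynomial prefactors. The only differences are cosmetic — you use dyadic annuli where the paper uses unit-width annuli $B_{R+j}\setminus B_{R+j-1}$, and you derive the volume bound directly from the definition of entropy rather than citing \cite[Lemma 2.9]{ColdingMinicozzi12_generic}.
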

	
	\begin{proof}
		First recall that by \cite[Lemma 2.9]{ColdingMinicozzi12_generic}, for any $r>0$, \[
		\area(\Sigma\cap B_r(\orig))\lesssim_n \lambda[\Sigma]\cdot r^n
		\]
		Therefore, for any $R>0$, 
		\begin{align*}
			\int_{\Sigma\backslash B_R}(1+|X|^N)e^{-\frac{|X|^2}{4}}\ dX
			& = \sum_{j=1}^\infty \int_{\Sigma\cap(B_{R+j}\backslash B_{R+j-1})}(1+|X|^N)e^{-\frac{|X|^2}{4}}\ dX  \\
			& \lesssim_n \lambda[\Sigma]\cdot 
			\sum_{j=1}^\infty \left(1+(R+j)^{N+n} \right)\cdot e^{-\frac{(R+j-1)^2}{4}} \ \ \lesssim_{n, N, \delta} \lambda[\Sigma]\cdot e^{-\frac{R^2}{4+\delta}} \,.
		\end{align*}
		where the last inequality is a consequence of the fast decay of the exponential factor.
	\end{proof}
	
	Throughout this paper, unless otherwise noted, a ``mean curvature flow (MCF)" is referred to as a unit regular Brakke flow with finite entropy, and a ``rescaled mean curvature flow (RMCF)" means the RMCF of some mean curvature flow based at some point. Also for a Radon measure $\Sigma$ in $\R^{n+1}$, we may abuse the notation to use $\int_\Sigma\cdot$ instead of $\int\cdot\ d\Sigma$ to denote the integration with respect to $\Sigma$, and use $\Sigma\cap B$ instead of $\Sigma\llcorner B$ to denote the restriction of $\Sigma$ onto $B$.

	\subsection{Geometry of generalized cylinders} \label{Subsec_CylinderGeom}
	
	We use $\cC_{n,k}\subset\R^{n+1}$ to denote the round generalized cylinder $S^{n-k}(\varrho)\times\R^k=:\bS^{n-k}\times\R^k$, where $\varrho=\varrho_{n,k}:=\sqrt{2(n-k)}$; and we use $(\theta,y)=(\theta_1,\cdots,\theta_{n-k+1},y_1,\cdots,y_k)$ as the cylindrical coordinates over $\cC_{n,k}$, where $\theta_i$ is the restriction of the coordinate function $x_i$ of $\R^{n-k+1}$ to $\mathbb S^{n-k}$. And we use $\Rot(\cC_{n,k}):= \{\mbfB(\cC_{n,k}): \mbfB\in \rmO(n+1)\}$ to denote all possible rotations of $\cC_{n,k}$.
	
	\subsubsection{The rotation group and its subgroups} \label{Ssubsec_RotGp}
	The subgroup of $\rmO(n+1)$ that preserves $\cC_{n,k}$ is 
	\begin{align}
		\rmG_{n,k}:= \{\mbfB\in \rmO(n+1): \mbfB(\cC_{n,k}) = \cC_{n,k}\} = \rmO(n-k+1)\times \rmO(k)\,.  \label{Equ_Prelim_Stabilizer G_(n,k)}
	\end{align}
	We let $\mfk g_{n,k}$ be its Lie algebra, and $\mfk g^\perp_{n,k}$ be the orthogonal complement of $\mfk g_{n,k}$ with respect to the metric $\langle \mbfA_1, \mbfA_2\rangle:= \tr(\mbfA_1\mbfA_2^\top)$ on $\mfk{o}(n+1)$. Note that, 
	\begin{itemize}
		\item $\mfk g_{n,k}$ consists of $(n+1)\times (n+1)$ skew symmetric matrices of form: \[
		\mbfA = \begin{bmatrix}
			\mbfA_1 & 0 \\ 0 & \mbfA_2
		\end{bmatrix} \,,
		\]
		under the decomposition $\R^{n+1}=\R^{n-k+1}\oplus \R^k$.
		Therefore $\mfk g_{n,k}^\perp$ consists of $(n+1)\times (n+1)$ skew symmetric matrices of the following form under this decomposition: \[
		\mbfA = \begin{bmatrix}
			0 & \bl \\ -\bl^\top & 0
		\end{bmatrix} \,,
		\]
		where $\bl$ is an arbitrary $(n-k+1)\times k$ matrix, which can also be viewed as a linear map from $\{0\}\times\R^k$ to $\R^{n-k+1}\times \{0\}$.
		\item By implicit function theorem, there exists $\kappa_n\in (0, 1)$ such that \[
		\mfk g_{n,k}^\perp \oplus \mfk g_{n,k} \to \rmO(n+1)\,, \quad (\mbfA, \mbfA')\mapsto e^{\mbfA}e^{\mbfA'}
		\]  
		is a diffeomorphism from the $\kappa_n$-ball centered at $\orig$ on the domain to an open neighborhood of the identity matrix in $\rmO(n+1)$. By possibly shrinking $\kappa_n$, we can also require \[
		\mfk g_{n,k}^\perp \to \mathrm{Gr}_k(\R^{n+1})\,, \quad \mbfA \mapsto e^\mbfA(\{0\}\times \R^k)
		\]
		to be a diffeomorphism from the $\kappa_n$-ball centered at $\orig$ on the domain to an open neighborhood of $\{0\}\times \R^k$ in the $k$-Grassmannian $\mathrm{Gr}_k(\R^{n+1})$.
		Note that when $\mbfA'\in \mfk{g}_{n,k}$, $e^{\mbfA'}\in \rmG_{n,k}$ preserves $\cC_{n,k}$. Hence, to study the effect of rotations on the $L^2$-distance function to round cylinders, it suffices to work only in $\mfk{g}_{n,k}^\perp$. 
	\end{itemize}

	\subsubsection{The $L$-operator and the weighted $L^2$-space}
	It is natural to define the weighted Gaussian $L^2$-norm: for a hypersurface $M\subset \R^{n+1}$ and $f: M\to \R$, define
	\[
	\|f\|_{L^2(M)}^2:=(4\pi)^{-n/2}\int_{M}|f(X)|^2e^{-\frac{|X|^2}{4}}d\cH^n(X).
	\]
	Throughout this paper, without a specific notion, all the $L^2$-norms refer to this weighted one. When $M=\cC_{n,k}$, we sometimes replace the subscript $L^2(M)$ simply by $L^2$ to save notation.
	
	We are interested in the following linear operator on $\cC_{n,k}$, known as the \textit{Jacobi operator} for shrinker:
	\begin{equation}\label{EqL}
		L_{n,k}\, u = \Delta_{\cC_{n,k}} u -\frac{1}{2}\langle y,\nabla_y u\rangle + u 
		= \Delta_{\bS^{n-k}} u + (\cL_{\R^k} + 1)u\,,
	\end{equation}
	which is self-adjoint with respect to the $L^2$ norm above, where \[
	\cL_{\R^k}:= \Delta_{\R^{k}} - \frac{1}{2}\sum_{i=1}^{k}y_i\pr_{y_i}  \]
	is the Ornstein-Uhlenbeck operator on $\R^k$.

	\subsubsection{The spectrum of the $L$-operator} \label{SSubsec_Spectrum of L}
	Throughout this paper, we denote by $\sigma(\cC_{n,k})$ the collection of $L^2$-eigenvalues of $-L_{n,k}$ (i.e. $\lambda\in \sigma(\cC_{n,k})$ if and only if there exists a nonzero function $f\in L^2(\cC_{n,k})$ satisfying $-L_{n,k}\,f=\lambda f$). For $\lambda\in \sigma(\cC_{n,k})$, we denote by $\rmW_\lambda=\rmW_\lambda(\cC_{n,k})\subset L^2(\cC_{n,k})$ the space of ($L^2$-)eigenfunctions of $-L_{n,k}$ with eigenvalue $\lambda$. And for $\sim \in \{\geq, >, =, \neq, <, \leq\}$ and $\gamma\in \RR$, we denote by 
	\begin{align}
		\Pi_{\sim \gamma}: L^2(\cC_{n,k})\to L^2(\cC_{n,k})  \label{Equ_Pre_Proj onto sum of eigenspace}      
	\end{align}
	the ($L^2$-)orthogonal projection onto the direct sum $\oplus_{\lambda\sim \gamma}\rmW_\lambda$. A particularly important eigenvalue is $\gamma_{n,k}:=\frac{1}{n-k}$, which we will use in multiple places.
	
	In \cite[Section 5.2]{SunWangZhou20_MinmaxShrinker}, it was proved that the eigenvalues of $-L_{n,k}$ are given by
	\begin{align}
		\sigma(\cC_{n,k}) = \{\mu_i+j/2-1\}_{i,j=0}^\infty  \,, \label{Equ_Pre_sigma(C_n,k)}
	\end{align}
	with corresponding eigenspaces given by
	\begin{align}
		\rmW_\lambda(\cC_{n,k}) = \Span\big\{\phi_i(\theta) h_j(y): \mu_i+j/2-1 = \lambda \big\} \,, \label{Equ_Pre_W_lambda(C_n,k)}
	\end{align}
	where $\mu_i = \frac{i(i-1+n-k)}{2(n-k)}$ and $\phi_i$ are eigenvalues and eigenfunctions of $-\Delta_{\SSp^{n-k}}$ ($\phi_i$ are known to be the restriction of homogeneous harmonic polynomials on $\R^{n-k+1}$), and $h_j$ are Hermitian polynomials of degree $j$ on $\R^{k}$, which are eigenfunctions of $-\cL_{\R^k}$ with eigenvalue $j/2$. Note that, 
	\begin{itemize}
		\item degree $0$ Hermitian polynomials are constants; degree $1$ Hermitian polynomials are linear functions on $\R^k$;
		\item any degree $2$ Hermitian polynomials can be written as \[
		h_2(y) = q(y) - 2\tr q\,,
		\]
		where $q$ is a homogeneous quadratic polynomial on $\R^k$ and (letting $\{e_j\}_{j=1}^k$ be an orthonormal basis of $\R^k$) $\tr q:= \sum_{j=1}^k q(e_j)\in \R$.
	\end{itemize}
	
	For later reference, for $\mbfB\in \rmO(n+1)$ and $\cC=\mbfB(\cC_{n,k})\in \Rot(\cC_{n,k})$ we also let \[
	\rmW_\gamma(\cC) := \{\psi\circ \mbfB^{-1}: \psi\in \rmW_\gamma(\cC_{n,k}) \}\,.
	\]
	Note that this is well-defined since $\rmW_\gamma(\cC_{n,k})$ is $\rmG_{n,k}$-invariant.
	
	We remark that when $(n-k)\geq 2$, the smallest positive eigenvalue in $\sigma(\cC_{n,k})$ is $1/(n-k)\leq 1/2$. Therefore, the spectrum behaviors of $\cC_{n,n-1}$ and $\cC_{n,k}$ with $k\leq n-2$ are slightly different.
	In the following, we list the first few eigenspaces of $-L_{n,k}$ and their geometric meanings:
	\begin{enumerate} [label={\normalfont(\roman*)}]
		\item\label{Item_Pre_W_-1} $\rmW_{-1}=\{c:c\in \R\}$ consists of constant function;
		\item\label{Item_Pre_W_-1/2} $\rmW_{-1/2}$ consists of linear combinations of linear functions $h_1(y)$ on $\R^k$ and \textit{translation-like functions} $\psi_\bx(\theta)$ on $\SSp^{n-k}$, where for $\bx\in \R^{n-k+1}$, we define (set $\hat\theta:= \theta/|\theta|$)
		\begin{align}
			\psi_\bx(\theta):= \bx\cdot\hat\theta \,; \label{Equ_Pre_Transl-like eigen} 
		\end{align}
		For later reference, we denote by $\rmW_{\Tr}$ the space of translation-like functions on $\cC_{n,k}$.
		\item\label{Item_Pre_W_0} $\rmW_0$ consists of linear combinations of quadratic Hermitian polynomials $h_2(y)$ on $\R^k$ and \textit{rotation-like functions} $\psi_\mbfA(\theta, y)$ on $\cC_{n,k}$, where for $\mbfA = \begin{bmatrix}
			0 & \bl \\ -\bl^\top & 0
		\end{bmatrix}\in \mfk g_{n,k}^\perp$ we define,
		\begin{align}
			\psi_\mbfA(\theta, y):= \langle \mbfA(0, y), (\hat\theta, 0)\rangle = (\bl\, y)\cdot \hat\theta \,. \label{Equ_Pre_Rot-like eigen}
		\end{align}
		Again for later reference, we denote by $\rmW_{\Rot}$ the space of rotation-like functions on $\cC_{n,k}$.
		\item\label{Item_Pre_W_1/2} When $n-k\neq 2$, $\rmW_{1/2}$ consists of linear combinations of cubic Hermitian polynomials $h_3(y)$ on $\R^k$ and linear combinations of products of translation-like functions and quadratic Hermitian polynomials, which takes the form \[
		\sum_{i=1}^{n-k+1}\psi_{\mathbf{e}_i}(\theta)(q_i(y)-2\tr q_i) = \left(\sum_{i=1}^{n-k+1} \big(q_i(y)-2\tr q_i\big)\mathbf e_i \right)\cdot \hat\theta = (\bq(y) - 2\tr\bq)\cdot\hat\theta \,;
		\]
		where $\{\mathbf{e}_i\}_{i=1}^{n-k+1}$ is the coordinate orthonormal basis of $\R^{n-k+1}$, $q_i$ are homogeneous quadratic polynomials on $\R^k$ (possibly $0$), and \[
		\bq := \sum_{i=1}^{n-k+1} q_i\, \mathbf e_i :\{0\}\times \R^k \to \R^{n-k+1}\times \{0\}  \]
		is an $\R^{n-k+1}$-valued homogeneous quadratic polynomial on $\R^k$. Such functions play an essential role in the $C^2$-regularity of the $k$-cylindrical singular set. When $n-k=2$, $\rmW_{1/2}$ is the span of these functions discussed above together with the following \textit{low spherical modes}:
		\item\label{Item_Pre_W_S} Let $\rmW_{\SSp}\subset \rmW_{1/(n-k)}$ be the space of 2nd eigenfunctions of $-\Delta_{\SSp^{n-k}}$ (i.e. those with eigenvalue $1/(n-k)+1$), extended $y$-invariantly as functions on $\cC_{n,k}$. By the classification of harmonic polynomial, $\rmW_\SSp$ consists of functions of form \[
		\psi(\theta, y) = \sum_{i_1, i_2=1}^{n-k+1} a_{i_1 i_2}\theta_{i_1}\theta_{i_2}\,,
		\]
		where $\sum a_{ii} = 0$. These functions describe the slowest asymptotic in the spherical factor. In fact, each mode corresponds to some rescaled mean curvature flow converging to the sphere with the corresponding leading order asymptotics, see \cite{Sesum08_Rate_of_Convergence, Strehlke20_Asym_Max}, as well as Appendix \ref{App:ExistenceSphericalArrival} in this paper. These modes do not affect the $C^2$ regularity of the singularity set, but we need to modulo these modes because they may have lower eigenvalues.
	\end{enumerate}

	In summary, we have the following list of the first several eigenfunctions of $-L_{n,k}$.
	\begin{table}[H]
		\begin{tabular}{|l|l|}
			\hline
			eigenvalues of $-L_{n,k}$ & corresponding eigenfunctions (not necessarily normalized) \\ \hline
			$-1$ & $1$ \\ \hline
			$-1/2$ & $\theta_i,y_j$
			\\ \hline
			$0$ & $\theta_iy_j,\ y_j^2-2,\ y_{j_1}y_{j_2}$ 
			\\
			\hline
			$1/2$
			&
			$\theta_i(y_j^2-2)$,
			\ $\theta_i(y_{j_1}y_{j_2})$,
			\ $y_j^3-6y_j$, $\dots$
			\ 
			\\ 
			\hline
			$1/(n-k)$ & 
			$\theta_{i_1}^2-\theta_{i_2}^2$, $\dots$ 
			\\ \hline
		\end{tabular}
		\caption{Eigenvalues and eigenfunctions of $L_{n,k}$.}
		\label{TableEigen}
	\end{table}
	
	\subsubsection{Two facts on eigenfunctions}
	The following lemma will be used in Section \ref{Sec_C^(2,al) Reg} in getting bounds on Whitney data.
	\begin{Lem} \label{Lem_Facts on Hermitian Polyn} 
		Let $h$ be a Hermitian polynomial on $\R^k$ of degree $l\geq 2$. Then for every $a\in \R$ and every $\by\in \R^k$ with $|\by|\geq 1$, we have \[
		\|a + h(\cdot+\by) - h \|_{L^2} \geq c(k, l)|\by|^{1-l}|a|\,.     
		\]
	\end{Lem}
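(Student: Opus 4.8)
The plan is to decompose $h(\cdot+\by)-h$ into Hermite modes and to show that its degree-$1$ mode already forces the claimed bound, once one compensates for the fact that $a$ may be almost cancelled by the constant mode. Since $h$ has degree $l$, Taylor's formula is exact, so
\[
h(\cdot+\by)-h=\sum_{m=1}^{l}\frac{1}{m!}\,(\by\cdot\nabla)^m h,\qquad \by\cdot\nabla:=\sum_{i=1}^{k}\by_i\,\pr_{y_i}\,.
\]
The first thing I would observe is that every summand is a \emph{pure} Hermite polynomial: from the commutator $[\pr_{y_i},\cL_{\R^k}]=-\tfrac12\pr_{y_i}$ one reads off that $\pr_{y_i}$ maps the degree-$j$ Hermite space into the degree-$(j-1)$ one, hence $(\by\cdot\nabla)^m h$ lies in the degree-$(l-m)$ Hermite space. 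So the displayed identity is exactly the $L^2$-orthogonal decomposition of $h(\cdot+\by)-h$ by Hermite degree; its constant part is $C:=\tfrac1{l!}(\by\cdot\nabla)^l h$, and its degree-$1$ part is $\tfrac1{(l-1)!}(\by\cdot\nabla)^{l-1}h=\tfrac1{(l-1)!}\,v\cdot y$ for a unique $v=v(h,\by)\in\R^k$, degree-$1$ Hermite polynomials on $\R^k$ being exactly the linear functions. Since all functions involved are $\theta$-independent, $\|\cdot\|_{L^2(\cC_{n,k})}$ is a fixed positive multiple of the Gaussian $L^2$-norm on $\R^k$, so it suffices to work with the latter.

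Next I would record the two relations between $C$ and $v$. Applying $\by\cdot\nabla$ once more gives $l!\,C=(\by\cdot\nabla)^l h=(\by\cdot\nabla)(v\cdot y)=\by\cdot v$, hence $l!\,|C|\le|\by|\,|v|$; and a one-line Gaussian moment computation gives $\|v\cdot y\|_{L^2}^2=c_k|v|^2$ with $c_k>0$ (its value, $2$ for the paper's normalization, is immaterial). Projecting $a+h(\cdot+\by)-h$ onto the constants and onto $v\cdot y$ and using orthogonality,
\[
\bigl\|a+h(\cdot+\by)-h\bigr\|_{L^2}\ \ge\ \max\Bigl\{\,|a+C|\,,\ \tfrac{\sqrt{c_k}}{(l-1)!}\,|v|\,\Bigr\}\ \ge\ \max\Bigl\{\,|a+C|\,,\ \tfrac{\sqrt{c_k}\,l}{|\by|}\,|C|\,\Bigr\}\,.
\]
I would then finish with the elementary inequality $\max\{|a+C|,\alpha|C|\}\ge\frac{\alpha}{1+\alpha}|a|$, valid for all real $C$ and all $\alpha>0$ (split into $|C|\le\frac{|a|}{1+\alpha}$ and $|C|>\frac{|a|}{1+\alpha}$), applied with $\alpha=\sqrt{c_k}\,l/|\by|$: using $|\by|\ge1$ and $l\ge2$ one gets $\frac{\alpha}{1+\alpha}=\frac{\sqrt{c_k}\,l}{|\by|+\sqrt{c_k}\,l}\ge\frac{\sqrt{c_k}\,l}{(1+\sqrt{c_k}\,l)\,|\by|}\ge\frac{\sqrt{c_k}\,l}{1+\sqrt{c_k}\,l}\,|\by|^{1-l}$, which is the assertion with $c(k,l)=\sqrt{c_k}\,l/(1+\sqrt{c_k}\,l)$.

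The one point that genuinely needs care is conceptual rather than computational: bounding by the constant Hermite mode alone is useless, because it can equal $-a$; the mechanism that saves the estimate is that such a cancellation forces $C$ — and therefore $|v|$ and the degree-$1$ mode — to be comparably large, and the degree-$1$ mode is precisely the one carrying the weight $|\by|^{-1}$, hence a fortiori $|\by|^{1-l}$. Everything else I expect to be routine, the only things to be careful about being the degree-lowering property of $\pr_{y_i}$ on Hermite polynomials and the reduction to the Gaussian $L^2$-norm on $\R^k$.
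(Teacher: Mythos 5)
Your proof is correct, and it takes a genuinely different route from the paper's. The paper fixes coordinates so that $\by=(L,0,\dots,0)$, computes the constant mode $I=\int(h(\cdot+\by)-h)\,d\mu$ by iterating an integration-by-parts identity coming from the eigenfunction equation (obtaining $I=(\int 1\,d\mu)(-1)^lL^l c_{l,0,\dots,0}$), splits into cases according to whether $|I|$ nearly cancels $|a|$, and in the bad case projects onto the \emph{degree-$(l-1)$} Hermite mode, whose leading coefficient $lLc_{l,0,\dots,0}$ yields the factor $|\by|^{1-l}$. You instead identify the exact Taylor expansion $h(\cdot+\by)-h=\sum_{m=1}^{l}\tfrac{1}{m!}(\by\cdot\nabla)^m h$ with the Hermite-mode decomposition (via the lowering property $[\pr_{y_i},\cL_{\R^k}]=-\tfrac12\pr_{y_i}$, which is correct for this normalization), and exploit the \emph{degree-$1$} mode: the identity $l!\,C=\by\cdot v$ ties the constant mode to the linear mode with the loss of only one factor of $|\by|$, and your $\max$-inequality replaces the paper's case split. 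The two mechanisms are the same in spirit — a cancellation of $a$ by the constant mode forces a nonconstant mode to be large — but your choice of mode is coordinate-free, avoids the leading-coefficient bookkeeping, and in fact proves the stronger bound $\gtrsim_{k,l}|\by|^{-1}|a|$, of which the stated $|\by|^{1-l}|a|$ is a weakening for $|\by|\ge 1$ and $l\ge 2$ (one can check on $h(y)=y^3-6y$ in $k=1$ that $|\by|^{-1}$ is the true rate there, so your estimate is genuinely sharper, though the paper only needs the weaker form). The only cosmetic point is that the constants also absorb the normalization of $\|\cdot\|_{L^2(\cC_{n,k})}$ relative to the Gaussian norm on $\R^k$, which you note and which the paper's own proof handles equally loosely.
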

	\begin{proof}
		Because Hermitian polynomials after a rotation are still Hermitian polynomials, without loss of generality, we may assume $\by=(L,0,0,\cdots,0)$. Recall that a Hermitian polynomial $h$ with degree $l$ satisfies the equation 
		\[
		\Delta h-\frac{1}{2}y\cdot\nabla h+\frac{l}{2}h=0.
		\]
		This implies that 
		\begin{align*}
			I:=\int (h(y+\by)-h(y))d\mu
			=&
			\int \big(\frac{2}{l}\cL(h(y+\by)-h(y))-\frac{1}{l}\by\cdot \nabla h(y+\by)\big)d\mu
			\\
			=&
			-\int \frac{1}{l}\by\cdot \nabla h(y+\by) d\mu
			=
			-\int \frac{1}{l}L \pr_1 h(y+\by) d\mu
			.
		\end{align*}
		Note that the derivative of an Hermitian polynomial is still an Hermitian polynomial with eigenvalue shifted by $1/2$. Iterate the above argument $l$-times, we get
		\[
		I= \left(\int 1 d\mu\right) (-1)^{l} L^l c_{l,0,\cdots,0} \, ,
		\]
		where $c_{l,0,\cdots,0}$ is the coefficient of the monomial $y_1^{l}$ in the polynomial $h$.
		
		By projection to the constant, we have
		\[
		\|a+h(\cdot+\by)-h(\cdot)\|_{L^2}\geq \left|a\left(\int 1d\mu\right)+I\right|. 
		\]
		If $\left(\int 1 d\mu\right)^{-1}|I|<a/2$, then $\|a+h(\cdot+\by)-h(\cdot)\|_{L^2}\geq |a|/2\geq |\by|^{1-l}|a|/2$.
		
		If $\left(\int 1 d\mu\right)^{-1}|I|\geq a/2$, then at least one of $|L^{l}c_{l,0,\cdots,0}|\geq C|a|$. Let $p$ be the unique Hermitian polynomial with degree $l-1$ and leading order monomial $y_1^{l-1}$. Then
		\begin{align*}
			\int(a+h(y+\by)-h(y))p(y) d\mu
			=&
			\int(h(y+\by)-h(y))p(y)d\mu.
		\end{align*}
		Note that $h(y+\by)-h(y)$ has degree $l-1$, and it has the monomial term $y_1^{l-1}$ with coefficient $l L c_{l,0,\cdots,0}$, so the projection of $h(y+\by)-h(y)$ to $p(y)$ has length greater than or equal to $C|l L c_{l,0,\cdots,0}|$. This shows that 
		\[
		\|a+h(\cdot+\by)-h(\cdot)\|_{L^2}\geq C|l L c_{l,0,\cdots,0}|\geq CL^{1-l}|a|=C|\by|^{1-l}|a|.
		\]
		In the above discussion, the constant $C$ varies line by line but is uniformly bounded by $l$ and $k$.
	\end{proof}
	
	The following lemma estimates the difference of the $L^2$ norm after a cut-off.
	\begin{Lem} \label{Lem_Effect of Cutoff} 
		Let $\zeta\in L^\infty(\cC_{n,k})$ be such that $0\leq \zeta\leq 1$ and $\zeta|_{Q_R} = 1$; $\Pi: L^2(\cC_{n,k})\to L^2(\cC_{n,k})$ be the orthogonal projection map onto a closed linear subspace. Then for every $v\in L^2(\cC_{n,k})$, we have \[
		\big|\|\Pi(v\zeta)\|_{L^2} - \|\Pi(v)\|_{L^2}\big| \leq \|v\|_{L^2(\cC_{n,k}\setminus Q_R)} 
		\]
		In particular, if $\Lambda>0$, $v\in \oplus_{\gamma\leq \Lambda} \rmW_\gamma$, then \[
		\big|\|\Pi(v\zeta)\|_{L^2} - \|\Pi(v)\|_{L^2}\big| \lesssim_{n, \Lambda} e^{-R^2/10}\|v\|_{L^2}\,.
		\]
	\end{Lem}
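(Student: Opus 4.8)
\textbf{Proof proposal for Lemma~\ref{Lem_Effect of Cutoff}.}
The plan is to prove the first inequality by a direct $L^2$-triangle-inequality estimate, and then derive the second as a corollary using the exponential decay of the Gaussian weight together with a crude bound on the $L^2$ mass of eigenfunctions outside a large cube, which is exactly the content of Proposition~\ref{prop:entropy control outside ball of radius R}.

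\emph{Step 1: the general cutoff estimate.} Since $\Pi$ is an orthogonal projection, it is $1$-Lipschitz on $L^2(\cC_{n,k})$, so by the reverse triangle inequality
\[
\big| \|\Pi(v\zeta)\|_{L^2} - \|\Pi(v)\|_{L^2}\big| \leq \|\Pi(v\zeta) - \Pi(v)\|_{L^2} = \|\Pi\big(v(\zeta-1)\big)\|_{L^2} \leq \|v(\zeta-1)\|_{L^2}.
\]
Now $0\leq \zeta\leq 1$ implies $|\zeta - 1|\leq 1$ pointwise, and $\zeta|_{Q_R}=1$ implies $\zeta - 1$ is supported in $\cC_{n,k}\setminus Q_R$. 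Hence $\|v(\zeta-1)\|_{L^2}^2 = \int_{\cC_{n,k}\setminus Q_R}|v|^2|\zeta-1|^2 \leq \int_{\cC_{n,k}\setminus Q_R}|v|^2 = \|v\|_{L^2(\cC_{n,k}\setminus Q_R)}^2$, giving the first claim. This step is entirely routine.

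\emph{Step 2: the case $v\in \oplus_{\gamma\leq \Lambda}\rmW_\gamma$.} By Step 1 it suffices to show $\|v\|_{L^2(\cC_{n,k}\setminus Q_R)}\lesssim_{n,\Lambda} e^{-R^2/10}\|v\|_{L^2}$. Write $v = \sum_{\gamma\leq\Lambda} v_\gamma$ with $v_\gamma\in\rmW_\gamma$; by \eqref{Equ_Pre_sigma(C_n,k)}–\eqref{Equ_Pre_W_lambda(C_n,k)} each $v_\gamma$ is a finite linear combination of products $\phi_i(\theta)h_j(y)$ of a spherical harmonic and a Hermite polynomial whose degrees are bounded in terms of $\Lambda$ (since $\mu_i + j/2 - 1 = \gamma\leq\Lambda$ forces $i,j$ below a constant $N(n,\Lambda)$). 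Thus $v$ is a polynomial on $\cC_{n,k}$ of degree $\leq N(n,\Lambda)$, and so $|v(X)|^2\lesssim_{n,\Lambda}(1+|X|^{2N})$. Since $\lambda[\cC_{n,k}]<\infty$, Proposition~\ref{prop:entropy control outside ball of radius R} applied on $\cC_{n,k}$ (with $N$ replaced by $2N$ and, say, $\delta=1$) gives, using that $\cC_{n,k}\setminus Q_R\subset \cC_{n,k}\setminus B_R$,
\[
\|v\|_{L^2(\cC_{n,k}\setminus Q_R)}^2 \lesssim_{n,\Lambda}\int_{\cC_{n,k}\setminus B_R}(1+|X|^{2N})e^{-|X|^2/4}\,d\cH^n(X)\lesssim_{n,\Lambda}e^{-R^2/5}.
\]
On the other hand, for each fixed eigenvalue the normalization constants are fixed, so there is $c(n,\Lambda)>0$ with $\|v\|_{L^2}\geq c(n,\Lambda)\cdot(\text{sup of the coefficients of }v)$; tracking the coefficients through the polynomial bound one gets $\|v\|_{L^2(\cC_{n,k}\setminus Q_R)}\lesssim_{n,\Lambda}e^{-R^2/10}\|v\|_{L^2}$ after absorbing the mild polynomial-in-$R$ losses into the exponential (replacing $e^{-R^2/5}$ by $e^{-R^2/10}$ for $R$ large, and adjusting the implicit constant for $R$ bounded). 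Combining with Step 1 finishes the proof.

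\emph{Main obstacle.} The only slightly delicate point is making the dependence on $v$ genuinely scale-invariant in Step 2, i.e. passing from ``the coefficients of $v$ are controlled'' to ``$\|v\|_{L^2(\cC_{n,k}\setminus Q_R)}\lesssim e^{-R^2/10}\|v\|_{L^2}$'' with a constant depending only on $n$ and $\Lambda$. This is handled by noting that the finite-dimensional space $\bigoplus_{\gamma\leq\Lambda}\rmW_\gamma$ carries two norms — the $L^2$ norm and the ``weighted $L^2$ norm on $\cC_{n,k}\setminus Q_R$ divided by $e^{-R^2/10}$'' is bounded above by a third norm (sup of coefficients) which is equivalent to the $L^2$ norm by finite-dimensionality — so all estimates are uniform once $N(n,\Lambda)$ is fixed. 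No genuinely new idea is needed beyond Proposition~\ref{prop:entropy control outside ball of radius R} and elementary linear algebra.
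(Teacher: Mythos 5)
Your proposal is correct and follows essentially the same route as the paper: the reverse triangle inequality plus the $1$-Lipschitz property of $\Pi$ for the first estimate, and then the pointwise polynomial bound on elements of $\oplus_{\gamma\leq\Lambda}\rmW_\gamma$ combined with Proposition~\ref{prop:entropy control outside ball of radius R} for the second. Your Step 2 merely spells out (via finite-dimensionality of the eigenspace sum) the justification of the pointwise bound $|v|\lesssim_{n,\Lambda}\|v\|_{L^2}(1+|y|^{2\Lambda+2})$ that the paper states directly, and the passage from $e^{-R^2/5}$ to $e^{-R^2/10}$ is simply taking a square root, not an absorption of polynomial losses.
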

	
	\begin{proof}
		By the triangle inequality, $$\|\Pi(v\zeta)\|_{L^2} - \|\Pi(v)\|_{L^2}\leq \|\Pi(v(1-\zeta))\|_{L^2}\leq \|v(1-\zeta)\|_{L^2}\leq \|v\|_{L^2(\cC_{n,k}\setminus Q_R)}.$$ Similarly, $$ \|\Pi(v)\|_{L^2}-\|\Pi(v\zeta)\|_{L^2}\leq \|\Pi(v(1-\zeta))\|_{L^2}\leq \|v(1-\zeta)\|_{L^2}\leq \|v\|_{L^2(\cC_{n,k}\setminus Q_R)}.$$ Thus, we obtain the first inequality.
		
		By the spectrum decomposition of $L_{n,k}$, $v\in \oplus_{\gamma\leq \Lambda} \rmW_\gamma$ implies that, \[
		|v(\theta,y)|\lesssim_{n,\Lambda} \|v\|_{L^2}\cdot (1+|y|^{2\Lambda+2}).
		\]
		Therefore the second inequality follows from the first and \[
		\|v\|_{L^2(\cC_{n,k}\setminus Q_R)}^2
		\lesssim_{n,\Lambda} \|v\|_{L^2}^2\cdot \int_{\cC_{n,k}\backslash Q_R}(1+|y|^{2\Lambda+2}) e^{-\frac{|X|^2}{4}}dX 
		\lesssim_{n,\Lambda} \|v\|_{L^2}^2\cdot e^{-R^2/5},
		\]
		where the last inequality uses Proposition \ref{prop:entropy control outside ball of radius R} with $\delta=1$.
	\end{proof}
	
	\subsection{Use of constants}
	Throughout this paper, we shall use the letter $C$ to denote a constant that is allowed to vary from line to line (or even within the same line); we shall stress the functional dependence of any such constant on geometric quantities by including them in brackets, writing things like $C=C(n,\eps)$.
	We shall also use $\Psi(\eps|C_1, C_2, \dots, C_l)$ to denote a constant depending on $\eps, C_1, \dots, C_l$ and tending to $0$ when $C_1, \dots, C_l$ are fixed and $\eps\to 0$.
	To save notation, we also write $A\sim_{a, b, \dots} A'$ (resp. $A\lesssim_{a, b, \dots} A'$, $A\gtrsim_{a, b, \dots} A'$) if for some constant $C(a, b, \dots)>1$, \[
	C(a, b, \dots)^{-1}A'\leq A\leq C(a, b, \dots)A'\,; \qquad (\,\text{resp. }\  A\leq C(a, b, \dots) A'; \quad A\geq C(a, b, \dots)^{-1}A'.\,)
	\]

	\section{An $L^2$-nonconcentration for a general convex cylinder}  \label{Sec_L^2 Noncon}
	In \cite{SunWangXue1_Passing}, the authors proved a general monotonicity for the $L^2$-distance to a certain level set flow. As an application, an $L^2$-nonconcentration for RMCF relative to the round cylinder was obtained in \cite[Corollary 3.3]{SunWangXue1_Passing}. In this section, we generalize such a nonconcentration property relative to a general convex cylindrical flow. The consequence of it, which will be discussed in this section, is almost the same as those discussed in \cite[Section 3]{SunWangXue1_Passing}.
	
	Let $n\geq 2$, $1\leq k\leq n-1$ be integers. We shall fix the following data $(\Omega, \bu, \beta, \rho)$ such that, 
	\begin{enumerate}[label=$\mathbf{(T{{\arabic*}})}$]
		\item \label{Item_ArrivalT1} $\Omega= \Omega_\circ\times \RR^k$, where $\Omega_\circ\subset \RR^{n-k+1}$ is a smooth strictly convex subset;
		\item \label{Item_ArrivalT2} $\bu(x, y):= \bu_\circ(x): \Omega\to \RR$, where $\bu_\circ: \Omega_\circ\to \RR_{\leq 0}$ is the arrival-time-function of the MCF starting from $\partial\Omega_\circ$; $\beta\in (0, 1]$ such that
		\begin{align*}
			\bu_\circ(\orig)=0\,, & & \bu_\circ|_{\partial\Omega_\circ}=-4\,, & & \beta I_{n-k+1} \leq -(n-k)\cdot\nabla^2 \bu_\circ \leq \beta^{-1}I_{n-k+1} \,.
		\end{align*}
		\item \label{Item_ArrivalT3} $\rho(X, t) := (-4\pi t)^{-n/2}\exp(|X|^2/4t)$. We denote for later reference $d\mu:= \rho(X, -1)dX$.
	\end{enumerate}
	
	\begin{Rem} \label{Rem_L^2 Noncon_Scal Invar of (T2)(T3)}
		If $(\Omega, \bu, \beta)$ satisfies \ref{Item_ArrivalT1} and \ref{Item_ArrivalT2} above, then one can directly check that for every $\lambda\geq 1$, $(\{\bu^{(\lambda)}\geq -4\}, \bu^{(\lambda)}, \beta)$ also satisfies \ref{Item_ArrivalT1} and \ref{Item_ArrivalT2}, where \[
		\bu^{(\lambda)}(X):= \lambda^2 \bu(\lambda^{-1}X) \,.
		\]  
		Also, the RMCF associated to $\bu$ is 
		\begin{align}
			\tau\mapsto \cC_\bu(\tau) := e^{\frac{\tau}{2}}\{\bu=-e^{-\tau}\}\,.  \label{Equ_L^2 Noncon_RMCF associated to u}
		\end{align} 
		Define the graphical function of $\cC_\bu(\tau)$ over $\cC_{n,k}$ as $\varphi_\bu(\cdot, \tau)$. Then it's easy to check that \[
		\varphi_{\bu^{(\lambda)}}(\cdot, \tau) = \varphi_\bu(\cdot, \tau+2\ln\lambda) \,.
		\]
		Also, assumption \ref{Item_ArrivalT2} implies that as $\tau\to +\infty$, $\cC_\bu(\tau)$ converges to $\cC_{n,k}$. Then by Brakke-White's $\eps$-regularity \cite{White05_MCFReg}, $\|\varphi_\bu(\cdot, \tau)\|_{C^4}\to 0$ as $\tau\to +\infty$.
	\end{Rem}
	
	\begin{Rem}
		One explicit example that has already been discussed in \cite[Section 3.2]{SunWangXue1_Passing}, is $\Omega_\circ = B^{n-k+1}(0,\sqrt{8(n-k)})$ and   
		\begin{align}
			\bu(x, y) = \mbfU_{n,k}(x, y) := -\frac{|x|^2}{2(n-k)} \,, \label{Equ_L^2 Noncon_Arriv time func for k-cylind} 
		\end{align}
		which describes the generalized shrinking cylinder: \[
		\mbfC_{n, k} := \coprod_{t\leq 0}\{(x, y)\in \RR^{n+1} :\mbfU_{n, k}(x, y)=t\}\times \{t\} = \coprod_{t\leq 0} (\sqrt{-t}\,\cC_{n, k})\times \{t\} \subset \RR^{n+1}\times \RR\,.
		\]
		In this case, one can choose $\beta=1$. Another family of examples are the ``low spherical flows", to be constructed and discussed in Section \ref{Subsec_Mod Low SphericalMode}. 
	\end{Rem}
	
	\subsection{Application of $L^2$-monotonicity formula}\label{SS:Application of $L^2$-monotonicity formula}
	
	Let us fix a non-decreasing odd function $\chi\in C^\infty(\RR)$ such that 
	\begin{itemize}
		\item $\chi''\leq 0$ on $[0, +\infty)$;
		\item $\chi(s) = s$ for $|s|\leq 1/2$, $\chi(s) = \sgn(s)$ for $|s|\geq 1$.
	\end{itemize}
	Suppose $(\Omega, \bu, \beta, \rho)$ satisfies \ref{Item_ArrivalT1}-\ref{Item_ArrivalT3}, and in addition
	\begin{align}
		\|\varphi_\bu(\cdot, \tau)\|_{C^0}\leq \frac14, \qquad \forall\, \tau\geq 0 \,. \label{Equ_L^2 Mono_Assump |varphi_u|<1/2}
	\end{align}
	We define
	\begin{align}
		\odist_\bu(X, \tau) := \chi\left(|x|-\varrho - \varphi_\bu(\varrho|x|^{-1}x) \right) \,, \label{Equ_L^2 Mono_odist to C_(n,k)}      
	\end{align}
	which is a cut-off and regularization of the radial signed distance function to $\cC_\bu$, and define
	\begin{align}
		\rmD_\bu(\tilde X, \tau) = \rmD_\bu(\tilde x,\tilde y, \tau) := f_\circ\left( \bu_\circ(e^{-\tau/2}x) + e^{-\tau}\right) \,. \label{Equ_L^2 Mono_rmD_bu def}      
	\end{align}
	where $f_\circ$ is specified in \cite[Section 3, \textbf{(S2)}]{SunWangXue1_Passing} with $T_\circ=1$.
	Then it's easy to check that for every $(\tilde X, \tau)\in \RR^{n+1}\times \RR_{\geq 0}$,
	\begin{align}
		e^{-2\tau} \overline{\dist}_\bu(\tilde X, \tau)^2 
		\lesssim_{n} \rmD_\bu(\tilde X,\tau)^2 
		\lesssim_{n} \overline{\dist}_\bu(\tilde X, \tau)^2 \,.  \label{Equ_L^2 Mono_D_bu approx dist_bu} 
	\end{align}
	This leads to the following non-concentration near infinity for rescaled mean curvature flow.

	\begin{Lem}\label{Lem_L^2 Noncon_for RMCF}
		Let $\Omega, \bu, \beta, \rho$ be satisfying \ref{Item_ArrivalT1}-\ref{Item_ArrivalT3} and \eqref{Equ_L^2 Mono_Assump |varphi_u|<1/2}. Then there exist $K=K(\beta, n)>2$ with the following property. Let $T_\pm\geq 0$, $[T_-, T_+]\ni\tau\to\cM(\tau)$ be a RMCF with finite entropy in $\RR^{n+1}$. Then for every $T_-<\tau \leq T_+$, 
		\begin{align*}
			\int_{\cM(\tau)} \overline{\dist}_\bu(X, \tau)^2 (1 + (\tau-T_-)|X|^2) \ d\mu \lesssim_{n, \beta}\ e^{K(\tau-T_-)}\int_{\cM(T_-)} \overline{\dist}_\bu(X, \tau)^2 \ d\mu \,.
		\end{align*}   
	\end{Lem}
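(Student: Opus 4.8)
The plan is to reduce the statement to the general $L^2$-distance monotonicity of \cite[Section 3]{SunWangXue1_Passing} --- which compares an arbitrary Brakke flow to a fixed smooth comparison flow --- now applied with the smooth convex cylindrical flow $\{\bu=t\}$ in place of the exact shrinking cylinder, and then re-run the derivation of the $L^2$-nonconcentration \cite[Corollary 3.3]{SunWangXue1_Passing} from it. First I would translate so that $\cM(\tau)=e^{\tau/2}\bM(-e^{-\tau})$ is the RMCF of $\bM$ based at $(\orig,0)$, and reduce to $T_-=0$: since $T_\pm\ge 0$, replacing $\bM$ by its parabolic rescaling with factor $e^{T_-/2}$ turns $\cM(\tau)$ into the RMCF $\sigma\mapsto\cM(\sigma+T_-)$ on $[0,T_+-T_-]$ and, by Remark \ref{Rem_L^2 Noncon_Scal Invar of (T2)(T3)}, simultaneously turns $\bu$ into $\bu^{(e^{T_-/2})}$, which still obeys \ref{Item_ArrivalT1}--\ref{Item_ArrivalT3} with the \emph{same} $\beta$, still satisfies $\|\varphi_{\bu^{(e^{T_-/2})}}(\cdot,\sigma)\|_{C^0}=\|\varphi_\bu(\cdot,\sigma+T_-)\|_{C^0}\le\tfrac14$, and has $\overline{\dist}_{\bu^{(e^{T_-/2})}}(\cdot,\sigma)=\overline{\dist}_\bu(\cdot,\sigma+T_-)$; finite entropy is scale invariant. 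Thus it suffices to prove, for $\sigma\in(0,T_+-T_-]$, that $\int_{\cM(\sigma)}\overline{\dist}_\bu^2(1+\sigma|X|^2)\,d\mu\lesssim_{n,\beta}e^{K\sigma}\int_{\cM(0)}\overline{\dist}_\bu^2\,d\mu$. Then I would pass to the unrescaled flow, where $\int_{\cM(\sigma)}\phi\,d\mu=\int_{\bM(-e^{-\sigma})}\phi(e^{\sigma/2}\cdot)\,\rho(\cdot,-e^{-\sigma})\,d\cH^n$ and, by \eqref{Equ_L^2 Mono_rmD_bu def}, $\rmD_\bu(X,\sigma)=f_\circ(\bu(e^{-\sigma/2}X)-t)$ with $t=-e^{-\sigma}$ is precisely the truncated signed ``distance'' to the smooth mean curvature flow $V_t:=\{\bu=t\}$, while $|X|^2=(-t)^{-1}|Y|^2$ for $Y=e^{-\sigma/2}X\in\bM(-e^{-\sigma})$.

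Hypotheses \ref{Item_ArrivalT1}--\ref{Item_ArrivalT2} say exactly that $\{V_t\}_{-4\le t<0}=\{\bu_\circ=t\}\times\RR^k$ is a smooth mean curvature flow shrinking onto $\{\orig\}\times\RR^k$ as $t\to0^-$ whose second fundamental form and all its derivatives are bounded on $[-4,0)$ in terms of $\beta$ alone --- the data to which the monotonicity formula of \cite[Section 3]{SunWangXue1_Passing} applies. Re-running that argument with $\{V_t\}$ in place of the cylinder gives, back in RMCF variables, a locally absolutely continuous $\Phi(\sigma):=\int_{\cM(\sigma)}\rmD_\bu^2\,d\mu$ with $\Phi'(\sigma)\le K_0\Phi(\sigma)-2\int_{\cM(\sigma)}|\nabla^{\cM}\rmD_\bu|^2\,d\mu+\cE(\sigma)$, $K_0=K_0(n,\beta)$, where $\cE(\sigma)$ is the error from the regions where $\chi$ or $f_\circ$ fails to be affine. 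Those regions lie where $|x|$ differs from $\varrho$ by at least $c(n)$, so by $\|\varphi_\bu\|_{C^0}\le\tfrac14$ and \eqref{Equ_L^2 Mono_D_bu approx dist_bu} one has $\rmD_\bu\sim_n1$ there, whence $\cE(\sigma)\lesssim_{n,\beta}\Phi(\sigma)$ and the error may be absorbed into $K_0\Phi(\sigma)$. To build in the $|X|^2$-weight I would carry a time-dependent weight through the same computation, considering $\Theta(\sigma):=\int_{\cM(\sigma)}\rmD_\bu^2(1+c_0\sigma|X|^2)\,d\mu$ for a small $c_0=c_0(n)$: the extra contributions are $c_0\int_{\cM(\sigma)}\rmD_\bu^2|X|^2\,d\mu$ from $\partial_\sigma$ of the weight, plus lower-order drift terms from $\nabla|X|^2$ controlled by the Gaussian decay of $d\mu$ on $\cM(\sigma)$ (Proposition \ref{prop:entropy control outside ball of radius R} and $\lambda[\cM]<\infty$); the first is absorbed into the negative gradient term and $K_0\Theta(\sigma)$ by a Gaussian Poincar\'e inequality along $\cM(\sigma)$ once $c_0$ is small. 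This gives $\Theta'(\sigma)\le K\Theta(\sigma)$ with $K=K(n,\beta)>2$, hence $\Theta(\sigma)\le e^{K\sigma}\Theta(0)$ by Gronwall. (Alternatively the $\sigma|X|^2$-factor can be extracted from the spacetime term the monotonicity formula itself provides, together with the growth bound for the $|X|^2$-weighted distance, which is the route taken for the cylinder in \cite{SunWangXue1_Passing}.)

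Finally I would return from $\rmD_\bu$ to $\overline{\dist}_\bu$ via \eqref{Equ_L^2 Mono_D_bu approx dist_bu}: at $\sigma=0$, $\rmD_\bu^2\lesssim_n\overline{\dist}_\bu^2$ gives $\Theta(0)\lesssim_n\int_{\cM(0)}\overline{\dist}_\bu^2\,d\mu$; at time $\sigma$, $\overline{\dist}_\bu^2\lesssim_n e^{2\sigma}\rmD_\bu^2$ gives $\int_{\cM(\sigma)}\overline{\dist}_\bu^2(1+\sigma|X|^2)\,d\mu\lesssim_n e^{2\sigma}\Theta(\sigma)$ --- the loss $e^{2\sigma}$ being harmless precisely because $\sigma=\tau-T_-$ in the reduced problem. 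Chaining these estimates and undoing the reductions yields the lemma with $K$ enlarged by $2$. The point where the generalization genuinely demands work --- and where I expect the main obstacle to lie --- is controlling $\cE(\sigma)$ and the constant $K_0$ \emph{uniformly in $\beta$} for an arbitrary comparison flow satisfying \ref{Item_ArrivalT1}--\ref{Item_ArrivalT2}: one must verify that the geometric inputs of the monotonicity formula (convexity of $\{\bu<t\}$, smooth convergence of the associated RMCF to $\cC_{n,k}$, two-sided bounds on the regularized distance and its derivatives) all follow from the two-sided Hessian bound in \ref{Item_ArrivalT2}, and that the truncation structure of $\chi$ and $f_\circ$ together with $\|\varphi_\bu\|_{C^0}\le\tfrac14$ confines $\cE$ to a region away from $\cC_{n,k}$; everything else runs parallel to \cite[Section 3]{SunWangXue1_Passing}.
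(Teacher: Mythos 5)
Your proposal follows the same skeleton as the paper's proof: reduce to $T_-=0$ by the parabolic rescaling of Remark \ref{Rem_L^2 Noncon_Scal Invar of (T2)(T3)}, invoke the general $L^2$-monotonicity of \cite[Section 3]{SunWangXue1_Passing} relative to the convex cylindrical flow $\cC_\bu$, and convert between $\rmD_\bu$ and $\odist_\bu$ via \eqref{Equ_L^2 Mono_D_bu approx dist_bu}, accepting the harmless $e^{2(\tau-T_-)}$ loss. Two remarks on where you diverge. First, you frame the step as ``re-running'' the monotonicity argument for a new comparison flow and flag uniformity in $\beta$ as the main obstacle; in fact the inequality (3.4) of \cite{SunWangXue1_Passing} is already stated for general data satisfying the structural hypotheses \textbf{(S1)}--\textbf{(S4)}, so the paper only needs to verify those hypotheses for $(\eta,\Omega,\bu,\beta',\rho)$ with $\beta'=\beta'(n,\beta)$ --- no re-derivation or error term $\cE(\sigma)$ appears. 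Second, and more substantively, your primary mechanism for producing the $(\tau-T_-)|X|^2$ weight --- differentiating $\Theta(\sigma)=\int\rmD_\bu^2(1+c_0\sigma|X|^2)\,d\mu$ and absorbing $c_0\int\rmD_\bu^2|X|^2\,d\mu$ via ``a Gaussian Poincar\'e inequality along $\cM(\sigma)$'' --- is not justified as stated: a time slice of an arbitrary integral Brakke flow carries no such inequality with constants independent of the slice (the normal component $|X^\perp|^2$ cannot be controlled by $\int(|\nabla f|^2+f^2)\,d\mu$ without mean-curvature terms), and $\rmD_\bu$ has no mean-zero normalization. If this were your only route it would be a genuine gap. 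The paper instead obtains the weight by substituting the specific spacetime test function $\eta(X,t)=\sqrt{1-|X|^2/t}\,\xi(t)$ into (3.4), yielding \eqref{Equ_L^2 Noncon_Mono under RMCF param w eta=|X|}, and then integrating with $\tilde\xi(s)=s/\tau$ while using the $\eta\equiv1$ case \eqref{Equ_L^2 Noncon_Mono under RMCF param w eta=1} to dominate the resulting spacetime error --- which is precisely the alternative you mention parenthetically, so your fallback route is the correct one and should be promoted to the main argument.
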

	\begin{proof}
		By Remark \ref{Rem_L^2 Noncon_Scal Invar of (T2)(T3)}, we can assume WLOG that $T_-=0$.  If we let $\eta\in C^2(\RR^{n+1}\times \RR)$ be a non-negative function such that $\{\eta>0\} = \RR^{n+1}\times I$ for some interval $I$, $T_\circ=1$ and $f_\circ\in C^\infty(\RR)$ be chosen as in \cite[\textbf{(S2)}]{SunWangXue1_Passing}, then it's easy to check that \textbf{(S1)}-\textbf{(S4)} in \cite[Section 3]{SunWangXue1_Passing} holds for $(\eta, \Omega, \bu, \beta', \rho)$ with $\beta'>0$ being some constant depending only on $n, \beta$. 
		
		Let $t\mapsto \bM(t):= \sqrt{-t}\cM(-\ln(-t))$ be the MCF associated to $\cM$. When taking $\eta\equiv 1$, \cite[(3.4)]{SunWangXue1_Passing} applied to $\bM$ becomes
		\begin{align}
			\frac{d}{dt} \left[ (-t)^{2K}\int_{\bM(t)} F^2\rho\ dX \right] + (-t)^{2K}\int_{\bM(t)} \frac{c(n, \beta)|X|^2}{t^2} F^2 \rho\ dX \leq 0 \label{Equ_L^2 Noncon_Model Eg with eta = 1}
		\end{align}
		
		When taking $\eta(X, t) = \sqrt{1-|X|^2/t\, } \xi(t)$, where $\xi\in C_c^2(-1, 0)$, \cite[(3.4)]{SunWangXue1_Passing} implies,
		\begin{align}
			\begin{split}
				\frac{d}{dt} & \left[  (-t)^{2K}\int_{\bM(t)} F^2\cdot(1-|X|^2/t)\,\xi(t)^2\rho\ dX \right] \\ 
				\leq \; & (-t)^{2K}\int_{\bM(t)} \left[\left(-\frac{|X|^2}{t^2} - \frac{2n+8}{t} \right)\xi^2 + \left(1-\frac{|X|^2}{t}\right)|\partial_t(\xi^2)| \right] F^2\rho\ dX
			\end{split} \label{Equ_L^2 Noncon_Model Eg with eta = |X|}
		\end{align}
		
		Now we rewrite everything under RMCF $\tau\mapsto \cM(\tau)$ parametrized by $(\tilde X, \tau)$ using the change of variable $(X, t) = (e^{-\tau/2}\tilde X, -e^{-\tau})$.  Recall $\rmD_\bu$ is defined in \eqref{Equ_L^2 Mono_rmD_bu def}. Then \eqref{Equ_L^2 Noncon_Model Eg with eta = 1} is equivalent to, 
		\begin{align}
			\begin{split}
				\frac{d}{d\tau} \left[e^{-2K\tau}\int_{\cM(\tau)} \rmD_\bu(X,\tau)^2 \ d\mu \right] 
				+ C(n,\beta)e^{-2K\tau}\int_{\cM(\tau)} \rmD_\bu(X,\tau)^2|X|^2 \ d\mu \leq 0 \,.
			\end{split} \label{Equ_L^2 Noncon_Mono under RMCF param w eta=1}
		\end{align}
		And let $\tilde{\xi}(\tau):= \xi(-e^{-\tau})$, then (\ref{Equ_L^2 Noncon_Model Eg with eta = |X|}) is equivalent to,
		\begin{align}
			\begin{split}
				\frac{d}{d\tau} & \left[ \, e^{-2K\tau}\int_{\cM(\tau)} \rmD_\bu(X,\tau)^2(1+|X|^2)\tilde{\xi}(\tau)^2 \ d\mu \right] \\
				\leq & \; e^{-2K\tau}\int_{\cM(\tau)} \rmD_\bu(X,\tau)^2\left[ (2n+8-|X|^2)\tilde{\xi}(\tau)^2 + (1+|X|^2)
				\left|(\tilde{\xi}(\tau)^2)'\right| \right] \ d\mu \,.
			\end{split} \label{Equ_L^2 Noncon_Mono under RMCF param w eta=|X|}
		\end{align} 
		
		Integrate \eqref{Equ_L^2 Noncon_Mono under RMCF param w eta=1} and \eqref{Equ_L^2 Noncon_Mono under RMCF param w eta=|X|} with $\tilde{\xi}(s)=s/\tau$ and use \eqref{Equ_L^2 Mono_D_bu approx dist_bu} proves the Lemma.
	\end{proof}

	\subsection{Low spherical flows} \label{Subsec_Mod Low SphericalMode}
	We shall construct a new family of arrival time functions satisfying \ref{Item_ArrivalT1}, \ref{Item_ArrivalT2} and \eqref{Equ_L^2 Mono_Assump |varphi_u|<1/2}.
	Recall that $\rmW_{\SSp}$ is specified in \ref{Item_Pre_W_S} of Section \ref{SSubsec_Spectrum of L}, and $\gamma_{n,k}=1/(n-k)$.
	\begin{Lem} \label{Lem_L^2Noncon_LowSphericalMode}
		Suppose $(n-k)\geq 2$. There exists $\kappa_{\ref{Lem_L^2Noncon_LowSphericalMode}}(n)\in (0, 1/4)$, a subset $\scU$ of the space of convex $C^2$-arrival time functions on some domain in $\R^{n+1}$ and a bijection map from $\kappa_{\ref{Lem_L^2Noncon_LowSphericalMode}}(n)$-ball in $\rmW_\SSp$ to $\scU$, such that, 
		\begin{enumerate} [label={\normalfont(\arabic*)}]
			\item\label{item_0_Lem_L^2Noncon_LowSphericalMode}  for every $\mbfB\in \rmG_{n,k}$ (the stabilizer of $\cC_{n,k}$, see its definition in \eqref{Equ_Prelim_Stabilizer G_(n,k)}) and every $\psi\in \rmW_\SSp$ with image $\bu\in \scU$, we have $\bu\circ \mbfB \in \scU$ is the image of $\psi\circ\mbfB$; %and if $\bu$ is the image of $\psi\in \rmW_\SSp$ which is $\mbfB$-invariant, then $\bu\circ\mbfB = \bu$; 
			\item\label{item_1_Lem_L^2Noncon_LowSphericalMode} there exists $\beta=\beta(n)>0$ such that every $\bu\in \scU$ satisfies the conditions \ref{Item_ArrivalT1} \& \ref{Item_ArrivalT2};
			\item\label{item_2_Lem_L^2Noncon_LowSphericalMode} the image of $0\in\rmW_{\SSp}$ is just the arrival time $\mbfU_{n,k}$ of the round cylinder; for any images $\bu_1,\bu_2\in \scU$ of $\psi_1,\psi_2\in \rmW_\SSp$ respectively and any $\tau\geq 0$, \[
			\left\|(\varphi_{\bu_1}(\cdot, \tau)-e^{-\gamma_{n,k}\tau}\psi_1)-(\varphi_{\bu_2}(\cdot, \tau)-e^{-\gamma_{n,k}\tau}\psi_2)\right\|_{C^4(\cC_{n,k})}\leq \frac12e^{-\frac{3}{2}\gamma_{n,k}\tau}\|\psi_1-\psi_2\|_{L^2(\cC_{n,k})} \,.
			\] 
			In particular, $\|\varphi_\bu(\cdot, \tau)\|_{C^4}\leq \frac14 e^{-\gamma_{n,k} \tau}$ for every $\bu\in \scU$. 
		\end{enumerate}
	\end{Lem}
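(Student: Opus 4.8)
The plan is to reduce the entire statement to a construction carried out in Appendix \ref{App:ExistenceSphericalArrival} on the spherical factor alone, and then to verify the three listed conclusions by soft arguments. Write $m:=n-k\ge 2$. Since every function in $\rmW_\SSp$ is $y$-invariant, since the $\rmO(k)$ factor of $\rmG_{n,k}$ acts trivially on $y$-invariant objects, and since for $\bu(x,y):=\bu_\circ(x)$ the conditions \ref{Item_ArrivalT1} and \ref{Item_ArrivalT2} concern only the function $\bu_\circ$ on a domain $\Omega_\circ\subset\R^{m+1}$ (the Hessian bound being the one on $I_{n-k+1}$), it suffices to produce, for each $\psi$ in a small ball of $\rmW_\SSp$, a convex arrival-time function $\bu^\psi_\circ$ on a domain $\Omega^\psi_\circ\subset\R^{m+1}$ such that $\bu^0_\circ(x)=-|x|^2/(2m)$ (so that $\mbfU_{n,k}(x,y)=\bu^0_\circ(x)$ and the slices $\cC_{\bu^0}(\tau)$ equal $\bS^{n-k}=S^{n-k}(\varrho)$), the assignment $\psi\mapsto\bu^\psi_\circ$ is $\rmO(m+1)$-equivariant, and the asymptotics in \ref{item_2_Lem_L^2Noncon_LowSphericalMode} hold; here $\rmW_\SSp$ is viewed, via restriction to $\bS^{n-k}$, as the $\ell=2$ eigenspace of the shrinker Jacobi operator $\Delta_{\bS^{n-k}}+1$ with eigenvalue $\gamma_{n,k}=1/m$, and $\varphi_{\bu}(\cdot,\tau)$ is the graphical function of $\cC_\bu(\tau)=e^{\tau/2}\{\bu^\psi_\circ=-e^{-\tau}\}$ over $\bS^{n-k}$ (equivalently over $\cC_{n,k}$, extended $y$-invariantly).

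I would then invoke Appendix \ref{App:ExistenceSphericalArrival}. There, for each $\psi$ in a ball $B_{\kappa_0(n)}\subset\rmW_\SSp$, one constructs a convex mean curvature flow in $\R^{m+1}$ starting at time $-4$ from a smooth strictly convex hypersurface $C^4$-close to $S^{m}(\sqrt{8m})$ and contracting to $\orig$ at time $0$; by Huisken's theorem its rescaling converges smoothly to $\bS^{n-k}$, and by the differentiability theory of the arrival time (Huisken, Colding--Minicozzi) the function $\bu^\psi_\circ$ is a convex $C^2$ arrival-time function on $\overline{\Omega^\psi_\circ}=\{-4\le\bu^\psi_\circ\le0\}$, smooth away from $\orig$, with $\bu^\psi_\circ(\orig)=0$ and $\bu^\psi_\circ|_{\partial\Omega^\psi_\circ}=-4$. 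Writing $\gamma:=\gamma_{n,k}$, the construction can be arranged so that
\[
\varphi_{\bu^\psi}(\cdot,\tau)=e^{-\gamma\tau}\psi+R^\psi(\tau),\qquad \|R^{\psi_1}(\tau)-R^{\psi_2}(\tau)\|_{C^4(\bS^{n-k})}\le C(n)(\|\psi_1\|+\|\psi_2\|)\,e^{-2\gamma\tau}\|\psi_1-\psi_2\|_{L^2}
\]
for all $\tau\ge0$, and $\bu^{\psi\circ\mbfB}_\circ=\bu^\psi_\circ\circ\mbfB$ for $\mbfB\in\rmO(m+1)$. The mechanism is the standard one for center-manifold-type constructions: on $\bS^{n-k}$ the only Jacobi eigenvalues $\le\gamma$ are $-1$ (constants) and $-1/2$ ($\ell=1$), which are the gauge modes of parabolic rescaling and of translation and are eliminated by normalizing the extinction point to $\orig$ and the extinction time to $0$; the orthogonal complement of $e^{-\gamma\tau}\rmW_\SSp$ then decays at least like $e^{-\frac{m+6}{2m}\tau}$, faster than $e^{-2\gamma\tau}$, so the decay rate $2\gamma$ of $R^\psi$ is produced by the quadratic nonlinearity of the rescaled MCF evaluated on $e^{-\gamma\tau}\psi$, and the same quadratic structure yields the factor $\|\psi_1\|+\|\psi_2\|$. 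The surjectivity of the leading-coefficient map onto $\rmW_\SSp$ and a continuous, $\rmO(m+1)$-equivariant choice of section are the substance of the appendix, to be proved by a fixed-point argument in a weighted parabolic norm, with parabolic Schauder estimates used to upgrade the decay to $C^4$.

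With this in hand I would conclude as follows. Set $\kappa_{\ref{Lem_L^2Noncon_LowSphericalMode}}(n):=\min\{\kappa_0(n),\tfrac18,(4C(n))^{-1},(8C'(n))^{-1}\}$, where $C'(n)$ bounds $\|\cdot\|_{C^4}$ by $\|\cdot\|_{L^2}$ on the finite-dimensional space $\rmW_\SSp$; put $\bu^\psi(x,y):=\bu^\psi_\circ(x)$ and $\scU:=\{\bu^\psi:\psi\in B_{\kappa_{\ref{Lem_L^2Noncon_LowSphericalMode}}(n)}\}$. The map $\psi\mapsto\bu^\psi$ is injective, since $\bu^{\psi_1}=\bu^{\psi_2}$ forces $e^{-\gamma\tau}(\psi_1-\psi_2)=R^{\psi_2}(\tau)-R^{\psi_1}(\tau)=O(e^{-2\gamma\tau})$ and hence $\psi_1=\psi_2$; thus it is a bijection onto $\scU$. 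Conclusion \ref{item_0_Lem_L^2Noncon_LowSphericalMode} is the $\rmO(m+1)$-equivariance together with the triviality of the $\rmO(k)$-action on $y$-invariant data. For \ref{item_1_Lem_L^2Noncon_LowSphericalMode}: the flows are uniformly $C^4$-close (in $\tau$ and in $\psi$) to the shrinking round sphere, hence $-(n-k)\nabla^2\bu^\psi_\circ$ is uniformly close to $I_{n-k+1}$ on $\overline{\Omega^\psi_\circ}$ (it equals $I_{n-k+1}$ identically when $\psi=0$), giving some $\beta=\beta(n)\in(0,1]$, e.g.\ $\beta=\tfrac12$, with $\beta I_{n-k+1}\le-(n-k)\nabla^2\bu^\psi_\circ\le\beta^{-1}I_{n-k+1}$; the remaining normalizations in \ref{Item_ArrivalT2} hold by construction. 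For \ref{item_2_Lem_L^2Noncon_LowSphericalMode}: the left-hand side is $\|R^{\psi_1}(\tau)-R^{\psi_2}(\tau)\|_{C^4}$, and since $\|\psi_i\|\le\kappa_{\ref{Lem_L^2Noncon_LowSphericalMode}}(n)$ and $e^{-2\gamma\tau}\le e^{-\frac32\gamma\tau}$ for $\tau\ge0$, the displayed bound gives it $\le 2C(n)\kappa_{\ref{Lem_L^2Noncon_LowSphericalMode}}(n)\,e^{-\frac32\gamma\tau}\|\psi_1-\psi_2\|_{L^2}\le\tfrac12 e^{-\frac32\gamma\tau}\|\psi_1-\psi_2\|_{L^2}$. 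Taking $\psi_2=0$ (so $\bu^{\psi_2}=\mbfU_{n,k}$ and $\varphi_{\bu^{\psi_2}}\equiv0$) and using $\|\psi_1\|_{C^4}\le C'(n)\kappa_{\ref{Lem_L^2Noncon_LowSphericalMode}}(n)$ then yields $\|\varphi_{\bu^{\psi_1}}(\cdot,\tau)\|_{C^4}\le e^{-\gamma\tau}\|\psi_1\|_{C^4}+\tfrac12 e^{-\frac32\gamma\tau}\|\psi_1\|_{L^2}\le\tfrac14 e^{-\gamma\tau}$, which also verifies \eqref{Equ_L^2 Mono_Assump |varphi_u|<1/2}.

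The step I expect to be the main obstacle is the appendix construction underlying the two displayed formulas: producing, for every prescribed leading coefficient $\psi\in\rmW_\SSp$, a normalized convex flow realizing it, with continuous and $\rmO(m+1)$-equivariant dependence on $\psi$ and with the quantitative Lipschitz control of the remainder $R^\psi$ in $C^4$. This requires sharp linear decay estimates for $\partial_\tau-L_{n,k}$ off the gauge spectrum on $\bS^{n-k}$, a careful treatment of the unstable and neutral modes through the extinction normalization, parabolic regularity to pass from weighted $L^2$ bounds to $C^4$ bounds, and a contraction estimate that keeps the dependence on $\psi$ linear; everything in the present lemma beyond that input is soft.
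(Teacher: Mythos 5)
Your proposal is correct and follows essentially the same route as the paper: reduce to the spherical factor, invoke the fixed-point construction of Appendix \ref{App:ExistenceSphericalArrival} (Lemma \ref{Lem:AppCMainSpherical}) for the family $\Xi(\psi)$ with its Lipschitz remainder estimate, get equivariance from rotation-invariance of the construction (Remark \ref{rmk_App_spherical_rot}), and get the Hessian bound from $C^4$-closeness to the round shrinking sphere via Huisken's formula for $D^2\bu_\circ$. The only cosmetic discrepancy is your claimed remainder rate $e^{-2\gamma_{n,k}\tau}$: the quadratic nonlinearity forces the appendix to work with $\sigma$ strictly below $2/m$, and the paper takes $\sigma=\tfrac32\gamma_{n,k}$ exactly, which is all your argument actually uses.
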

	
	The key part of the proof is to construct convex RMCF with given asymptotics on the sphere. This is done is Appendix \ref{App:ExistenceSphericalArrival}. 
	
	\begin{definition} \label{Def_LowSphericalFlow}
		We define $\scU$ to be a specific family of arrival time functions as: 
		\begin{itemize}
			\item $\scU := \{\mbfU_{n,k}\}$, if $n-k=1$;
			\item $\scU$ is given by Lemma \ref{Lem_L^2Noncon_LowSphericalMode}, if $n-k\geq 2$.
		\end{itemize}
		Elements in $\scU$, after a possible parabolic dilation, are called \textbf{low spherical flows} asymptotic to $\cC_{n,k}$. 
		We also define
		\begin{align}
			\tilde\scU := \{\bu\circ \mbfB: \bu\in\scU,\ \mbfB\in \rmO(n+1) \}\,.  \label{Def_tilde scU}  
		\end{align}
		Note that by Lemma \ref{Lem_L^2Noncon_LowSphericalMode} \ref{item_0_Lem_L^2Noncon_LowSphericalMode}, $\bu\circ\mbfB\in \scU$ if and only if $\mbfB\in \rmG_{n,k}$. 
	\end{definition}
	
	\begin{Cor} \label{Cor_L^2 Mono_Low Bd Upsilon_(u,u'; w)}
		For every $\gamma\geq -1$, there exists $\Lambda_{\ref{Cor_L^2 Mono_Low Bd Upsilon_(u,u'; w)}}(n, \gamma)>2n$ such that if $\bu, \bu'\in \scU$ are low spherical flows and $w\in (\oplus_{\lambda\leq \gamma} \rmW_\lambda(\cC_{n,k}))\cap \rmW_\SSp^\perp$ is a linear combination of eigenfunctions of $-L_{n,k}$. Then for every $R\geq \Lambda(n, \gamma)$ and every $\tau\geq 0$, we have \[
		\|(\varphi_\bu - \varphi_{\bu'})(\cdot, \tau) + w\|_{L^2(Q_R)} \gtrsim_n \|(\varphi_\bu - \varphi_{\bu'})(\cdot, \tau)\|_{L^2} + \|w\|_{L^2} \,.
		\]
	\end{Cor}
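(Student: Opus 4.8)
The plan is to isolate the leading spherical mode of $\varphi_\bu - \varphi_{\bu'}$, which is $L^2$-orthogonal to $w$, prove the estimate first on all of $\cC_{n,k}$, and then descend to the cube $Q_R$ by a tail estimate. Throughout, set $f := (\varphi_\bu - \varphi_{\bu'})(\cdot,\tau) + w$ and $M := \|(\varphi_\bu - \varphi_{\bu'})(\cdot,\tau)\|_{L^2} + \|w\|_{L^2}$.

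\emph{Whole-cylinder bound.} Let $\psi,\psi'\in\rmW_\SSp$ be the preimages of $\bu,\bu'$ under the bijection in Lemma \ref{Lem_L^2Noncon_LowSphericalMode}, and set $\psi_0:=\psi-\psi'\in\rmW_\SSp$, $g:=e^{-\gamma_{n,k}\tau}\psi_0$. By Lemma \ref{Lem_L^2Noncon_LowSphericalMode}\ref{item_2_Lem_L^2Noncon_LowSphericalMode} one can write $(\varphi_\bu-\varphi_{\bu'})(\cdot,\tau)=g+E$ with $\|E\|_{C^4(\cC_{n,k})}\le\frac{1}{2}e^{-\frac{3}{2}\gamma_{n,k}\tau}\|\psi_0\|_{L^2}$ (when $n-k=1$ we have $\bu=\bu'$ and take $g=E=0$). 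Using $\tau\ge0$ and the standard fact $\cF[\cC_{n,k}]\le\lambda[\cC_{n,k}]<2$, the $C^4$-bound converts to $\|E\|_{L^2}\le\frac{1}{\sqrt{2}}\|g\|_{L^2}$, and finite-dimensionality of $\rmW_\SSp$ also yields $\|(\varphi_\bu-\varphi_{\bu'})(\cdot,\tau)\|_{L^2}\sim_n\|g\|_{L^2}$ and $\|(\varphi_\bu-\varphi_{\bu'})(\cdot,\tau)\|_{C^0}\lesssim_n\|(\varphi_\bu-\varphi_{\bu'})(\cdot,\tau)\|_{L^2}$. Since $g\in\rmW_\SSp$ and $w\in\rmW_\SSp^\perp$ are $L^2$-orthogonal, $\|g+w\|_{L^2}^2=\|g\|_{L^2}^2+\|w\|_{L^2}^2$, so combining with $\|E\|_{L^2}\le\frac{1}{\sqrt{2}}\|g\|_{L^2}\le\frac{1}{\sqrt{2}}\|g+w\|_{L^2}$ and the triangle inequality,
\[
\|f\|_{L^2}=\|g+w+E\|_{L^2}\ge\Big(1-\frac{1}{\sqrt{2}}\Big)\sqrt{\|g\|_{L^2}^2+\|w\|_{L^2}^2}\gtrsim_n\|g\|_{L^2}+\|w\|_{L^2}\gtrsim_n M\,.
\]

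\emph{Descent to $Q_R$.} Since $w\in\oplus_{\lambda\le\gamma}\rmW_\lambda$, the pointwise bound $|w(\theta,y)|\lesssim_{n,\gamma}\|w\|_{L^2}(1+|y|^{2\gamma+2})$ from the proof of Lemma \ref{Lem_Effect of Cutoff}, together with Proposition \ref{prop:entropy control outside ball of radius R}, gives $\|w\|_{L^2(\cC_{n,k}\setminus Q_R)}^2\lesssim_{n,\gamma}e^{-R^2/5}\|w\|_{L^2}^2$; and since $\varphi_\bu-\varphi_{\bu'}$ is $C^0$-controlled by its own $L^2$-norm, Proposition \ref{prop:entropy control outside ball of radius R} likewise gives $\|(\varphi_\bu-\varphi_{\bu'})(\cdot,\tau)\|_{L^2(\cC_{n,k}\setminus Q_R)}^2\lesssim_n e^{-R^2/5}\|(\varphi_\bu-\varphi_{\bu'})(\cdot,\tau)\|_{L^2}^2$. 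Hence $\|f\|_{L^2(\cC_{n,k}\setminus Q_R)}^2\lesssim_{n,\gamma}e^{-R^2/5}M^2$, and from the identity $\|f\|_{L^2(Q_R)}^2=\|f\|_{L^2}^2-\|f\|_{L^2(\cC_{n,k}\setminus Q_R)}^2$ together with the whole-cylinder bound, it suffices to choose $\Lambda_{\ref{Cor_L^2 Mono_Low Bd Upsilon_(u,u'; w)}}(n,\gamma)>2n$ large enough that the tail term is at most half of $\|f\|_{L^2}^2$; then $\|f\|_{L^2(Q_R)}\gtrsim_n\|f\|_{L^2}\gtrsim_n M$, which is the claim.

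\emph{Main obstacle.} The one delicate point is the error term $E$: it has no spectral localization, yet the orthogonality argument only controls $g+w$, so one needs $\|E(\cdot,\tau)\|_{L^2}$ to stay below $\|g(\cdot,\tau)\|_{L^2}$ with a uniform gap for all $\tau\ge0$. This is precisely where the faster decay rate $e^{-\frac{3}{2}\gamma_{n,k}\tau}$ in Lemma \ref{Lem_L^2Noncon_LowSphericalMode}\ref{item_2_Lem_L^2Noncon_LowSphericalMode} is used---together with $\tau\ge0$ and the elementary bound $\lambda[\cC_{n,k}]<2$---since a bare $C^0$-to-$L^2$ comparison would carry too large a constant and fail at $\tau=0$. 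The remaining ingredients (polynomial growth of the low modes, Proposition \ref{prop:entropy control outside ball of radius R}, and the orthogonality of $g$ and $w$) are entirely routine.
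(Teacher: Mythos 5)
Your proof is correct and follows essentially the same route as the paper's: both replace $\varphi_\bu-\varphi_{\bu'}$ by its leading spherical mode $e^{-\gamma_{n,k}\tau}(\psi-\psi')$ up to the fast-decaying error from Lemma \ref{Lem_L^2Noncon_LowSphericalMode}\ref{item_2_Lem_L^2Noncon_LowSphericalMode}, exploit the $L^2$-orthogonality of $\rmW_\SSp$ and $\rmW_\SSp^\perp$, and then localize to $Q_R$ via the polynomial growth of low eigenmodes together with Proposition \ref{prop:entropy control outside ball of radius R}. The only (cosmetic) difference is that the paper reduces the localization step to the pure eigenfunction combination $e^{-\gamma_{n,k}\tau}(\psi-\psi')+w$, whereas you localize $f$ itself using the $C^0$-control on $\varphi_\bu-\varphi_{\bu'}$.
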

	\begin{proof}
		We assume the leading eigenmodes of $\varphi_\bu$ and $\varphi_{\bu'}$ are given by $\psi$ and $\psi'$ respectively. By the item \ref{item_2_Lem_L^2Noncon_LowSphericalMode} of Lemma \ref{Lem_L^2Noncon_LowSphericalMode}, 
		\begin{align*}
			\|(\varphi_\bu - \varphi_{\bu'})(\cdot, \tau) + w\|_{L^2(Q_R)} & \geq  \|e^{-\gamma_{n,k}\tau}(\psi-\psi')+ w\|_{L^2(Q_R)}- \frac12\|e^{-\frac{3}{2}\gamma_{n,k}\tau}(\psi-\psi')\|_{L^2},  \\
			\|e^{-\gamma_{n,k}\tau}(\psi-\psi')\|_{L^2}& \sim_n  \|(\varphi_\bu - \varphi_{\bu'})(\cdot, \tau)\|_{L^2(Q_R)}.
		\end{align*}
		By the orthogonality, \[
		\|e^{-\gamma_{n,k}\tau}(\psi-\psi')+ w\|_{L^2}^2 = e^{-2\gamma_{n,k}\tau} \|\psi-\psi'\|_{L^2}^2 + \|w\|_{L^2}^2 \sim_n \|(\varphi_\bu - \varphi_{\bu'})(\cdot, \tau)\|_{L^2}^2+ \|w\|_{L^2}^2
		\] 
		Thus, it suffices to show that for $R\geq \Lambda(n,\gamma)$, \[
		\|e^{-\gamma_{n,k}\tau}(\psi-\psi')+ w\|_{L^2(Q_R)} \geq \frac34\|e^{-\gamma_{n,k}\tau}(\psi-\psi')+ w\|_{L^2}\,.  \]

		By the expression of the eigenfunctions in $\rmW_{\lambda}(\cC_{n,k})$ in Section \ref{SSubsec_Spectrum of L}, %for every $\gamma>-1$ and $v\in (\oplus_{\lambda\leq \gamma} \rmW_\lambda(\cC_{n,k}))$, there exist $C(\gamma)>0$ and $N(\gamma)\in\Z_+$ such that $|v(\theta,y)|\leq C(1+|y|^N)\|v\|_{L^2}$. In fact, 
		for any unit eigenfunction $\phi$ with eigenvalue $\lambda$, we have $|\phi(\theta,y)|\leq C_\lambda(1+|y|^{2|\lambda|+2})$. Therefore, if $v=\sum_{i=1}^M a_i\phi_i\in (\oplus_{\lambda\leq \gamma} \rmW_\lambda(\cC_{n,k}))$ is a Fourier decomposition of the eigenfunctions with eigenvalues $\leq \gamma$, we have
		\[
		|v(\theta,y)|^2\lesssim_M\sum_{i=1}^M |a_i|^2|\phi_i(\theta,y)|^2\lesssim_{\gamma} \sum_{i=1}^M |a_i|^2 (1+|y|^{2|\gamma|+2})\|\phi_i\|^2_{L^2}
		\sim_{\gamma}(1+|y|^{2|\gamma|+2})\|v\|^2_{L^2}.
		\]
		By taking $v=e^{-\gamma_{n,k}\tau}(\psi-\psi')+ w$ and integrating $|v(\theta,y)|^2$ outside $Q_R$, using Proposition \ref{prop:entropy control outside ball of radius R},
		\[
		\|v\|_{L^2(\cC_{n,k}\backslash Q_R)}\leq \Psi_{\cC_{n,k},2|\gamma|+2}(R)^{1/2} \|v\|_{L^2}.
		\]
		When $R$ is sufficiently large, this implies that $\|v\|_{L^2(\cC_{n,k}\backslash Q_R)}\leq 1/4\|v\|_{L^2}$, which implies that $\|v\|_{L^2(Q_R)}\geq 3/4\|v\|_{L^2}$. This completes the proof.
	\end{proof}

	\subsection{Decay order and asymptotic rate} \label{Subsec_Doub Const, AR of RMCF}  
	
	Recall that by Lemma \ref{Lem_L^2Noncon_LowSphericalMode}, every low spherical flow $\bu\in \scU$ satisfies the conditions \ref{Item_ArrivalT1}, \ref{Item_ArrivalT2} and \eqref{Equ_L^2 Mono_Assump |varphi_u|<1/2} for some $\beta=\beta(n)>0$. In particular, the $L^2$ nonconcentration Lemma \ref{Lem_L^2 Noncon_for RMCF} applies to $\odist_\bu$. We shall discuss consequences of this $L^2$ nonconcentration property in the rest of this section analogous to those in \cite[Section 3]{SunWangXue1_Passing}. Although most lemma in this subsection work for general arrival time function satisfying \ref{Item_ArrivalT1}, \ref{Item_ArrivalT2} and \eqref{Equ_L^2 Mono_Assump |varphi_u|<1/2}, to avoid technicality, we restrict ourselves to the low spherical flows $\scU$.
	
	For every $\bu\in \scU$, let $\cC_\bu:= \{\cC_\bu(\tau)\}_{\tau\geq 0}$ be the RMCF associated to $\bu$ as in Remark \ref{Rem_L^2 Noncon_Scal Invar of (T2)(T3)}. For a hypersurface $\Sigma\subset \RR^{n+1}$, we define the $L^2$-distance from $\Sigma$ to $\cC_\bu(\tau)$ by,
	\begin{align}
		\mbfd_\bu(\tau, \Sigma)^2 := \int_{\Sigma}  \overline{\dist}_\bu(X, \tau)^2\ d\mu\,;  \label{Equ_L^2 Noncon_Def L^2 dist to C_u(tau)}
	\end{align} 
	
	For later reference, we also introduce the distance to the rotated relevant flow. For $\tilde\bu = \bu\circ\mbfB$ with $\bu\in \scU$ and $\mbfB\in \rmO(n+1)$, define 
	\begin{align}
		\mbfd_{\tilde\bu}(\tau, \Sigma)^2 := \int_{\Sigma}  \overline{\dist}_\bu(\mbfB(X), \tau)^2\ d\mu\,;  \label{Equ_L^2 Noncon_Def L^2 dist to C_tilde(u)(tau)}
	\end{align} 
	Note that this is well-defined, since whenever $\rmB\in \rmO(n+1)$ such that $\bu\circ \mbfB = \bu$, we must have $\mbfB(\cC_{n,k})=\cC_{n,k}$ and $\overline{\dist}_\bu(\mbfB(X), \tau)^2 = \overline{\dist}_\bu(X, \tau)^2$ for every $X\in \RR^{n+1}$ and $\tau\in \RR_{\geq 0}$.

	The following Lemma comparing $L^2$-distance from translated hypersurfaces to possibly a different relative flow will be useful later.
	\begin{Lem} \label{Lem_L^2 Mono_Compare d_u(Sigma) w d_u'(Sigma) and d_u(Sigma + y)}
		Let $\Sigma\subset \R^{n+1}$ be a hypersurface with $\lambda[\Sigma]<+\infty$, $\bu, \bu'\in \scU$. Then for every $\tau\geq 0$, we have
		\begin{align}
			|\mbfd_{\bu'}(\tau, \Sigma) - \mbfd_{\bu}(\tau, \Sigma)| \leq \lambda[\Sigma]^{1/2}\|\chi\|_{C^1}\cdot \|(\varphi_{\bu'}-\varphi_\bu)(\cdot, \tau)\|_{C^0}\,.  \label{Equ_L^2 Mono_|d_u' - d_u| < |varphi_u'-varphi_u|}
		\end{align}
		Also for every $\by\in \R^k$ with $|\by|\leq 1$, and every $R\geq 1$, we have, 
		\begin{align}
			\mbfd_\bu(\tau, \Sigma + (0, \by)) \lesssim_n \mbfd_\bu(\tau, \Sigma)\cdot e^{R/4} + \lambda[\Sigma]^{1/2}\cdot e^{-R^2/64} \,. \label{Equ_L^2 Mono_d_u(Sigma - y) < d_u(Sigma) + error}
		\end{align}
	\end{Lem}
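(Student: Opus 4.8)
The plan is to establish both inequalities by elementary estimates: the first is the reverse triangle inequality in $L^2(\Sigma,\mu)$ combined with the Lipschitz bound on the cutoff $\chi$, and the second is a change of variables exploiting the $\RR^k$-invariance of $\odist_\bu$, followed by a Gaussian tail estimate.

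For the first inequality, note that $\mbfd_{\bu'}(\tau,\Sigma) = \|\odist_{\bu'}(\cdot,\tau)\|_{L^2(\Sigma,\mu)}$ and $\mbfd_{\bu}(\tau,\Sigma) = \|\odist_{\bu}(\cdot,\tau)\|_{L^2(\Sigma,\mu)}$, so the reverse triangle inequality gives $|\mbfd_{\bu'}(\tau,\Sigma) - \mbfd_{\bu}(\tau,\Sigma)| \le \|\odist_{\bu'}(\cdot,\tau) - \odist_{\bu}(\cdot,\tau)\|_{L^2(\Sigma,\mu)}$. By the definition of $\odist_\bu$, at each $X = (x,y)$ with $x\neq\orig$ the two functions equal $\chi$ evaluated at two reals that differ by $(\varphi_{\bu}-\varphi_{\bu'})(\varrho|x|^{-1}x,\tau)$; since $\chi$ is $\|\chi\|_{C^1}$-Lipschitz, this yields the pointwise bound $|\odist_{\bu'}(X,\tau) - \odist_{\bu}(X,\tau)| \le \|\chi\|_{C^1}\,\|(\varphi_{\bu'}-\varphi_{\bu})(\cdot,\tau)\|_{C^0}$. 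Integrating the square against $\mu$ and using $\mu(\Sigma) = \cF[\Sigma] \le \lambda[\Sigma]$ finishes this part.

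For the second inequality, change variables $Z = X + (0,\by)$ in the integral defining $\mbfd_\bu(\tau,\Sigma+(0,\by))^2$. Since low spherical flows are $\RR^k$-invariant, $\odist_\bu$ depends only on the $\RR^{n-k+1}$-component, so $\odist_\bu(X+(0,\by),\tau) = \odist_\bu(X,\tau)$ and only the Gaussian weight changes; the ratio $\rho(X+(0,\by),-1)/\rho(X,-1)$ equals $\exp(-\tfrac12\langle y,\by\rangle - \tfrac14|\by|^2) \le \exp(-\tfrac12\langle y,\by\rangle)$. Split $\Sigma$ according to whether $\langle y,\by\rangle \ge -R$. On the first piece this ratio is $\le e^{R/2}$, so that part of the integral is $\le e^{R/2}\,\mbfd_\bu(\tau,\Sigma)^2$. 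On the second piece, $\langle y,\by\rangle < -R$ forces $|X| \ge |y| > R$ (using $|\by|\le 1$); there $\odist_\bu^2 \le 1$, and since the shifted point has $|Z| \ge |X| - |\by| > R-1$, the contribution is at most $\int_{(\Sigma+(0,\by))\setminus B_{R-1}} \rho(Z,-1)\,d\cH^n(Z) = \Psi_{\Sigma+(0,\by),0}(R-1) \lesssim_n \lambda[\Sigma+(0,\by)]\,e^{-(R-1)^2/5}$, by Proposition \ref{prop:entropy control outside ball of radius R} with $\delta=1$ and translation-invariance of entropy. Adding the two pieces, taking square roots, and noting that $e^{-(R-1)^2/10} \lesssim e^{-R^2/64}$ for $R\ge 1$ gives the claim.

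Neither part is deep; the only point requiring care is the bookkeeping in the second estimate — namely, observing that the region where the shifted Gaussian weight substantially exceeds the original is confined to $\{|X| > R\}$, where the boundedness $|\odist_\bu|\le 1$ and the tail decay from Proposition \ref{prop:entropy control outside ball of radius R} absorb it — and matching up the exponents $R/2 \to R/4$ and $(R-1)^2/5 \to R^2/64$ after the square root.
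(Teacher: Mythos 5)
Your proof is correct and follows essentially the same route as the paper: the reverse triangle inequality plus the Lipschitz bound on $\chi$ for \eqref{Equ_L^2 Mono_|d_u' - d_u| < |varphi_u'-varphi_u|}, and for \eqref{Equ_L^2 Mono_d_u(Sigma - y) < d_u(Sigma) + error} a split into a region where the shifted Gaussian weight is at most $e^{R/2}$ times the original (yielding the $e^{R/2}\mbfd_\bu(\tau,\Sigma)^2$ term) and a far region where $|\odist_\bu|\leq 1$ and Proposition \ref{prop:entropy control outside ball of radius R} supplies the Gaussian tail. The only cosmetic differences are that the paper splits on $X\in Q_R$ versus $X\notin Q_R$ (using $|X+(0,\by)|^2\geq \tfrac14|X|^2-1$ and a dilation by $1/2$ before invoking the proposition), whereas you split on $\langle y,\by\rangle\geq -R$ and apply the proposition directly to the translated surface; both are fine.
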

	\begin{proof}
		\eqref{Equ_L^2 Mono_|d_u' - d_u| < |varphi_u'-varphi_u|} follows by the triangle inequality of Gaussian $L^2$ norm on $\cM$ and the pointwise estimate below from definition in \eqref{Equ_L^2 Mono_odist to C_(n,k)}, \[
		\left|\odist_{\bu}(X, \tau) - \odist_{\bu'}(X, \tau)\right| \leq \|\chi\|_{C^1}\cdot\|(\varphi_\bu-\varphi_{\bu'})(\cdot, \tau)\|_{C^0} \,.
		\]
		
		To prove \eqref{Equ_L^2 Mono_d_u(Sigma - y) < d_u(Sigma) + error}, note that for $|\by|\leq 1$ and every $X\in \R^{n+1}$, 
		\begin{align}
			|X+(0,\by)|^2 \geq |X|^2 - 2|X| + 1 \geq \frac14|X|^2 - 1\,. \label{Equ_L^2 Mono_Alg ineq}
		\end{align}
		Hence, 
		\begin{align*}
			\mbfd_\bu(\tau, \Sigma+(0, \by))^2 & \sim_n \int_\Sigma \odist_\bu(X+(0, \by), \tau)^2\cdot e^{-|X+(0,\by)|^2/4}\ dX \\
			& \lesssim_n \int_{\Sigma\cap Q_R} \odist_\bu(X, \tau)^2\cdot e^{R/2}\cdot e^{-|X|^2/4}\ dX + \int_{\Sigma\setminus Q_R} \odist_\bu(X, \tau)^2\cdot e^{-|X|^2/16}\ dX \\
			& \lesssim_n \mbfd_\bu(\tau, \Sigma)^2\cdot e^{R/2} + \int_{2^{-1}\cdot (\Sigma\setminus Q_R)} e^{-|X|^2/4}\ dX \\
			& \lesssim_n \mbfd_\bu(\tau, \Sigma)^2\cdot e^{R/2} + \lambda[\Sigma]\cdot e^{-R^2/32} \,;
		\end{align*}
		where the second inequality follows from the translation invariance of $\mbfd_\bu(\cdot, \tau)$ in $\R^k$ direction; the third inequality follows from change of variable and that $|\odist_\bu(X, \tau)|\leq 1$, and the last inequality follows from Proposition \ref{prop:entropy control outside ball of radius R}.
		This finishes the proof of \eqref{Equ_L^2 Mono_d_u(Sigma - y) < d_u(Sigma) + error}.
	\end{proof}

	For $\bu\in\tilde \scU$ and a RMCF $\tau\to\cM(\tau)$ with $\tau, \tau+1\in I$, we also define the \textbf{decay order} of $\cM$ at time $\tau$ relative to $\cC_\bu$ by 
	\begin{align}
		\cN_\bu(\tau, \cM) := \ln \left(\frac{\mbfd_\bu(\tau, \cM(\tau))}{\mbfd_\bu(\tau+1, \cM(\tau+1))} \right) \label{Equ_L^2 Noncon_Def Doubl Const}
	\end{align}
	Note that when $\bu=\mbfU_{n,k}$, using the notation in \cite[Section 3.3]{SunWangXue1_Passing}, 
	\begin{align*}
		\mbfd_{n,k}(\cM(\tau)) = \mbfd_{\mbfU_{n,k}}(\tau, \cM(\tau))\,, & &
		\cN_{n,k}(\tau, \cM) = \cN_{\mbfU_{n,k}}(\tau, \cM)\,.
	\end{align*}
	Also note that by Lemma \ref{Lem_L^2 Noncon_for RMCF}, for every RMCF $\cM$, we always have the uniform lower bound 
	\begin{align}
		\cN_\bu(\tau, \cM) \geq -C(n) \,. \label{Equ_L^2 Noncon_Dim Lower Bd for cN}
	\end{align}
	
	For $R>0$, recall that $Q_R:= B_R^{n-k+1}\times B_R^k$. We also define the decay order of a RMCF $\cM$ restricted in $Q_R$: if $\cM(\tau)\cap Q_R, \cM(\tau+1)\cap Q_R \neq \emptyset$, let 
	\begin{align*}
		\cN_\bu(\tau, \cM\cap Q_R) := \ln \left(\frac{\mbfd_\bu(\tau, \cM(\tau)\cap Q_R)}{\mbfd_\bu(\tau+1, \cM(\tau+1)\cap Q_R)} \right) 
	\end{align*}  
	Similar to \cite[Corollary 3.4]{SunWangXue1_Passing}, we have the following error estimate on this localized decay order.
	\begin{Cor}\label{Cor_L^2 Noncon_Compare d(M), cN(M) and d(M in Q_R), cN(M in Q_R)}
		For every $\eps\in (0, 1)$, there exists $R_{\ref{Cor_L^2 Noncon_Compare d(M), cN(M) and d(M in Q_R), cN(M in Q_R)}}(\eps, n)\gg 1$ such that the following hold.  If $T_\circ\geq 0$, $\bu\in \scU$, $\cM$ is a RMCF in $\RR^{n+1}$ over $[T_\circ, T_\circ + 2]$ such that 
		\begin{align*}
			\cN_\bu(T_\circ, \cM), \;\; \cN_\bu(T_\circ + 1, \cM) \leq \eps^{-1}\,.
		\end{align*}
		Then $\forall\, \tau \in (T_\circ, T_\circ + 1]$, $\forall\, R\geq (\tau-T_\circ)^{-1/2}R_{\ref{Cor_L^2 Noncon_Compare d(M), cN(M) and d(M in Q_R), cN(M in Q_R)}}$, we have $\cM(\tau)\cap Q_R\neq\emptyset,\ \cM(\tau+1)\cap Q_R\neq \emptyset$ and 
		\begin{align*}
			\mbfd_\bu(\tau, \cM(\tau))^{-1}\cdot\mbfd_\bu(\tau, \cM(\tau)\cap Q_R) & \geq 1-\frac{R_{\ref{Cor_L^2 Noncon_Compare d(M), cN(M) and d(M in Q_R), cN(M in Q_R)}}}{(\tau-T_\circ) R^2} \,, \\
			\left| \cN_\bu(\tau, \cM\cap Q_R) - \cN_\bu(\tau, \cM) \right| & \leq \frac{R_{\ref{Cor_L^2 Noncon_Compare d(M), cN(M) and d(M in Q_R), cN(M in Q_R)}}}{(\tau-T_\circ) R^2} \,. 
		\end{align*}
	\end{Cor}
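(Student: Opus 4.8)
The plan is to run the argument of \cite[Corollary 3.4]{SunWangXue1_Passing} in the present generality: the sole analytic input is the $L^2$-nonconcentration Lemma \ref{Lem_L^2 Noncon_for RMCF}, which applies here since every $\bu\in\scU$ satisfies \ref{Item_ArrivalT1}--\ref{Item_ArrivalT3} and \eqref{Equ_L^2 Mono_Assump |varphi_u|<1/2} by Lemma \ref{Lem_L^2Noncon_LowSphericalMode}. Write $R_\ast:=R_{\ref{Cor_L^2 Noncon_Compare d(M), cN(M) and d(M in Q_R), cN(M in Q_R)}}$, and assume $\mbfd_\bu(\sigma,\cM(\sigma))>0$ for $\sigma\in[T_\circ,T_\circ+2]$ (otherwise $\cM$ coincides with $\cC_\bu$ on a subinterval and the statement is trivial). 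The quantity to control is the ``tail'' $\int_{\cM(\tau)\setminus Q_R}\odist_\bu(X,\tau)^2\,d\mu$, because $\mbfd_\bu(\tau,\cM(\tau))^2-\mbfd_\bu(\tau,\cM(\tau)\cap Q_R)^2$ is exactly this tail (integral over the empty set being $0$), and likewise at time $\tau+1$.

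First I would bound the tail: since $|X|\ge R$ on $\cM(\tau)\setminus Q_R$, it is at most $R^{-2}\int_{\cM(\tau)}\odist_\bu^2|X|^2\,d\mu$, and Lemma \ref{Lem_L^2 Noncon_for RMCF} on $[T_\circ,\tau]$ gives $(\tau-T_\circ)\int_{\cM(\tau)}\odist_\bu^2|X|^2\,d\mu\lesssim_{n,\beta} e^{K}\,\mbfd_\bu(T_\circ,\cM(T_\circ))^2$ with $K=K(n,\beta)$ (and the same on $[T_\circ+1,\tau+1]$ with $T_\circ+1$ in place of $T_\circ$). To replace the reference slices $T_\circ,T_\circ+1$ by the slices $\tau,\tau+1$ occurring in the conclusion, I would use the two decay-order hypotheses together with Lemma \ref{Lem_L^2 Noncon_for RMCF} applied ``backwards'': $\cN_\bu(T_\circ,\cM)\le\eps^{-1}$ gives $\mbfd_\bu(T_\circ,\cM(T_\circ))\le e^{1/\eps}\mbfd_\bu(T_\circ+1,\cM(T_\circ+1))$, while Lemma \ref{Lem_L^2 Noncon_for RMCF} on $[\tau,T_\circ+1]$ gives $\mbfd_\bu(T_\circ+1,\cM(T_\circ+1))\lesssim_{n,\beta}e^{K/2}\mbfd_\bu(\tau,\cM(\tau))$, so $\mbfd_\bu(T_\circ,\cM(T_\circ))\le C(\eps,n)\,\mbfd_\bu(\tau,\cM(\tau))$; symmetrically, using $\cN_\bu(T_\circ+1,\cM)\le\eps^{-1}$ and Lemma \ref{Lem_L^2 Noncon_for RMCF} on $[\tau+1,T_\circ+2]$, $\mbfd_\bu(T_\circ+1,\cM(T_\circ+1))\le C(\eps,n)\,\mbfd_\bu(\tau+1,\cM(\tau+1))$. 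Substituting back yields, for $\sigma\in\{\tau,\tau+1\}$,
\[
0\ \le\ \mbfd_\bu(\sigma,\cM(\sigma))^2-\mbfd_\bu(\sigma,\cM(\sigma)\cap Q_R)^2\ \le\ \frac{C(\eps,n)}{(\tau-T_\circ)R^2}\,\mbfd_\bu(\sigma,\cM(\sigma))^2\,.
\]

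Then I would extract the three conclusions. Set $R_\ast:=\max\{2,\,4C(\eps,n)\}$; the hypothesis $R\ge(\tau-T_\circ)^{-1/2}R_\ast$ forces $(\tau-T_\circ)R^2\ge R_\ast^2$, hence $\tfrac{C(\eps,n)}{(\tau-T_\circ)R^2}\le\tfrac{R_\ast}{(\tau-T_\circ)R^2}\le R_\ast^{-1}\le\tfrac12$. In particular the displayed inequality forces $\mbfd_\bu(\sigma,\cM(\sigma)\cap Q_R)>0$, so $\cM(\tau)\cap Q_R\ne\emptyset$ and $\cM(\tau+1)\cap Q_R\ne\emptyset$; taking square roots and using $\sqrt{1-x}\ge1-x$ gives the second claim $\mbfd_\bu(\tau,\cM(\tau))^{-1}\mbfd_\bu(\tau,\cM(\tau)\cap Q_R)\ge1-\tfrac{R_\ast}{(\tau-T_\circ)R^2}$. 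For the decay-order estimate, write $\cN_\bu(\tau,\cM\cap Q_R)-\cN_\bu(\tau,\cM)$ as the difference of the logarithms of the two ratios $\mbfd_\bu(\tau,\cM(\tau)\cap Q_R)/\mbfd_\bu(\tau,\cM(\tau))$ and $\mbfd_\bu(\tau+1,\cM(\tau+1)\cap Q_R)/\mbfd_\bu(\tau+1,\cM(\tau+1))$, each lying in $[\tfrac12,1]$; applying $|\ln(1-x)|\le2x$ for $x\le\tfrac12$ bounds this difference by $\tfrac{4C(\eps,n)}{(\tau-T_\circ)R^2}\le\tfrac{R_\ast}{(\tau-T_\circ)R^2}$, which is the third claim. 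The only mildly delicate point — and the one I would be most careful about — is the interval bookkeeping above: one must apply Lemma \ref{Lem_L^2 Noncon_for RMCF} both forwards (on $[T_\circ,\tau]$, $[T_\circ+1,\tau+1]$) to produce the $|X|^2$-weight and backwards (on $[\tau,T_\circ+1]$, $[\tau+1,T_\circ+2]$) to transfer the distances between the two pairs of time slices, and verify that the cumulative exponential constant depends only on $n,\beta$, so that $C(\eps,n)$, and hence $R_\ast$, depends only on $n$ and $\eps$.
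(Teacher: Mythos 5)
Your proposal is correct and follows essentially the same route as the paper: bound the tail $\int_{\cM(\sigma)\setminus Q_R}\odist_\bu^2\,d\mu$ by $R^{-2}$ times the $|X|^2$-weighted integral from Lemma \ref{Lem_L^2 Noncon_for RMCF}, transfer the reference slice to $\sigma\in\{\tau,\tau+1\}$ using the two decay-order hypotheses together with a forward application of the same lemma, and then convert the resulting multiplicative closeness of the ratios into the stated bounds on $\mbfd_\bu$ and $\cN_\bu$. The only (immaterial) difference is bookkeeping: the paper anchors both slices at $T_\circ$ and routes the comparison through $\mbfd_\bu(T_\circ+2,\cM(T_\circ+2))$, whereas you anchor the $\tau+1$ slice at $T_\circ+1$ and route through $\mbfd_\bu(T_\circ+1,\cM(T_\circ+1))$.
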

	\begin{proof}
		By Lemma \ref{Lem_L^2 Noncon_for RMCF}, for every $\tau\in (T_\circ, T_\circ + 2]$ and $R>0$, 
		\begin{align*}
			0\leq 1-\frac{\mbfd_\bu(\tau, \cM(\tau)\cap Q_R)}{\mbfd_\bu(\tau, \cM(\tau))} 
			& \leq \frac{C(n)}{(\tau-T_\circ) R^2}\cdot \frac{\mbfd_\bu(T_\circ, \cM(T_\circ))}{\mbfd_\bu(\tau, \cM(\tau))} \\
			& \leq \frac{C(n,  \epsilon)}{(\tau-T_\circ) R^2}\cdot \frac{\mbfd_\bu(T_\circ+2, \cM(T_\circ + 2))}{\mbfd_\bu(\tau, \cM(\tau))} \leq \frac{C(n,  \epsilon)}{(\tau-T_\circ) R^2}
		\end{align*}
		Therefore, when $(\tau-T_\circ) R^2\geq R_{\ref{Cor_L^2 Noncon_Compare d(M), cN(M) and d(M in Q_R), cN(M in Q_R)}}(n, \epsilon)^2 \gg 1$, we have $\cM(\tau)\cap Q_R\neq \emptyset$ and 
		\begin{align*}
			& \left| e^{\cN_\bu(\tau, \cM\cap Q_R) - \cN_\bu(\tau, \cM)} - 1 \right| \\
			=\ & \left| \frac{\mbfd_\bu(\tau, \cM(\tau)\cap Q_R)}{\mbfd_\bu(\tau, \cM(\tau))} \cdot \left(\frac{\mbfd_\bu(\tau+1, \cM(\tau+1)\cap Q_R)}{\mbfd_\bu(\tau+1, \cM(\tau+1))}\right)^{-1} - 1\right| \leq \frac{C(n,  \epsilon)}{(\tau-T_\circ) R^2}\,.         
		\end{align*}
		Thus \[
		|\cN_\bu(\tau, \cM\cap Q_R) - \cN_\bu(\tau, \cM)| \leq \frac{C(n,  \epsilon)}{(\tau-T_\circ) R^2} \,.
		\]
	\end{proof}
	
	Let $I\subset \RR_{\geq 0}$. We call a RMCF $I\ni\tau\to \cM(\tau)$ \textbf{$\delta$-$L^2$ close to the RMCF $\cC_{n,k}$ on $I$} if $\forall\, s\in I$, 
	\begin{align}
		\frac12 \cF[\cC_{n,k}]
		\leq 
		\cF[\cM(s)] \leq \frac32 \cF[\cC_{n,k}] \,, & &
		\mbfd_{n,k}(s, \cM(s)) \leq \delta \,, \label{Equ_L^2 Mono_delta L^2 close to cylinder C_(n,k)}
	\end{align}
	where $\cF$ denotes the Gaussian area functional. By White's regularity, if $\cM$ is $\delta$ close to $\cC_{n,k}$ in the Brakke sense over $I$, then $\cM$ is $\Psi(\delta|n, \eps, I)$-$L^2$ close to $\cC_{n,k}$, and vice versa.

	Later, we have various estimates to derive the upper bounds for $\mbfd$ in \eqref{Equ_L^2 Mono_delta L^2 close to cylinder C_(n,k)}, and hence, to show a flow is $\delta$-$L^2$ close to $\cC_{n,k}$, the main challenge is to obtain the Gaussian bound. The following lemma implies that whenever $\cM(\tau)$ is $\delta$-$L^2$ close to $\cC_{n,k}$ for $\tau\in[T-1,T]$, then $\cM(\tau)$ has the desired Gaussian bound for $\tau\in[T,T+1]$.
	\begin{lemma}\label{lem_delta_L2_close_implies_Gaussian_lower_bound}
		There exists $\delta_{\ref{lem_delta_L2_close_implies_Gaussian_lower_bound}}(n,\eps)>0$ with the following significance. Suppose $\tau\to \cM(\tau)$ is a rescaled mean curvature flow over $\tau\in[T-1,T+1]$, $\lambda[\cM]\leq \eps^{-1}$ and $\cM$ is $\delta_{\ref{lem_delta_L2_close_implies_Gaussian_lower_bound}}$-$L^2$ close to $\cC_{n,k}$ on $[T-1,T]$. Then for $\tau\in[T,T+1]$, \[
		\frac12\cF[\cC_{n,k}]\leq \cF[\cM(\tau)]\leq \frac32\cF[\cC_{n,k}] \,.
		\]
	\end{lemma}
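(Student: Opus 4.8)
The plan is to argue by contradiction, using the monotonicity of Gaussian area under rescaled mean curvature flow (Huisken) together with an $\eps$-regularity / compactness argument to pin down the limit. Since $\cF[\cM(\tau)]$ is nonincreasing in $\tau$ along a RMCF, the upper bound $\cF[\cM(\tau)]\leq \frac32\cF[\cC_{n,k}]$ on $[T,T+1]$ is immediate once we know it on $[T-1,T]$, which is part of the hypothesis $\eqref{Equ_L^2 Mono_delta L^2 close to cylinder C_(n,k)}$. So the content is entirely in the lower bound $\cF[\cM(\tau)]\geq \frac12\cF[\cC_{n,k}]$, which I would establish by a contradiction/compactness scheme: suppose not, so there is a sequence of RMCFs $\cM_j$ over $[T_j-1, T_j+1]$ with $\lambda[\cM_j]\leq \eps^{-1}$, each $\delta_j$-$L^2$ close to $\cC_{n,k}$ on $[T_j-1, T_j]$ with $\delta_j\to 0$, but with $\cF[\cM_j(\tau_j)] < \frac12\cF[\cC_{n,k}]$ for some $\tau_j\in [T_j, T_j+1]$.

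Next, after translating in $\tau$ so that $T_j=0$ and passing to a subsequence with $\tau_j\to\tau_\infty\in[0,1]$, I would invoke Brakke compactness (using the uniform entropy bound $\lambda[\cM_j]\leq\eps^{-1}$ and Huisken's monotonicity) to extract a subsequential limit RMCF $\cM_\infty$ on (at least) an open interval containing $(-1,\tau_\infty]$ in the Brakke sense. The hypothesis that each $\cM_j$ is $\delta_j$-$L^2$ close to $\cC_{n,k}$ on $[-1,0]$, combined with $\delta_j\to 0$ and the bound $\frac12\cF[\cC_{n,k}]\leq\cF[\cM_j(s)]\leq\frac32\cF[\cC_{n,k}]$, forces $\mbfd_{n,k}(s,\cM_\infty(s))=0$ for a.e.\ $s\in[-1,0]$, i.e.\ $\cM_\infty(s)=\cC_{n,k}$ as Radon measures for such $s$. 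Then a backward/forward uniqueness or rigidity argument for the static RMCF $\cC_{n,k}$ — it is a fixed point of the rescaled flow, and the only ancient-to-it Brakke flow staying $L^2$-close is $\cC_{n,k}$ itself — shows that $\cM_\infty(\tau)\equiv\cC_{n,k}$ for all $\tau$ in the limiting time interval, in particular at $\tau_\infty$. Alternatively, and more robustly, one uses that $\cF[\cM_\infty(\cdot)]$ is monotone and equals $\cF[\cC_{n,k}]$ on $[-1,0]$, hence is constant equal to $\cF[\cC_{n,k}]$ on the whole interval by monotonicity, so by the rigidity case of Huisken's monotonicity $\cM_\infty$ is the static cylinder for all times in the interval.

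Finally, upper semicontinuity of the Gaussian area under Brakke convergence gives
\[
\cF[\cC_{n,k}] = \cF[\cM_\infty(\tau_\infty)] \geq \limsup_{j\to\infty}\cF[\cM_j(\tau_j)]\,,
\]
which combined with $\cF[\cM_j(\tau_j)]<\frac12\cF[\cC_{n,k}]$ is not yet a contradiction — so the right inequality to use is the \emph{lower} semicontinuity one actually gets here. More precisely: since $\cM_j\to\cC_{n,k}$ in the Brakke sense on $[-1,0]$ and the flows are unit-regular with the limit smooth, Brakke--White $\eps$-regularity upgrades the convergence to smooth convergence on compact subsets of spacetime; this smooth convergence propagates (via the RMCF equation and the graphical estimates near $\cC_{n,k}$, using that $\cC_{n,k}$ has bounded geometry) to smooth convergence on a slightly larger time interval covering $[0, \tau_\infty]$, whence $\cF[\cM_j(\tau_j)]\to\cF[\cC_{n,k}]$, contradicting $\cF[\cM_j(\tau_j)]<\frac12\cF[\cC_{n,k}]$. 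I expect the main obstacle to be making the propagation of closeness rigorous: one must rule out that the flow ``escapes'' the $L^2$-neighborhood of $\cC_{n,k}$ immediately after time $0$ — this is handled by the $L^2$-nonconcentration / decay-order machinery of Lemma~\ref{Lem_L^2 Noncon_for RMCF}, which gives quantitative control of $\mbfd_{n,k}(\tau,\cM(\tau))$ on $[0,1]$ in terms of its value at earlier times, so that $\delta$-$L^2$-closeness on $[T-1,T]$ with $\delta$ small forces $\Psi(\delta\,|\,n,\eps)$-$L^2$-closeness on $[T,T+1]$, which in turn via White's regularity yields the Gaussian bound; choosing $\delta_{\ref{lem_delta_L2_close_implies_Gaussian_lower_bound}}$ so that $\Psi(\delta\,|\,n,\eps)$ is below the threshold in White's $\eps$-regularity completes the proof without any contradiction argument at all.
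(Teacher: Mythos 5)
Your final paragraph lands on essentially the paper's argument, but as written it is circular. You claim that Lemma~\ref{Lem_L^2 Noncon_for RMCF} upgrades $\delta$-$L^2$-closeness on $[T-1,T]$ to $\Psi(\delta|n,\eps)$-$L^2$-closeness on $[T,T+1]$, and that White's regularity then "yields the Gaussian bound." But ``$\delta$-$L^2$ close'' is \emph{defined} in \eqref{Equ_L^2 Mono_delta L^2 close to cylinder C_(n,k)} to include the two-sided Gaussian area bound that the lemma is trying to prove; Lemma~\ref{Lem_L^2 Noncon_for RMCF} only propagates the cut-off distance $\mbfd_{n,k}$ forward, and a small $\mbfd_{n,k}$ on $[T,T+1]$ by itself does not trigger Brakke--White regularity there (since $\odist$ is bounded, $\mbfd_{n,k}$ would also be small if sheets of the flow simply disappeared, which is exactly the scenario the lower Gaussian bound must exclude). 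The paper instead applies Lemma~\ref{Lem_L^2 clos => C^2 close} \emph{only on $[T-1,T]$}, where all hypotheses hold, to conclude that $\cM(T)$ is a $C^2$-graph over $\cC_{n,k}\cap Q_R$ with norm $O(\delta)$; it then propagates graphicality \emph{forward} on $[T,T+1]$ by the local regularity theorem for flows starting from an almost-flat graph (no measure-closeness hypothesis on $[T,T+1]$ is needed for this step), and finally computes $\cF[\cM(\tau)]$ directly from the graph on $Q_{R/2}$ plus the entropy tail estimate of Proposition~\ref{prop:entropy control outside ball of radius R}. The missing idea in your direct route is precisely this forward propagation of graphicality from the time-$T$ slice.

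Your compactness/contradiction scheme could also be made to work, but the ``more robust'' rigidity step is wrong as stated: knowing that $\tau\mapsto\cF[\cM_\infty(\tau)]$ is non-increasing and constant on $[-1,0]$ gives only $\cF[\cM_\infty(\tau)]\le\cF[\cC_{n,k}]$ for $\tau>0$; it does not force constancy forward in time, which is the direction you need. What actually closes that argument is forward uniqueness for unit-regular Brakke flows emanating from the smooth complete hypersurface $\cC_{n,k}$ (again a pseudolocality/local-regularity statement), together with the tail control from the entropy bound to upgrade locally smooth convergence to convergence of Gaussian areas. Also note the upper bound needs no work at all: it follows from Huisken's monotonicity and the hypothesis at time $T$, as you observed at the outset.
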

	\begin{proof}
		By Lemma \ref{Lem_L^2 clos => C^2 close}, when $\delta(n,\eps)$ is sufficiently small,  $\cM(T)\cap Q_{R_n}$ is a $C^2$-graph of a function $u$ over $\cC_{n,k}\cap Q_{R_n}$, with $\|u\|_{C^2(\cC_{n,k}\cap Q_{R_{n,\eps}})}\leq C(n,\eps)\delta(n,\eps)$. Then by the Brakke-White regularity theorem, $\cM$ is a graph of some function over $\cC_{n,k}\cap Q_{R_{n,\eps}/2}$, with $\|u(\cdot,\tau)\|_{C^2(\cC_{n,k}\cap Q_{R_{n,\eps}})}\leq C'(n,\eps)\delta(n,\eps)$, for $\tau\in[T,T+1]$. Therefore, for $\tau\in[T,T+1]$ and a sufficiently large choice of $R_{n,\eps}$, when $\delta\ll 1$, we have $\cF[\cM(\tau)]\geq \frac{2}{3}\cF[\cC_{n,k}\cap Q_{R/2}]>\frac{1}{2}\cF[\cC_{n,k}]$; $\cF[\cM(\tau)]\leq \frac{4}{3}\cF[\cC_{n,k}\cap Q_{R_n/2}]+O(\eps^{-1}e^{-R^2/16})\leq \frac{3}{2}\cF[\cC_{n,k}]$.
	\end{proof}
	
	The following Lemma will be used in estimating the graphical radius in Section \ref{Sec_AsympProfile}.
	\begin{Lem} \label{Lem_Prelim_M cap Q_R in Q_2n}
		For every $\eps\in (0, 1)$, there exists $\delta_{\ref{Lem_Prelim_M cap Q_R in Q_2n}}(n, \eps)\in (0, \eps)$ with the following property. 
		Let $T\geq 1$, $\tau\mapsto \cM(\tau)$ be a RMCF in $\R^{n+1}$ with entropy $\lambda[\cM]\leq \eps^{-1}$ and $\delta_{\ref{Lem_Prelim_M cap Q_R in Q_2n}}$-$L^2$ close to $\cC_{n,k}$ over $[0, T]$. Then for every $\tau\in [1, T]$, \[
		(\spt\cM)(\tau) \cap \left(B^{n-k+1}_{2n+e^{\tau/2}}\times B^k_{2n} \right) \subset Q_{2n} \,.
		\]
	\end{Lem}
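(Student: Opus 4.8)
The plan is to argue by contradiction and compactness. Suppose the conclusion fails for some $\eps$; then there are $\delta_j\downarrow 0$, RMCFs $\cM_j$ with $\lambda[\cM_j]\le\eps^{-1}$ that are $\delta_j$-$L^2$ close to $\cC_{n,k}$ over $[0,T_j]$ with $T_j\ge 1$, and points $P_j=(x_{P_j},y_{P_j})\in(\spt\cM_j)(\tau_j)$ with $\tau_j\in[1,T_j]$, $|x_{P_j}|\in[2n,2n+e^{\tau_j/2}]$ and $|y_{P_j}|<2n$. Passing to a subsequence we may assume that either $\sup_j|x_{P_j}|<\infty$ or $|x_{P_j}|\to\infty$.

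In the first case $P_j$ stays in a fixed compact region, and I would time-translate: set $\cM_j'(s):=\cM_j(\tau_j-1+s)$, $s\in[0,1]$, which (since $[\tau_j-1,\tau_j]\subset[0,T_j]$) is again an RMCF with entropy $\le\eps^{-1}$ that is $\delta_j$-$L^2$ close to $\cC_{n,k}$ on $[0,1]$. By compactness of Brakke flows with bounded entropy a subsequence of $\cM_j'$ converges in the Brakke sense to an integral Brakke flow $\cM_\infty'$ on $[0,1]$. Weak-$*$ convergence of the time slices together with $\mbfd_{n,k}(s,\cM_j'(s))\le\delta_j\to 0$ forces $\spt\cM_\infty'(s)\subset\cC_{n,k}$ for every $s$, and then the constancy theorem together with the Gaussian-area pinching $\cF[\cM_j'(s)]\in[\tfrac12,\tfrac32]\cF[\cC_{n,k}]$ gives $\cM_\infty'(s)=\cC_{n,k}$; in particular the convergence is smooth on compact subsets of spacetime by Brakke--White regularity. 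Hence $P_j\in(\spt\cM_j')(1)$ subconverges to some $P_\infty\in\cC_{n,k}$, so $|x_{P_\infty}|=\varrho=\sqrt{2(n-k)}$, contradicting $|x_{P_\infty}|=\lim_j|x_{P_j}|\ge 2n>\varrho$.

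The second case, $|x_{P_j}|\to\infty$ (which forces $\tau_j\to\infty$), is the main difficulty, because $P_j$ now escapes to spatial infinity and the $L^2$-distance, because of the Gaussian weight, cannot directly detect a far sheet at $|x|\sim e^{\tau/2}$. I would pass to the associated MCFs $\bM_j(t):=(-t)^{1/2}\cM_j(-\ln(-t))$ on $[-1,-e^{-T_j}]$ and to the points $Y_j:=e^{-\tau_j/2}P_j\in(\spt\bM_j)(t_j)$, $t_j:=-e^{-\tau_j}\to 0^-$. The choice of outer radius $e^{\tau_j/2}$ is exactly what makes $|Y_j|$ bounded by a constant $\rho_n$: $|Y_j|^2=e^{-\tau_j}(|x_{P_j}|^2+|y_{P_j}|^2)\le 8n^2e^{-\tau_j}+4ne^{-\tau_j/2}+1\le\rho_n^2$. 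Moreover, since $|x_{P_j}|\ge 2n>|y_{P_j}|$, we have $|(Y_j)_y|/|(Y_j)_x|=|y_{P_j}|/|x_{P_j}|<2n/|x_{P_j}|\to 0$, so $|(Y_j)_x|/|Y_j|\to 1$ and $|(Y_j)_y|/|Y_j|\to 0$; thus the normalized points $\hat P_j:=|Y_j|^{-1}Y_j$ are unit vectors whose $\RR^k$-component tends to $0$, and any subsequential limit $\hat P^\ast$ of $\hat P_j$ satisfies $|\hat P^\ast_x|=1$, in particular $\hat P^\ast\notin\{0\}\times\RR^k$.

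To produce the contradiction I would blow up at the spacetime origin at scale $|Y_j|$, i.e. consider $\hat\bM_j:=|Y_j|^{-1}\cdot\bM_j(|Y_j|^2\,\cdot\,)$; these have entropy $\le\eps^{-1}$, and their RMCFs are the time-translates $\hat\cM_j(\tau)=\cM_j(\tau-2\ln|Y_j|)$ of $\cM_j$. Since $|Y_j|\le\rho_n$ and $|Y_j|\ge e^{-\tau_j/2}|x_{P_j}|$ with $T_j\ge\tau_j$, the interval $[2\ln|Y_j|,\,T_j+2\ln|Y_j|]$ on which $\hat\cM_j(\tau)$ is $\delta_j$-$L^2$ close to $\cC_{n,k}$ has left endpoint $\le 2\ln\rho_n$ and right endpoint $\ge 2\ln|x_{P_j}|+O(1)\to\infty$; by the same weak-$*$/constancy argument as above, $\hat\cM_j\to\cC_{n,k}$ on intervals exhausting $(2\ln\rho_n,\infty)$, i.e. $\hat\bM_j$ converges in the Brakke sense to the self-shrinking cylinder $\mbfC_{n,k}=\coprod_{t\le 0}(\sqrt{-t}\,\cC_{n,k})\times\{t\}$ on $(-\rho_n^{-2},0)$ (here I use the self-similarity of $\mbfC_{n,k}$ so this is insensitive to whether $|Y_j|\to 0$ or to a positive limit). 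Now $(\hat P_j,\hat t_j)\in\spt\hat\bM_j$ with $\hat t_j=|Y_j|^{-2}t_j$, and $|\hat t_j|\le|x_{P_j}|^{-2}\to 0$, so $(\hat P_j,\hat t_j)\to(\hat P^\ast,0)$; upper semicontinuity of the Gaussian density under Brakke convergence gives $\Theta_{\mbfC_{n,k}}(\hat P^\ast,0)\ge\limsup_j\Theta_{\hat\bM_j}(\hat P_j,\hat t_j)\ge 1$, so $(\hat P^\ast,0)\in\spt\mbfC_{n,k}$. But $\spt\mbfC_{n,k}$ meets $\{t=0\}$ only along the spine $\{0\}\times\RR^k$ (immediate from the definition of $\mbfC_{n,k}$, since $\sqrt{-t}\,\cC_{n,k}\to\{0\}\times\RR^k$ as $t\to 0^-$; alternatively, for $X\notin\{0\}\times\RR^k$ the cylinder $\sqrt{-t}\,\cC_{n,k}$ lies at distance $\gtrsim|X_x|$ from $X$ once $-t$ is small, so the backward heat kernel integral at $(X,0)$ vanishes). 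This contradicts $\hat P^\ast\notin\{0\}\times\RR^k$ and completes the proof. The crux is this second case; the preceding paragraphs show why $2n$ can be replaced by any constant $>\varrho$ and why $e^{\tau/2}$ is the right outer radius — it puts the suspicious region at bounded distance from the singularity after unrescaling, which is what forces the scale-$|Y_j|$ blow-up to be the shrinking cylinder.
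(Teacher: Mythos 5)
Your proof is correct and follows essentially the same strategy as the paper's: a contradiction/compactness argument in which one parabolically rescales about the spacetime origin by the distance from the offending point to the axis, identifies the limit as the multiplicity-one shrinking cylinder via the $L^2$-closeness, and then contradicts upper semicontinuity of the Gaussian density because the limit point has unit spherical component while lying at a time where the cylinder has radius at most $\sqrt{2(n-k)}/2n<1$. The only difference is cosmetic: you split into the cases $|x_{P_j}|$ bounded and $|x_{P_j}|\to\infty$, whereas the paper runs a single time-translation/rescaling by $|x_j|$ (with subsequential limit $a=\lim|x_j|^{-1}\in[0,1/2n]$) that treats both cases at once.
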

	\begin{proof}
		Suppose for contradiction that there exist $\tau_j\geq 1$, RMCF $\cM_j$ with $\lambda[\cM]\leq \eps^{-1}$ which is $1/j$-$L^2$ close to $\cC_{n,k}$ over $[0, \tau_j]$, and $(x_j, y_j)\in \R^{n-k+1}\times \R^k$ such that 
		\begin{align*}
			(x_j, y_j)\in (\spt\cM_j)(\tau_j)\,, & & 
			2n\leq |x_j| \leq 2n + e^{\tau_j/2}, \quad |y_j|\leq 2n\,.
		\end{align*}
		Also suppose $|x_j|^{-1}$ converges to some $a\in [0, 1/2n]$.
		
		Consider the time-translated RMCF $\tau\mapsto \hat\cM_j(\tau):= \cM_j(\tau + \tau_j- 2\ln|x_j|)$. We then see that $\hat\cM_j$ is $1/j$-$L^2$ close to $\cC_{n,k}$ over $[2\ln|x_j|-\tau_j, 2\ln|x_j|]$, and that $(x_j, y_j)\in (\spt\hat\cM_j)(2\ln|x_j|)$. 
		Let $t\mapsto \hat\bM_j(t):= \sqrt{-t}\, \hat\cM_j(-\ln(-t))$ be the MCF associated to $\hat\cM_j$, then when $j\to \infty$, we have $\bM_j$ converges in the Brakke sense to the shrinking round cylinder $t\mapsto \sqrt{-t}\, \cC_{n,k}$. However, by the upper semi-continuity of Gaussian density under Brakke convergence, since $(x_j/|x_j|, y_j/|x_j|, -|x_j|^{-2})\in \spt\hat\bM_j$, its subsequential limit $(\hat x_\infty, 0, t_\infty=-a^2)$ must be a point in the support of a shrinking round cylinder. This is a contradiction since $|\hat x_\infty|=1$ but $a\leq 1/2n$.
	\end{proof}

	An application of Lemma \ref{Lem_L^2 Noncon_for RMCF} is that the decay order upper bound allows us to take the normalized limit of the graphical function of rescaled mean curvature flow over round cylinders.
	\begin{Lem} \label{Lem_L^2 Noncon_Induced Parab Jac}
		Let $\beta>0$, $\delta_j\searrow 0$; Let $\cM_j$ be a sequence of RMCF in $\RR^{n+1}$ over $[0, T]$ converging to $\cC_{n,k}$ in the Brakke sense, where $T\geq 1$; $\bu_j\in \scU$ be a sequence of arrival time functions approaching $\mbfU_{n,k}$ as $j\to \infty$. Suppose \[
		\limsup_{j\to \infty} \cN_{\bu_j}(0, \cM_j) <+\infty \,.
		\]
		Let $v_j(\cdot, \tau)$ be the graphical function of $\cM_j(\tau)$ over $\cC_{n,k}$, defined on a larger and larger domain as $j\to \infty$. Then after passing to a subsequence, $\hat{v}_j:= \mbfd_{\bu_j}(1, \cM_j)^{-1}(v_j - \varphi_{\bu_j})$ converges to some non-zero $\hat{v}$ in $C^\infty_{loc}(\cC_{n,k}\times (0, T])$ solving \[
		\partial_\tau \hat v - L_{n,k} \hat v = 0\,.
		\]
		Moreover, for every $\tau\in (0, T]$, we have \[
		\|\hat{v}(\cdot, \tau)\|_{L^2(\cC_{n,k})} = \lim_{j\to \infty} \mbfd_{\bu_j}(1,\cM_j)^{-1}\mbfd_{\bu_j}(\tau, \cM_j) < +\infty\,.     \]
	\end{Lem}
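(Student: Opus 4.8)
The plan is a normalized blow-up argument, extracting a parabolic Jacobi field in the spirit of uniqueness-of-tangent-flow proofs. Write $d_j:=\mbfd_{\bu_j}(1,\cM_j)$ (which we may assume is positive, since otherwise there is nothing to prove), $w_j:=v_j-\varphi_{\bu_j}$, and $\hat v_j:=d_j^{-1}w_j$. First I would record the graphical smallness. Since $\cM_j\to\cC_{n,k}$ in the Brakke sense over $[0,T]$ and the limit is a smooth (rescaled) flow, Brakke--White $\eps$-regularity \cite{White05_MCFReg} shows that on every compact $\cK\subset\cC_{n,k}\times(0,T]$ the graphical functions $v_j$ are eventually defined and $v_j\to 0$ in $C^\infty(\cK)$; in particular the graphical region of $\cM_j$ exhausts $\cC_{n,k}$ on each slice $\tau>0$. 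Moreover $\bu_j\to\mbfU_{n,k}$, and by Remark \ref{Rem_L^2 Noncon_Scal Invar of (T2)(T3)} and Lemma \ref{Lem_L^2Noncon_LowSphericalMode} we have $\varphi_{\bu_j}\to 0$ in $C^\infty_{\loc}(\cC_{n,k}\times[0,T])$. Hence $w_j\to 0$ in $C^\infty_{\loc}(\cC_{n,k}\times(0,T])$, and $\hat v_j$ is eventually defined on every compact subset of $\cC_{n,k}\times(0,T]$.

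Both $v_j$ and $\varphi_{\bu_j}$ solve the graphical RMCF equation $(\partial_\tau-L_{n,k})u=\cQ(u,\nabla u,\nabla^2 u)$ over their graphical regions, where $\cQ$ vanishes to second order at the origin. Subtracting and writing $\cQ(v_j,\dots)-\cQ(\varphi_{\bu_j},\dots)=\bigl(\int_0^1 D\cQ((1-s)\varphi_{\bu_j}+sv_j,\dots)\,ds\bigr)\ast(w_j,\nabla w_j,\nabla^2 w_j)$, I see that $w_j$, hence $\hat v_j$ by linearity, solves a uniformly parabolic equation $(\partial_\tau-L_{n,k})\hat v_j=\mathfrak a_j\ast\nabla^2\hat v_j+\mathfrak b_j\ast\nabla\hat v_j+\mathfrak c_j\,\hat v_j$ whose coefficients $\mathfrak a_j,\mathfrak b_j,\mathfrak c_j\to 0$ in $C^\infty_{\loc}(\cC_{n,k}\times(0,T])$. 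To pass to a limit I need uniform local bounds on $\hat v_j$, and this is where the $L^2$-nonconcentration enters. The hypothesis gives $\mbfd_{\bu_j}(0,\cM_j)\le e^{C}d_j$; Lemma \ref{Lem_L^2 Noncon_for RMCF} then yields the forward bound $\mbfd_{\bu_j}(\tau,\cM_j)\lesssim_{n,T}d_j$ for all $\tau\in(0,T]$, and the discrete almost-monotonicity of the decay order $\cN_{\bu_j}$ (re-derived as in \cite[Section 3]{SunWangXue1_Passing}, together with the universal lower bound \eqref{Equ_L^2 Noncon_Dim Lower Bd for cN}) propagates the bound forward so that $\mbfd_{\bu_j}(\tau,\cM_j)\sim_{n,\delta,T}d_j$ for $\tau\in[\delta,T]$ and all large $j$. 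On each $\cC_{n,k}\cap Q_R$, for $j$ large the graph is flat enough that $\odist_{\bu_j}(\cdot,\tau)$ restricted to $\cM_j\cap Q_R$ agrees with $w_j(\cdot,\tau)$ and the weighted measure $d\mu$ matches the cylinder's Gaussian measure up to a factor $1+o_j(1)$; hence $\|w_j(\cdot,\tau)\|_{L^2(\cC_{n,k}\cap Q_R)}=(1+o_j(1))\,\mbfd_{\bu_j}(\tau,\cM_j\cap Q_R)\le(1+o_j(1))\,\mbfd_{\bu_j}(\tau,\cM_j)$, so $\|\hat v_j(\cdot,\tau)\|_{L^2(\cC_{n,k}\cap Q_R)}$ is bounded uniformly in $j$ on compact time intervals. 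Interior parabolic estimates ($L^2\to L^\infty$ followed by Schauder bootstrap, using the smooth convergence of the coefficients) promote this to uniform $C^\infty$ bounds on compact subsets, so after passing to a subsequence (Arzel\`a--Ascoli and diagonalization) $\hat v_j\to\hat v$ in $C^\infty_{\loc}(\cC_{n,k}\times(0,T])$ with $\partial_\tau\hat v-L_{n,k}\hat v=0$.

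To finish, fix $\tau\in(0,T]$ and $R\ge R_{\ref{Cor_L^2 Noncon_Compare d(M), cN(M) and d(M in Q_R), cN(M in Q_R)}}$. On one hand $\|\hat v_j(\cdot,\tau)\|_{L^2(\cC_{n,k}\cap Q_R)}\to\|\hat v(\cdot,\tau)\|_{L^2(\cC_{n,k}\cap Q_R)}$ by the local smooth convergence; on the other, combining the graph/$\mbfd$ comparison above with the non-concentration estimate of Corollary \ref{Cor_L^2 Noncon_Compare d(M), cN(M) and d(M in Q_R), cN(M in Q_R)} applied to $\cM_j$ gives $\|\hat v_j(\cdot,\tau)\|_{L^2(\cC_{n,k}\cap Q_R)}=(1+o_j(1))(1-\eps_R)\,d_j^{-1}\mbfd_{\bu_j}(\tau,\cM_j)$ with $\eps_R\to 0$ as $R\to\infty$, uniformly in $j$. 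Letting $j\to\infty$ and then $R\to\infty$ shows that $d_j^{-1}\mbfd_{\bu_j}(\tau,\cM_j)$ converges, along the chosen subsequence, to $\|\hat v(\cdot,\tau)\|_{L^2(\cC_{n,k})}$, which is finite by the bound $\mbfd_{\bu_j}(\tau,\cM_j)\lesssim_{n,T}d_j$; at $\tau=1$ this value is $1$, so in particular $\hat v\not\equiv 0$. The main obstacle is the middle paragraph: converting the one-time decay-order hypothesis into uniform two-sided control of $\mbfd_{\bu_j}(\tau,\cM_j)$ on compact time intervals via Lemma \ref{Lem_L^2 Noncon_for RMCF} and the almost-monotonicity of $\cN_{\bu_j}$, and faithfully identifying the extrinsic distance $\mbfd_{\bu_j}$ (built from the radial cutoff $\chi$ and the Gaussian weight $d\mu$ on $\cM_j$) with the intrinsic weighted $L^2$-norm of $w_j$ on $\cC_{n,k}\cap Q_R$; once these are in hand, the PDE compactness and the limit passage are routine.
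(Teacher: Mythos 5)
Your proposal follows essentially the same route as the paper's proof: Brakke--White regularity for the smooth graphical convergence, the $L^2$-nonconcentration of Lemma \ref{Lem_L^2 Noncon_for RMCF} and Corollary \ref{Cor_L^2 Noncon_Compare d(M), cN(M) and d(M in Q_R), cN(M in Q_R)} combined with the decay-order hypothesis to get uniform local $L^2$ bounds and tail control for $\hat v_j$, interior parabolic estimates for the $C^\infty_{loc}$ subconvergence, and the tail estimate to identify $\lim_{j}\mbfd_{\bu_j}(1,\cM_j)^{-1}\mbfd_{\bu_j}(\tau,\cM_j)$ with $\|\hat v(\cdot,\tau)\|_{L^2}$. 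The one blemish is your claimed two-sided bound $\mbfd_{\bu_j}(\tau,\cM_j)\sim d_j$ for $\tau>1$, whose lower half does not follow from the cited almost-monotonicity; but it is also not needed, since the absolute form of the tail bound coming from Lemma \ref{Lem_L^2 Noncon_for RMCF} (tail mass at time $\tau$ controlled by $d_j/(\sqrt{\tau}R)$ rather than by $\mbfd_{\bu_j}(\tau,\cM_j)$) suffices for both the compactness and the limit identification, which is exactly how the paper proceeds.
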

	We shall call such non-zero renormalized limit $\hat v$ a \textbf{induced (parabolic) Jacobi field} from the sequence $\{\cM_j\}$ relative to $\{\bu_j\}$. We refer the readers to Lemma \ref{Lem_App_Analysis of Parab Jacob field} for properties of parabolic Jacobi fields over $\cC_{n,k}$.

	\begin{proof}
		By Brakke-White regularity of mean curvature flow and interior parabolic estimate, $v_j$ is defined on a larger and larger domain exhausting $\cC_{n,k}$ and $(v_j-\varphi_{\bu_j})\to 0$ in $C^\infty_{loc}(\cC_{n,k}\times (0, T])$ as $j\to \infty$.  
		By Lemma \ref{Lem_App_Graph over Cylinder}, Corollary \ref{Cor_L^2 Noncon_Compare d(M), cN(M) and d(M in Q_R), cN(M in Q_R)} and the upper bound of decay order (denoted by $\epsilon^{-1}$), there exists $R_1(n,\epsilon)\gg1$ such that for every $\tau\in (0, T]$, every $R\geq \tau^{-1}R_1$ and $j\gg1$, we have \[
		\|(v_j-\varphi_{\bu_j})(\tau, \cdot)\|_{L^2(\cC_{n,k}\setminus Q_R)} + \mbfd_{\bu_j}(\tau, \cM_j\setminus Q_R) \leq \frac{C(n,\epsilon)}{\tau R^2}\cdot \mbfd_{\bu_j}(1, \cM_j\setminus Q_R).
		\]
		Therefore, combined with Lemma \ref{Lem_App_Analysis of Parab Jacob field} and the classical parabolic regularity estimates, $\hat{v}_j:= \mbfd_{\bu_j}(1,\cM_j)^{-1}\, (v_j-\varphi_{\bu_j})$ subconverges to some non-zero $\hat{v}\in C^\infty(\cC_{n,k}\times (0, T])$, and such that for every $\tau\in (0, T]$, \[
		\|\hat{v}(\cdot,\tau)\|_{L^2(\cC_{n,k})} =  \lim_{j\to \infty} \mbfd_{\bu_j}(1,\cM_j)^{-1}\cdot \mbfd_{\bu_j}(\tau, \cM_j) <+\infty \,.
		\] 
	\end{proof}

	\begin{Cor} \label{Cor_L^2 Mono_RMCF w graphical eigenfunc has cN = spectrum}
		For every $\eps\in (0, 1/2)$, there exists a $\delta_{\ref{Cor_L^2 Mono_RMCF w graphical eigenfunc has cN = spectrum}}(n, \eps)\in (0, \epsilon)$ with the following significance. 
		Let $\bu\in \scU$, $T_\circ>0$ so that $\|\varphi_\bu(\cdot,T_\circ)\|_{C^4}<\delta_{\ref{Cor_L^2 Mono_RMCF w graphical eigenfunc has cN = spectrum}}$, $T\in [1, \eps^{-1}]$,  $\cM$ be a RMCF in $\RR^{n+1}$ $\delta_{\ref{Cor_L^2 Mono_RMCF w graphical eigenfunc has cN = spectrum}}$-$L^2$ close to $\cC_{n,k}$ over $[T_\circ, T_\circ+T]$ such that \[
		\cN_\bu(0, \cM) \leq \epsilon^{-1}\,.
		\] 
		Let $\gamma\in [-\eps^{-1}, \eps^{-1}]$, $\tau_\circ\in [\eps, T]$, $\sim\in \{\geq, =, \leq\}$, and $v$ be the graphical function of $\cM$ over $\cC_{n,k}$. Also, suppose that \[
		\|\Pi_{\sim\gamma}((v-\varphi_\bu)(\cdot, \tau_\circ))\|_{L^2} \geq (1-\delta)\|(v-\varphi_\bu)(\cdot, \tau_\circ)\|_{L^2} \,.  \]
		Then for every $\tau\in [\eps, T]$, 
		\begin{align*}
			\cN_\bu(\tau, \cM) - \gamma \begin{cases}
				\leq  \eps , & \text{ if }\sim \text{ is }\leq\,; \\
				\geq -\eps, & \text{ if }\sim \text{ is }\geq\,; \\
				\in [-\eps, \eps], & \text{ if }\sim \text{ is }=\,.  \\
			\end{cases}
		\end{align*}
	\end{Cor}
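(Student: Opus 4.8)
The plan is to prove the Corollary by contradiction and compactness, reducing it to a rigidity statement for parabolic Jacobi fields over $\cC_{n,k}$ (Lemma \ref{Lem_App_Analysis of Parab Jacob field}). Suppose the assertion fails for some fixed $\eps\in(0,1/2)$. Then for every $j\geq 1$ there are $\bu_j\in\scU$, $T_{\circ,j}>0$ with $\|\varphi_{\bu_j}(\cdot,T_{\circ,j})\|_{C^4}<1/j$, $T_j\in[1,\eps^{-1}]$, $\gamma_j\in[-\eps^{-1},\eps^{-1}]$, $\tau_{\circ,j}\in[\eps,T_j]$, a relation $\sim\in\{\leq,=,\geq\}$ which after passing to a subsequence is the same for all $j$, and a RMCF $\cM_j$ that is $(1/j)$-$L^2$ close to $\cC_{n,k}$ on the relevant interval, with $\cN_{\bu_j}(0,\cM_j)\leq\eps^{-1}$ and graphical function $v_j$ satisfying $\|\Pi_{\sim\gamma_j}((v_j-\varphi_{\bu_j})(\cdot,\tau_{\circ,j}))\|_{L^2}\geq(1-1/j)\|(v_j-\varphi_{\bu_j})(\cdot,\tau_{\circ,j})\|_{L^2}$, yet $\cN_{\bu_j}(\tau_j,\cM_j)-\gamma_j$ violates the stated bound at some $\tau_j\in[\eps,T_j]$. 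Passing to a further subsequence I may assume $T_j\to T_\infty\in[1,\eps^{-1}]$, $\gamma_j\to\gamma_\infty$, $\tau_{\circ,j}\to\tau_{\circ,\infty}$, $\tau_j\to\tau_\infty$ (all in $[\eps,T_\infty]$); Brakke--White regularity gives $\cM_j\to\cC_{n,k}$ in $C^\infty_\loc$, and the $C^4$-smallness combined with the structure of low spherical flows in Lemma \ref{Lem_L^2Noncon_LowSphericalMode} forces $\bu_j\to\mbfU_{n,k}$.

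\textbf{Extracting the limiting Jacobi field.} Since $\cN_{\bu_j}(0,\cM_j)\leq\eps^{-1}$ is uniformly bounded, Lemma \ref{Lem_L^2 Noncon_Induced Parab Jac} gives, after a further subsequence, that $\hat v_j:=\mbfd_{\bu_j}(1,\cM_j)^{-1}(v_j-\varphi_{\bu_j})$ converges in $C^\infty_\loc(\cC_{n,k}\times(0,T_\infty])$ to a nonzero $\hat v$ solving $\partial_\tau\hat v=L_{n,k}\hat v$, with $\|\hat v(\cdot,\tau)\|_{L^2}=\lim_j\mbfd_{\bu_j}(1,\cM_j)^{-1}\mbfd_{\bu_j}(\tau,\cM_j)$ for every $\tau$. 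Using the mass-localization bound of Corollary \ref{Cor_L^2 Noncon_Compare d(M), cN(M) and d(M in Q_R), cN(M in Q_R)}, which confines a definite proportion of the $L^2$-mass of $\cM_j$ to a fixed cube $Q_R$ once $\tau\geq\eps$, this upgrades to $\hat v_j(\cdot,\tau)\to\hat v(\cdot,\tau)$ in $L^2(\cC_{n,k})$ and $\cN_{\bu_j}(\tau,\cM_j)\to\cN(\tau,\hat v):=\ln\big(\|\hat v(\cdot,\tau)\|_{L^2}/\|\hat v(\cdot,\tau+1)\|_{L^2}\big)$, locally uniformly in $\tau$. A step I would want to carry out carefully is that $\hat v(\cdot,\tau)\neq 0$ for \emph{every} $\tau$: because $L_{n,k}$ is self-adjoint and bounded above, $e^{sL_{n,k}}$ is injective for $s\geq 0$, so $\hat v(\cdot,\tau_0)=0$ would propagate by flowing forward from an earlier time to $\hat v\equiv 0$; this is part of Lemma \ref{Lem_App_Analysis of Parab Jacob field}. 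Hence $\|\hat v(\cdot,\tau_{\circ,\infty})\|_{L^2}>0$; dividing the spectral-concentration hypothesis by $\|\hat v_j(\cdot,\tau_{\circ,j})\|_{L^2}$ and passing to the limit (after a further subsequence so that the projections $\Pi_{\sim\gamma_j}$ stabilize, which is possible since $\sigma(\cC_{n,k})$ is discrete; note that when $\sim$ is $=$ the hypothesis forces $\gamma_j\in\sigma(\cC_{n,k})$, hence $\gamma_j=\gamma_\infty$ for large $j$), I obtain $\hat v(\cdot,\tau_{\circ,\infty})\in\bigoplus_{\lambda\sim\gamma_\infty}\rmW_\lambda(\cC_{n,k})$.

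\textbf{Spectral computation and conclusion.} Using the eigenfunction expansion of $\hat v$ from Lemma \ref{Lem_App_Analysis of Parab Jacob field}, write $\hat v(\cdot,\tau)=\sum_i a_i e^{-\lambda_i\tau}\phi_i$ with $\{\phi_i\}$ an $L^2$-orthonormal eigenbasis, $-L_{n,k}\phi_i=\lambda_i\phi_i$. Since the modes evolve independently, the vanishing of $\Pi_\lambda\hat v(\cdot,\tau_{\circ,\infty})$ for $\lambda\not\sim\gamma_\infty$ forces $a_i=0$ whenever $\lambda_i\not\sim\gamma_\infty$, so $\|\hat v(\cdot,\tau)\|_{L^2}^2=\sum_{\lambda_i\sim\gamma_\infty}a_i^2 e^{-2\lambda_i\tau}$ for all $\tau$. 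A one-line estimate then gives $\cN(\tau,\hat v)\leq\gamma_\infty$ when $\sim$ is $\leq$ (each surviving mode satisfies $e^{-2\lambda_i}\geq e^{-2\gamma_\infty}$), $\cN(\tau,\hat v)\geq\gamma_\infty$ when $\sim$ is $\geq$, and $\cN(\tau,\hat v)=\gamma_\infty$ when $\sim$ is $=$. Passing to the limit in the negated conclusion at $\tau_j$ — for instance $\cN_{\bu_j}(\tau_j,\cM_j)-\gamma_j>\eps$ in the case $\sim$ is $\leq$ — yields $\cN(\tau_\infty,\hat v)-\gamma_\infty\geq\eps>0$, contradicting $\cN(\tau_\infty,\hat v)\leq\gamma_\infty$; the other two cases are symmetric. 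I expect the main difficulty to lie not in any single step but in the bookkeeping of the second part: simultaneously establishing genuine convergence of the decay orders (which requires both the mass-localization of Corollary \ref{Cor_L^2 Noncon_Compare d(M), cN(M) and d(M in Q_R), cN(M in Q_R)} and the non-degeneracy $\hat v(\cdot,\tau)\neq 0$) and reconciling the time-parametrization conventions so that Lemma \ref{Lem_L^2 Noncon_Induced Parab Jac} applies directly.
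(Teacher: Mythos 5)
Your proposal is correct and follows essentially the same route as the paper, whose proof is exactly the one-line combination of Lemma \ref{Lem_L^2 Noncon_Induced Parab Jac} (to extract the induced parabolic Jacobi field by contradiction/compactness) with part (ii) of Lemma \ref{Lem_App_Analysis of Parab Jacob field} (the spectral rigidity of the linear decay order), which you rederive explicitly in your final paragraph. The additional bookkeeping you supply (non-vanishing of $\hat v$ at every time, stabilization of the projections, locally uniform convergence of the decay orders) is exactly the content the paper leaves implicit.
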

	\begin{proof}
		The corollary follows by a direct contradiction argument combining Lemma \ref{Lem_L^2 Noncon_Induced Parab Jac} and (ii) of Lemma \ref{Lem_App_Analysis of Parab Jacob field}.
	\end{proof}

	\begin{Cor}[Discrete Monotonicity of the decay order] \label{Cor_L^2 Mono_Discrete Growth Mono}
		For every $\eps\in (0, 1/2)$, there exists $\delta_{\ref{Cor_L^2 Mono_Discrete Growth Mono}}(n, \eps)\in (0, \eps)$ such that the following hold.  If $T\geq 0$, $\bu\in \scU$ such that $\|\varphi_\bu(\cdot,\tau)\|_{C^4}<\delta_{\ref{Cor_L^2 Mono_Discrete Growth Mono}}$ for $\tau\in[T,T+2]$, $\cM$ is a RMCF in $\RR^{n+1}$ $\delta_{\ref{Cor_L^2 Mono_Discrete Growth Mono}}$-$L^2$ close to $\cC_{n,k}$ over $[T, T+2]$, and satisfies the decay order bound, 
		\begin{align}
			\cN_{\bu}(0, \cM) \leq \epsilon^{-1} \,. \label{Equ_L^2 Mono_Entropy and Doubl Const Bd}
		\end{align}
		Then at least one of the following holds,
		\begin{align}
			&\text{either}  & -1-\epsilon \leq \cN_\bu(T+1, \cM) & \leq \cN_\bu(T, \cM) - \delta_{\ref{Cor_L^2 Mono_Discrete Growth Mono}}\,;  \label{Equ_L^2 Mono_Strict Doubl Const drop}  \\
			&\text{or}  & \sup_{\tau\in [T+\epsilon, T+1]}|\cN_\bu(\tau, \cM) - \gamma| & \leq \epsilon\,, \;\;\;\;\; \text{ for some }\gamma\in \sigma(\cC_{n,k})\,.  \label{Equ_L^2 Mono_Doubl Const close to spectrum}
		\end{align}
		Moreover, if (\ref{Equ_L^2 Mono_Strict Doubl Const drop}) fails and $\gamma$ is given by (\ref{Equ_L^2 Mono_Doubl Const close to spectrum}), then the graphical function $v$ of $\cM$ over $\cC_{n,k}$ satisfies 
		\begin{align}
			\|\Pi_{=\gamma} ((v-\varphi_\bu)(\cdot, \tau))\|_{L^2} \geq (1-\epsilon)\|(v-\varphi_\bu)(\cdot, \tau)\|_{L^2}\,,  \quad \forall\, \tau\in [\epsilon, 2] \,. \label{Equ_L^2 Mono_Jac field concentra in eigenspace} 
		\end{align}
	\end{Cor}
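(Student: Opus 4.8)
The plan is a contradiction--compactness argument that reduces the dichotomy to a rigidity property of parabolic Jacobi fields over $\cC_{n,k}$, in the spirit of the proof of Corollary \ref{Cor_L^2 Mono_RMCF w graphical eigenfunc has cN = spectrum}. Suppose the statement fails for every $\delta>0$: then there are $T_j\geq 0$, low spherical flows $\bu_j\in\scU$ with $\|\varphi_{\bu_j}(\cdot,\tau)\|_{C^4}<1/j$ on $[T_j,T_j+2]$, and RMCFs $\cM_j$ that are $(1/j)$-$L^2$ close to $\cC_{n,k}$ over $[T_j,T_j+2]$ with $\cN_{\bu_j}(0,\cM_j)\leq\eps^{-1}$, such that neither \eqref{Equ_L^2 Mono_Strict Doubl Const drop} nor \eqref{Equ_L^2 Mono_Doubl Const close to spectrum} holds for $\cM_j$ --- and, if one also wishes to contradict the ``moreover'' clause, such that \eqref{Equ_L^2 Mono_Jac field concentra in eigenspace} fails as well. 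After a time translation and the accompanying parabolic dilation of the arrival-time function (Remark \ref{Rem_L^2 Noncon_Scal Invar of (T2)(T3)}, which keeps it in $\scU$ up to dilation), I may assume $T_j=0$; the $C^4$-smallness of $\varphi_{\bu_j}$ on $[0,2]$ then forces, via the parametrization of $\scU$ in Lemma \ref{Lem_L^2Noncon_LowSphericalMode}, that $\bu_j\to\mbfU_{n,k}$ and $\varphi_{\bu_j}(\cdot,\tau)\to 0$ uniformly, and $(1/j)$-$L^2$ closeness gives $\cM_j\to\cC_{n,k}$ in the Brakke sense over $[0,2]$.

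Next I would extract an induced parabolic Jacobi field. The hypotheses, together with the universal lower bound \eqref{Equ_L^2 Noncon_Dim Lower Bd for cN} and (to dispose of the degenerate case $\mbfd_{\bu_j}(1,\cM_j)=0$, where the $L^2$-non-concentration of Lemma \ref{Lem_L^2 Noncon_for RMCF} forces $\cM_j$ to coincide with a low spherical flow and the conclusion becomes trivial) show that $\cN_{\bu_j}(0,\cM_j)$ stays bounded, so Lemma \ref{Lem_L^2 Noncon_Induced Parab Jac} applies over $[0,2]$: along a subsequence, $\hat v_j:=\mbfd_{\bu_j}(1,\cM_j)^{-1}(v_j-\varphi_{\bu_j})$ converges in $C^\infty_{\loc}(\cC_{n,k}\times(0,2])$, with uniformly small Gaussian tails by Corollary \ref{Cor_L^2 Noncon_Compare d(M), cN(M) and d(M in Q_R), cN(M in Q_R)} (hence in $L^2(\cC_{n,k})$ at each fixed $\tau>0$, locally uniformly in $\tau$), to a nonzero parabolic Jacobi field $\hat v$ with $\partial_\tau\hat v=L_{n,k}\hat v$. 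Consequently $\cN_{\bu_j}(\tau,\cM_j)\to\cN(\tau,\hat v):=\ln\big(\|\hat v(\cdot,\tau)\|_{L^2}/\|\hat v(\cdot,\tau+1)\|_{L^2}\big)$ for $\tau\in(0,1]$, and the bound $\cN_{\bu_j}(0,\cM_j)\leq\eps^{-1}$ with Lemma \ref{Lem_L^2 Noncon_for RMCF} shows $\hat v$ extends continuously to $\tau=0$ with $\cN_{\bu_j}(0,\cM_j)\to\cN(0,\hat v)$.

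Now I invoke the key input, the frequency-monotonicity and rigidity part of Lemma \ref{Lem_App_Analysis of Parab Jacob field}: $\tau\mapsto\cN(\tau,\hat v)$ is non-increasing with $\cN\geq -1$, and it is constant on a subinterval if and only if there $\hat v(\cdot,\tau)=e^{-\gamma\tau}\psi$ for some $\gamma\in\sigma(\cC_{n,k})$ and $\psi\in\rmW_\gamma(\cC_{n,k})$, in which case $\cN(\cdot,\hat v)\equiv\gamma$ and, by unique continuation of the Jacobi equation, $\hat v(\cdot,\tau)\in\rmW_\gamma(\cC_{n,k})$ for all $\tau\in[0,2]$. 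Since $\cN_{\bu_j}(1,\cM_j)\to\cN(1,\hat v)\geq -1>-1-\eps$, the failure of \eqref{Equ_L^2 Mono_Strict Doubl Const drop} forces $\cN_{\bu_j}(1,\cM_j)>\cN_{\bu_j}(0,\cM_j)-1/j$ for large $j$; in the limit $\cN(1,\hat v)\geq\cN(0,\hat v)$, so $\cN(\cdot,\hat v)$ is constant on $[0,1]$ and $\hat v=e^{-\gamma\tau}\psi$. Then $\cN_{\bu_j}(\tau,\cM_j)\to\gamma$ uniformly on $[\eps,1]$, so \eqref{Equ_L^2 Mono_Doubl Const close to spectrum} holds with this $\gamma$ for $j$ large --- a contradiction, proving the dichotomy. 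For the ``moreover'' clause the same rigid limit gives $\Pi_{=\gamma}\hat v(\cdot,\tau)=\hat v(\cdot,\tau)$ with $\|\hat v(\cdot,\tau)\|_{L^2}$ bounded below on $[\eps,2]$; since $\hat v_j\to\hat v$ in $L^2(\cC_{n,k})$ uniformly on $[\eps,2]$, one gets $\|\Pi_{=\gamma}\hat v_j(\cdot,\tau)\|_{L^2}\geq(1-\eps)\|\hat v_j(\cdot,\tau)\|_{L^2}$ there for $j$ large, contradicting the failure of \eqref{Equ_L^2 Mono_Jac field concentra in eigenspace}.

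I expect the main obstacle to be the bookkeeping that makes the compactness limit a bona fide parabolic Jacobi field possessing a well-defined decay order down to $\tau=0$: controlling, uniformly in $j$, the Gaussian tails of $v_j-\varphi_{\bu_j}$ on the exponentially growing cubes (Corollary \ref{Cor_L^2 Noncon_Compare d(M), cN(M) and d(M in Q_R), cN(M in Q_R)}), quantifying the error from replacing $\cC_{n,k}$ by the nearby flows $\cC_{\bu_j}$ (via Lemma \ref{Lem_L^2 Mono_Compare d_u(Sigma) w d_u'(Sigma) and d_u(Sigma + y)} and Lemma \ref{Lem_L^2Noncon_LowSphericalMode}), and ensuring the decay-order bound at time $0$ genuinely propagates so that $\cN_{\bu_j}(0,\cM_j)$ is bounded above. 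None of this is deep, but all of it must be in place before the rigidity of Jacobi fields can be brought to bear.
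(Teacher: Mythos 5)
Your proposal is correct and follows essentially the same route as the paper: a contradiction--compactness argument that extracts an induced parabolic Jacobi field via Lemma \ref{Lem_L^2 Noncon_Induced Parab Jac} and then concludes from the monotonicity and rigidity of the linear decay order $N_{n,k}$ in Lemma \ref{Lem_App_Analysis of Parab Jacob field}. The additional bookkeeping you flag (the degenerate case $\mbfd_{\bu_j}(1,\cM_j)=0$, the one-sided control of the decay order as $\tau\to 0^+$ via Lemma \ref{Lem_L^2 Noncon_for RMCF}, and the Gaussian tail control) consists of correct elaborations of steps the paper's much terser proof leaves implicit.
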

	\begin{Rem} \label{Rem_L^2 Mono_cN<gamma pass to next scale}
		A useful consequence of at least one of (\ref{Equ_L^2 Mono_Strict Doubl Const drop}) and (\ref{Equ_L^2 Mono_Doubl Const close to spectrum}) being true is that, if $\gamma\in \RR$ with $\dist_\RR(\gamma, \sigma(\cC_{n,k}))\geq \epsilon$, then when $T\geq T_\circ$, $\cN_\bu(T, \cM)\leq \gamma$ implies $\cN_\bu(T+1, \cM)\leq \gamma$. 
	\end{Rem}
	\begin{proof}
		In view of Lemma \ref{Lem_L^2 Noncon_for RMCF}, it suffices to show that for every $\epsilon>0$ and sequence of rescaled mean curvature flow $\tau\mapsto \cM_j(\tau)$ over $\tau\in [0,2]$ satisfying (\ref{Equ_L^2 Mono_Entropy and Doubl Const Bd}) with $\|\varphi_{\bu_j}(\cdot,\tau)\|_{C^4}<1/j$ for $\tau\in[T,T+2]$ and converging to the multiplicity $1$ static flow $\cC_{n,k}$ in the Brakke sense as $j\to \infty$, if \eqref{Equ_L^2 Mono_Strict Doubl Const drop} fails, namely,  
		\begin{align*}
			\text{ either }\;\; \cN_{\bu}(1, \cM_j) \geq \cN_{\bu_j}(0, \cM_j) - \frac1{j} \,, & &
			\text{ or }\;\; \cN_{\bu_j}(1, \cM_j) < -1-\epsilon = \inf \sigma(\cC_{n,k}) - \epsilon \,. 
		\end{align*}
		Then \eqref{Equ_L^2 Mono_Doubl Const close to spectrum} holds, namely, there exists $\gamma\in \sigma(\cC_{n,k})$ such that
		\begin{align*}
			\limsup_{j\to \infty} \sup_{\tau\in [T+\epsilon, T+1]} |\cN_{\bu_j}(\tau, \cM_j)-\gamma| = 0 \,. 
		\end{align*}
		and that the induced Jacobi fields from $\{\cM_j\}$ relative to $\{\bu_j\}$ exist and are all given by $e^{-\gamma\tau}w$ for some $\gamma$-eigenfunction $w$ of $-L_{n,k}$.
		
		By Lemma \ref{Lem_L^2 Noncon_Induced Parab Jac}, there always exists an induced Jacobi field $\hat v$, and \[
		\lim\limits_{j\to\infty}\cN_{\bu_j}(T,\cM_j)=\lim\limits_{j\to\infty}\cN_{\bu_j}(T+1,\cM_j)=N_{n,k}(v)=:\sigma \,,
		\] 
		where $v\sim e^{-\sigma\tau}$. Here $N_{n,k}(v)$ is the linear decay order of a parabolic Jacobi field $v$ defined in Lemma \ref{Lem_App_Analysis of Parab Jacob field}. Then the desired property is a direct consequence of the parabolic Jacobi field over $\cC_{n,k}$, see Lemma \ref{Lem_App_Analysis of Parab Jacob field}.
	\end{proof}
	
	A direct consequence of Corollary \ref{Cor_L^2 Mono_Discrete Growth Mono} is that the decay order at infinity is well-defined for those RMCF that converge to $\cC_{n,k}$.
	\begin{Cor} \label{Cor_L^2 Mono_Def cN_u(infty)}
		Let $\cM$ be a RMCF in $\R^{n+1}$ with finite entropy over $\R_{\tau\geq 0}$ such that $\cM(\tau)$ locally smoothly converges to $\cC_{n,k}$ as $\tau\to +\infty$; $\beta\in (0, 1)$, $\bu\in \scU$. Then the following limit exists and belongs to $(\sigma(\cC_{n,k})\cap \R_{\geq 0})\cup \{+\infty\}$: 
		\begin{align}
			\cN_{\bu}(\infty, \cM):= \lim_{\tau\to +\infty} \cN_\bu(\tau, \cM) \in (\sigma(\cC_{n,k})\cap \R_{\geq 0})\cup \{+\infty\} \,. \label{Equ_L^2 Mono_Def cN_u(infty)}
		\end{align}
	\end{Cor}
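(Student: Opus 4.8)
The plan is to obtain the result directly from the discrete almost-monotonicity of the decay order --- Corollary \ref{Cor_L^2 Mono_Discrete Growth Mono} together with Remark \ref{Rem_L^2 Mono_cN<gamma pass to next scale} --- and the universal lower bound \eqref{Equ_L^2 Noncon_Dim Lower Bd for cN}, following the same scheme already used in \cite[Section 3]{SunWangXue1_Passing} for $\bu=\mbfU_{n,k}$; I will only sketch it. As preliminary reductions I would note that we may assume $\mbfd_\bu(\tau,\cM(\tau))>0$ for all $\tau$ (otherwise $\cM\equiv\cC_\bu$ and the statement is vacuous), so that $\tau\mapsto\cN_\bu(\tau,\cM)$ is finite and continuous; and that, since $\cM(\tau)\to\cC_{n,k}$ locally smoothly, $\|\varphi_\bu(\cdot,\tau)\|_{C^4}\to 0$ (Lemma \ref{Lem_L^2Noncon_LowSphericalMode}, resp.\ Remark \ref{Rem_L^2 Noncon_Scal Invar of (T2)(T3)}), and the exterior Gaussian tails are controlled by Proposition \ref{prop:entropy control outside ball of radius R}, the flow $\cM$ is $\delta$-$L^2$ close to $\cC_{n,k}$ on $[T_\delta,\infty)$ for every $\delta>0$ and $\mbfd_\bu(\tau,\cM(\tau))\to 0$.

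Set $\ell_\pm:=\limsup/\liminf_{\tau\to\infty}\cN_\bu(\tau,\cM)\in[-C(n),+\infty]$. In the case $\ell_+<+\infty$, I would fix $\eps>0$ with $\eps^{-1}>\ell_++2$ and with $2\eps$ smaller than the minimal gap of $\sigma(\cC_{n,k})$ inside $[-C(n)-1,\ell_++1]$, set $\delta=\delta_{\ref{Cor_L^2 Mono_Discrete Growth Mono}}(n,\eps)$, and take $T_1\ge T_\delta$ with $\cN_\bu(\cdot,\cM)<\ell_++1<\eps^{-1}$ and $\|\varphi_\bu(\cdot,\tau)\|_{C^4}<\delta$ on $[T_1,\infty)$. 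Then Corollary \ref{Cor_L^2 Mono_Discrete Growth Mono} applies at every $T\ge T_1$, so at each unit step either \eqref{Equ_L^2 Mono_Strict Doubl Const drop} holds (a definite drop of $\cN_\bu$ by $\delta$) or \eqref{Equ_L^2 Mono_Doubl Const close to spectrum} holds ($\cN_\bu$ trapped within $\eps$ of some $\gamma_T\in\sigma(\cC_{n,k})$ on $[T+\eps,T+1]$). Since $\cN_\bu\ge -C(n)$, the drop alternative occurs only finitely often; by Remark \ref{Rem_L^2 Mono_cN<gamma pass to next scale} the decay order can never recross upward a gap value of $\sigma(\cC_{n,k})$, which together with the gap condition on $\eps$ forces the trapping values $\gamma_T$ to be eventually a single $\gamma_\infty\in\sigma(\cC_{n,k})\cap[-C(n),\ell_++1]$; feeding this back into \eqref{Equ_L^2 Mono_Doubl Const close to spectrum}, which also controls the non-integer times, yields $\limsup_{\tau\to\infty}|\cN_\bu(\tau,\cM)-\gamma_\infty|\le\eps$. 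Letting $\eps\downarrow 0$ along a sequence and using discreteness of $\sigma(\cC_{n,k})$ pins down one value, so $\cN_\bu(\tau,\cM)\to\gamma_\infty$; and $\gamma_\infty\ge 0$, since otherwise $\mbfd_\bu(\tau,\cM(\tau))\gtrsim e^{-\gamma_\infty\tau}$ would contradict $\mbfd_\bu(\tau,\cM(\tau))\to 0$. Hence $\cN_\bu(\infty,\cM)=\gamma_\infty\in\sigma(\cC_{n,k})\cap\R_{\ge 0}$.

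If instead $\ell_+=+\infty$, I would show $\ell_-=+\infty$, giving $\cN_\bu(\tau,\cM)\to+\infty$. Suppose not; fix a gap value $A>\ell_-$ of $\sigma(\cC_{n,k})$, at distance $d_0>0$ from the spectrum, and $\eps<\min(d_0/4,1/(A+1))$, $\delta=\delta_{\ref{Cor_L^2 Mono_Discrete Growth Mono}}(n,\eps)$. Because $\cN_\bu(\cdot,\cM)$ is continuous with values $<A$ and $>A+3\eps$ at arbitrarily large times, one can select $T\ge T_\delta$ at which the dichotomy of Corollary \ref{Cor_L^2 Mono_Discrete Growth Mono} (valid since $\cN_\bu(T,\cM)<\eps^{-1}$) is violated: alternative \eqref{Equ_L^2 Mono_Strict Doubl Const drop} would give $\cN_\bu(T+1,\cM)\le\cN_\bu(T,\cM)-\delta<A$ against $\cN_\bu(T+1,\cM)>A+3\eps$, while alternative \eqref{Equ_L^2 Mono_Doubl Const close to spectrum} would put $\cN_\bu(T+\eps,\cM)$ and $\cN_\bu(T+1,\cM)$ within $\eps$ of a common $\gamma\in\sigma(\cC_{n,k})$, hence within $2\eps$ of each other, against their $>3\eps$ separation. (The choice of such a $T$ uses a modulus of continuity for the decay order coming from the $L^2$-monotonicity, exactly as in \cite[Section 3]{SunWangXue1_Passing}.) Thus $\ell_-=+\infty$.

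I expect the main work to be the bookkeeping in the case $\ell_+<+\infty$: upgrading the two alternatives \eqref{Equ_L^2 Mono_Strict Doubl Const drop}--\eqref{Equ_L^2 Mono_Doubl Const close to spectrum} from boundedness of $\cN_\bu$ to genuine convergence. This rests on the facts that a strict drop cannot recur once the decay order is trapped near a spectral value and that the trapping value cannot increase --- both consequences of the discreteness of $\sigma(\cC_{n,k})$ and of the quantitative gap $\delta=\delta(\eps)$ in Corollary \ref{Cor_L^2 Mono_Discrete Growth Mono} / Remark \ref{Rem_L^2 Mono_cN<gamma pass to next scale}. No ingredient beyond those is required, and the whole argument parallels \cite[Section 3]{SunWangXue1_Passing}.
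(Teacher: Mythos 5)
Your proposal is correct and follows essentially the same route as the paper: the paper's (much terser) proof also combines the dichotomy of Corollary \ref{Cor_L^2 Mono_Discrete Growth Mono} with the gap-crossing obstruction of Remark \ref{Rem_L^2 Mono_cN<gamma pass to next scale} and the uniform lower bound on the decay order, deducing first that both $\liminf$ and $\limsup$ lie in $(\sigma(\cC_{n,k})\cap\R_{\geq 0})\cup\{+\infty\}$ and then that they coincide. Your write-up merely makes explicit the bookkeeping (finiteness of drops, stabilization of the trapping value, nonnegativity from $\mbfd_\bu(\tau,\cM)\to 0$) that the paper leaves implicit.
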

	\begin{proof}
		By the $C^1$-normal form of the finite entropy RMCF locally smoothly converges to $\cC_{n,k}$ (Theorem \ref{thm:PseduoLocality_Partial}), $\cM(\tau)$ is $\delta(\tau)$-close to $\cC_{n,k}$, with $\delta(\tau)\to 0$ as $\tau\to\infty$. By taking $\eps\to0$ in Corollary \ref{Cor_L^2 Mono_Discrete Growth Mono}, we have $\liminf\limits_{\tau\to +\infty} \cN_\bu(\tau, \cM)\in (\sigma(\cC_{n,k})\cap \R_{\geq 0})\cup \{+\infty\}$ and $\limsup\limits_{\tau\to +\infty} \cN_\bu(\tau, \cM)\in (\sigma(\cC_{n,k})\cap \R_{\geq 0})\cup \{+\infty\}$. Moreover, Corollary \ref{Cor_L^2 Mono_Discrete Growth Mono} shows that for any $\eps>0$, once $\cN(\tau,\cM)<\lambda-\eps$ for $\lambda\in (\sigma(\cC_{n,k})\cap \R_{\geq 0})$ for a sufficiently large $\bar\tau$, then $\cN(\tau,\cM)<\lambda-\eps$ for all $\tau>\bar\tau$. This implies that $\limsup\limits_{\tau\to +\infty} \cN_\bu(\tau, \cM)=\liminf\limits_{\tau\to +\infty} \cN_\bu(\tau, \cM)$, which completes the proof.
	\end{proof}

	The following lemma shows that if the rescaled mean curvature flow detects a cylindrical singularity, then the decay order can not be too negative.
	\begin{Lem}\label{Lem_L^2 Mono_cN>-eps if singular}
		For every $\eps\in(0, 1)$, there exists a $\delta_{\ref{Lem_L^2 Mono_cN>-eps if singular}}(n,\eps)>0$ with the following properties. 
		Let $T_\circ\geq 0$, $\tau\to\cM(\tau)$ be a RMCF defined for all $\tau\geq T_\circ$, and suppose it is $\delta$-$L^2$ close to $\cC_{n,k}$ on $[T_\circ, T_\circ+1]$ with $\delta\leq \delta_{\ref{Lem_L^2 Mono_cN>-eps if singular}}$, but not identical to $\cC_{n,k}$. Suppose 
		\begin{align*}
			\lambda[\cM] \leq \eps^{-1}\,, & &
			\lim_{\tau\to +\infty} \cF[\cM(\tau)] = \cF[\cC_{n,k}]\,, 
		\end{align*}
		Then we have $\cN_{n,k}(\tau, \cM)\geq -\eps$ for every $\tau\geq T_\circ+1$.
	\end{Lem}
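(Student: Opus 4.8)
The idea is to combine the Gaussian-area pinching forced by $\cF[\cM(\tau)]\to\cF[\cC_{n,k}]$ with the discrete monotonicity of the relative decay order (Corollary \ref{Cor_L^2 Mono_Discrete Growth Mono} and Remark \ref{Rem_L^2 Mono_cN<gamma pass to next scale}) and the fact that the decay order at infinity is a nonnegative eigenvalue (Corollary \ref{Cor_L^2 Mono_Def cN_u(infty)}). After a time translation we may assume $T_\circ=0$, and since $\cN_{n,k}(\tau,\cM)\geq-1/4$ implies $\cN_{n,k}(\tau,\cM)\geq-\eps$ whenever $\eps\geq 1/4$, we may also assume $\eps\leq 1/4$. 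First I record two structural facts. (1) Because $\cM$ is not identical to $\cC_{n,k}$, forward and backward uniqueness for mean curvature flow together with the fact (proved below) that each $\cM(\tau)$ is a graph over $\cC_{n,k}$ force $\mbfd_{n,k}(\cM(\tau))>0$ for every $\tau\geq 0$, so every $\cN_{n,k}(\tau,\cM)$ is well defined and finite. (2) By Huisken's monotonicity $\cF[\cM(\tau)]$ is non-increasing, so the hypothesis $\cF[\cM(\tau)]\to\cF[\cC_{n,k}]$ forces $\cF[\cM(\tau)]\geq\cF[\cC_{n,k}]$ for all $\tau$; and $\delta$-$L^2$-closeness on $[0,1]$ with $\lambda[\cM]\leq\eps^{-1}$ gives, via White's regularity and Proposition \ref{prop:entropy control outside ball of radius R}, that $\cF[\cM(1)]\leq\cF[\cC_{n,k}]+\Psi(\delta\,|\,n,\eps)$. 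Hence $\cF[\cC_{n,k}]\leq\cF[\cM(\tau)]\leq\cF[\cC_{n,k}]+\Psi(\delta\,|\,n,\eps)$ for all $\tau\geq 1$. Feeding this pinching, the one-time $L^2$-closeness and the entropy bound into the Łojasiewicz-type confinement of Colding--Minicozzi \cite{CM15_Lojasiewicz} (or, alternatively, into a direct compactness argument built on Lemma \ref{lem_delta_L2_close_implies_Gaussian_lower_bound}, White's regularity and upper semicontinuity of the Gaussian density), one concludes that, after shrinking $\delta_{\ref{Lem_L^2 Mono_cN>-eps if singular}}$, the flow $\cM(\tau)$ is $\delta_0(n,\eps)$-$L^2$ close to $\cC_{n,k}$ for all $\tau\geq 0$ --- with $\delta_0$ as small as subsequently needed --- and that $\cM(\tau)$ converges locally smoothly, as $\tau\to\infty$, to some $\cC_\infty\in\Rot(\cC_{n,k})$ with $\mbfd_{n,k}(\cC_\infty)\leq\delta_0$.

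\textbf{The core case $\cC_\infty=\cC_{n,k}$.} Argue by contradiction: suppose $\cN_{n,k}(\tau_1,\cM)<-\eps$ for some $\tau_1\geq 1$. Since $\cM$ is $\delta_0$-$L^2$ close to $\cC_{n,k}$ on all of $[\tau_1,\infty)$ (choosing $\delta_0$ below the threshold of Corollary \ref{Cor_L^2 Mono_Discrete Growth Mono} for the parameter $\eps$), and since for $\eps\leq 1/4$ the point $-\eps$ has distance exactly $\eps$ from $\sigma(\cC_{n,k})=\{-1,-1/2,0,1/(n-k),1/2,\dots\}$, Remark \ref{Rem_L^2 Mono_cN<gamma pass to next scale} applies with $\gamma=-\eps$ and gives, by induction, $\cN_{n,k}(\tau_1+m,\cM)\leq-\eps$ for every integer $m\geq 0$. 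Letting $m\to\infty$ and using $\cM(\tau)\to\cC_{n,k}$ locally smoothly, Corollary \ref{Cor_L^2 Mono_Def cN_u(infty)} gives $\cN_{n,k}(\infty,\cM)=\lim_m\cN_{n,k}(\tau_1+m,\cM)\in(\sigma(\cC_{n,k})\cap\R_{\geq0})\cup\{+\infty\}$, hence $\geq 0$; this contradicts $\cN_{n,k}(\infty,\cM)\leq-\eps$. Therefore $\cN_{n,k}(\tau,\cM)\geq-\eps$ for all $\tau\geq 1$.

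\textbf{The rotation case $\cC_\infty\neq\cC_{n,k}$.} Write $\cC_\infty=\mbfB(\cC_{n,k})$ with $\mbfB$ close to the identity. The $L^2$-nonconcentration Lemma \ref{Lem_L^2 Noncon_for RMCF} and the discrete monotonicity are insensitive to ambient rotation, so the argument of the core case applies verbatim with $\cC_{n,k}$ replaced by $\cC_\infty$ and yields, for the decay order relative to $\cC_\infty$ (in the sense of \eqref{Equ_L^2 Noncon_Def L^2 dist to C_tilde(u)(tau)}), both $\cN_{\cC_\infty}(\infty,\cM)\geq 0$ and $\cN_{\cC_\infty}(\tau,\cM)\geq-\eps$ for all $\tau\geq 1$; in particular $a_\tau:=\mbfd_{\cC_\infty}(\tau,\cM(\tau))\to 0$ with $a_{\tau+1}\leq e^{\eps}a_\tau$. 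To transfer this to $\cN_{n,k}$, use the comparison estimate \eqref{Equ_L^2 Mono_|d_u' - d_u| < |varphi_u'-varphi_u|} (and its rotational analogue) together with the graph-over-$\cC_{n,k}$ expansion of Appendix \ref{Append_Graph over Cylinder}: one gets $\mbfd_{n,k}(\cM(\tau))=\|w(\cdot,\tau)+\psi_\mbfA\|_{L^2}+O(\text{quadratic})$, where $w(\cdot,\tau)$ is the graphical function of $\cM(\tau)$ over $\cC_\infty$ (so $\|w(\cdot,\tau)\|_{L^2}\sim a_\tau\to 0$) and $\psi_\mbfA\in\rmW_0(\cC_{n,k})$ is the fixed rotation-like eigenfunction attached to $\mbfB$ with $\|\psi_\mbfA\|_{L^2}\sim\mbfd_{n,k}(\cC_\infty)$. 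Splitting according to whether $a_\tau$ is small or comparable to $\mbfd_{n,k}(\cC_\infty)$, and using $\cN_{\cC_\infty}\geq-\eps$ to bound one-step ratios in the former regime and $a_\tau\lesssim\mbfd_{n,k}(\cM(\tau))$ in the latter, one obtains $\cN_{n,k}(\tau,\cM)\geq-\eps$ for all $\tau\geq 1$ after adjusting constants (which one may do by running the previous steps with $\eps$ replaced by a small multiple of $\eps$).

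\textbf{Main obstacle.} The delicate point is the confinement statement in the first paragraph --- that finite entropy together with one-time $L^2$-closeness and Gaussian-area pinching forces the flow to remain uniformly $L^2$-close to $\cC_{n,k}$ on all of $[0,\infty)$ and to converge to a nearby rotated cylinder --- and, in the rotation case, the transfer of the one-step bound $\cN_{n,k}\geq-\eps$ at the finitely many ``transition'' scales where $\mbfd_{\cC_\infty}(\tau,\cM(\tau))$ is comparable to $\mbfd_{n,k}(\cC_\infty)$: there the soft bound $\cN_{n,k}\geq-C(n)$ from \eqref{Equ_L^2 Noncon_Dim Lower Bd for cN} is not enough, and one must invoke the finer normal-form/center-manifold description of the flow near the cylinder to control $\mbfd_{n,k}(\cM(\tau))$ from below throughout this window.
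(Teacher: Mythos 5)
Your setup matches the paper's: both establish the Gaussian-area pinching $\cF[\cC_{n,k}]\leq\cF[\cM(\tau)]\leq\cF[\cC_{n,k}]+\Psi(\delta\,|\,n,\eps)$ from Huisken's monotonicity plus White's regularity at time $T_\circ+1$, and both then invoke Colding--Minicozzi's quantitative uniqueness of tangent flows (\cite[Theorem 0.5]{ColdingMinicozzi25_quantitativeMCF}) to conclude that $\mbfd_{n,k}(\tau,\cM)$ stays below $\delta_{\ref{Cor_L^2 Mono_Discrete Growth Mono}}$ for all $\tau\geq T_\circ+1$, so that the discrete monotonicity applies at every later time and propagates $\cN_{n,k}\leq-\eps$ forward. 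Your ``core case'' is therefore sound. The genuine gap is your ``rotation case'': the transfer of the decay-order bound from $\cC_\infty$ back to $\cC_{n,k}$ through the ``transition scales'' is not actually carried out --- you acknowledge yourself that the soft bound $\cN_{n,k}\geq-C(n)$ is insufficient there and that some unspecified normal-form input would be needed. As written, that case is unproved.

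The point you are missing is that the rotation case never arises if you derive the contradiction at the level of $\mbfd_{n,k}$ rather than at the level of $\cN_{n,k}(\infty)$, which is what the paper does. Once $\cN_{n,k}(\tau_1+\ell,\cM)\leq-\eps$ for all integers $\ell\geq0$, the definition \eqref{Equ_L^2 Noncon_Def Doubl Const} gives $\mbfd_{n,k}(\tau_1+\ell,\cM)\geq e^{\eps\ell}\,\mbfd_{n,k}(\tau_1,\cM)$, which blows up since $\mbfd_{n,k}(\tau_1,\cM)\neq0$, contradicting the uniform bound $\mbfd_{n,k}\leq\delta_{\ref{Cor_L^2 Mono_Discrete Growth Mono}}$ supplied by quantitative uniqueness. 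This works whether or not the actual limit cylinder is a rotation of $\cC_{n,k}$, because every object involved ($\mbfd_{n,k}$, $\cN_{n,k}$, Corollary \ref{Cor_L^2 Mono_Discrete Growth Mono} with $\bu=\mbfU_{n,k}$) is defined relative to the fixed cylinder and only requires $\delta$-$L^2$ closeness to it, not convergence to it. Your detour through Corollary \ref{Cor_L^2 Mono_Def cN_u(infty)} (which does require smooth convergence to $\cC_{n,k}$ itself) is what forces the case split; replacing it with the exponential-growth contradiction removes both the case split and the unresolved obstacle.
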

	\begin{proof}
		By Brakke-White's $\epsilon$-regularity \cite{White05_MCFReg}, there exists $R\geq \Psi(\delta|n, \eps)^{-1}$ and $v\in C^2(\cC_{n,k}\cap Q_R)$ with $\|v\|_{C^2}\leq \Psi(\delta|n ,\eps)$ such that if $\cM$ is described as in the Lemma with $\delta_{\ref{Lem_L^2 Mono_cN>-eps if singular}}(n, \eps)\ll 1$, then $\cM(T_\circ+1)$ is a $C^2$ graph over $\cC_{n,k}$ in $Q_R$ with graphical function $v$. In particular, 
		\begin{align*}
			\cF[\cM(T_\circ+1)] 
			& = \int_{\cM(T_\circ+1)\cap Q_R} d\mu + \int_{\cM(T_\circ+1)\setminus Q_R} d\mu \\
			& \leq (1+C_n\|v\|_{C^2}) \cF[\cC_{n,k}\cap Q_R] + C(n, \eps) e^{-R^2/8} \\
			& \leq \cF[\cC_{n,k}] + \Psi(\delta|n, \eps) \,.
		\end{align*}
		Combining with Huisken's Monotonicity formula and that $\cF[\cM(\tau)]\to \cF[\cC_{n,k}]$ as $\tau\to +\infty$, we conclude that for every $\tau\geq T_\circ+1$, \[
		|\cF[\cM(\tau)]-\cF[\cC_{n,k}]| \leq \Psi(\delta|n, \eps) \,.
		\]
		By \cite[Theorem 0.5]{ColdingMinicozzi25_quantitativeMCF} and taking $\delta_{\ref{Lem_L^2 Mono_cN>-eps if singular}}(n, \eps)\ll 1$, we have for every $\tau\geq T_\circ+1$, 
		\begin{align}
			\mbfd_{n,k}(\tau, \cM) \leq \delta_{\ref{Cor_L^2 Mono_Discrete Growth Mono}}(n, \eps) \,. \label{Equ_L^2 Mono_Pf_d_(n,k) < delta}
		\end{align}
		And then $\cM$ is $\delta_{\ref{Cor_L^2 Mono_Discrete Growth Mono}}$-$L^2$ close to $\cC_{n,k}$ over $\R_{\geq T_\circ}$.
		By Corollary \ref{Cor_L^2 Mono_Discrete Growth Mono}, if $\cN_{n,k}(\tau, \cM)\leq -\eps$ for some $\tau\geq T_\circ+1$, then inductively $\cN_{n,k}(\tau+\ell, \cM)\leq -\eps$ for every integer $\ell\geq 0$. By definition of decay order, this implies, \[
		\mbfd_{n,k}(\tau+\ell, \cM) \geq \mbfd_{n,k}(\tau, \cM) e^{\eps\ell} \,.
		\]
		Since $\mbfd_{n,k}(\tau, \cM)\neq 0$, this contradicts to \eqref{Equ_L^2 Mono_Pf_d_(n,k) < delta} by sending $\ell\to +\infty$.
	\end{proof}

	\begin{Cor} \label{Cor_L^2 Mono_cN>-1}
		Let $T\geq 0$, $\bu \in \scU$; $\cM$ be a RMCF in $\RR^{n+1}$ over $[T, T']$ with $\lambda[\cM]\leq \eps^{-1}$ and $\cF[\cM(T)]>1/2\cF[\cC_{n,k}]$. Then for every $\tau\in [T, T']$, \[
		\mbfd_\bu(\tau, \cM) \lesssim_{n, \beta, \eps} e^{(1+\eps)(\tau-T)}\mbfd_\bu(T, \cM) \,.
		\]
	\end{Cor}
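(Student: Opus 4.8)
The plan is to upgrade the crude exponential bound of Lemma~\ref{Lem_L^2 Noncon_for RMCF}, which carries the non-sharp exponent $K(\beta,n)$, to the exponent $1+\eps$ by exploiting that $-1=\min\sigma(\cC_{n,k})$, in the spirit of the discrete monotonicity of Corollary~\ref{Cor_L^2 Mono_Discrete Growth Mono}. First I would dispose of two reductions. Since $\mbfd_\bu(\tau,\cM)^2\le\cF[\cM(\tau)]\le\cF[\cM(T)]\le\lambda[\cM]\le\eps^{-1}$ by Huisken's monotonicity, if $\mbfd_\bu(T,\cM)\ge\delta_0$ for a constant $\delta_0=\delta_0(n,\beta,\eps)$ to be fixed, then $\mbfd_\bu(\tau,\cM)\le\eps^{-1/2}\le\delta_0^{-1}\eps^{-1/2}e^{(1+\eps)(\tau-T)}\mbfd_\bu(T,\cM)$ and there is nothing to prove; so I may assume $\mbfd_\bu(T,\cM)<\delta_0$. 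Moreover, applying Lemma~\ref{Lem_L^2 Noncon_for RMCF} on a unit interval gives $\mbfd_\bu(\tau,\cM)\le C(n,\beta)\,\mbfd_\bu(\tau',\cM)$ whenever $\tau-1\le\tau'\le\tau$, so it suffices to bound $\mbfd_\bu$ at the times $T+m$, $m\in\NN$; telescoping the definition of the decay order, $\mbfd_\bu(T+m,\cM)=\mbfd_\bu(T,\cM)\exp\!\big(-\sum_{l=0}^{m-1}\cN_\bu(T+l,\cM)\big)$, and $\cN_\bu\ge-C(n,\beta)$ in all cases by \eqref{Equ_L^2 Noncon_Dim Lower Bd for cN}.

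The key step will be a multiplicity-robust form of the decay-order lower bound: there is $\delta_*=\delta_*(n,\eps)>0$ such that whenever $\lambda[\cM]\le\eps^{-1}$ and $\mbfd_\bu(s,\cM)\le\delta_*$ for every $s$ in a length-$2$ interval $[T_1,T_1+2]$, one has $\cN_\bu(T_1+1,\cM)\ge-1-\eps$. I would prove this by the same compactness/contradiction scheme used for Corollary~\ref{Cor_L^2 Mono_Discrete Growth Mono}: a violating sequence $\cM_j$, after a time translation (under which $\bu$ is replaced by a parabolic dilate, still satisfying \ref{Item_ArrivalT1}, \ref{Item_ArrivalT2} and \eqref{Equ_L^2 Mono_Assump |varphi_u|<1/2} by Remark~\ref{Rem_L^2 Noncon_Scal Invar of (T2)(T3)}), subconverges in the Brakke sense to $j_0\cdot\cC_{n,k}$ for some integer $j_0\ge1$; using Brakke--White regularity on the density-one part together with the structure of the flow near a multiple of $\cC_{n,k}$, inside any fixed cube and off a negligible set $\cM_j$ is a disjoint union of graphs over $\cC_{n,k}$ each solving the rescaled MCF equation; renormalizing these graphical functions by $\mbfd_\bu(T_1+1,\cM_j)$ and running the argument of Lemma~\ref{Lem_L^2 Noncon_Induced Parab Jac} (using Lemma~\ref{Lem_L^2 Noncon_for RMCF} and Corollary~\ref{Cor_L^2 Noncon_Compare d(M), cN(M) and d(M in Q_R), cN(M in Q_R)} to control the tails) produces in the limit a nonzero tuple of parabolic Jacobi fields over $\cC_{n,k}$, and the $\ell^2$-aggregate of their $L^2$-norms has $1$-step decay order $\lim_j\cN_\bu(T_1+1,\cM_j)\le-1-\eps$; but each Fourier mode of a parabolic Jacobi field over $\cC_{n,k}$ grows no faster than $e^\tau$ since $-1=\min\sigma(\cC_{n,k})$ (Lemma~\ref{Lem_App_Analysis of Parab Jacob field}), so its decay order is $\ge-1$, a contradiction.

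Granting this, I would finish as follows. Put $\delta_{**}:=\delta_* C(n,\beta)^{-2}$ and $\delta_0:=\delta_{**}$, and let $l^*\in\NN$ be the first index, if any, with $\mbfd_\bu(T+l^*,\cM)>\delta_{**}$ (the hypothesis $\mbfd_\bu(T,\cM)<\delta_0$ forces $l^*\ge1$). By Lemma~\ref{Lem_L^2 Noncon_for RMCF} we then have $\mbfd_\bu(s,\cM)\le\delta_*$ for all $s\in[T,T+l^*]$, so the key step applies with $T_1=T+l-1$ for each $1\le l\le l^*-1$, yielding $\cN_\bu(T+l,\cM)\ge-1-\eps$; combined with $\cN_\bu(T,\cM)\ge-C(n,\beta)$ and the telescoping identity, this gives $\mbfd_\bu(\tau,\cM)\lesssim_{n,\beta}e^{(1+\eps)(\tau-T)}\mbfd_\bu(T,\cM)$ for all $\tau\in[T,T+l^*]$. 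In particular $e^{(1+\eps)l^*}\mbfd_\bu(T,\cM)\gtrsim_{n,\beta}\mbfd_\bu(T+l^*,\cM)>\delta_{**}$, so for $\tau\ge T+l^*$ the right-hand side $e^{(1+\eps)(\tau-T)}\mbfd_\bu(T,\cM)$ stays above a fixed constant $c_0(n,\beta,\eps)>0$ while $\mbfd_\bu(\tau,\cM)\le\eps^{-1/2}$, which closes the estimate; and if no such $l^*$ exists the first half already covers all of $[T,T']$. The genuinely delicate point is the compactness step above when $j_0\ge2$ --- describing $\cM_j$ as a union of sheets near a higher-multiplicity cylinder (localizing when $\cC_{n,n-1}=S^1\times\R^{n-1}$ because of its nontrivial fundamental group) and comparing $\mbfd_\bu$ to the $\ell^2$-aggregate of the sheets' $L^2$-norms --- which is precisely where one imports and adapts to the $\bu$-relative setting the corresponding argument of \cite{SunWangXue1_Passing}.
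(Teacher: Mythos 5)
Your overall architecture (telescoping the decay order over unit steps, reducing everything to the lower bound $\cN_\bu\geq -1-\eps$ on an interval where $\mbfd_\bu$ stays small, and handling the exit time when $\mbfd_\bu$ first exceeds a fixed threshold by playing the trivial bound $\mbfd_\bu\lesssim\lambda[\cM]^{1/2}$ against a right-hand side of definite size) is essentially the same as the paper's three-case proof. The difference, and the gap, is entirely in your ``key step.'' You claim that smallness of $\mbfd_\bu$ alone on a length-$2$ interval forces $\cN_\bu\geq-1-\eps$, and you propose to prove it by a compactness argument whose limit is allowed to be $j_0\cdot\cC_{n,k}$ with $j_0\geq 2$, followed by a decomposition of $\cM_j$ into graphical sheets over the cylinder. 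That decomposition is not available: Brakke--White $\eps$-regularity only applies where the Gaussian density is close to $1$, and sheeting a general unit-regular Brakke flow near a multiplicity-$\geq 2$ cylinder is precisely the content of the multiplicity-one problem (resolved only in $\R^3$, as the introduction notes). None of the tools in Section \ref{Sec_L^2 Noncon} --- Lemma \ref{Lem_L^2 Noncon_Induced Parab Jac}, Corollary \ref{Cor_L^2 Mono_Discrete Growth Mono}, Lemma \ref{Lem_L^2 clos => C^2 close} --- apply here, since all of them take as hypothesis ``$\delta$-$L^2$ close to $\cC_{n,k}$,'' which by \eqref{Equ_L^2 Mono_delta L^2 close to cylinder C_(n,k)} includes the two-sided Gaussian area bound and hence multiplicity one; smallness of $\mbfd_\bu$ does not imply it. A related symptom: you also assert $j_0\geq 1$ without justification (a vanishing limit is not excluded by $\mbfd_\bu$ being small), and you never use the hypothesis $\cF[\cM(T)]>\tfrac12\cF[\cC_{n,k}]$ at all --- but that hypothesis is exactly what anchors the Gaussian lower bound that makes the compactness argument converge to a genuine multiplicity-one cylinder.

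The paper sidesteps the multiplicity question rather than confronting it. It takes the maximal interval $[T,\bar T]$ on which the flow is $\delta$-$L^2$ close in the strong sense of \eqref{Equ_L^2 Mono_delta L^2 close to cylinder C_(n,k)}, propagating the two-sided Gaussian bound forward via Lemma \ref{lem_delta_L2_close_implies_Gaussian_lower_bound}; on that interval the existing multiplicity-one discrete monotonicity (Corollary \ref{Cor_L^2 Mono_Discrete Growth Mono}) gives $\cN_\bu\geq-1-\eps$, and beyond $\bar T$ (or if closeness fails at time $T$, e.g.\ because $\cF[\cM(T)]>\tfrac32\cF[\cC_{n,k}]$) it falls back on $\mbfd_\bu\leq C(n)\eps^{-1}$ together with the fact that the right-hand side has already grown to a definite constant. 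To repair your argument, replace your exit condition ``first $m$ with $\mbfd_\bu(T+m,\cM)>\delta_{**}$'' by ``first time $\delta$-$L^2$ closeness in the sense of \eqref{Equ_L^2 Mono_delta L^2 close to cylinder C_(n,k)} fails,'' and invoke Corollary \ref{Cor_L^2 Mono_Discrete Growth Mono} directly instead of your multiplicity-robust variant; the rest of your bookkeeping then goes through.
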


	\begin{proof}
		Let $[T,\bar T]$ be the maximum interval such that $\cM(\tau)$ is $\delta$-$L^2$ close to $\cC_{n,k}$ for $\tau\in[T,\bar T]$, where $\delta$ is given as in Lemma \ref{lem_delta_L2_close_implies_Gaussian_lower_bound}. We consider several situations. 
		
		{\bf Case 1.} If $\bar T$ exists and $\bar T=T'$. Let $\eps\in (0, 1)$, $T_\circ>0$ (only depend on $\scU$), $\bu\in \scU$ such that when $\tau>T_\circ$, $\|\varphi_\bu(\cdot,\tau)\|_{C^4}\leq \delta_{\ref{Cor_L^2 Mono_Discrete Growth Mono}}$. By Corollary \ref{Cor_L^2 Mono_Discrete Growth Mono}, we have $\cN_\bu(\tau, \cM)\geq -1-\epsilon$ for $\tau\in[T_\circ+1,T'-1]$. In particular, using an induction argument, we can conclude that when $\tau\in[T_\circ+1,T'-1]$, we have
		\[
		\mbfd_\bu(\tau, \cM)\leq C(n,\eps)e^{(1+\eps)(\tau-T_\circ)}\mbfd_\bu(T_\circ, \cM).
		\]
		Again using Lemma \ref{Lem_L^2 Noncon_for RMCF} (over $[T,T_\circ]$ and $[T'-1,T']$) and because $T_\circ$ only depends on $n,\beta,\eps$, we have for all $\tau\in[T,T']$,
		\[
		\mbfd_\bu(\tau, \cM)\leq C(n,\eps)e^{(1+\eps)(\tau-T)}\mbfd_\bu(T, \cM).
		\]
		
		{\bf Case 2.} If $\bar T$ exists and $T\leq \bar T<T'$. Then by the definition of $\delta$-$L^2$ closeness, and Lemma \ref{lem_delta_L2_close_implies_Gaussian_lower_bound}, we must have $\mbfd_\bu(\bar T, \cM)=\delta$. Then we can apply the proof in Case 1 to show that $\delta\leq C(n ,\eps)e^{(1+\eps)(\bar T-T_\circ)}\mbfd_\bu(T, \cM)$, and hence for any $\tau\in[\bar T,T']$, 
		\[
		\mbfd_\bu(\tau, \cM)\leq C(n)\lambda[\cM]\leq C(n)\eps^{-1}\leq C(n,\eps ) \delta \leq C(n,\eps )e^{(1+\eps)(\tau-T)}\mbfd_\bu(T, \cM).
		\]
		
		{\bf Case 3.} If $\bar T$ does not exist. This implies that either $\cF[\cM(T)]>3/2\cF[\cC_{n,k}]$, or $\mbfd_\bu(T, \cM)>\delta$. Note that $\cF[\cM(T)]>3/2\cF[\cC_{n,k}]$ implies that $\mbfd_\bu(T, \cM)\geq C(n)$ from the proof of Lemma \ref{lem_delta_L2_close_implies_Gaussian_lower_bound}. Therefore, in the latter two situations, similar to Case 2, we can use Huisken's monotonicity formula and $\mbfd_\bu(\tau, \cM)\leq C(n)\cF[\cM(\tau)]$ to show that 
		\[
		\mbfd_\bu(\tau, \cM)\leq C(n,\eps)e^{(1+\eps)(\tau-T_\circ)}\mbfd_\bu(T, \cM).
		\]
	\end{proof}

	\section{Asymptotic profile at cylindrical singular point} \label{Sec_AsympProfile}
	The goal of this section is to prove Theorem \ref{thm:AsyProfile}. Recall that the family of low spherical flows $\scU$ has been defined in Definition \ref{Def_LowSphericalFlow}. 
	Throughout the section, we assume that $\cM: \tau \mapsto \cM(\tau)$ is a RMCF with finite entropy in $\RR^{n+1}$ over $\RR_{\tau \geq 0}$, such that $\cM(\tau)$ $C^\infty_{loc}$-converges to $\cC_{n,k}$ as $\tau\to +\infty$. Let $\gfrd_1(\cM(\tau))$ and $v(\cdot, \tau)$ be the $1$-graphical radius and graphical function of $\cM(\tau)$ over $\cC_{n,k}$. 
	
	Case \ref{item_Intro_PolynDecay} of Theorem \ref{thm:AsyProfile} is essentially treated in \cite{SunXue2022_generic_cylindrical}. See Section \ref{Subsec_Pf Main AsympProf}.  
	The bulk of this section is devoted to case \ref{item_Intro_ExpDecay} and \ref{item_Intro_SuperExpDecay} of Theorem \ref{thm:AsyProfile}, i.e., when the flow converges to $\cC_{n,k}$ exponentially or even faster. For later applications, we shall prove the following asymptotic profile theorems with a more quantitative estimate. Denote for simplicity the maximal decay order limit, modulo all the low spherical flow to be \[
	\cN_\scU(\infty, \cM) := \sup_{\bu\in \scU} \cN_\bu(\infty, \cM) \ \ (\in \sigma(\cC_{n, k}) \cup \{+\infty\}\, \text{ by Corollary \ref{Cor_L^2 Mono_Def cN_u(infty)}})\,.
	\]
	\begin{Thm} \label{Thm_Asymp Profile(New)_gamma>0} 
		Let $\cM$ be specified as above, such that $\cN_{n,k}(\infty, \cM)>0$. Then 
		\begin{align}
			\cN_\scU(\infty, \cM) \geq 1/2 \,.  \label{Equ_AsympProf_Main_cN(infty) in sigma(cC)}
		\end{align}
		Moreover, for every $\eps\in (0, 1/4n)$, there exists $\delta_{\ref{Thm_Asymp Profile(New)_gamma>0}}(n, \eps)\in (0, \eps)$ with the following significance. If $\cM$ (as above), $\delta, \gamma, \gamma^+, \eps$ satisfy the following with $\delta\leq \delta_{\ref{Thm_Asymp Profile(New)_gamma>0}}$: 
		\begin{enumerate}[label={\normalfont(\Roman*)}]
			\item\label{Item_AsyPrNew_Assum1} $\cM: \tau \mapsto \cM(\tau)$ is $\delta$-$L^2$ close to $\cC_{n,k}$ over $[0, 1]$ with $\lambda[\cM]\leq \eps^{-1}$; 
			\item\label{Item_AsyPrNew_Assum2} $\gamma\in \sigma(\cC_{n,k})\cap [1/2, \eps^{-1}-1]$ satisfies \[
			\gamma \leq  \cN_{\scU}(\infty, \cM) \,.
			\]
			Let $\gamma' := \min \sigma(\cC_{n,k})\cap (\gamma, +\infty)$ be the minimal spectrum of $-L_{n,k}$ greater than $\gamma$; and \[
			\gamma^+:= \min\left\{\gamma',\ \gamma+\gamma_{n,k},\ 2\gamma \right\} \in (\gamma, \gamma']\,,
			\] 
			where recall $\gamma_{n,k}=1/(n-k)$ ($\gamma^+$ will play the role of the decay order for the error term). 
		\end{enumerate}
		Then there exist a unique low spherical flow $\bu\in \scU$ (depending only on $\cM$ but not on $\gamma$) and a unique eigenfunction $\psi\in \rmW_\gamma(\cC_{n,k})\cap \rmW_{\SSp}^\perp$ of $-L_{n,k}$ (non-vanishing if and only if $\gamma=\cN_\scU(\infty,\cM)$) with $\|\psi\|_{L^2}\leq 1$, such that 
		\begin{align}
			\cN_\scU(\infty, \cM) = \cN_\bu(\infty, \cM)\,;  \label{Equ_AsympProf_Main_cN_scU(infty) = cN_u(infty)}
		\end{align}
		and that for every $\tau\geq 1$, 
		\begin{enumerate} [label={\normalfont(\roman*)}]
			\item the graphical radius of $\cM(\tau)$ over $\cC_{n,k}$ satisfies,
			\begin{align}
				\gfrd_1(\cM(\tau)) \geq 2n+ e^{\frac{\gamma-\eps}{2(\gamma+1)}\cdot\tau} =: \rmR_{\gamma, \eps}(\tau)\,; \label{Equ_AsympProf_Main_Gfrad bd}
			\end{align}
			\item for every $2n\leq R\leq \rmR_{\gamma, \eps}(\tau)$, the graphical function $v(\cdot, \tau)$ satisfies a rough $C^2$ estimate,
			\begin{align}
				\|(v-\varphi_\bu)(\cdot, \tau)\|_{C^2(\cC_{n,k}\cap Q_R)} \lesssim_{n, \eps} e^{-(\gamma-\eps)\tau} R^{2(1+\gamma)}\mbfd_\bu(0,\cM)^{1-\eps} \,;  \label{Equ_AsympProf_Main_Rough C^2 Bd}
			\end{align}
			and an improved error estimate in the asymptotic expansion: 
			\begin{align}
				\|(v-\varphi_{\bu})(\cdot, \tau) - e^{-\gamma\tau}\psi\|_{C^0(\cC_{n,k}\cap Q_R)}  \lesssim_{n, \eps} e^{-(\gamma^+-3\eps)\tau}R^{2(2+\gamma^+)} \,.  \label{Equ_AsympProf_Main_Precise C^0 Bd}
			\end{align}
		\end{enumerate}
	\end{Thm}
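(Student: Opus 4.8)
\textit{Overview and reductions.} The plan is: (i) reduce to a flow that remains $L^2$-close to $\cC_{n,k}$; (ii) prove \eqref{Equ_AsympProf_Main_cN(infty) in sigma(cC)} and single out the optimal low spherical flow from the structure of the low spectrum; (iii) read off the graphical radius \eqref{Equ_AsympProf_Main_Gfrad bd} and the rough estimate \eqref{Equ_AsympProf_Main_Rough C^2 Bd} from Lemma \ref{Lem_AsympProf_Rough C^2 est}; (iv) extract the leading eigenmode $\psi$ by a Duhamel/projection argument to obtain \eqref{Equ_AsympProf_Main_Precise C^0 Bd}. For (i): since $\cM$ converges locally smoothly to $\cC_{n,k}$ it detects a $k$-cylindrical singularity, so Lemma \ref{Lem_L^2 Mono_cN>-eps if singular} together with the quantitative uniqueness of cylindrical tangent flows \cite{ColdingMinicozzi25_quantitativeMCF} shows that $\cM$ is $L^2$-close to $\cC_{n,k}$ for all large $\tau$ (and, under \ref{Item_AsyPrNew_Assum1}, that $\delta$-$L^2$-closeness on $[0,1]$ propagates to $\Psi(\delta|n,\eps)$-$L^2$-closeness on $[0,\infty)$); hence all results of Section \ref{Sec_L^2 Noncon} apply to every $\bu\in\scU$.

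\textit{The lower bound and the optimal $\bu$.} By Corollary \ref{Cor_L^2 Mono_Def cN_u(infty)}, each $\cN_\bu(\infty,\cM)$ lies in $(\sigma(\cC_{n,k})\cap\R_{\ge0})\cup\{\infty\}$, and $\cN_{\mbfU_{n,k}}(\infty,\cM)=\cN_{n,k}(\infty,\cM)>0$, so $\cN_\scU(\infty,\cM)$ is positive. Since $\rmW_\SSp$ is finite-dimensional and $\scU$ is parametrized homeomorphically by a ball in it (Lemma \ref{Lem_L^2Noncon_LowSphericalMode}), the supremum defining $\cN_\scU(\infty,\cM)$ is attained by some $\bu\in\scU$ (otherwise $\cM$ is itself a low spherical flow and all conclusions are trivial), which gives \eqref{Equ_AsympProf_Main_cN_scU(infty) = cN_u(infty)}. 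If $n-k\le2$ the smallest positive eigenvalue of $-L_{n,k}$ is $1/2$, so \eqref{Equ_AsympProf_Main_cN(infty) in sigma(cC)} follows at once. If $n-k\ge3$, inspecting \eqref{Equ_Pre_sigma(C_n,k)}--\eqref{Equ_Pre_W_lambda(C_n,k)} shows the unique positive eigenvalue below $1/2$ is $\gamma_{n,k}=1/(n-k)$ and that $\rmW_{\gamma_{n,k}}=\rmW_\SSp$; were $\cN_\bu(\infty,\cM)=\gamma_{n,k}$ for the maximizing $\bu$, then Lemma \ref{Lem_L^2 Noncon_Induced Parab Jac} and \eqref{Equ_L^2 Mono_Jac field concentra in eigenspace} would force the leading asymptotics of $v-\varphi_\bu$ to be $e^{-\gamma_{n,k}\tau}\psi_0$ with $0\ne\psi_0\in\rmW_\SSp$, and after a time-translation (making the leading modes small enough to lie in the ball of Lemma \ref{Lem_L^2Noncon_LowSphericalMode}) we could absorb $\psi_0$ into a new $\bu'\in\scU$ with strictly larger decay order, contradicting maximality; hence \eqref{Equ_AsympProf_Main_cN(infty) in sigma(cC)} holds. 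The same absorption, now applied to the $\rmW_\SSp$-component of the leading mode of $v-\varphi_\bu$, lets us arrange that leading mode to lie in $\rmW_\SSp^\perp$, and it gives uniqueness of $\bu$: the difference of any two optimal low spherical flows has leading mode in $\rmW_\SSp$, while the $\rmW_{1/2}$-part of $v$ determines it in $\rmW_\SSp^\perp$, so by Lemma \ref{Lem_L^2Noncon_LowSphericalMode} the two flows coincide.

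\textit{Graphical radius, rough estimate, and mode extraction.} Fix the optimal $\bu$ and set $w:=v-\varphi_\bu$. Discrete monotonicity (Corollary \ref{Cor_L^2 Mono_Discrete Growth Mono}, Remark \ref{Rem_L^2 Mono_cN<gamma pass to next scale}) and $\cN_\bu(\infty,\cM)\ge\gamma$ give $\cN_\bu(\tau,\cM)\ge\gamma-\eps$ for large $\tau$, and with the universal growth bound of Corollary \ref{Cor_L^2 Mono_cN>-1} this yields $\mbfd_\bu(\tau,\cM)\lesssim_{n,\eps}e^{-(\gamma-\eps)\tau}\mbfd_\bu(0,\cM)^{1-\eps}$; feeding this into the translation/blow-up argument of Lemma \ref{Lem_AsympProf_Rough C^2 est} produces \eqref{Equ_AsympProf_Main_Gfrad bd} and \eqref{Equ_AsympProf_Main_Rough C^2 Bd}. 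On the expanding graphical region, since $\varphi_\bu$ solves the rescaled flow equation exactly, $w$ solves $(\partial_\tau-L_{n,k})w=E$ with $E$ quadratic in $(w,\nabla w,\nabla^2 w)$ plus cross terms $w\cdot\varphi_\bu$; by \eqref{Equ_AsympProf_Main_Rough C^2 Bd} and $\|\varphi_\bu(\cdot,\tau)\|_{C^4}\lesssim e^{-\gamma_{n,k}\tau}$ one gets $\|E(\cdot,\tau)\|_{C^0(\cC_{n,k}\cap Q_R)}\lesssim_{n,\eps}e^{-(\gamma^+-O(\eps))\tau}R^{C}$, the definition $\gamma^+=\min\{\gamma',\gamma+\gamma_{n,k},2\gamma\}$ recording precisely the slowest of the next eigenmode, the self-interaction, and the interaction with the low spherical flow. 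Cutting $w$ off at a scale $\sim\sqrt\tau$ (so commutator and tail contributions are super-exponentially small) and projecting the cut-off equation onto the $-L_{n,k}$-eigenspaces, one integrates $\tfrac{d}{d\tau}\big(e^{\gamma\tau}\Pi_{=\gamma}(w\zeta)\big)$ — whose $L^2$-norm is $\lesssim e^{-(\gamma^+-\gamma-O(\eps))\tau}$, hence integrable — from $\tau$ to $\infty$ to obtain a limiting eigenfunction $\psi\in\rmW_\gamma$ with $\|\Pi_{=\gamma}w(\cdot,\tau)-e^{-\gamma\tau}\psi\|_{L^2}\lesssim e^{-(\gamma^+-O(\eps))\tau}$, and $\psi\ne0$ exactly when $\gamma=\cN_\scU(\infty,\cM)$; the eigenmodes of $w$ of eigenvalue $>\gamma$ decay at least like $e^{-\gamma'\tau}\le e^{-\gamma^+\tau}$ by Duhamel, while modes of eigenvalue $<\gamma$, which would dominate $\Pi_{=\gamma}w$, are ruled out to leading order by \eqref{Equ_L^2 Mono_Jac field concentra in eigenspace}, leaving an $O(e^{-(\gamma^+-O(\eps))\tau})$ tail. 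This gives $\|w(\cdot,\tau)-e^{-\gamma\tau}\psi\|_{L^2(\cM)}\lesssim_{n,\eps}e^{-(\gamma^+-O(\eps))\tau}$, and a parabolic-interior-estimate bootstrap for $(\partial_\tau-L_{n,k})(w-e^{-\gamma\tau}\psi)=E$, using \eqref{Equ_AsympProf_Main_Rough C^2 Bd} to keep the $R$-dependence polynomial, upgrades this to \eqref{Equ_AsympProf_Main_Precise C^0 Bd} with the weight $R^{2(2+\gamma^+)}$.

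\textit{Main obstacle.} The crux is the last balancing act: the polynomial-in-$R$ blow-up coming from the a priori bound \eqref{Equ_AsympProf_Main_Rough C^2 Bd} must be overwhelmed by the exponential-in-$\tau$ decay of the forcing $E$ on the whole exponentially expanding graphical region — this is exactly why the graphical radius exponent is only $\tfrac{\gamma-\eps}{2(\gamma+1)}$ rather than $\tfrac12$ and why $\gamma^+$ cannot exceed $2\gamma$ — and, running through every projection and every uniqueness step, one must consistently work ``modulo $\rmW_\SSp$'', i.e. subtract off the correct low spherical flow before reading off the cylindrical asymptotics.
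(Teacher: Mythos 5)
Your proposal follows essentially the same route as the paper: apply the fixed-$\bu$ results (Lemma \ref{Lem_AsympProf_Rough C^2 est} for the graphical radius and rough $C^2$ bound, and a cutoff-plus-linear-parabolic mode extraction as in Lemma \ref{Lem_AsympProf_Precise C^0 est}), then single out the optimal $\bu\in\scU$ by absorbing the $\rmW_\SSp$-component of the leading eigenmode of $v$, using that for $n-k\geq 3$ the only positive eigenvalue of $-L_{n,k}$ below $1/2$ is $\gamma_{n,k}$ with eigenspace exactly $\rmW_\SSp$. The only spot to tighten is your assertion that the supremum defining $\cN_\scU(\infty,\cM)$ is attained (the parenthetical justification is a non sequitur): the paper instead constructs $\bu$ explicitly from the $\rmW_\SSp$-projection of the leading mode and proves every other $\bu'\in\scU$ has $\cN_{\bu'}(\infty,\cM)=\gamma_{n,k}$ exactly, so attainment is a consequence rather than a hypothesis --- your absorption argument already contains this construction, so the fix is only a reordering.
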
  
	
	\begin{Rem} \label{Rem_AsympProf_Whitney data}
		An interesting consequence of Theorem \ref{Thm_Asymp Profile(New)_gamma>0} follows by taking $\gamma=1/2$, in which case \[
		\gamma^+ = \frac12 + \min \{\frac12, \frac1{n-k}\} =: \gamma_{n,k}^+ \,.
		\]
		In fact, for every degenerate $k$-cylindrical singularity $p\in \cS_k(\bM)_+$, suppose $\bM$ has tangent flow at $p$ to be $\cC_p \in \Rot(\cC_{n,k})$. Then the assumptions \ref{Item_AsyPrNew_Assum1} \& \ref{Item_AsyPrNew_Assum2} in the Theorem above are satisfied by an appropriate rotation and time translation of the RMCF $\cM^p$ based at $p$, with $\eps:= \min\{\lambda[\bM]^{-1}, 1/4n\}$, $\gamma=1/2$ and $\delta=\delta_{\ref{Thm_Asymp Profile(New)_gamma>0}}(n, \eps)$. This determines a low spherical flow $\bu_p$ asymptotic to $\cC_p$ and a unique $\psi_p\in \rmW_{1/2}(\cC_p)$ such that the graphical function $v^p(\cdot, \tau)$ of $\cM^p(\cdot, \tau)$ over $\cC_p$ satisfies \[
		v^p(\cdot, \tau) = \varphi_{\bu_p}(\cdot, \tau) + e^{-\tau/2}\psi_p + O(e^{-(\gamma_{n,k}^+-3\eps)\tau})\,, \quad \forall\, \tau\gg 1 \,.
		\]
		in $L^2(\cC_p)$. And if we denote by $|\mbfL_p|:= \spine(\cC_p)$. Then by \ref{Item_Pre_W_1/2} of Section \ref{SSubsec_Spectrum of L}, $\psi_p$ determines a unique $|\mbfL_p|^\perp$-valued quadratic polynomial $\bq_p$ on $|\mbfL_p|$ by \[
		\psi_p(\theta, y) =: \left\langle \bq_p(y) - 2\tr \bq_p, \hat\theta \right\rangle + \bc_p(y)\,,
		\]
		where $(\theta, y)\in (|\mbfL_p|^\perp\cap S^{n}(\sqrt{2(n-k)}))\times |\mbfL_p|$ parametrizes $\cC_p$, $\hat\theta:= \theta/|\theta|$; $\bc_p$ is a cubic Hermitian polynomial on $|\mbfL_p|\approx \R^k$.
	\end{Rem}

	\subsection{Asymptotic profile relative to a fixed spherical flow}  \label{Subsec_AsympProf_fix u}
	We first treat the case relative to a fixed low spherical flow $\cC_\bu$. The following Lemma provides an almost optimal graphical radius estimate, together with a rough $C^2$ estimate in spacetime. We emphasize that we don't need to assume a decay order upper bound in this Lemma.
	\begin{Lem} \label{Lem_AsympProf_Rough C^2 est}
		$\forall\, \eps\in (0, 1/(4n))$, there exists $\delta_{\ref{Lem_AsympProf_Rough C^2 est}}(n, \eps)\in (0, \eps)$ 
		with the following significance. Suppose 
		\begin{enumerate} [label={\normalfont(\alph*)}]
			\item\label{Item_AsympProf_Rough_RMCF delta close} $\cM$ is a RMCF $\delta$-$L^2$ close to $\cC_{n,k}$ over $\RR_{\geq 0}$, with $\delta\leq \delta_{\ref{Lem_AsympProf_Rough C^2 est}}$ and entropy bound $\lambda[\cM]\leq \eps^{-1}$;
			\item\label{Item_AsympProf_Rough_mu > lim cN_u} $\bu\in \scU$ is a low spherical flow with $\|\varphi_\bu\|_{C^4}\leq \delta_{\ref{Lem_AsympProf_Rough C^2 est}}$; $\mu\in (2\eps, \eps^{-1}]$ such that $\cN_\bu(\infty, \cM)\geq \mu$. 
		\end{enumerate}
		Then for every $\tau \geq 1$, we have graphical radius lower bound, 
		\begin{align}
			\gfrd_1(\cM(\tau)) \geq 2n + e^{\frac{\mu-\eps}{2(\mu+1)}\cdot\tau} =: \rmR_{\mu,\eps}(\tau)\,;  \label{Equ_AsympProf_Rough_Gfrad est}
		\end{align}
		and a rough $C^2$ estimate of graphical function $v(\cdot, \tau)$ of $\cM(\tau)$ over $\cC_{n,k}$: $\forall\, 2n \leq R\leq \rmR_{\mu,\eps}(\tau)$, 
		\begin{align}
			\|(v-\varphi_\bu)(\cdot, \tau)\|_{C^2(\cC_{n,k}\cap Q_R)} \lesssim_{n,\eps} e^{-(\mu-\eps)\tau}R^{2(1+\mu)}\cdot \mbfd_\bu(0, \cM)^{1-\eps} \,.  \label{Equ_AsympProf_Rough Spacetime C^2 est for u}
		\end{align}
	\end{Lem}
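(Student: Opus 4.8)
The plan is to turn a uniform lower bound on the relative decay order $\cN_\bu(\tau,\cM)$ into local graphicality of $\cM(\tau)$, using as the two inputs the discrete monotonicity of $\cN_\bu$ and the universal growth bound $\mbfd_\bu(\tau+s,\cM)\lesssim_{n,\eps}e^{(1+\eps)s}\mbfd_\bu(\tau,\cM)$ of Corollary~\ref{Cor_L^2 Mono_cN>-1}. First I would establish an a priori decay of $\mbfd_\bu$: since $\cN_\bu(\infty,\cM)\ge\mu$ exists (Corollary~\ref{Cor_L^2 Mono_Def cN_u(infty)}), the ``descent'' property of Remark~\ref{Rem_L^2 Mono_cN<gamma pass to next scale} --- which propagates $\cN_\bu(\cdot,\cM)\le\gamma$ forward in time whenever $\gamma$ is $\eps$-separated from $\sigma(\cC_{n,k})$ --- forbids $\cN_\bu(\tau,\cM)$ from ever dropping below such a $\gamma$ lying in $(\mu-\eps,\mu]$; such a $\gamma$ exists because any two distinct elements of $\sigma(\cC_{n,k})=\{\mu_i+j/2-1\}$ differ by at least $\tfrac1{2(n-k)}>\eps$ (this is where the hypothesis $\eps<\tfrac1{4n}$ enters). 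Hence $\cN_\bu(\tau,\cM)>\mu-\eps$ for all $\tau\ge0$; chaining the one-step bound $\mbfd_\bu(\tau+1,\cM)\le e^{-(\mu-\eps)}\mbfd_\bu(\tau,\cM)$, interpolating non-integer times with Corollary~\ref{Cor_L^2 Mono_cN>-1}, and trading a factor $\mbfd_\bu(0,\cM)^{\eps}\le\delta^{\eps}$ against the time-independent constants gives
\[
\mbfd_\bu(\tau,\cM)\ \lesssim_{n,\eps}\ e^{-(\mu-\eps)\tau}\,\mbfd_\bu(0,\cM)^{1-\eps}\qquad\text{for all }\tau\ge0.
\]

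Next I would carry out the translation argument. Fix $\tau\ge1$ and a point $X_0=(x_0,y_0)\in\cC_{n,k}$ with $\rho:=|y_0|\in(2n,\rmR_{\mu,\eps}(\tau)]$ --- the range $|y_0|\le2n$ is disposed of directly using Lemma~\ref{Lem_Prelim_M cap Q_R in Q_2n} and $\delta$-$L^2$-closeness. After rotating the $\R^k$-factor I would set $\tau':=\tau-2\log\rho$ and let $\cM''$ be the RMCF obtained by translating the underlying MCF of $\cM$ by $-e^{-\tau/2}(0,y_0)$, so $\cM''(\sigma)=\cM(\sigma)-e^{(\sigma-\tau)/2}(0,y_0)$. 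Since $\cC_{n,k}$ and $\cC_\bu$ are translation-invariant in $\R^k$, $\cM''$ still $C^\infty_{\loc}$-converges to $\cC_{n,k}$, has the same entropy, and satisfies $\cF[\cM''(\sigma)]\ge\cF[\cC_{n,k}]$ by Huisken monotonicity; the key point is that $\cM''(\tau')=\cM(\tau')-(0,\hat y_0)$ with $|\hat y_0|=1$, so the translation is of bounded size at time $\tau'$, whereas $\cM''(\tau)=\cM(\tau)-(0,y_0)$ puts $X_0$ at the origin. Then Lemma~\ref{Lem_L^2 Mono_Compare d_u(Sigma) w d_u'(Sigma) and d_u(Sigma + y)} (with cutoff radius $\sim\sqrt\tau$) together with the Step~1 bound controls $\mbfd_\bu(\tau',\cM'')$, and pushing forward from $\tau'$ to $\tau$ via Corollary~\ref{Cor_L^2 Mono_cN>-1}, using $\tau-\tau'=2\log\rho$ so that $e^{(1+\eps)(\tau-\tau')}=\rho^{2(1+\eps)}$ combines with $e^{-(\mu-\eps)\tau'}=e^{-(\mu-\eps)\tau}\rho^{2(\mu-\eps)}$ to form $\rho^{2(1+\mu)}$ (the sub-exponential $e^{c\sqrt\tau}$ absorbed by enlarging $\eps$), one obtains
\[
\mbfd_\bu(\tau,\cM'')\ \lesssim_{n,\eps}\ \rho^{2(1+\mu)}\,e^{-(\mu-\eps)\tau}\,\mbfd_\bu(0,\cM)^{1-\eps}.
\]
For $\rho\le\rmR_{\mu,\eps}(\tau)$ the prefactor $\rho^{2(1+\mu)}e^{-(\mu-\eps)\tau}$ is $\lesssim_{n,\eps}1$, so the right side is below the smallness threshold of Lemma~\ref{Lem_L^2 clos => C^2 close}, which then makes $\cM''(\tau)$ a $C^2$-graph over $\cC_\bu$ on a fixed cube about the origin with $C^2$-norm $\lesssim_{n,\eps}\mbfd_\bu(\tau,\cM'')$. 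Translating back exhibits $\cM(\tau)$ as a $1$-graph over $\cC_{n,k}$ near $X_0$ with $\|(v-\varphi_\bu)(\cdot,\tau)\|_{C^2}\lesssim_{n,\eps}\rho^{2(1+\mu)}e^{-(\mu-\eps)\tau}\mbfd_\bu(0,\cM)^{1-\eps}$; letting $X_0$ range over $|y_0|\le\rmR_{\mu,\eps}(\tau)$, and handling the bounded-$y$ region via Lemma~\ref{Lem_Prelim_M cap Q_R in Q_2n}, yields both \eqref{Equ_AsympProf_Rough_Gfrad est} and \eqref{Equ_AsympProf_Rough Spacetime C^2 est for u}.

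The delicate step --- the ``novel translation argument'' --- is the choice of the reparametrization time $\tau'=\tau-2\log\rho$: it must keep the translation linking $\cM''$ to $\cM$ of bounded size (so that Lemma~\ref{Lem_L^2 Mono_Compare d_u(Sigma) w d_u'(Sigma) and d_u(Sigma + y)} applies with only polynomial-in-$\rho$ loss after evolving forward) while keeping $\tau'$ comparable to $\tau$; this is exactly what converts the worst-case growth rate $e^{(1+\eps)}$ (coming from $-1\in\sigma(\cC_{n,k})$) into the polynomial factor $R^{2(1+\mu)}$ and the exponent $\tfrac{\mu-\eps}{2(\mu+1)}$ in the graphical radius. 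It is worth noting that no a priori control of $\cM$'s graphical radius at intermediate scales is needed, since the decay is transferred to $\cM''$ only through a \emph{bounded} translation at the single time $\tau'$, where closeness of $\cM(\tau')$ to $\cC_{n,k}$ on a fixed-size set is all that is used. I expect the rest to be bookkeeping: the spectral-gap input to Step~1, the various $\eps$-losses (absorbing $e^{c\sqrt\tau}$ and the error term $\rho^{2(1+\eps)}e^{-c^2\tau/64}$, harmless because $\rho\le e^{\tau/2}$ and $c$ is chosen large), and the small-$\tau$ regime (when $\tau'<1$), in which $\rmR_{\mu,\eps}(\tau)$ is bounded and the conclusion follows at once from $\delta$-$L^2$-closeness and Brakke--White regularity once $\delta=\delta_{\ref{Lem_AsympProf_Rough C^2 est}}(n,\eps)$ is chosen small enough.
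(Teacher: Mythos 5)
Your strategy coincides with the paper's: Step~1 propagates the decay-order lower bound forward via the discrete monotonicity (using the spectral gap of $\sigma(\cC_{n,k})$, which is indeed $\geq \tfrac{1}{2(n-k)}$) to get exponential decay of $\mbfd_\bu(\tau,\cM)$, and Step~2 is exactly the paper's translation argument at the reparametrized time $\tau'=\tau-2\log\rho$, with the push-forward by Corollary~\ref{Cor_L^2 Mono_cN>-1} converting the worst-case rate into the polynomial factor $R^{2(1+\mu)}$ and the exponent $\tfrac{\mu-\eps}{2(\mu+1)}$. There is, however, one step that as written fails to deliver the stated estimate \eqref{Equ_AsympProf_Rough Spacetime C^2 est for u}: the choice of cutoff radius $R\sim\sqrt{\tau}$ in Lemma~\ref{Lem_L^2 Mono_Compare d_u(Sigma) w d_u'(Sigma) and d_u(Sigma + y)}. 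That lemma leaves a Gaussian-tail error $\lambda[\cM]^{1/2}e^{-R^2/64}$, which with $R=c(n,\eps)\sqrt{\tau}$ is a fixed function of $\tau$ carrying no factor of $\mbfd_\bu(0,\cM)$. After pushing forward it contributes $\rho^{2(1+\eps)}e^{-c^2\tau'/64}$; this is indeed $\ll e^{-(\mu-\eps)\tau}$ for $c$ large, but it cannot be dominated by $C(n,\eps)\rho^{2(1+\mu)}e^{-(\mu-\eps)\tau}\mbfd_\bu(0,\cM)^{1-\eps}$ uniformly, since the target tends to $0$ as $\mbfd_\bu(0,\cM)\to 0$ with $\tau$ fixed while the error does not. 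So your argument yields the graphical radius bound \eqref{Equ_AsympProf_Rough_Gfrad est} (where only smallness below the threshold of Lemma~\ref{Lem_L^2 clos => C^2 close} is needed), but not the $C^2$ estimate with the factor $\mbfd_\bu(0,\cM)^{1-\eps}$, which is used essentially downstream (e.g.\ in \eqref{Equ_AsympProf_Precise |psi|_L^2 bd}). The fix is to take $R=8(-\ln\mbfd_\bu(\tau',\cM))^{1/2}$, so that $e^{-R^2/64}=\mbfd_\bu(\tau',\cM)$ and $e^{R/4}\leq \mbfd_\bu(\tau',\cM)^{-\eps''}$, giving $\mbfd_\bu(\tau',\cM'')\lesssim \mbfd_\bu(\tau',\cM)^{1-\eps''}$ with the initial-distance factor preserved.

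Two smaller points. Your justification $\cF[\cM''(\sigma)]\geq \cF[\cC_{n,k}]$ for the hypothesis of Corollary~\ref{Cor_L^2 Mono_cN>-1} is too strong: the translated base point need not be a cylindrical singularity, so Huisken monotonicity only gives $\cF[\cM''(\sigma)]\geq \Theta\geq 1$ if the point lies in $\spt\bM$; what is actually needed, $\cF[\cM''(\tau')]>\tfrac12\cF[\cC_{n,k}]$, follows more robustly from the $L^2$-closeness of the translated flow and Lemma~\ref{lem_delta_L2_close_implies_Gaussian_lower_bound}. Also, to rule out extra components of $\cM(\tau)$ in $Q_R\setminus(B^{n-k+1}_{2n}\times\R^k)$ for $|y|$ up to $\rmR_{\mu,\eps}(\tau)$ (not merely $|y|\leq 2n$), you must apply Lemma~\ref{Lem_Prelim_M cap Q_R in Q_2n} to each translated flow, which requires its $\delta$-$L^2$ closeness over the whole interval $[0,\tau]$ rather than only at the single time $\tau'$; this does follow from your estimates but needs to be recorded.
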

	\begin{proof}
		We first claim that by taking $\delta_{\ref{Lem_AsympProf_Rough C^2 est}}(n, \eps)\ll 1$ and $\rmT'(n,\eps)\gg 1$, we have for every $\tau\geq \rmT'$, 
		\begin{align}
			\cN_\bu(\tau, \cM)\geq \mu - \eps/2 \,.  \label{Equ_AsympProf_N_u(M)> mu - eps}
		\end{align}
		Note that this implies that (by definition) for every $\tau\geq \rmT'$, 
		\begin{align}
			\mbfd_\bu(\tau, \cM) \leq e^{-(\mu-\eps/2)\lfloor\tau-\rmT' \rfloor}\mbfd_\bu(\tau-\lfloor\tau-\rmT' \rfloor, \cM) \lesssim_{n, \eps} e^{-(\mu-\eps/2)\tau} \mbfd_\bu(0, \cM)\,;  \label{Equ_AsympProf_Rough e^(-mu tau) decay}
		\end{align}
		where the last inequality follows from Lemma \ref{Lem_L^2 Noncon_for RMCF}; while since we have fix the choice of $\rmT'(n, \eps)$ here, also by Lemma \ref{Lem_L^2 Noncon_for RMCF}, the same estimate holds for $\tau\in[0, \rmT']$.
		
		To prove \eqref{Equ_AsympProf_N_u(M)> mu - eps}, first note that there exist an $\eps'(n, \eps)\in (0, \eps)$ and $\hat\mu\in [\mu-\eps/2, \mu-\eps/4]$ such that $\dist_\R(\hat\mu, \sigma(\cC_{n,k}))\geq \eps'$. Let $\delta_{\ref{Lem_AsympProf_Rough C^2 est}}(n, \eps)\ll 1$ and $\rmT'(n,\eps)\gg 1$ such that when $\tau\geq \rmT'$, $\|\varphi_\bu(\cdot,\tau)\|_{C^4}$ is sufficiently small so that Corollary \ref{Cor_L^2 Mono_Discrete Growth Mono} can be applied. Suppose for contradiction, there exists $\bar \tau\geq \rmT'$ such that $\cN_\bu(\bar \tau, \cM)< \mu - \eps/2 \leq \hat\mu$. Then by Corollary \ref{Cor_L^2 Mono_Discrete Growth Mono}, for every integer $l\geq 0$, \[
		\cN_\bu(\bar \tau + l, \cM)\leq \cN_\bu(\bar \tau, \cM) \leq \hat\mu \leq \mu - \eps/4 \,.
		\]
		This contradicts to assumption \ref{Item_AsympProf_Rough_mu > lim cN_u} by sending $l\to +\infty$.
		
		Now we bound the $L^2$ distance of the translated flow. For every $\by\in \RR^k$ and $\tau\geq 0$, we denote for simplicity $\by_\tau:= e^{\tau/2}\by$, then recall that $\tau\mapsto \cM^{(0, \by, 0)}(\tau) = \cM(\tau) - (\orig, \by_\tau)$ is another RMCF. Hence whenever $|\by|\leq e^{-\tau/2}$, by applying \eqref{Equ_L^2 Mono_d_u(Sigma - y) < d_u(Sigma) + error} of Lemma \ref{Lem_L^2 Mono_Compare d_u(Sigma) w d_u'(Sigma) and d_u(Sigma + y)} with $R=8(-\ln \mbfd_\bu(\tau, \cM))^{1/2}$,
		\begin{align}
			\mbfd_\bu(\tau, \cM^{(0, \by, 0)}) = \mbfd_\bu(\tau, \cM(\tau)-(0, \by_\tau)) \lesssim_{n,\eps} \mbfd_\bu(\tau, \cM)^{1-\eps''}\,, \label{Equ_AsympProf_d(M^y)<d(M)}
		\end{align}
		where $\eps'':= 1-(\mu-\eps)/(\mu-\eps/2)\leq \eps$ has a positive lower bound depending only on $\eps$.
		
		Now that for every $\tau_\circ \geq 0$, every $\tau \geq \tau_\circ+1$ and $\by\in \RR^k$ with $|\by|\leq e^{-\tau_\circ/2}$, whenever the terms on the RHS are sufficiently small, we have 
		\begin{align}
			\begin{split}
				\mbfd_\bu(\tau-1, \cM^{(0, \by, 0)}) & \lesssim_{n, \eps} e^{(1+\eps)(\tau-\tau_\circ)}\cdot \mbfd_\bu(\tau_\circ, \cM^{(0, \by, 0)}) \\
				& \lesssim_{n, \eps} e^{(1+\eps)(\tau-\tau_\circ)}\cdot \mbfd_\bu(\tau_\circ, \cM)^{1-\eps''} \\
				& \lesssim_{n, \eps} e^{(1+\eps)(\tau-\tau_\circ)}\cdot \left(e^{-(\mu-\eps/2)\tau_\circ}\mbfd_\bu(0, \cM)\right)^{1-\eps''} \\
				& \lesssim_{n, \eps} e^{(1+\eps)\tau-(1+\mu)\tau_\circ}\cdot \mbfd_\bu(0, \cM)^{1-\eps} \,,
			\end{split} \label{Equ_AsympProf_d(tau, M)<e^(tau-(1+mu)l)}
		\end{align}
		where the first inequality follows from Corollary \ref{Cor_L^2 Mono_cN>-1} and Lemma \ref{lem_delta_L2_close_implies_Gaussian_lower_bound}, the second follows from \eqref{Equ_AsympProf_d(M^y)<d(M)} and the third follows from \eqref{Equ_AsympProf_N_u(M)> mu - eps}. 
		In particular, if we take $\breve\tau := (\mu+1)^{-1}\cdot (1+\eps)\tau$, then for every $\tau_\circ\in [\breve\tau, \tau-1]$ and every $|\by|\leq e^{-\tau_\circ/2}$, the inequality above gives, \[
		\mbfd_\bu(\tau-1, \cM^{(0, \by, 0)}) \lesssim_{n, \eps} \mbfd_\bu(0, \cM)^{1-\eps} \,.
		\]
		By Lemma \ref{lem_delta_L2_close_implies_Gaussian_lower_bound}, Lemma \ref{Lem_L^2 clos => C^2 close} and taking $\delta_{\ref{Lem_AsympProf_Rough C^2 est}}(n, \eps)\ll 1$, this means $\cM^{(0, \by, 0)}(\tau)=\cM(\tau)-e^{\tau/2}\by$ is $C^2$ graphical over $\cC_{n,k}$ in $Q_{2n}$, and hence $\cM(\tau)$ is a $C^2$ graph over $\cC_{n,k}$ in $Q_{2n}(0, e^{\tau/2}\by)$, with the graphical function $v(\cdot, \tau)$ satisfying \[
		\|(v - \varphi_\bu)(\cdot, \tau)\|_{C^2(\cC_{n,k}\cap Q_{2n}(0, e^{\tau/2}\by))} 
		\lesssim_{n, \eps} e^{(1+\eps)\tau - (1+\mu)\tau_\circ}\cdot \mbfd_\bu(0, \cM)^{1-\eps} \,.
		\]In view of the choice of admissible $\by$, this proves that for every $\tau \geq 1$, every $2n\leq R\leq 2n + e^{(\tau - \breve\tau)/2} =: \rmR_{\mu, \eps}(\tau)$, $\cM(\tau)$ is a $C^2$ graph over $\cC_{n,k}$ in $B^{n-k+1}_{2n}\times B^k_R$, with estimate on graphical function $v(\cdot, \tau)$:
		\begin{align*}
			\|(v-\varphi_\bu)(\cdot, \tau)\|_{C^2(\cC_{n,k}\cap Q_R)} \lesssim_{n, \eps} e^{-(\mu-\eps)\tau}R^{2(1+\mu)}\cdot \mbfd_\bu(0, \cM)^{1-\eps} \,. 
		\end{align*}
		
		To finish the proof of \eqref{Equ_AsympProf_Rough_Gfrad est} and \eqref{Equ_AsympProf_Rough Spacetime C^2 est for u}, it suffices to establish the following: 
		\begin{align}
			\cM(\tau)\cap Q_{\rmR_{\mu, \eps}(\tau)}\setminus \BB_{2n}^{n-k+1}\times \RR^k = \emptyset \,. \label{Equ_AsympProf_M(tau) cap Q_R subset Q_2n}
		\end{align}
		To see this, for every $T > 1$, note that by applying \eqref{Equ_AsympProf_d(tau, M)<e^(tau-(1+mu)l)} again with $\tau_\circ = \breve T := (1+\mu)^{-1}\cdot (1+\eps)T$, we have for every $|\by|\leq e^{-\breve T/2}$ and every $\tau\in [\breve T, T]$, \[
		\mbfd_\bu(\tau, \cM^{(0, \by, 0)}) \lesssim_{n, \eps} e^{(1+\eps)\tau - (1+\mu) \breve T} \cdot \mbfd_\bu(0, \cM)^{1-\eps} \lesssim_{n, \eps} \mbfd_\bu(0, \cM)^{1-\eps} \,;
		\]
		and the same holds when $0\leq \tau\leq \breve T$ by Lemma \ref{Lem_L^2 Mono_Compare d_u(Sigma) w d_u'(Sigma) and d_u(Sigma + y)} since $|e^{\tau/2}\by|\leq 1$ for such $\tau$. By taking $\delta_{\ref{Lem_AsympProf_Rough C^2 est}}(n, \eps)\ll 1$, this implies that $\cM^{(0, \by, 0)}$ is $\delta_{\ref{Lem_Prelim_M cap Q_R in Q_2n}}(n, \eps)$-close to $\cC_{n, k}$ over $[0, T]$. Thus by Lemma \ref{Lem_Prelim_M cap Q_R in Q_2n}, \[
		(\spt\cM^{(0, \by, 0)})(T) \cap \left(B^{n-k+1}_{2n+e^{\tau/2}}\times B^k_{2n} \right) \subset Q_{2n}.
		\]
		By ranging $\by\in \R^k$ with constraint $|\by|\leq e^{-\breve T/2}$, this proves \eqref{Equ_AsympProf_M(tau) cap Q_R subset Q_2n}.
	\end{proof}
	
	We now turn the rough estimate \eqref{Equ_AsympProf_Rough Spacetime C^2 est for u} to one with a precise time asymptotic. This is achieved by solving an inhomogeneous parabolic equation with a pointwise estimate, which we discuss in the Appendix.
	
	\begin{Lem} \label{Lem_AsympProf_Precise C^0 est}
		Let $\eps, \delta\leq \delta_{\ref{Lem_AsympProf_Rough C^2 est}}, \cM, \bu, \mu$ be the same as in Lemma \ref{Lem_AsympProf_Rough C^2 est}. Suppose further that
		\begin{align}
			\lim_{\tau\to +\infty} \cN_\bu(\tau, \cM) = \mu\,, & & 
			\mu^+ := \min\left\{\mu',\ \mu+\gamma_{n,k},\ 2\mu \right\}>\mu+4\eps\,. \label{Equ_AsympProf_Imprv_cN to mu}
		\end{align}
		where $\mu':= \min\sigma(\cC_{n,k})\cap (\mu, +\infty)$. 
		Then $\mu\in \sigma(\cC_{n,k})$, and there's a unique nonzero eigenfunction $\psi\in \rmW_{\mu}(\cC_{n,k})$ of $-L_{n,k}$ such that the following refined estimate of graphical function $v(\cdot, \tau)$ of $\cM(\tau)$ over $\cC_{n,k}$ holds: for every $\tau\geq 1$ and $2n\leq R \leq \gfrd_1(\cM(\tau))$,
		\begin{align}
			\|(v-\varphi_\bu)(\cdot, \tau) - e^{-\mu\tau}\psi\|_{C^0(\cC_{n,k}\cap Q_R)} \lesssim_{n, \eps} e^{-(\mu^+-3\eps)\tau}R^{2(2+\mu^+)} \,.    
			\label{Equ_AsympProf_Precise C^2 est}
		\end{align}
		
		In particular, $\exists\, \delta_{\ref{Lem_AsympProf_Precise C^0 est}}(n, \eps)\in (0, \delta_{\ref{Lem_AsympProf_Rough C^2 est}}]$ such that if $\cM$ further satisfies $\mbfd_\bu(0, \cM)\leq \delta_{\ref{Lem_AsympProf_Precise C^0 est}}$, then
		\begin{align}
			\|\psi\|_{L^2} \lesssim_{n, \eps} \mbfd_\bu(0, \cM)^{(1-\eps)/2} \leq 1\,; \label{Equ_AsympProf_Precise |psi|_L^2 bd}
		\end{align}
		and for every $\tau\geq 1$,
		\begin{align}
			\|(v-\varphi_\bu)(\cdot, \tau)\|_{L^2(\cC_{n,k}\cap Q_{\gfrd_1(\cM(\tau))})} + \mbfd_\bu(\tau, \cM) \lesssim_{n, \eps} e^{-\mu\tau}\mbfd_\bu(0, \cM)^{\delta_{\ref{Lem_AsympProf_Precise C^0 est}}}\,.  \label{Equ_AsympProf_Precise d_bu(tau) bd}
		\end{align}
		
	\end{Lem}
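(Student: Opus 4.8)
The plan is to linearize. Write $w(\cdot,\tau):=v(\cdot,\tau)-\varphi_\bu(\cdot,\tau)$ and observe that, since both $v$ and $\varphi_\bu$ are graphical functions over $\cC_{n,k}$ of rescaled mean curvature flows (the latter of $\cC_\bu$), Lemma \ref{Lem_App_Graph over Cylinder} of Appendix \ref{Append_Graph over Cylinder} shows that $w$ solves an inhomogeneous linear parabolic equation $(\partial_\tau-L_{n,k})w=\cE$, where $\cE=\cQ(v,\nabla v,\nabla^2 v)-\cQ(\varphi_\bu,\nabla\varphi_\bu,\nabla^2\varphi_\bu)$ and $\cQ$ vanishes to second order. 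That $\mu\in\sigma(\cC_{n,k})$ is immediate: by Corollary \ref{Cor_L^2 Mono_Def cN_u(infty)} the limit $\cN_\bu(\infty,\cM)$ lies in $(\sigma(\cC_{n,k})\cap\RR_{\ge0})\cup\{+\infty\}$, and it equals the finite number $\mu$ by \eqref{Equ_AsympProf_Imprv_cN to mu}. The key quantitative input is the error bound: inserting the rough $C^2$ estimate \eqref{Equ_AsympProf_Rough Spacetime C^2 est for u} and $\|\varphi_\bu(\cdot,\tau)\|_{C^4}\lesssim_n e^{-\gamma_{n,k}\tau}$ (Lemma \ref{Lem_L^2Noncon_LowSphericalMode}; the $\varphi_\bu$ term is vacuous when $n-k=1$) into the quadratic structure of $\cQ$ gives, for $\tau\ge1$ and $2n\le R\le\gfrd_1(\cM(\tau))$,
\[
\|\cE(\cdot,\tau)\|_{C^0(\cC_{n,k}\cap Q_R)}\lesssim_{n,\eps}\bigl(e^{-\gamma_{n,k}\tau}+e^{-(\mu-\eps)\tau}R^{2(1+\mu)}\bigr)\,e^{-(\mu-\eps)\tau}R^{2(1+\mu)}\lesssim_{n,\eps}e^{-(\mu^+-2\eps)\tau}R^{N(n)},
\]
using precisely $\mu^+\le\min\{\mu+\gamma_{n,k},2\mu\}$, where $N(n)$ is a fixed polynomial exponent.

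Since this error decays strictly faster than $e^{-\mu\tau}$ (recall $\mu^+>\mu+4\eps$), the extraction of the leading mode is the inhomogeneous parabolic/parabolic-Jacobi-field analysis of Appendix \ref{Append_Ana Parab Jac}, which I invoke after verifying its hypotheses. Concretely, choose a cutoff $\zeta_\tau$ with $\zeta_\tau\equiv1$ on $Q_{\sqrt{C(n)\tau}}$ and supported in $Q_{2\sqrt{C(n)\tau}}\subset Q_{\gfrd_1(\cM(\tau))}$ for $\tau$ large (by \eqref{Equ_AsympProf_Rough_Gfrad est}), and track the $L^2(\cC_{n,k})$-valued function $\tau\mapsto e^{\mu\tau}\Pi_{=\mu}(w(\cdot,\tau)\zeta_\tau)$. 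Its derivative is controlled by $e^{\mu\tau}\|\cE\zeta_\tau\|_{L^2}$ plus commutator terms supported in $Q_{2\sqrt{C(n)\tau}}\setminus Q_{\sqrt{C(n)\tau}}$, which are super-exponentially small by the Gaussian weight and Proposition \ref{prop:entropy control outside ball of radius R}; since $\mu^+-\mu-2\eps>2\eps>0$ the main term is integrable, so the function is Cauchy and converges to some $\psi\in\rmW_\mu(\cC_{n,k})$ with $\|\Pi_{=\mu}w(\cdot,\tau)-e^{-\mu\tau}\psi\|_{L^2}\lesssim_{n,\eps}e^{-(\mu^+-3\eps)\tau}$ (Lemma \ref{Lem_Effect of Cutoff} removes $\zeta_\tau$). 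The off-diagonal modes are handled by the spectral gap applied to $\tilde w:=w-e^{-\mu\tau}\psi$, which solves the same equation: a Grönwall estimate on $\Pi_{>\mu}(\tilde w\zeta_\tau)$ (eigenvalues $\ge\mu'\ge\mu^+$) gives $\|\Pi_{>\mu}w(\cdot,\tau)\|_{L^2}\lesssim e^{-(\mu^+-2\eps)\tau}$; while for $\Pi_{<\mu}w$ each mode $c_\lambda(\tau)$, $\lambda<\mu$, has a limit $e^{\lambda\tau}c_\lambda(\tau)\to c_\lambda(\infty)$ which must vanish, since otherwise $\mbfd_\bu(\tau,\cM)\gtrsim e^{-\lambda\tau}$ would contradict the a priori decay $\mbfd_\bu(\tau,\cM)\lesssim_{n,\eps}e^{-(\mu-\eps)\tau}\mbfd_\bu(0,\cM)^{1-\eps}$ (from \eqref{Equ_AsympProf_Rough Spacetime C^2 est for u} at $R=2n$ plus Proposition \ref{prop:entropy control outside ball of radius R}), so $\|\Pi_{<\mu}w(\cdot,\tau)\|_{L^2}\lesssim e^{-(\mu^+-2\eps)\tau}$ too. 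Combining, $\|\tilde w(\cdot,\tau)\|_{L^2(\cC_{n,k})}\lesssim_{n,\eps}e^{-(\mu^+-3\eps)\tau}$, and interior parabolic estimates for $\tilde w$ (forcing $\cE$ bounded as above) upgrade this to the pointwise bound \eqref{Equ_AsympProf_Precise C^2 est} with the stated $R$-polynomial loss. That $\psi\ne0$: if $\psi=0$, taking $R$ a fixed power of $\tau$ in \eqref{Equ_AsympProf_Precise C^2 est} gives $\mbfd_\bu(\tau,\cM)\lesssim e^{-(\mu^+-4\eps)\tau}$, whereas $\cN_\bu(\tau,\cM)\to\mu$ forces $\mbfd_\bu(\tau,\cM)\gtrsim e^{-(\mu+\eps)\tau}$ for large $\tau$, contradicting $\mu^+>\mu+4\eps$; uniqueness follows since two candidates differ by an element of $\rmW_\mu$ of $L^2$-norm $O(e^{-(\mu^+-3\eps)\tau})\to0$.

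For the last two displays, assume in addition $\mbfd_\bu(0,\cM)\le\delta_{\ref{Lem_AsympProf_Precise C^0 est}}$. The asymptotics just obtained give $\mbfd_\bu(\tau,\cM)=e^{-\mu\tau}\|\psi\|_{L^2}\,(1+o(1))$ as $\tau\to\infty$, while the rough bound reads $\mbfd_\bu(\tau,\cM)\lesssim_{n,\eps}e^{-(\mu-\eps)\tau}\mbfd_\bu(0,\cM)^{1-\eps}$ for all $\tau\ge0$. Evaluating the former at the crossover time $\tau_\ast\sim(\mu^+-\mu-4\eps)^{-1}\log(1/\|\psi\|_{L^2})$ and inserting the latter yields $\|\psi\|_{L^2}^{1+\Psi(\eps|n)}\lesssim_{n,\eps}\mbfd_\bu(0,\cM)^{1-\eps}$, hence \eqref{Equ_AsympProf_Precise |psi|_L^2 bd} after weakening the exponent (using $\mbfd_\bu(0,\cM)\le1$). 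Then \eqref{Equ_AsympProf_Precise d_bu(tau) bd} follows by splitting at $\tau_{\ast\ast}\sim\eps^{-1}\log(1/\mbfd_\bu(0,\cM))$: for $\tau\le\tau_{\ast\ast}$ rewrite the rough bound as $e^{-\mu\tau}\cdot e^{\eps\tau}\mbfd_\bu(0,\cM)^{1-\eps}\le e^{-\mu\tau}\mbfd_\bu(0,\cM)^{\delta_{\ref{Lem_AsympProf_Precise C^0 est}}}$, and for $\tau\ge\tau_{\ast\ast}$ use $\mbfd_\bu(\tau,\cM)\lesssim e^{-\mu\tau}\|\psi\|_{L^2}+(\text{error})$ together with \eqref{Equ_AsympProf_Precise |psi|_L^2 bd}; the constant $\delta_{\ref{Lem_AsympProf_Precise C^0 est}}(n,\eps)$ is taken small enough to absorb the residual powers. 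The passages between $C^0(\cC_{n,k}\cap Q_R)$ and $L^2(\cC_{n,k})$ throughout use \eqref{Equ_AsympProf_Rough_Gfrad est} and the Gaussian tail as in Corollary \ref{Cor_L^2 Noncon_Compare d(M), cN(M) and d(M in Q_R), cN(M in Q_R)}.

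The main obstacle is the middle step. The error estimate and the extraction of the $\rmW_\mu$-component via the projection ODE are routine once Lemma \ref{Lem_AsympProf_Rough C^2 est} is in hand, but pinning down $\Pi_{\ne\mu}w$ to precision $e^{-(\mu^+-3\eps)\tau}$ is delicate: the lower modes must be shown to carry no surviving mass (using the a priori $e^{-(\mu-\eps)\tau}$ decay), the higher modes require a spectral-gap Grönwall estimate uniform over infinitely many eigenvalues, and all of this must be carried out with a $\tau$-dependent cutoff whose support grows only like $\sqrt{\tau}$ rather than like $\gfrd_1(\cM(\tau))$, so that the polynomial-in-$R$ error factors remain integrable in $\tau$ — this is exactly the content packaged in Appendix \ref{Append_Ana Parab Jac}. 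The secondary difficulty is the $\eps$-bookkeeping and interpolation producing the powers of $\mbfd_\bu(0,\cM)$ in \eqref{Equ_AsympProf_Precise |psi|_L^2 bd}–\eqref{Equ_AsympProf_Precise d_bu(tau) bd}.
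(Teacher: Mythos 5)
Your overall strategy is the same as the paper's: linearize to $(\partial_\tau-L_{n,k})(v-\varphi_\bu)=\cQ[v]-\cQ[\varphi_\bu]$, feed the rough $C^2$ bound \eqref{Equ_AsympProf_Rough Spacetime C^2 est for u} and the $e^{-\gamma_{n,k}\tau}$ decay of $\varphi_\bu$ into the quadratic structure to get an error of size $e^{-(\mu^+-2\eps)\tau}$ times a polynomial weight, and then extract the $\rmW_\mu$-mode while killing the lower modes by the a priori $e^{-(\mu-\eps)\tau}$ decay and the higher modes by the spectral gap. The only packaging difference is that you run the mode-by-mode ODE/Gr\"onwall analysis by hand with a cutoff at scale $\sqrt{C\tau}$, whereas the paper cuts off at the exponentially growing graphical radius $\rmR_{\mu,\eps}(\tau)$, constructs a particular solution with pointwise weighted bounds (Proposition \ref{Prop_App_Solve Parab Jac equ w spacetime C^0 est}), and applies the Liouville-type Proposition \ref{Prop_App_Parab equ Spacetime C^0 bd from initial data} to the homogeneous remainder; your $\sqrt{\tau}$-cutoff works (the commutator terms are $O(e^{-C\tau/8})$, exponentially small with a rate you can make large by choosing $C$, not ``super-exponentially'' small as you say, but that suffices), at the cost of re-deriving the pointwise bound \eqref{Equ_AsympProf_Precise C^2 est} from the $L^2$ asymptotics by interior parabolic estimates, which is fine.

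There is one concrete quantitative slip, in the derivation of \eqref{Equ_AsympProf_Precise |psi|_L^2 bd}. Your crossover time $\tau_\ast\sim(\mu^+-\mu-4\eps)^{-1}\log(1/\|\psi\|_{L^2})$ yields
\[
\|\psi\|_{L^2}^{\,1+\eps/(\mu^+-\mu-4\eps)}\lesssim_{n,\eps}\mbfd_\bu(0,\cM)^{1-\eps},
\]
and the extra exponent $\eps/(\mu^+-\mu-4\eps)$ is \emph{not} $\Psi(\eps|n)$: the hypothesis only guarantees $\mu^+>\mu+4\eps$, so $\mu^+-\mu-4\eps$ may be much smaller than $\eps$ (e.g.\ already for $\gamma=1/2$, $\mu^+-\mu=\min\{1/2,\gamma_{n,k}\}$ while $\eps$ may be close to $1/(4n)$), in which case the exponent on $\|\psi\|_{L^2}$ exceeds $2$ and the resulting bound is strictly weaker than $\mbfd_\bu(0,\cM)^{(1-\eps)/2}$. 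The fix is to compare at a time determined by $\mbfd_\bu(0,\cM)$ rather than by $\|\psi\|_{L^2}$: take $\tau$ with $e^{\eps\tau}=\mbfd_\bu(0,\cM)^{-(1-\eps)/2}$, so that the rough bound contributes $e^{\eps\tau}\mbfd_\bu(0,\cM)^{1-\eps}=\mbfd_\bu(0,\cM)^{(1-\eps)/2}$ while the error term $e^{-(\mu^+-\mu-3\eps)\tau}\leq e^{-\eps\tau}$ contributes the same power; this is what the paper does (together with Corollary \ref{Cor_L^2 Mono_Low Bd Upsilon_(u,u'; w)} to pass from $\|\psi\|_{L^2(Q_R)}$ to $\|\psi\|_{L^2}$). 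The rest of your argument, including the case split for \eqref{Equ_AsympProf_Precise d_bu(tau) bd}, is sound.
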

	
	\begin{Rem}
		By applying the RMCF equation to $v-\varphi_\bu$, one can also derive a $C^{k,\alpha}$ bound on $(v-\varphi_\bu)(\cdot, \tau) - e^{-\mu\tau}\psi$ of similar form as \eqref{Equ_AsympProf_Precise C^2 est}. But this is out of the scope of this paper. 
	\end{Rem}
	\begin{proof}[Proof of Lemma \ref{Lem_AsympProf_Precise C^0 est}.]
		Recall $\rmR_{\mu,\eps}(\tau):= 2n + e^{\frac{\mu-\eps}{2(\mu+1)}\cdot \tau}$ denotes the lower bound on graphical radius as in Lemma \ref{Lem_AsympProf_Rough C^2 est}.
		Also note that there exist $\eps'\in (0, \eps)$ depending on $\mu$, $\eps''=\eps''(n, \eps)\in (0, \eps)$, and $\hat\mu\in [\mu^+-3\eps, \mu^+-2\eps]$ such that
		\begin{align}
			\dist_\R(\hat\mu, \sigma(\cC_{n,k}))\geq \eps''\,, & &
			[\mu-2\eps', \mu+2\eps']\cap \sigma(\cC_{n,k}) \subset \{\mu\}\,.  \label{Equ_AsympProf_Imprv_hat mu}
		\end{align}
		
		Let $\xi\in C^\infty(\RR)$ be a cut-off function with $\xi|_{(-\infty, 1/2]}=1$, $\xi|_{[1, +\infty)}=0$ and $|\xi'|, |\xi''|\leq 2025$; let \[
		\tilde\xi(\theta, y, \tau):= \xi(\rmR_{\mu,\eps}(\tau)^{-1}|y|), \qquad
		\tilde v(\cdot, \tau):= (v-\varphi_\bu)(\cdot, 1+\tau)\tilde\xi\,.
		\]
		
		Note that by \eqref{Equ_AsympProf_Imprv_cN to mu}, $|\cN_\bu(\tau, \cM) - \mu|<\eps'$ for $\tau \gg 1$, hence by Lemma \ref{Lem_App_Graph over Cylinder} \ref{Item_GraphCyliner_d_u vs |v-varphi_u|_L^2} and the exponential growth of $\rmR_{\mu,\eps}(\tau)$, we have
		\begin{align}
			\begin{split}
				& \limsup_{\tau\to +\infty} e^{(\mu-\eps')\tau}\|\tilde v(\cdot, \tau)\|_{L^2} <+\infty\,; \\
				&\liminf_{\tau\to +\infty}\ e^{(\mu+\eps')\tau}\|\tilde v(\cdot, \tau)\|_{L^2} \geq  \liminf_{\tau\to +\infty}\ e^{(\mu+\eps')\tau} \left(\mbfd_\bu(\tau, \cM) - \mbfd_\bu(\tau, \cM(\tau)\setminus Q_{\rmR_{\mu,\eps}(\tau)/2})) \right) > 0\,. 
			\end{split} \label{Equ_AsympProf_v sim e^(-mu tau)}
		\end{align}
		\begin{Claim}
			$\tilde\cQ:= (\partial_\tau - L_{n,k}) \tilde v$ satisfies the pointwise estimate, 
			\begin{align}
				|\tilde\cQ(\cdot, \tau)| \lesssim_{n, \eps} e^{-\hat\mu\tau}(1+|y|^2)^{\hat\mu+2+2\eps} \,. \label{Equ_AsympProf_Est |cQ|}
			\end{align}
		\end{Claim}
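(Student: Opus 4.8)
The plan is to differentiate the cut-off function $\tilde v = w\,\tilde\xi$ directly, where I write $w(\cdot,\tau) := (v-\varphi_\bu)(\cdot,1+\tau)$ (so $\tilde v = w\,\tilde\xi$). Since $L_{n,k} = \Delta - \tfrac12\langle y,\nabla_y\,\cdot\,\rangle + 1$ is of second order, the Leibniz rule gives
\[
\tilde\cQ = \tilde\xi\cdot\cE + w\Big(\partial_\tau\tilde\xi - \Delta\tilde\xi + \tfrac12\langle y,\nabla_y\tilde\xi\rangle\Big) - 2\,\nabla\tilde\xi\cdot\nabla w, \qquad \cE := (\partial_\tau - L_{n,k})w,
\]
so I would bound separately the ``bulk term'' $\tilde\xi\cdot\cE$ on $\{\tilde\xi>0\}\subset\{|y|\le\rmR_{\mu,\eps}(\tau)\}$ and the ``cut-off terms'', which are supported on the annulus $\{\tfrac12\rmR_{\mu,\eps}(\tau)\le|y|\le\rmR_{\mu,\eps}(\tau)\}$.

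For the bulk term I would use the graphical RMCF equation recorded in Appendix~\ref{Append_Graph over Cylinder}: both $v$ and the graphical function $\varphi_\bu$ of $\cC_\bu$ solve $\partial_\tau(\,\cdot\,) = L_{n,k}(\,\cdot\,) + \cQ(\,\cdot\,)$ for one and the same nonlinearity $\cQ$, which is smooth, vanishes to second order at the origin, and has coefficients bounded uniformly in $|y|$ — the rescaling drift contributes only an affine term to the equation for a radial graph over $\cC_{n,k}$, while the graph's mean curvature involves only the background geometry of $\cC_{n,k}$, which is bounded and translation invariant in the $\R^k$-factor. Subtracting the two equations and applying the mean value theorem gives the pointwise bound $|\cE|\lesssim_n(\|w\|_{C^2} + \|\varphi_\bu\|_{C^2})\|w\|_{C^2}$, with $C^2$-norms over a unit neighbourhood of the point. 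Into this I would feed the rough $C^2$-estimate \eqref{Equ_AsympProf_Rough Spacetime C^2 est for u} of Lemma~\ref{Lem_AsympProf_Rough C^2 est} at scale $R\asymp\max\{2n,|y|\}\le\gfrd_1(\cM(1+\tau))$, namely $\|w\|_{C^2}\lesssim_{n,\eps}e^{-(\mu-\eps)\tau}(1+|y|^2)^{1+\mu}$ (absorbing $\mbfd_\bu(0,\cM)^{1-\eps}\le1$ and the fixed one-unit time shift), together with $\|\varphi_\bu\|_{C^2}\le\|\varphi_\bu(\cdot,1+\tau)\|_{C^4}\lesssim_n e^{-\gamma_{n,k}\tau}$ from Lemma~\ref{Lem_L^2Noncon_LowSphericalMode} (and $\varphi_\bu\equiv0$ when $n-k=1$). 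This produces
\[
|\tilde\xi\cdot\cE|\;\lesssim_{n,\eps}\;e^{-2(\mu-\eps)\tau}(1+|y|^2)^{2(1+\mu)}+e^{-(\mu+\gamma_{n,k}-\eps)\tau}(1+|y|^2)^{1+\mu}.
\]
For the cut-off terms, since $\tilde\xi(\theta,y,\tau) = \xi(\rmR_{\mu,\eps}(\tau)^{-1}|y|)$ and $\rmR'_{\mu,\eps}/\rmR_{\mu,\eps}\le\tfrac12$, each of $|\partial_\tau\tilde\xi|,\,|\langle y,\nabla_y\tilde\xi\rangle|,\,|\nabla\tilde\xi|,\,|\Delta\tilde\xi|$ is $\lesssim_n1$ on its annular support, where $|y|\asymp\rmR_{\mu,\eps}(\tau)$ and $|w|+|\nabla w|\lesssim_{n,\eps}e^{-(\mu-\eps)\tau}(1+|y|^2)^{1+\mu}$ by the same estimate, so these terms are $\lesssim_{n,\eps}e^{-(\mu-\eps)\tau}(1+|y|^2)^{1+\mu}$ there.

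Finally I would convert these bounds into the claimed form. The mixed term is already dominated by $e^{-\hat\mu\tau}(1+|y|^2)^{\hat\mu+2+2\eps}$ because $\hat\mu\le\mu+\gamma_{n,k}-2\eps$ makes its exponential decay faster and $\hat\mu+2+2\eps>1+\mu$ makes its spatial weight smaller. For the $e^{-2(\mu-\eps)\tau}$-term and the cut-off terms I would use that on $\{\tilde\xi>0\}$ one has $1+|y|^2\lesssim_n\rmR_{\mu,\eps}(\tau)^2\asymp e^{\frac{\mu-\eps}{\mu+1}\tau}$, so any surplus power of $(1+|y|^2)$ beyond $\hat\mu+2+2\eps$ may be traded for a factor $e^{\frac{\mu-\eps}{\mu+1}\tau}$ (a deficit of power only helps, since $1+|y|^2\ge1$); with the identity $\mu+1-(\mu-\eps)=1+\eps$ this reduces the required inequality, for both pieces, to $\hat\mu\le2(\mu-\eps)$, which holds since $\hat\mu\le\mu^+-2\eps\le2\mu-2\eps$ by the choice of $\hat\mu$ in \eqref{Equ_AsympProf_Imprv_hat mu} and the definition of $\mu^+$ in \eqref{Equ_AsympProf_Imprv_cN to mu}. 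The step that needs the most care is exactly this bookkeeping: one must check that \emph{every} term generated by $\cQ$ is dominated by $e^{-\hat\mu\tau}(1+|y|^2)^{\hat\mu+2+2\eps}$ uniformly, in particular in the borderline regime $\mu^+=2\mu$ (which does occur, e.g.\ for $n-k=2$ and $\mu=\tfrac12$), where the margin in $\hat\mu\le2(\mu-\eps)$ degenerates and one must rely on the precise growth rate $\tfrac{\mu-\eps}{2(\mu+1)}$ built into $\rmR_{\mu,\eps}$ together with the $2\eps$ of slack retained in the choice of $\hat\mu$.
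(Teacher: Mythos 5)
Your proposal follows essentially the same route as the paper: the same Leibniz decomposition of $\tilde\cQ$ into a bulk term and annulus-supported cut-off terms, the same quadratic-error bound $|\cE|\lesssim(|J^2 w|+|J^2\varphi_\bu|)|J^2 w|$ for the graphical RMCF equation (which the paper imports as Lemma B.2 of the prequel), the same inputs \eqref{Equ_AsympProf_Rough Spacetime C^2 est for u} and Lemma \ref{Lem_L^2Noncon_LowSphericalMode}, and the same exponent arithmetic reducing everything to $\hat\mu\leq 2(\mu-\eps)$ and $(\mu+1)\hat\mu\leq(\mu-\eps)(\hat\mu+2+2\eps)$, which are indeed equivalent given $\hat\mu\leq\mu^+-2\eps\leq 2\mu-2\eps$. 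The one misstep is in the final bookkeeping for the cut-off terms: their bound $e^{-(\mu-\eps)\tau}(1+|y|^2)^{1+\mu}$ has a \emph{deficit} of spatial power relative to the target, and your rule ``a deficit of power only helps'' would then require $\hat\mu\leq\mu-\eps$, which is false since $\hat\mu\geq\mu^+-3\eps>\mu+\eps$. The fix is the fact you already recorded: on the annulus $1+|y|^2\gtrsim\rmR_{\mu,\eps}(\tau)^2$, so $e^{-(\mu-\eps)\tau}(1+|y|^2)^{1+\mu}\asymp 1$ there and may be squared (equivalently, the annulus indicator is itself $\lesssim e^{-(\mu-\eps)\tau}(1+|y|^2)^{\mu+1}$, which is exactly how the paper phrases it), reducing the cut-off piece to the already-handled $e^{-2(\mu-\eps)\tau}(1+|y|^2)^{2(1+\mu)}$ term.
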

		\begin{proof}[Proof of the Claim.]
			We denote for simplicity $|J^2v|:=|v|+|\nabla v|+|\nabla^2 v|$. By \cite[Lemma B.2]{SunWangXue1_Passing}, \[
			\tilde\cQ = \tilde\xi\cdot (\partial_\tau - L_{n,k})(v-\varphi_\bu) + (v-\varphi_\bu)\cdot (\partial_\tau - L_{n,k}+1)\tilde\xi - 2\nabla \tilde\xi\cdot \nabla (v-\varphi_\bu)
			\]
			satisfies the pointwise estimate (we use $\Id_E$ to denote the indicator function of $E$), 
			\begin{align*}
				|\tilde\cQ(\cdot, \tau)| & \lesssim_n \Big( |J^2(v-\varphi_\bu)(\cdot, \tau)| + |J^2\varphi_\bu(\cdot, \tau)| + \Id_{\{1/2\leq |y|/\rmR_{\mu, \eps}(\tau)\leq 1\}} \Big) |J^2(v-\varphi_\bu)(\cdot, \tau)|\cdot \Id_{Q_{\rmR_{\mu,\eps}(\tau)}}
			\end{align*}
			To derive \eqref{Equ_AsympProf_Est |cQ|}, we shall bound each term on the RHS above. By the definition of $\rmR_\tau$, 
			\begin{align*}
				\Id_{\{\rmR_{\mu,\eps}(\tau)/2\leq |y|\leq \rmR_{\mu,\eps}(\tau)\}} \lesssim_{n,\eps} e^{-(\mu-\eps)\tau} \cdot(1+|y|^2)^{\mu+1} \Id_{Q_{\rmR_{\mu,\eps}(\tau)}} \lesssim_{n, \eps} 1 \,;
			\end{align*}
			Since $\bu\in \scU$, by item \ref{item_2_Lem_L^2Noncon_LowSphericalMode} of Lemma \ref{Lem_L^2Noncon_LowSphericalMode}, 
			\begin{align*}
				|J^2\varphi_\bu(\cdot, \tau)| \lesssim_n e^{-\gamma_{n,k}\tau} \,; 
			\end{align*}
			while by Lemma \ref{Lem_AsympProf_Rough C^2 est}, 
			\begin{align*}
				|J^2(v-\varphi_\bu)(\cdot, \tau)|\cdot \Id_{Q_{\rmR_{\mu,\eps}(\tau)}} \lesssim_{n, \eps} e^{-(\mu-\eps)\tau} (1+|y|^2)^{\mu+1} \Id_{Q_{\rmR_{\mu,\eps}(\tau)}} \lesssim_{n, \eps} 1\,.
			\end{align*}
			Also by our choice of $\hat\mu$, it's easy to check that $\frac{\hat\mu}{\mu-\eps}\leq 2$ and $\frac{(\mu+1)\hat\mu}{\mu-\eps}\leq \hat\mu+2+2\eps$. Hence
			\begin{align*}
				\left(e^{-(\mu-\eps)\tau} (1+|y|^2)^{\mu+1}\cdot \Id_{Q_{\rmR_{\mu,\eps}(\tau)}}\right)^2 & \lesssim_{n, \eps}  \left(e^{-(\mu-\eps)\tau} (1+|y|^2)^{\mu+1}\cdot \Id_{Q_{\rmR_{\mu,\eps}(\tau)}}\right)^{\frac{\hat\mu}{\mu-\eps}} \\
				& \lesssim_{n, \eps} e^{-\hat\mu\tau}(1+|y|^2)^{\hat\mu+2+2\eps} \,;
			\end{align*}
			
			\begin{align*}
				e^{-\gamma_{n,k}\tau}\cdot \left(e^{-(\mu-\eps)\tau} (1+|y|^2)^{\mu+1}\cdot \Id_{Q_{\rmR_{\mu,\eps}(\tau)}}\right) \leq e^{-\hat\mu\tau}(1+|y|^2)^{\hat\mu+2+2\eps}\,.
			\end{align*}
			Combining the estimates above finishes the proof of pointwise estimate \eqref{Equ_AsympProf_Est |cQ|}. 
		\end{proof}
		
		Now recall by our assumption \eqref{Equ_AsympProf_Imprv_hat mu}, $\hat\mu\in \RR\setminus\BB_{\eps''}(\sigma(\cC_{n,k}))$. Hence by \eqref{Equ_AsympProf_Est |cQ|} and Proposition \ref{Prop_App_Solve Parab Jac equ w spacetime C^0 est}, there's a solution to \[
		(\partial_\tau - L_{n,k})w = \tilde\cQ
		\]
		with the following pointwise spacetime $C^0$ estimate, 
		\begin{align}
			|w(\cdot, \tau)| \lesssim_{n,\eps} e^{-\hat\mu\tau}(1+|y|^2)^{\hat\mu+2+2\eps}\,. \label{Equ_AsympProf_Est purely nonlinear |w|}
		\end{align}
		Then notice that $(\partial_\tau - L_{n,k})(\tilde v-w)=0$, and combining \eqref{Equ_AsympProf_Est purely nonlinear |w|} with \eqref{Equ_AsympProf_v sim e^(-mu tau)}, we have
		\begin{align*}
			\limsup_{\tau\to +\infty} e^{(\mu-\eps')\tau}\|(\tilde v-w)(\cdot, \tau)\|_{L^2} < +\infty\,, & &
			\liminf_{\tau\to +\infty} e^{(\mu+\eps')\tau}\|(\tilde v-w)(\cdot, \tau)\|_{L^2} > 0\,.
		\end{align*}
		While by equation \eqref{Equ_AsympProf_Rough Spacetime C^2 est for u} of Lemma \ref{Lem_AsympProf_Rough C^2 est} and \eqref{Equ_AsympProf_Est purely nonlinear |w|}, the following pointwise estimate hold at $\tau=0$: 
		\begin{align*}
			|(\tilde v - w)(\cdot, 0)| \lesssim_{n, \eps} (1+|y|^2)^{\hat\mu+2+2\eps} \,.
		\end{align*}
		Recall again by our assumption \eqref{Equ_AsympProf_Imprv_hat mu}, $[\mu-\eps', \mu+\eps']\cap \sigma(\cC_{n,k})\subset \{\mu\}$, $\mu+\eps\leq \hat\mu\leq \mu'-\eps$. Then by applying Proposition \ref{Prop_App_Parab equ Spacetime C^0 bd from initial data} with $\lambda=\hat\mu, \lambda':=\hat\mu+2+2\eps$ we get, $\mu\in \sigma(\cC_{n,k})$, $\psi:= \Pi_{=\mu}((\tilde v - w)(\cdot, 0))$ is non-zero and the following pointwise bound holds, 
		\begin{align*}
			|(\tilde v - w)(\cdot, \tau) - e^{-\mu\tau}\psi| \lesssim_{n, \eps} e^{-\hat\mu\tau}(1+|y|^2)^{\hat\mu+2+2\eps} \leq e^{-(\mu^+-3\eps)\tau}(1+|y|^2)^{2+\mu^+}\,.
		\end{align*}
		Combining this with \eqref{Equ_AsympProf_Est purely nonlinear |w|} proves \eqref{Equ_AsympProf_Precise C^2 est} for $2n\leq R\leq \rmR_{\mu,\eps}(\tau)$.
		While if $\rmR_{\mu,\eps}(\tau)\leq R\leq \gfrd_1(\cM(\tau))$, we have (by definition of graphical radius),
		\begin{align*}
			\|(v - \varphi_\bu)(\cdot, \tau) - e^{-\mu\tau}\psi\|_{C^0(\cC_{n,k}\cap Q_R)} & \lesssim_{n, \eps} \|(v - \varphi_\bu)(\cdot, \tau)\|_{C^0(Q_R)} + e^{-\mu\tau}\|\psi\|_{C^0(Q_R)} \\
			& \lesssim_{n, \eps} 1 + e^{-\mu\tau}R^{2\mu+2} 
			\lesssim_{n, \eps} e^{-(\mu^+-3\eps)\tau}R^{2(2+\mu^+)}.
		\end{align*}
		This proves \eqref{Equ_AsympProf_Precise C^2 est} for such $R$.
		
		Finally, by combining \eqref{Equ_AsympProf_Rough Spacetime C^2 est for u} and \eqref{Equ_AsympProf_Precise C^2 est}, for every $\tau\geq 1$ we have \[
		\|\psi\|_{L^2(\cC_{n,k}\cap Q_{\rmR_{\mu,\eps}(\tau)})} \lesssim_{n,\eps} e^{-(\mu^+-\mu-3\eps)\tau} + e^{\eps\tau}\mbfd_\bu(0, \cM)^{1-\eps} \lesssim_{n,\eps} e^{-\eps\tau} + e^{\eps\tau}\mbfd_\bu(0, \cM)^{1-\eps}.
		\]
		In particular, when $\mbfd_\bu(0, \cM)\leq \delta_{\ref{Lem_AsympProf_Precise C^0 est}}(n, \eps)\ll 1$, by applying Corollary \ref{Cor_L^2 Mono_Low Bd Upsilon_(u,u'; w)} with $\bu=\bu'$ and $w=\psi$, taking $\tau:=\frac{\eps-1}{2\eps}\cdot\ln \mbfd_\bu(0,\cM)$ yields estimate \eqref{Equ_AsympProf_Precise |psi|_L^2 bd} on $\|\psi\|_{L^2}$.
		
		To prove \eqref{Equ_AsympProf_Precise d_bu(tau) bd}, first note that by item \ref{Item_GraphCyliner_d_u vs |v-varphi_u|_L^2} of Lemma \ref{Lem_App_Graph over Cylinder} and Lemma \ref{Lem_L^2 Noncon_for RMCF}, 
		\begin{align*}
			\|(v-\varphi_\bu)(\cdot, \tau)\|_{L^2(\cC_{n,k}\cap Q_{\gfrd_1(\cM(\tau))})} \lesssim_{n, \eps} \mbfd_\bu(\tau, \cM) \lesssim_{n, \eps} e^{-\mu\tau} e^{(K_n+\mu)\tau}\mbfd_\bu(0, \cM).
		\end{align*}
		This implies \eqref{Equ_AsympProf_Precise d_bu(tau) bd} if $\tau\geq 1$ such that $e^{(K_n+\eps^{-1})\tau}\leq \mbfd_\bu(0, \cM)^{-1/2}$.
		
		While if $e^{(K_n+\eps^{-1})\tau}\geq \mbfd_\bu(0, \cM)^{-1/2}$, the estimate \eqref{Equ_AsympProf_Precise |psi|_L^2 bd} with \eqref{Equ_AsympProf_Rough_Gfrad est} on $\psi$, estimate \eqref{Equ_AsympProf_Precise C^2 est} on $|v-\varphi_\bu-e^{-\mu\tau}\psi|$ and item \ref{Item_GraphCyliner_d_u vs |v-varphi_u|_L^2} of Lemma \ref{Lem_App_Graph over Cylinder} together yields, 
		\begin{align*}
			\|(v-\varphi_\bu)(\cdot, \tau)\|_{L^2(\cC_{n,k}\cap Q_{\gfrd_1(\cM(\tau))})} + \mbfd_\bu(\tau, \cM) 
			& \lesssim_{n, \eps} e^{-\mu\tau}\|\psi\|_{L^2} + e^{-(\mu^+-3\eps)\tau} \\
			& \lesssim_{n, \eps} e^{-\mu\tau}\left(\mbfd_\bu(0,\cM)^{(1-\eps)/2} + e^{-\eps\tau} \right) \,,
		\end{align*}
		which proves \eqref{Equ_AsympProf_Precise d_bu(tau) bd} if $\delta_{\ref{Lem_AsympProf_Precise C^0 est}}$ is chosen smaller.
	\end{proof}

	\subsection{Find optimal low spherical flow} \label{Subsec_AsympProf_gamma>0}    
	We shall prove Theorem \ref{Thm_Asymp Profile(New)_gamma>0} in this subsection by finding the low spherical flow that best approximates our RMCF.
	\begin{proof}[Proof of Theorem \ref{Thm_Asymp Profile(New)_gamma>0}]
		When $n-k=1$, by definition, the only low spherical flow is $\mbfU_{n,k}$. Hence \eqref{Equ_AsympProf_Main_cN(infty) in sigma(cC)} follows from Corollary \eqref{Cor_L^2 Mono_Def cN_u(infty)}; \eqref{Equ_AsympProf_Main_cN_scU(infty) = cN_u(infty)} is vacuum; \eqref{Equ_AsympProf_Main_Gfrad bd}, \eqref{Equ_AsympProf_Main_Rough C^2 Bd} and \eqref{Equ_AsympProf_Main_Precise C^0 Bd} follow from Lemmas \ref{Lem_AsympProf_Rough C^2 est} and \ref{Lem_AsympProf_Precise C^0 est}. From now on, let's assume $n-k\geq 2$.
		Recall that $\gfrd_1(\cM(\tau))$ and $v(\cdot, \tau)$ denote the $1$-graphical radius and graphical function of $\cM(\tau)$ over $\cC_{n,k}$. We may also assume without loss of generality that $\lambda[\cM]\leq \eps^{-1}$.

		Let $\gamma_0:=\cN_{n,k}(\infty, \cM)>0$. Note that by Corollary \ref{Cor_L^2 Mono_Def cN_u(infty)}, $\gamma_0\in \sigma(\cC_{n,k})$, hence $\gamma_0\geq \gamma_{n,k}$. 
		When $\gamma_0=\gamma_{n,k}$, by taking $\delta\leq \min\{\delta_{\ref{Lem_AsympProf_Rough C^2 est}}, \delta_{\ref{Lem_AsympProf_Precise C^0 est}}\}$ and applying Lemma \ref{Lem_AsympProf_Rough C^2 est} \eqref{Equ_AsympProf_Rough_Gfrad est} and Lemma \ref{Lem_AsympProf_Precise C^0 est} \eqref{Equ_AsympProf_Precise C^2 est} to $\cM$ with $\bu=\mbfU_{n,k}$, there's a unique nonzero eigenfunction $\psi_0\in \rmW_{\gamma_{n,k}}(\cC_{n,k})$ such that for every $\tau\gg 1$,
		\begin{align}
			\|v(\cdot, \tau)-e^{-\gamma_{n,k}\tau}\psi_0\|_{L^2} \lesssim_{n, \eps} e^{-(2\gamma_{n,k}-3\eps)\tau}\,, & &
			\gfrd_1(\cM(\tau))\geq 2n+e^{\frac{\gamma_{n,k}-\eps}{1+\gamma_{n,k}}\cdot\tau}\,.  \label{Equ_AsympProf_Pf_v sim e^(-gamma_0 tau) psi_0}
		\end{align}
		By Lemma \ref{Lem_AsympProf_Rough C^2 est} \eqref{Equ_AsympProf_Rough Spacetime C^2 est for u}, this is still true if $\gamma_0>\gamma_{n,k}$, with $\psi_0=0$. 
		Let $\psi_\SSp\in \rmW_\SSp$ (possibly $0$) be the $L^2$-orthogonal projection of $\psi_0$ onto $\rmW_\SSp$. Then when $\psi_\SSp\neq 0$, we must have $\gamma=\gamma_{n,k}$ and by Lemma \ref{Lem_AsympProf_Precise C^0 est} \eqref{Equ_AsympProf_Precise |psi|_L^2 bd}, we have \[
		\|\psi_\SSp\|_{L^2} \leq \|\psi_0\|_{L^2} \lesssim_{n,\eps} \mbfd_{n,k}(0, \cM)^{(1-\eps)/2} \,.
		\]
		Hence by taking $\delta_{\ref{Thm_Asymp Profile(New)_gamma>0}}(n, \eps)\ll 1$, we can apply Lemma \ref{Lem_L^2Noncon_LowSphericalMode} and find a unique low spherical flow $\bu\in \scU$ such that 
		\begin{align}
			\|\varphi_\bu(\cdot, \tau) - e^{-\gamma_{n,k}\tau}\psi_\SSp\|_{L^2} \lesssim_n e^{-\frac32 \gamma_{n,k}\tau} 
			\,.  \label{Equ_AsympProf_Pf_|varphi_u - psi_S| faster decay}
		\end{align}
		\textbf{Claim.} For every low spherical flow $\bu'\in \scU\setminus \{\bu\}$, we have \[
		\cN_{\bu'}(\infty, \cM) = \gamma_{n,k} \leq \frac12 \leq \cN_\bu(\infty, \cM) \,.
		\]
		
		We first finish the proof of Theorem \ref{Thm_Asymp Profile(New)_gamma>0} assuming this Claim. \eqref{Equ_AsympProf_Main_cN(infty) in sigma(cC)} and \eqref{Equ_AsympProf_Main_cN_scU(infty) = cN_u(infty)} follow directly from this Claim. Moreover, let $\eps, \cM, \delta, \gamma, \gamma^+$ be specified in assumption \ref{Item_AsyPrNew_Assum1}, \ref{Item_AsyPrNew_Assum2} of Theorem \ref{Thm_Asymp Profile(New)_gamma>0}, and $\bu\in \scU$ be determined as above. Then by taking $\delta_{\ref{Thm_Asymp Profile(New)_gamma>0}}(n, \eps)\ll 1$, the estimate \eqref{Equ_AsympProf_Main_Gfrad bd} on graphical radius and \eqref{Equ_AsympProf_Main_Rough C^2 Bd} on $C^2$ norm of graphical function follows from Lemma \ref{Lem_AsympProf_Rough C^2 est} with $\gamma$ in place of $\mu$. 
		
		To prove the improved error estimate \eqref{Equ_AsympProf_Main_Precise C^0 Bd}, first note that if $\gamma<\cN_\scU(\infty, \cM)$, then by definition of $\gamma', \gamma^+$ in \ref{Item_AsyPrNew_Assum2} and the fact that $\cN_\scU(\infty, \cM)\in \sigma(\cC_{n,k})\cup \{+\infty\}$, we have \[
		\gamma^+\leq \gamma'\leq \cN_\scU(\infty, \cM) \,.
		\]  
		Hence, we can again apply Lemma \ref{Lem_AsympProf_Rough C^2 est} \eqref{Equ_AsympProf_Rough Spacetime C^2 est for u} with $\gamma^+$ in place of $\mu$. This proves \eqref{Equ_AsympProf_Main_Precise C^0 Bd} with $\psi=0$.
		
		Finally, if $\gamma=\cN_\scU(\infty, \cM)=\cN_\bu(\infty, \cM)$, then by taking $\delta\leq \min\{\delta_{\ref{Lem_AsympProf_Rough C^2 est}}(n, \eps), \delta_{\ref{Lem_AsympProf_Precise C^0 est}}(n, \eps)^{10}\}$ with $\gamma$ in place of $\mu$ and applying \eqref{Equ_AsympProf_Precise C^2 est} of Lemma \ref{Lem_AsympProf_Precise C^0 est}, we derive that there exists a unique nonzero eigenfunction $\psi\in \rmW_\gamma(\cC_{n,k})$ with $\|\psi\|_{L^2}\leq 1$ such that estimate \eqref{Equ_AsympProf_Main_Precise C^0 Bd} holds (with $\gamma^+$ in place of $\mu^+$). Moreover, together with the graphical radius estimate, this implies
		\begin{align}
			\|(v-\varphi_\bu)(\cdot, \tau) - e^{-\gamma\tau}\psi\|_{L^2} \lesssim_{n, \eps} e^{-(\gamma^+-3\eps)\tau} \,.  \label{Equ_AsympProf_Pf_|v-varphi_bu - e^(-gamma tau)psi| faster decay}
		\end{align}
		We are left to show that 
		\begin{align}
			\psi\perp \rmW_\SSp \,.  \label{Equ_AsympProf_psi perp W_S}
		\end{align}
		This is obvious when $\gamma\neq \gamma_{n,k}$ since different eigenspaces are perpendicular to each other. When $\gamma=\gamma_{n,k}$, we need slightly more effort. First note that by the Claim, $\gamma_{n,k}\leq\gamma_0 \leq 1/2 \leq \gamma$, which then forces $\gamma_{n,k}=\gamma_0=\gamma$ and $n-k=2$. While by combining \eqref{Equ_AsympProf_Pf_v sim e^(-gamma_0 tau) psi_0}, \eqref{Equ_AsympProf_Pf_|varphi_u - psi_S| faster decay} and \eqref{Equ_AsympProf_Pf_|v-varphi_bu - e^(-gamma tau)psi| faster decay}, we have 
		\begin{align*}
			& e^{-\gamma\tau}\|\psi_0 - \psi_\SSp - \psi\|_{L^2} \\ 
			& \quad \leq \|v(\cdot, \tau)-e^{-\gamma_{n,k}\tau}\psi_0\|_{L^2} +
			\|\varphi_\bu(\cdot, \tau) - e^{-\gamma_{n,k}\tau}\psi_\SSp\|_{L^2} + 
			\|(v-\varphi_\bu)(\cdot, \tau) - e^{-\gamma\tau}\psi\|_{L^2} \\
			& \quad \lesssim_{n, \eps} e^{-(2\gamma_{n,k}^+-3\eps)\tau} + e^{-\frac32 \gamma_{n,k}\tau} + e^{-(\gamma^+-3\eps)\tau} \lesssim_{n, \eps} e^{-(\gamma+\eps)\tau} \,,
		\end{align*}
		where in the last inequality, we use the fact that $\eps\in (0, 1/4n)$. Sending $\tau\to +\infty$, this shows $\psi_0=\psi_\SSp+\psi$. Recall that $\psi_\SSp$ is the orthogonal projection of $\psi_0$ onto $\rmW_\SSp$, we thus conclude \eqref{Equ_AsympProf_psi perp W_S} and finish the proof when $\cN_{n,k}(\infty, \cM)<+\infty$. 
		
		\begin{proof}[Proof of Claim.]
			For every $\bu'\in \scU$ corresponding to $\psi'\neq \psi_\SSp$ as in Lemma \ref{Lem_L^2Noncon_LowSphericalMode}, by the graphical estimate \eqref{Equ_AsympProf_Pf_v sim e^(-gamma_0 tau) psi_0} and the cut-off estimate Lemma \ref{Lem_Effect of Cutoff}, $\gfrd_1(\cM(\tau))\geq \tau$ when $\tau\gg 1$. Hence,
			\begin{align*}
				& \left| \|(v-\varphi_{\bu'})(\cdot, \tau)\|_{L^2(Q_\tau)} - \|e^{-\gamma_{n,k}\tau}\psi_0 - e^{-\gamma_{n,k}\tau}\psi'\|_{L^2} \right| \\
				& \qquad \lesssim_n \|v(\cdot, \tau)-e^{-\gamma_{n,k}\tau}\psi_0\|_{L^2(Q_\tau)} + \|\varphi_{\bu'}(\cdot, \tau)-e^{-\gamma_{n,k}\tau}\psi'\|_{L^2} + e^{-\tau^2/16} \\
				& \qquad \lesssim_{n, \eps} e^{-(2\gamma_{n,k} -3\eps)\tau} + e^{-\frac32\gamma_{n,k}\tau} + e^{-\tau^2/16} \lesssim_{n, \eps} e^{-(\gamma_{n,k}+\eps)\tau} \,.
			\end{align*}
			Here the second inequality follows from \eqref{Equ_AsympProf_Pf_v sim e^(-gamma_0 tau) psi_0} and \eqref{Equ_AsympProf_Pf_|varphi_u - psi_S| faster decay}. 
			
			While by combining with Lemma \ref{Lem_App_Graph over Cylinder} \ref{Item_GraphCyliner_d_u vs |v-varphi_u|_L^2}, when $\tau\gg 1$, we derive
			\begin{align*}
				\mbfd_{\bu'}(\tau, \cM) & = (1+o(1))\|(v-\varphi_{\bu'})(\cdot, \tau)\|_{L^2(Q_\tau)} + \mbfd_{\bu'}(\tau, \cM(\tau)\setminus Q_\tau) \\
				& = (1+o(1))e^{-\gamma_{n,k}\tau}\|\psi_0-\psi'\|_{L^2} \,.
			\end{align*}
			Here we use the facts that $\mbfd_{\bu'}(\tau, \cM(\tau)\setminus Q_\tau) = O(e^{-\tau^2/16})$ since $\lambda[\cM]<+\infty$, and $\|\psi_0-\psi'\|_{L^2}\neq 0$ since $\psi'\neq \psi_\SSp$. Therefore,
			\begin{align*}
				\cN_{\bu'}(\infty, \cM) = \lim_{\tau\to +\infty} \ln\left( \frac{\mbfd_{\bu'}(\tau, \cM)}{\mbfd_{\bu'}(\tau+1, \cM)}\right) 
				= \gamma_{n,k} \,.
			\end{align*}
			This proves the first equality in the Claim.
			
			To prove the last inequality in the Claim, we may assume WLOG that $\cM$ doesn't coincide with the low spherical flow given by $\bu$. We first argue that $\cN_\bu(\infty, \cM)>0$. Suppose for contradiction that this is not true, then let $\breve\gamma:= \gamma_{n,k}/2$, we know that for some $\tau_\circ\gg 1$, there holds $\cN_\bu(\tau, \cM)\leq \breve\gamma$ for every $\tau\geq \tau_\circ$. 
			Then for every integer $\ell\geq 1$, 
			\begin{align*}
				\mbfd_{n,k}(\tau_\circ+ \ell, \cM) 
				& \geq \mbfd_\bu(\tau_\circ+\ell, \cM) - C(\cM, \chi)\|\varphi_\bu(\cdot, \tau_\circ+\ell)\|_{C^0} \\
				& \geq \mbfd_\bu(\tau_\circ, \cM)e^{-\breve\gamma\ell} - C(\cM, \chi) e^{-\gamma_{n,k}\ell} \\
				& \gtrsim_{\cM, \tau_\circ} e^{-\breve\gamma\ell}\,.
			\end{align*}
			where the first inequality follows from \eqref{Equ_L^2 Mono_|d_u' - d_u| < |varphi_u'-varphi_u|} of Lemma \eqref{Lem_L^2 Mono_Compare d_u(Sigma) w d_u'(Sigma) and d_u(Sigma + y)}, the second inequality follows from $\cN_\bu\leq \breve\gamma$ and Lemma \ref{Lem_L^2Noncon_LowSphericalMode}, and the last inequality holds when $\ell\gg 1$ since $\mbfd_\bu(\tau_\circ, \cM)\neq 0$. Taking $\ell\to \infty$, this contradicts to that $\cN_{n,k}(\infty, \cM)\geq \gamma_{n,k}$.
			
			Finally, we shall argue that if $\cN_\bu(\infty, \cM) = \gamma_{n,k}$, then $\gamma_{n,k}=1/2$. Note that since $1/2$ is the second largest positive eigenvalue in $\sigma(\cC_{n,k})$, this will finish the proof of the last inequality in the Claim. 
			Since $\cN_\bu(\infty, \cM)=\gamma_{n,k}$, by taking $\delta_{\ref{Thm_Asymp Profile(New)_gamma>0}}(n, \eps)\ll 1$ and applying Lemma \ref{Lem_AsympProf_Rough C^2 est} and \ref{Lem_AsympProf_Precise C^0 est} to $\cM, \bu$, $\mu=\cN_\bu(\infty, \cM)(=\gamma_{n,k})$, we see that \[
			\|(v-\varphi_\bu)(\cdot, \tau) - e^{-\gamma_{n,k}\tau}\psi_1\|_{L^2} \lesssim_{n, \eps} e^{-(2\gamma_{n,k}-3\eps)\tau}
			\]  
			for some nonzero eigenfunction $\psi_1\in \rmW_{\gamma_{n,k}}(\cC_{n,k})$. Combining this with \eqref{Equ_AsympProf_Pf_v sim e^(-gamma_0 tau) psi_0} and \eqref{Equ_AsympProf_Pf_|varphi_u - psi_S| faster decay} gives \[
			e^{-\gamma_{n,k}\tau} \|\psi_0-\psi_\SSp-\psi_1\|_{L^2} \lesssim_{n, \eps} e^{-(2\gamma_{n,k}-3\eps)\tau} + e^{-\frac32\gamma_{n,k}\tau} \lesssim_{n, \eps} e^{-(\gamma_{n,k}+\eps)\tau} \,.  
			\]
			Sending $\tau\to +\infty$ we derive $\psi_0=\psi_\SSp + \psi_1$. Since $\psi_\SSp$ is the orthogonal projection of $\psi_0$ onto $\rmW_\SSp$ and $\psi_1\neq 0$, we conclude that $\rmW_{\gamma_{n,k}}(\cC_{n,k})\setminus \rmW_\SSp\neq \emptyset$. By the discussion in Section \ref{SSubsec_Spectrum of L}, this forces $n-k=2$ and therefore $\gamma_{n,k}=1/2$.
		\end{proof}
		
	\end{proof}

	\subsection{Proof of Theorem \ref{thm:AsyProfile}} \label{Subsec_Pf Main AsympProf}
	We finish this section by proving our main theorem of asymptotic profile. We may assume without loss of generality that $\cM(\tau)\neq \cC_{n,k}$, otherwise case \ref{item_Intro_SuperExpDecay} holds automatically with $\varphi_\bu\equiv0$. Let $\tau\mapsto \cM(\tau)$ be the RMCF specified in Theorem \ref{thm:AsyProfile} with $1$-graphical radius $\gfrd_1(\cM(\tau))$ and graphical function $v(\cdot, \tau)$ over $\cC_{n,k}$. By Corollary \ref{Cor_L^2 Mono_Def cN_u(infty)}, $\cN_{n,k}(\infty, \cM)\in (\sigma(\cC_{n,k})\cap \R_{\geq 0})\cup\{+\infty\}$. 
	
	When $\cN_{n,k}(\infty, \cM)=0$, to prove the desired asymptotic formula as in item \ref{item_Intro_PolynDecay} of Theorem \ref{thm:AsyProfile}, in terms of \cite[Theorem 1.2]{SunXue2022_generic_cylindrical}, we only need to show that $\cI\neq \emptyset$. We prove this by contradiction: suppose $\cI=\emptyset$. Then \cite[Proposition 7.2]{SunXue2022_generic_cylindrical} shows that there exist a dimensional constant $K_0(n)>0$ and $\epsilon>0$ (possibly depends on $\cM$ a priori) such that for $\tau\gg 1$, $\cM(\tau)$ is a graph of function $v(\cdot,\tau)$ over $\cC_{n,k}$ in $Q_{K_0\sqrt{\tau}}$, $v\neq 0$ and \[
	\lim_{\tau\to +\infty} e^{\epsilon\tau} \|v(\cdot, \tau)\|_{L^2(\cC_{n,k}\cap Q_{K_0\sqrt{\tau}})} = 0 \,.
	\] 
	Hence by Proposition \ref{prop:entropy control outside ball of radius R} and item \ref{Item_GraphCyliner_d_u vs |v-varphi_u|_L^2} of Lemma \ref{Lem_App_Graph over Cylinder}, \[
	\lim_{\tau\to +\infty} e^{\epsilon\tau} \mbfd_{n,k}(\tau, \cM) \lesssim_n \limsup_{\tau\to +\infty} e^{\epsilon\tau} \left(\|v(\cdot, \tau)\|_{L^2(\cC_{n,k}\cap Q_{K_0\sqrt{\tau}})} + \cF[\cM(\tau)\setminus Q_{K_0\sqrt\tau}] \right) = 0 \,. 
	\]
	On the other hand, $\cN_{n,k}(\infty, \cM) = 0$ implies that when $\tau\geq \tau_\circ\gg 1$, $\cN_{n,k}(\tau, \cM)<\epsilon$, and hence by definition of decay order, for every integer $l\geq 0$, \[
	\mbfd_{n,k}(\tau_\circ + l, \cM) \geq e^{-\epsilon l} \mbfd_{n,k}(\tau_\circ, \cM) \,.
	\]
	which is a contradiction.
	
	When $\cN_{n,k}(\infty, \cM)>0$ and $\cN_\scU(\infty, \cM)<+\infty$, we take $\gamma:=\cN_\scU(\infty, \cM)$. For every $\eps\in (0, \gamma)$, let $\eps':= \min\{1/4n, \eps/3, 1/(\gamma+1), \lambda[\cM]^{-1}\}$ and take $T_\circ\gg 1$ such that $\cM$ is $\delta_{\ref{Thm_Asymp Profile(New)_gamma>0}}(n, \eps')$-$L^2$  close to $\cC_{n,k}$ over $[T_\circ, +\infty)$. We can then apply Theorem \ref{Thm_Asymp Profile(New)_gamma>0} to $\tau\mapsto \cM(T_\circ+\tau), \gamma, \eps'$ and conclude that there's a unique low spherical flow $\breve\bu\in \scU$ and eigenfunction $\breve\psi\in \rmW_\gamma(\cC_{n,k})$ such that \[
	\gfrd_1(\cM(T_\circ+\tau)) \geq 2n + e^{\frac{\gamma-\eps/3}{2(\gamma+1)}\cdot \tau} \geq e^{\frac{\gamma-\eps}{2(\gamma+1)}\cdot (\tau+T_\circ)}\,;
	\] 
	where the second inequality holds for $\tau\gg 1$; and that for $\tau\geq 1$, \[
	\|v(\cdot, T_\circ+\tau)-\varphi_{\breve\bu}(\cdot, \tau)-e^{-\gamma\tau}\breve\psi\|_{L^2} \lesssim_{n, \eps'} e^{-(\gamma^+-3\eps')\tau} \lesssim_{n, \eps', T_\circ} e^{-(\gamma^+- \eps)(T_\circ+\tau)} \,.
	\]
	This proves case \ref{item_Intro_ExpDecay} of Theorem \ref{thm:AsyProfile} with $\bu$ being such that $\varphi_\bu(\cdot, \tau):=\varphi_{\breve\bu}(\cdot, \tau-T_\circ)$ and $\psi:= e^{\gamma T_\circ}\breve\psi$.
	
	When $\cN_{n,k}(\infty, \cM)>0$ and $\cN_\scU(\infty, \cM)=+\infty$. By \eqref{Equ_AsympProf_Main_cN_scU(infty) = cN_u(infty)} of Theorem \ref{Thm_Asymp Profile(New)_gamma>0}, there exists a unique low spherical flow $\bu$ such that $\cN_\scU(\infty, \cM)=\cN_\bu(\infty, \cM)$. And for every $\lambda>1$, take $\eps_\lambda:= \min\{1/4n, \lambda[\cM]^{-1}, 1/(2\lambda+1)\}$, and let $T_\lambda\gg 1$ be such that $\cM$ is $\delta_{\ref{Thm_Asymp Profile(New)_gamma>0}}(n, \eps_\lambda)$-$L^2$ close to $\cC_{n,k}$ over $[T_\lambda, +\infty)$. We can then apply Theorem \ref{Thm_Asymp Profile(New)_gamma>0} with $\tau\mapsto \cM(T_\lambda + \tau), 2\lambda, \eps_\lambda$ in place of $\cM, \gamma, \eps$ therein to conclude that \[
	\gfrd_1(\cM(T_\lambda+\tau)) \geq 2n + e^{\frac{2\lambda-\eps_\lambda}{2(2\lambda+1)}\cdot \tau} \geq e^{\frac{\lambda}{2(\lambda+1)}\cdot (T_\lambda+\tau)}\,;
	\]
	where the second inequality holds for $\tau\gg 1$; and that for $\tau\geq 1$, \[
	\|(v-\varphi_{\bu})(\cdot, T_\lambda + \tau)\|_{L^2} \lesssim_{n, \eps_\lambda} e^{-(\lambda^+-3\eps_\lambda)\tau} \lesssim_{n, \eps_\lambda, T_\lambda} e^{-\lambda(T_\lambda+\tau)} \,.
	\]
	This proves case \ref{item_Intro_SuperExpDecay} of Theorem \ref{thm:AsyProfile}.

	\section{Estimate of Whitney data} \label{Sec_C^(2,al) Reg}

	Throughout this section, let $\eps\in (0, 1/(2n))$; $t\mapsto \mbfM(t)$ be a MCF in $\RR^{n+1}$ on $[-e, +\infty)$.
	Suppose
	\begin{itemize}
		\item $\lambda[\bM]\leq \eps^{-1}$;
		\item $(\orig, 0)\in \cS_k(\mbfM)_+$, with tangent flow of $\bM$ to be $\cC_{n,k}$;
		\item the RMCF $\tau\mapsto \cM(\tau)$ of $\mbfM$ based at $(\orig,0)$ is $\delta$-$L^2$ close to $\cC_{n,k}$ on $[-1, +\infty)$.
	\end{itemize}
	
	For every $p\in \cS_{k}(\bM)_+$, we associate the following data: 
	\begin{enumerate}[label={\normalfont(\roman*)}]
		\item\label{Item_C^2Reg_TanFlow C_p} $\cC_p\in \Rot(\cC_{n,k})$ be the tangent flow of $\bM$ at $p$, still parametrized by $(\theta, y)$, where $y$ is the coordinate on $\spine(\cC_p)$ and $\theta$ is the coordinate on the spherical part $\cC_p\cap \spine(\cC_p)^\perp$;
		\item\label{Item_C^2Reg_Spine L_p} $\bL_p:\RR^{n+1}\to \RR^{n+1}$ be the linear orthogonal projection onto $\spine(\cC_p) =: |\bL_p|$;
		\item\label{Item_C^2Reg_Quad Q_p} $\bq_p: |\bL_p|\to |\bL_p|^\perp$ be the quadratic polynomial determined by the asymptotic profile $\psi_p\in \rmW_{1/2}(\cC_p)$ as in Remark \ref{Rem_AsympProf_Whitney data}, and denote for later reference $\bQ_p:= \bq_p\circ \bL_p$, which is an $\R^{n+1}$-valued quadratic polynomial on $\R^{n+1}$. 
	\end{enumerate}
	
	We will use $\|\cdot\|$ to denote the operator norm of linear or quadratic $\R^{n+1}$-valued maps from $\R^{n+1}$.
	
	Recall that
	\begin{align}
		\gamma_{n,k}^+ := \min \sigma(\cC_{n,k})\cap (\frac12, +\infty) = \frac12 + \min\{\frac12, \frac1{n-k}\}\,. \label{Equ_C^2 Reg_gamma^+_(n,k)}
	\end{align}
	Also let $\bL_\circ = \{0\}\times \RR^k, \bu_\circ, \bq_\circ, \bQ_\circ$ be the data associated to the spacetime origin $(\orig, 0)$. 
	The goal of this section is the following location estimate on the data $\bar p,\ \bar\bL:= \bL_{\bar p},\ \bar\bu:= \bu_{\bar p},\ \bar\bq:= \bq_{\bar p},\ \bar\bQ:= \bQ_{\bar p}$ when $\bar p\in \cS_{k}(\mbfM)_+$ is close to $(\orig,0)$. 
	\begin{Lem} \label{Lem_C^2 Reg_Main}
		For every $\eps\in (0, 1/(10n))$, there exists $\delta_{\ref{Lem_C^2 Reg_Main}}(n, \eps)\in (0, \eps)$ such that if $\mbfM$ is as above, let $\bar p = (\bar x, \bar y, \bar t)\in \cS_{k}(\mbfM)_+$ (where $\bar x\in |\bL_\circ|^\perp, \bar y\in |\bL_\circ|$) be such that \[
		r:= \max\{|\bar x|, |\bar y|, |\bar t|^{1/2}\}\ \ \in (0, \delta_{\ref{Lem_C^2 Reg_Main}}] \,.
		\]
		Then we have $\|\bQ_\circ\|, \|\bar\bQ\|\leq 1$, $|\bar\bL|=e^{\bar\mbfA}(|\bL_\circ|)$ for some $\bar\mbfA\in \mfk g_{n,k}^\perp$ of form $\bar\mbfA=\begin{bmatrix}
			0 & \bar \bl \\ -\bar \bl^\top & 0
		\end{bmatrix}$ and
		\begin{align}
			r^{-2}|\bar t| + r^{-1}|\bar x - \bq_\circ(\bar y)| +  \|\bar \bl - \nabla \bq_\circ(\bar y)\| + r\|\bar\bQ - \bQ_\circ\| \lesssim_{n,\eps} r^{2\gamma_{n,k}^+-10\eps} \,. \label{Equ_C^2 Reg_Main}
		\end{align}
	\end{Lem}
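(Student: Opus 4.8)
\emph{Strategy.} This is the location estimate \eqref{Equ_Intro_Location Est}, and I would prove it by comparing the asymptotic profiles of the RMCF based at $(\orig,0)$ and the one based at $\bar p$ — both of which, after a rotation, converge to $\cC_{n,k}$ — by ``unwrapping'' the translation, dilation and rotation relating them, and then projecting the resulting $L^2$-identity onto the low eigenspaces of $-L_{n,k}$. \textbf{Step 1 (profile at the origin).} Apply Theorem \ref{Thm_Asymp Profile(New)_gamma>0} and Remark \ref{Rem_AsympProf_Whitney data} to $\cM=\cM^{(\orig,0)}$ with $\gamma=1/2$ to obtain $\bu_\circ\in\scU$ and $\psi_\circ\in\rmW_{1/2}(\cC_{n,k})\cap\rmW_\SSp^\perp$ with $\|\psi_\circ\|_{L^2}\le 1$ such that $v(\cdot,\tau)=\varphi_{\bu_\circ}(\cdot,\tau)+e^{-\tau/2}\psi_\circ+O_{L^2}(e^{-(\gamma_{n,k}^+-3\eps)\tau})$, where $\psi_\circ$ encodes $\bq_\circ$ (hence $\bQ_\circ$) together with a cubic Hermite part $\bc_\circ$ on $|\bL_\circ|$; taking $\delta_{\ref{Lem_C^2 Reg_Main}}$ small gives $\|\bQ_\circ\|\le 1$. \textbf{Step 2 (crude rigidity).} Using the quantitative uniqueness of cylindrical tangent flows \cite{ColdingMinicozzi25_quantitativeMCF} together with Brakke--White $\eps$-regularity, show that for $r\le\delta_{\ref{Lem_C^2 Reg_Main}}$ the spine of $\cC_{\bar p}$ lies within $\Psi(\delta)$ of $|\bL_\circ|=\{0\}\times\R^k$, so that $\bar\mbfA\in\mfk g_{n,k}^\perp$ of the stated block form with $\|\bar\mbfA\|\le\Psi(\delta)$ exists (and $\|\bar\bQ\|\le 1$), and moreover $|\bar x|+|\bar t|^{1/2}\le\Psi(\delta)\,|\bar y|$, so $r\sim|\bar y|$.

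\textbf{Step 3 (unwrapping).} Write $\cM^{\bar p}(\tau)=\sqrt{1-\bar te^\tau}\,\cM(\tau')-e^{\tau/2}(\bar x,\bar y)$ with $\tau'=\tau-\log(1-\bar te^\tau)$, and set $\bar\cM:=(e^{-\bar\mbfA})_\sharp\cM^{\bar p}$; by Appendix \ref{Append_Graph over Cylinder} the graphical function $\bar v$ of $\bar\cM$ over $\cC_{n,k}$ satisfies $\bar v(\theta,y;\tau)=v(\theta,\,y+\bar\mbfy;\tau')+\varrho(\bar\lambda-1)-\psi_{\bar\mbfx}-\psi_{\bar\mbfA}+\mathcal E$, where $\bar\lambda=\sqrt{1-\bar te^\tau}$, $\bar\mbfx=e^{\tau/2}\bar x$, $\bar\mbfy=e^{\tau'/2}\bar y$, and $\|\mathcal E(\cdot,\tau)\|_{L^2(Q_R)}$ is quadratically small in $\|\bar\mbfA\|$, $|\bar\mbfx|$, $|\bar\lambda-1|$, $\|v(\cdot,\tau')\|_{C^1(Q_R)}$ times a power of $R$. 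Applying Theorem \ref{Thm_Asymp Profile(New)_gamma>0} to $\bar\cM$ (legitimate since $\bar p\in\cS_k(\bM)_+$) gives $\bar v=\varphi_{\bar\bu}+e^{-\tau/2}\bar\psi+O_{L^2}(e^{-(\gamma_{n,k}^+-3\eps)\tau})$ for some $\bar\bu\in\scU$ and $\bar\psi\in\rmW_{1/2}(\cC_{n,k})\cap\rmW_\SSp^\perp$.

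\textbf{Step 4 (comparison and projection).} Fix $\tau$ with $|\bar\mbfy|\sim 1$, i.e. $e^{-\tau}\sim r^2$ (hence $\tau'\approx\tau$, $\bar\lambda\approx 1$). Subtract the two formulas for $\bar v$, insert the Step-1 expansion of $v$ (recalling $\varphi_{\bu_\circ}$ is $y$-invariant, so the shift does not affect it), and apply Corollary \ref{Cor_L^2 Mono_Low Bd Upsilon_(u,u'; w)} with $\gamma=1/2$ to split off the spherical-flow discrepancy $\varphi_{\bu_\circ}-\varphi_{\bar\bu}$ from the remainder $w:=(e^{-\tau'/2}\psi_\circ(\cdot+\bar\mbfy)-e^{-\tau/2}\bar\psi)+\varrho(\bar\lambda-1)-\psi_{\bar\mbfx}-\psi_{\bar\mbfA}\in\big(\oplus_{\lambda\le 1/2}\rmW_\lambda\big)\cap\rmW_\SSp^\perp$, obtaining $\|w\|_{L^2}\lesssim_{n,\eps}\|\mathcal E\|_{L^2(Q_R)}+e^{-(\gamma_{n,k}^+-3\eps)\tau}$. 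Projecting $w$ onto $\rmW_{-1}$ (constants, matching $\varrho(\bar\lambda-1)$, giving a bound on $\bar t$), onto $\rmW_{-1/2}$ (matching $\psi_{\bar\mbfx}$ against the degree-zero-in-$y$ part of the translated $\psi_\circ$, giving $|\bar x-\bq_\circ(\bar y)|$), onto $\rmW_0$ (matching $\psi_{\bar\mbfA}$ against the degree-one part of the translated $\psi_\circ$, giving $\|\bar\bl-\nabla\bq_\circ(\bar y)\|$), and onto $\rmW_{1/2}$ (the $\hat\theta$-quadratic content, giving $\|\bar\bQ-\bQ_\circ\|$), and rescaling $e^{-\tau}\sim r^2$, produces each term in \eqref{Equ_C^2 Reg_Main}; the quantitative estimates for the translated Hermite pieces come from Lemma \ref{Lem_Facts on Hermitian Polyn}, and the passage between $\bar\psi$ over $\cC_{n,k}$ and the data $\bar\bq,\bar\bQ$ attached to $\cC_{\bar p}$ costs $O(\|\bar\mbfA\|)$.

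\textbf{Step 5 (closing the estimate) and the main obstacle.} The right-hand sides above still involve $\|\mathcal E\|_{L^2(Q_R)}$, which depends on $\|\bar\mbfA\|$, $|\bar\mbfx|$, $|\bar\lambda-1|$ — the very quantities being controlled — so one iterates: the $\Psi(\delta)$-bounds of Step 2 seed the first pass, and each pass improves all four quantities by a definite positive power of $r$ until $\|\mathcal E\|$ is absorbed into $r^{2\gamma_{n,k}^+-10\eps}$ and Step 4 closes. The hard part is precisely the quantitative control of $\mathcal E$: the cross terms coupling the rotation $\bar\mbfA$ to the slowly decaying low-spherical part of $v$ (when $n-k\ge 3$) are what force this iteration, while the cubic Hermite mode $\bc_\circ$ of $\psi_\circ$ feeds into the $\rmW_{-1}$ and $\rmW_{-1/2}$ projections and one must exploit the constraints those projections themselves impose on $\bar y$ — which quantitatively confine nearby singularities to the zero locus of the associated cubic form, cf. Remark \ref{rem_h_3_dominant} — to see that these contributions are consistent with the stated exponents. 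The bookkeeping of the various small parameters, so as to land exactly at $2\gamma_{n,k}^+-10\eps$, is delicate but otherwise routine once the structure above is in place.
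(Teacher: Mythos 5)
Your proposal is correct and follows essentially the same route as the paper's proof: expand both rescaled flows via Theorem \ref{Thm_Asymp Profile(New)_gamma>0}, relate them through Lemma \ref{Lem_App_Graph over Cylinder} \ref{Item_GraphCylinder_Rot} at the time $\tau_r=-2\ln r$ where $e^{\tau_r/2}\bar y$ has unit size, split off the spherical-flow discrepancy with Corollary \ref{Cor_L^2 Mono_Low Bd Upsilon_(u,u'; w)}, and read off each term of \eqref{Equ_C^2 Reg_Main} from the pairwise-orthogonal eigenmode projections, with Lemma \ref{Lem_Facts on Hermitian Polyn} handling the translated cubic Hermite part. The only cosmetic difference is in closing the self-referential error bound: the paper absorbs the quadratic error term in a single pass (using $r^{-1}|\bar x|\le r^{-1}|\bq_\circ(\tilde y_r)-\bar x|+2r$ and $|\bar\mbfA|\lesssim\|\nabla\bq_\circ(\tilde y_r)-\bar\bl\|+r$), rather than iterating, and the cubic mode requires no zero-locus considerations since $|\bar\by_r|\sim 1$ makes the loss factor in Lemma \ref{Lem_Facts on Hermitian Polyn} a constant.
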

	Note that in view of the Whitney Extension Theorem, applying this Lemma (possibly with a translation and rotation) to every pair of points in $\cS_{k}(\bM)_+$ that are sufficiently close to each other proves the $C^{2,\al}$-regularity of $\cS_{k}(\bM)_+$. We shall proceed with the details in the next section.

	\begin{proof}
		\textbf{Step 1 (A priori bounds on Whitney data).} 
		Suppose the tangent flow of $\bM$ at $\bar p$ is $\cC'\in \Rot(\cC_{n,k})$. First note that by a compactness argument and Colding-Minicozzi's quantitative uniqueness of cylindrical tangent flow \cite{ColdingMinicozzi25_quantitativeMCF}, $\cC'$ is close to $\cC_{n,k}$ as long as we take $\delta_{\ref{Lem_C^2 Reg_Main}}(n,\eps)\ll 1$, and hence there exists a unique $\bar\mbfA\in \mfk g_{n,k}^\perp$ such that $e^{\bar\mbfA}(|\bL_\circ|) = |\bar\bL|$, with estimates,
		\begin{align}
			r^{-2}|\bar t| + r^{-1}|\bar x| + |\bar\mbfA| \leq \Psi(\delta|n, \eps) \,.  \label{Equ_Improve_A priori bd on |t|,|x|,|A|}
		\end{align}
		And hence, by definition of $r$ (and taking $\delta_{\ref{Lem_C^2 Reg_Main}}(n,\eps)$ smaller), we have,
		\begin{align}
			|\bar y| = r \,. \label{Equ_Improve_A priori bd on |y|}
		\end{align}
		\noindent    \textbf{Step 2 (Estimates from Theorem \ref{Thm_Asymp Profile(New)_gamma>0}).} 
		Let $v(\cdot, \tau),\, \bar v(\cdot, \tau)$ be the graphical function of $\cM(\tau)$ and $\bar\cM(\tau):= (e^{-\bar\mbfA})_\sharp\cM^{\bar p}(\tau)$ over $\cC_{n,k}$ respectively. Note that by taking $\delta_{\ref{Lem_C^2 Reg_Main}}(n, \eps)\ll 1$, assumptions \ref{Item_AsyPrNew_Assum1}, \ref{Item_AsyPrNew_Assum2} in Theorem \ref{Thm_Asymp Profile(New)_gamma>0} are satisfied by $\cM$ and $\bar\cM$ with $\delta\leq \delta_{\ref{Thm_Asymp Profile(New)_gamma>0}}, \gamma=1/2, \eps$, with $\gamma^+=\gamma^+_{n,k}$.
		Hence, by Theorem \ref{Thm_Asymp Profile(New)_gamma>0}, for every $\tau\geq 1$ and every $2n\leq R\leq 2n+e^{\tau/12}$, we have
		\begin{align}
			\|v\|_{C^2(\cC_{n,k}\cap Q_R)} \lesssim_{n,\eps} e^{-(\min\{1/2, 1/(n-k)\}-\eps)\tau} R^3 \,; \label{Equ_Improve_|v| Rough C^2 est}
		\end{align}
		and the asymptotic expansions for $v, \bar v$:
		\begin{align}
			\begin{split}
				v(\cdot,\tau) & = \varphi_{\bu_\circ}(\cdot, \tau) + e^{-\tau/2}\psi_\circ + \cE_\circ(\cdot, \tau)\,, \\
				\bar v(\cdot,\tau) & = \varphi_{\bar\bu}(\cdot, \tau) + e^{-\tau/2}\bar\psi + \bar\cE(\cdot, \tau)\,, \\
				\|\cE_\circ(\cdot, \tau)\|_{C^0(\cC_{n,k}\cap Q_R)} 
				& + \|\bar\cE(\cdot, \tau)\|_{C^0(\cC_{n,k}\cap Q_R)} \lesssim_{n, \eps} e^{-(\gamma^+_{n,k}-\eps)\tau}R^{2(2+\gamma^+_{n,k})} \,;
			\end{split} \label{Equ_Improve_v = varphi_u + psi(tau) +cE_1 w. cE_1 est}
		\end{align}
		for some low spherical flows $\bu_\circ,\ \bar\bu\in \scU$ and eigenfunctions $\psi_\circ,\ \bar\psi \in \rmW_{1/2}(\cC_{n,k})$ with estimate $\|\psi_\circ\|_{L^2}, \|\bar\psi\|_{L^2}\leq 1$. (Note that $\bar\psi$ is related with the Whitney data at $\bar p$ by $\bar\psi=\psi_{\bar p}\circ e^{\bar\mbfA}$).
		
		\noindent    \textbf{Step 3 (Relation between $v$ and $\bar v$).} Recall that \[
		\cM^{\bar p}(\tau) = \sqrt{1-\bar t e^{\tau}}\cdot \cM\left(\tau - \log(1-\bar te^{\tau})\right) - e^{\tau/2}(\bar x, \bar y) \,. 
		\] 
		We shall apply Lemma \ref{Lem_App_Graph over Cylinder} \ref{Item_GraphCylinder_Rot} to compare $v$ and $\bar v$ at $\tau_r:=-2\ln r$ time slice. We set,
		\begin{align}
			\bar\lambda_r = \sqrt{1-\bar t e^{\tau_r}}, & & 
			(\bar\bx_r, \bar\by_r):= (e^{\tau_r/2}\bar x, e^{\tau_r/2}\bar y) \,. \label{Equ_Improve_Def lambda_r, bx_r, by_r}  
		\end{align}
		Note that by \eqref{Equ_Improve_A priori bd on |t|,|x|,|A|}, \eqref{Equ_Improve_A priori bd on |y|} and taking $\delta_{\ref{Lem_C^2 Reg_Main}}(n,\eps)\ll 1$, we have 
		\begin{align*}
			|\bar\lambda_r - 1| \sim r^{-2}|\bar t| \ll 1\,, & &
			|\bar\bx_r| = r^{-1}|\bar x| \ll 1\,, & &
			|\bar\by_r| = 1 \,.
		\end{align*}
		{\bf Claim.} {\it 
			Let $\Lambda = \Lambda_{\ref{Cor_L^2 Mono_Low Bd Upsilon_(u,u'; w)}}(n, 1/2)>2n$. By taking $\delta_{\ref{Lem_C^2 Reg_Main}}(n, \eps)\ll 1$, we have $\gfrd_1(\cM(\tau_r))>4\Lambda$ and, 
			\begin{align}
				\begin{split}
					\bar v(\cdot, \tau_r) = v(\cdot+(0,\bar\lambda_r^{-1}\bar\by_r),\ \tau_r) + \Upsilon_r + \cE'\,,
				\end{split} \label{Equ_Improve_bar v = v + Upsilon + cE_2}
			\end{align}
			in $Q_{\Lambda}$, where $\Upsilon_r$ is the sum of three eigenfunctions of $-L_{n,k}$: 
			\begin{align}
				\Upsilon_r:= \varrho(\bar\lambda_r-1) - \psi_{\bar\bx_r} - \psi_{\bar\mbfA} \,; \label{Equ_Improve_Define Upsilon_r} 
			\end{align}
			and the error estimate holds, 
			\begin{align}
				\|\cE'\|_{L^2(\cC_{n,k}\cap Q_\Lambda)} \lesssim_{n,\eps} \left(r^{2\gamma^+_{n,k}-1-4\eps} + |\bar\bx_r| + |\bar\lambda_r-1| + |\bar\mbfA| \right) \left(|\bar\bx_r| + |\bar\lambda_r-1| + |\bar\mbfA|\right)\,.    
				\label{Equ_Improve_|cE'| est from rot graph}
			\end{align}
		}
		\begin{proof}[Proof of Claim]
			By definition of $\bar\cM$ in \textbf{Step 2}, \[
			\bar\cM(\tau_r) = (e^{-\bar\mbfA})_\sharp \left(\bar\lambda_r\cdot \cM(\tau_r - 2\log\bar\lambda_r) - (\bar\bx_r, \bar\by_r)\right),
			\]
			while by \eqref{Equ_Improve_A priori bd on |t|,|x|,|A|}, \eqref{Equ_Improve_A priori bd on |y|} and \eqref{Equ_Improve_|v| Rough C^2 est},  by taking $\delta_{\ref{Lem_C^2 Reg_Main}}(n,\eps)\ll 1$ we have, 
			\begin{align}
				\|v\|_{C^2((\cC_{n,k}\cap Q_{2\Lambda}(0, \bar\lambda_r^{-1}\bar\by_r))\times [\tau_r-1, \tau_r])} \lesssim_{n,\eps} r^{2\gamma^+_{n,k}-1-2\eps} < \kappa'_{\ref{Lem_App_Graph over Cylinder}}(n)/2\,; \label{Equ_Improve_|v| Rough C^2 Est in Q_R}
			\end{align}
			\begin{align*}
				|\bar\bx_r| + (|\bar\lambda_r-1| + |\bar\mbfA|)\Lambda \leq \Psi(\delta|n,\eps) < \kappa'_{\ref{Lem_App_Graph over Cylinder}}(n)/2\,.
			\end{align*}
			Hence item \ref{Item_GraphCylinder_Rot} of Lemma \ref{Lem_App_Graph over Cylinder} applies to give 
			\begin{align*}
				\bar v(\cdot, \tau_r) = v(\cdot+(0,\bar\lambda_r^{-1}\bar\by_r),\ \tau_r-2\log \bar\lambda_r) + \Upsilon_r + \breve\cE' = v(\cdot+(0,\bar\lambda_r^{-1}\bar\by_r),\ \tau_r) + \Upsilon_r + \cE'\,,
			\end{align*}
			which implies \eqref{Equ_Improve_bar v = v + Upsilon + cE_2} with error estimate
			\begin{align}
				\begin{split}
					\|\cE'\|_{C^0(\cC_{n,k}\cap Q_\Lambda)} & \leq \|\breve\cE'\|_{C^0(\cC_{n,k}\cap Q_\Lambda)} + 2|\log\bar\lambda_r|\cdot\|\partial_\tau v\|_{C^0((\cC_{n,k}\cap Q_\Lambda(0,\bar\lambda_r^{-1}\bar\by_r))\times [\tau'_r -2\log\bar\lambda_r, \tau'_r])} \\
					& \lesssim_{n, \eps} \left(\|v\|_{C^2((\cC_{n,k}\cap Q_{\Lambda+1}(0,\bar\lambda_r^{-1}\bar\by_r))\times [\tau'_r -1, \tau'_r])} + \|\Upsilon_r\|_{L^2}\right)\|\Upsilon_r\|_{L^2} \\
					& \lesssim_{n,\eps} \left(r^{2\gamma^+_{n,k}-1-4\eps} + |\bar\bx_r| + |\bar\lambda_r-1| + |\bar\mbfA| \right) \left(|\bar\bx_r| + |\bar\lambda_r-1| + |\bar\mbfA|\right) \,.
				\end{split} \label{Equ_Improve_cE_2 est}
			\end{align}
			Here the second inequality follows from Lemma \ref{Lem_App_Graph over Cylinder} \ref{Item_GraphCylinder_Rot} and the fact that \[
			\|\Upsilon_r\|_{L^2} \sim_n |\bar\lambda_r-1| + |\bar\bx_r| + |\bar\mbfA| \,,
			\]
			and the third inequality follows from \eqref{Equ_Improve_|v| Rough C^2 Est in Q_R}. 
		\end{proof}

		\noindent    \textbf{Step 4.} Combining \eqref{Equ_Improve_v = varphi_u + psi(tau) +cE_1 w. cE_1 est} at $\tau:=\tau_r$ time slice, \eqref{Equ_Improve_bar v = v + Upsilon + cE_2} and \eqref{Equ_Improve_|cE'| est from rot graph}, we shall derive the following estimate, which leads to the final estimate on the Whitney data in the next step.
		\begin{align}
			\begin{split}
				& \left\|(\varphi_{\bu_\circ}-\varphi_{\bar\bu})(\cdot, \tau_r)\right\|_{L^2} + \left\|e^{-\tau_r/2}\left(\psi_\circ(\cdot + (0, \bar\lambda_r^{-1}\bar\by_r))-\bar\psi \right) + \Upsilon_r \right\|_{L^2} \\
				&\qquad  \lesssim_{n,\eps} r^{2\gamma^+_{n,k}-2\eps} + \left(r^{2\gamma^+_{n,k}-1-4\eps} + |\bar\bx_r| + |\bar\lambda_r-1| + |\bar\mbfA| \right) \left(|\bar\bx_r| + |\bar\lambda_r-1| + |\bar\mbfA|\right) \,.
			\end{split} \label{Equ_Improve_|Delta varphi_bu + Upsilon + Delta psi| < r^(2gamma^+)}
		\end{align}
		\begin{proof}[Proof of \eqref{Equ_Improve_|Delta varphi_bu + Upsilon + Delta psi| < r^(2gamma^+)}.]
			By \eqref{Equ_Improve_v = varphi_u + psi(tau) +cE_1 w. cE_1 est} at $\tau:=\tau_r$ time slice and \eqref{Equ_Improve_bar v = v + Upsilon + cE_2}, we have in $\cC_{n,k}\cap Q_\Lambda$, 
			\begin{align*}
				& \left(\varphi_{\bar\bu}(\cdot, \tau_r) + e^{-\tau_r/2}\bar\psi + \bar\cE(\cdot, \tau_r)\right) - \left(\Upsilon_r + \cE'\right) \\
				& \qquad = \varphi_{\bu_\circ}(\cdot, \tau_r) + e^{-\tau_r/2}\psi_\circ(\cdot + (0,\bar\lambda_r^{-1}\bar\by_r)) + \cE_\circ(\cdot + (0,\bar\lambda_r^{-1}\bar\by_r), \tau_r) \,.
			\end{align*}
			Moving $\cE_\circ, \bar\cE, \cE'$ to the same side, integrating over $\cC_{n,k}\cap Q_\Lambda$ (with respect to Gaussian weight) and applying \eqref{Equ_Improve_v = varphi_u + psi(tau) +cE_1 w. cE_1 est} and \eqref{Equ_Improve_|cE'| est from rot graph} yield,
			\begin{align}
				\begin{split}
					& \left\|(\varphi_{\bu_\circ}-\varphi_{\bar\bu})(\cdot, \tau_r) + e^{-\tau_r/2}\left(\psi_\circ(\cdot + (0, \bar\lambda_r^{-1}\bar\by_r))-\bar\psi \right) + \Upsilon_r \right\|_{L^2(\cC_{n,k}\cap Q_\Lambda)} \\
					& \qquad  \lesssim_{n,\eps} \left\|\cE_\circ(\cdot + (0,\bar\lambda_r^{-1}\bar\by_r), \tau_r)\right\|_{L^2(\cC_{n,k}\cap Q_\Lambda)} + \|\bar\cE(\cdot, \tau_r)\|_{L^2(\cC_{n,k}\cap Q_\Lambda)} + \|\cE'\|_{L^2(\cC_{n,k}\cap Q_\Lambda)} \\
					& \qquad  \lesssim_{n,\eps} r^{2\gamma^+_{n,k}-2\eps} + \left(r^{2\gamma^+_{n,k}-1-4\eps} + |\bar\bx_r| + |\bar\lambda_r-1| + |\bar\mbfA| \right) \left(|\bar\bx_r| + |\bar\lambda_r-1| + |\bar\mbfA|\right) \,.
				\end{split} \label{Equ_Improve_Pf in Step 4}
			\end{align}
			On the other hand, since the second and third terms on the left hand side above are linear combinations of eigenfunctions of $-L_{n,k}$ with eigenvalue $\leq 1/2$ and perpendicular to the low spherical mode $\rmW_\SSp$, by Corollary \ref{Cor_L^2 Mono_Low Bd Upsilon_(u,u'; w)} and the choice of $\Lambda$ in \textbf{Step 3}, we have 
			\begin{align*}
				\text{LHS of \eqref{Equ_Improve_Pf in Step 4}} \gtrsim_n \left\|(\varphi_{\bu_\circ}-\varphi_{\bar\bu})(\cdot, \tau_r)\right\|_{L^2} + \left\|e^{-\tau_r/2}\left(\psi_\circ(\cdot + (0, \bar\lambda_r^{-1}\bar\by_r))-\bar\psi \right) + \Upsilon_r \right\|_{L^2}\,.
			\end{align*}
			Together with \eqref{Equ_Improve_Pf in Step 4}, this finishes the proof.
		\end{proof}
		
		\noindent    \textbf{Step 5 (\eqref{Equ_Improve_|Delta varphi_bu + Upsilon + Delta psi| < r^(2gamma^+)} yields the location estimates).} 
		We first prove a geometric estimate,
		\begin{align}
			\begin{split}
				& \left\|e^{-\tau_r/2}\left(\psi_\circ(\cdot + (0, \bar\lambda_r^{-1}\bar\by_r))-\bar\psi \right) + \Upsilon_r \right\|_{L^2} \\
				& \qquad \gtrsim_n\ e^{\tau_r/2}|\bq_\circ(\bar\lambda_r^{-1}\bar y) - \bar x| + \|\nabla\bq_\circ(\bar\lambda_r^{-1}\bar y) - \bar\bl\| + |\bar\lambda_r-1| + e^{-\tau_r/2}\|\psi_\circ - \bar\psi\|_{L^2}\,.
			\end{split} \label{Equ_Improve_Whitney data vs tilde Upsilon}
		\end{align}
		\begin{proof}[Proof of \eqref{Equ_Improve_Whitney data vs tilde Upsilon}.]
			By the definition \ref{Item_C^2Reg_Quad Q_p} of $\bq_p$, we can set \[
			\psi_\circ(\theta, y) = \left\langle\bq_\circ(y) - 2\tr \bq_\circ(y), \hat\theta \right\rangle + \bc_\circ(y),
			\]
			where $\bc_\circ$ is a (possibly $0$) cubic Hermitian polynomial on $\R^k$. Then for every $0\neq \by\in \R^k$,
			\begin{align*}
				\psi(\theta, y+\by) - \psi(\theta, y) & = \left\langle\bq_\circ(y + \by) - \bq_\circ(y), \hat\theta \right\rangle + (\bc_\circ(y+\by) - \bc_\circ(y)) \\
				& = \left\langle \bq_\circ(\by) + \nabla\bq_\circ(\by)\cdot y, \hat\theta \right\rangle  + (\bc_\circ(y+\by) - \bc_\circ(y)) \,.
			\end{align*}
			Hence combined with \eqref{Equ_Improve_Def lambda_r, bx_r, by_r} and \eqref{Equ_Improve_Define Upsilon_r}, we have (denote for simplicity $\tilde y_r:= \bar\lambda_r^{-1}\bar y = e^{-\tau_r/2} \bar\lambda_r^{-1}\bar\by_r$)
			\begin{align*}
				& \left(e^{-\tau_r/2}\left(\psi_\circ(\cdot + (0, \bar\lambda_r^{-1}\bar\by_r))-\bar\psi \right) + \Upsilon_r\right)(\theta, y) \\
				& \qquad = e^{-\tau_r/2}(\psi_\circ - \bar\psi)(\theta, y) + e^{\tau_r/2}\left\langle\bq_\circ(\tilde y_r) - \bar x, \hat\theta \right\rangle + \left\langle (\nabla\bq_\circ(\tilde y_r) - \bar \bl)y, \hat\theta \right\rangle \\
				& \qquad\ + \left(\varrho(\bar\lambda_r-1) + \bc_\circ(y+\bar\lambda_r^{-1}\bar\by_r) - \bc_\circ(y) \right) \,,
			\end{align*}
			where recall that we set $\bar\mbfA = \begin{bmatrix}
				0 & \bar\bl \\ -\bar\bl^\top & 0
			\end{bmatrix}$. 
			Note that as a linear combination of eigenfunctions of $-L_{n,k}$ on $\cC_{n,k}$, the four terms on the right-hand side above are pairwise orthogonal to each other. Together with Lemma \ref{Lem_Facts on Hermitian Polyn}, this implies \eqref{Equ_Improve_Whitney data vs tilde Upsilon}. 
		\end{proof}
		
		Now combining \eqref{Equ_Improve_Whitney data vs tilde Upsilon} with \eqref{Equ_Improve_|Delta varphi_bu + Upsilon + Delta psi| < r^(2gamma^+)} and \eqref{Equ_Improve_Def lambda_r, bx_r, by_r}, we derive (denote as above $\tilde y_r:= \bar\lambda_r^{-1}\bar y$ and recall $\tau_r=-2\ln r$)
		\begin{align}
			\begin{split}
				& |\bar\lambda_r-1| + r^{-1}|\bq_\circ(\tilde y_r) - \bar x| + \|\nabla\bq_\circ(\tilde y_r) - \bar\bl\| + r\|\psi_\circ - \bar\psi\|_{L^2} \\
				& \qquad \lesssim_{n,\eps} r^{2\gamma^+_{n,k}-2\eps} + \left(r^{2\gamma^+_{n,k}-1-4\eps} + r^{-1}|\bar x| + |\bar\lambda_r-1| + |\bar\mbfA| \right) \left(r^{-1}|\bar x| + |\bar\lambda_r-1| + |\bar\mbfA|\right).
			\end{split} \label{Equ_Improve_Pf in Step 5}
		\end{align}
		Recall in \textbf{Step 2} we get $\|\psi_\circ\|_{L^2}, \|\bar\psi\|_{L^2}\leq 1$, and $\bq_\circ$ is a quadratic form, the remaining terms on the right-hand side have estimates,
		\begin{align*}
			r^{-1}|\bar x| & \leq r^{-1}|\bq_\circ(\tilde y_r) - \bar x| + r^{-1}\|\bq_\circ\|\cdot|\tilde y_r|^2 \leq r^{-1}|\bq_\circ(\tilde y_r) - \bar x| + 2r \,, \\
			|\bar\mbfA| & \lesssim_{n} \|\bar\bl\| \leq \|\nabla\bq_\circ(\tilde y_r) - \bar\bl\| + \|\nabla\bq_\circ\|\cdot |\tilde y_r| \lesssim_n \|\nabla\bq_\circ(\tilde y_r) - \bar\bl\| + r \,.
		\end{align*}
		Using these (combining with \eqref{Equ_Improve_A priori bd on |t|,|x|,|A|}), by taking $\delta_{\ref{Lem_C^2 Reg_Main}}(n,\eps)\ll 1$, one can absorb the second term on the right-hand side of \eqref{Equ_Improve_Pf in Step 5} by its left hand side and get, 
		\begin{align}
			|\bar\lambda_r-1| + r^{-1}|\bq_\circ(\tilde y_r) - \bar x| + \|\nabla\bq_\circ(\tilde y_r) - \bar\bl\| + r\|\psi_\circ - \bar\psi\|_{L^2}  \lesssim_{n,\eps} r^{2\gamma^+_{n,k}-4\eps} \,.
		\end{align}
		Recall that $|\bar\lambda_r-1|\sim r^{-2}|\bar t|$ and $|\tilde y_r - \bar y|\lesssim r|\bar\lambda_r-1|$ and $\bar\psi = \psi_{\bar p}\circ e^{\bar\mbfA}$. This proves the desired estimate \eqref{Equ_C^2 Reg_Main}.
	\end{proof}

	\section{Proof of Theorem \ref{Thm_Intro_C^2 Reg}} \label{Sec_PfMainThm}
	In this section, we collected all the ingredients from the previous sections to prove Theorem \ref{Thm_Intro_C^2 Reg}. Throughout this section, we let $1\leq k\leq n-1$, $\bM$ is a mean curvature flow in $\R^{n+1}$. We will further assume $\bM$ has entropy upper bound by $\eps^{-1}$ for some small $\eps\in (0, 1/10n)$. 
	
	For part \ref{Item_Intro_(cS_k)_0} of Theorem \ref{Thm_Intro_C^2 Reg}, the Hausdorff dimension estimate of $\cS_k(\bM)_0$ will be proved in Section \ref{Subsec_cS_0 lower dim} simply by a blow-up argument. When $k=1$, the singularities in $\cS_{k}(\bM)_0$ must be nondegenerate, and hence the isolatedness has been proved in \cite{SunWangXue1_Passing}.
	
	Part \ref{Item_Intro_(cS_k)_+ close} of Theorem \ref{Thm_Intro_C^2 Reg} is a direct consequence of the upper-semi-continuity of decay order at $\tau=\infty$, to be discussed in Section \ref{Subsec_cS_+ close}.
	
	Part \ref{Item_Intro_(cS_k)_+ reg} and \ref{Item_Intro_t|(cS_k)_+ Holder} of Theorem \ref{Thm_Intro_C^2 Reg} will be proved in Section \ref{Subsec_cS_+ reg} based on the location estimate in Lemma \ref{Lem_C^2 Reg_Main}. In fact, we establish a general result to get $C^{2,\al}$ regularity from the location estimate like \eqref{Equ_C^2 Reg_Main} using the Whitney extension theorem, see Proposition \ref{Prop_C^2 Reifenberg}. 
	
	\subsection{Hausdorff dimension of partially nondegenerate singularities} \label{Subsec_cS_0 lower dim}
	Recall the $C^1$-normal form theorem of cylindrical singularity. 
	\begin{theorem}[{\cite[Theorem 1.4]{SunXue2022_generic_cylindrical}}]\label{thm:PseduoLocality_Partial}
		Suppose $\cM$ is the rescaled mean curvature flow of $\bM$ at a $k$-cylindrical singularity. Then for any $\mathbf{L}>0$, there exist a rotation $\mbfB\in \mathrm{O}(n+1)$, an index set $\cI\subset\{1,2,\cdots,k\}$ and $\tau_0>0$ such that for every $\tau>\tau_0$, $\mbfB_\sharp\cM(\tau)$ can be written as a graph of function $v(\cdot,\tau)$ over $\cC_{n,k}$ in $Q_{\mathbf{L}}$, and that
		\begin{equation}
			\left\| v(\theta,y,\tau)-
			\sum_{i\in \mathcal I}\frac{\varrho}{4\tau}(y_i^2-2)
			\right\|_{C^1(\cC_{n,k}\cap Q_{\mathbf{L}})}=o(\tau^{-1}) \,.
		\end{equation}
	\end{theorem}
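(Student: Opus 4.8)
The plan is to prove the $C^1$ normal form by the standard spectral/Merle--Zaag analysis of the rescaled flow near a cylindrical singularity, carried out carefully enough to keep track of \emph{all} the neutral eigenmodes. \textbf{Step 1 (reduction to a graphical equation).} By the uniqueness of cylindrical tangent flows \cite{CM15_Lojasiewicz}, there is a fixed rotation $\mbfB\in\rmO(n+1)$ with $\mbfB_\sharp\cM(\tau)\to\cC_{n,k}$ in $C^\infty_{\loc}$; moreover (this is part of what such a normal form theorem must establish) the graphical radius of $\mbfB_\sharp\cM(\tau)$ over $\cC_{n,k}$ grows, say at least like $\sqrt{\tau}$. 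Writing $\mbfB_\sharp\cM(\tau)=\graph_{\cC_{n,k}}(v(\cdot,\tau))$ on $\cC_{n,k}\cap Q_{R(\tau)}$ with $\|v(\cdot,\tau)\|_{C^2}\to 0$, the function $v$ solves $(\partial_\tau-L_{n,k})v=\cQ[v]$ with $\cQ[v]$ quadratic in $(v,\nabla v,\nabla^2 v)$ and smooth coefficients; cutting $v$ off outside $Q_{R(\tau)}$ produces only commutator terms supported near $|y|\sim R(\tau)$, which are Gaussian-small once $R(\tau)^2\gg\tau$, so all spectral bookkeeping may be done in the Gaussian $L^2$-space on $\cC_{n,k}$. \textbf{Step 2 (the neutral mode dominates).} Decompose $v=v_-+v_0+v_+$ along the negative, zero and positive eigenvalues of $-L_{n,k}$ (Section \ref{SSubsec_Spectrum of L}): $v_-\in\rmW_{-1}\oplus\rmW_{-1/2}$, $v_0\in\rmW_0=\rmW_{\Rot}\oplus\Span\{y_iy_j-2\delta_{ij}\}$, and $v_+$ in the span of the remaining eigenspaces. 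With $x=\|v_-\|_{L^2}$, $y=\|v_0\|_{L^2}$, $z=\|v_+\|_{L^2}$ one obtains a Merle--Zaag system $\dot x\ge c_-x-C(x+y+z)^2$, $\dot z\le -c_+z+C(x+y+z)^2$, $|\dot y|\le C(x+y+z)^2$ with $c_\pm>0$; since $v\to 0$, the first inequality gives $x\lesssim (x+y+z)^2$, and the standard ODE argument yields $x+z=o(y)$ as $\tau\to\infty$ (unless $v$ vanishes eventually, in which case the theorem holds with $\cI=\emptyset$). Since the $\rmW_{\Rot}$-component of $v_0$ records the tilting of $\spine(\mbfB_\sharp\cM(\tau))$ and the tangent flow, hence the spine, is unique, $\mbfB$ may be chosen so that this component is $o(1/\tau)$.

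\textbf{Step 3 (matrix ODE for the Hermite part).} Write the dominant part of $v_0$ as $\sum_{i,j}a_{ij}(\tau)(y_iy_j-2\delta_{ij})$ with $A(\tau)=(a_{ij}(\tau))$ symmetric. Projecting $(\partial_\tau-L_{n,k})v=\cQ[v]$ onto $\rmW_0$ kills the linear term, and the leading part of $\cQ[v]$ is the $v_0^2$ part of the cylinder's quadratic nonlinearity; computing $\cQ$ restricted to $\rmW_0$ yields $\dot A=-\tfrac{4}{\varrho}A^2+o(|A|^2)+o(|A|/\tau)$. Diagonalize $A(\tau)=O(\tau)^\top D(\tau)O(\tau)$: one shows the eigenframe $O(\tau)$ converges, its limit lies in $\rmO(k)\subset\rmG_{n,k}$ and so is absorbed into $\mbfB$, while each eigenvalue satisfies $\dot d=-\tfrac4\varrho d^2+\dots$. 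Eigenvalues comparable to $1/\tau$ then satisfy $d(\tau)=\tfrac{\varrho}{4\tau}(1+o(1))$ and the rest are $o(1/\tau)$; letting $\cI$ collect the slow indices gives $\|v_0(\cdot,\tau)-\sum_{i\in\cI}\tfrac{\varrho}{4\tau}(y_i^2-2)\|_{L^2}=o(1/\tau)$, hence with Step 2, $\|v(\cdot,\tau)-\sum_{i\in\cI}\tfrac{\varrho}{4\tau}(y_i^2-2)\|_{L^2}=o(1/\tau)$. \textbf{Step 4 (upgrade to $C^1$).} On the fixed cube $Q_{2\mathbf{L}}\subset Q_{R(\tau)}$ the difference $w:=v-\sum_{i\in\cI}\tfrac{\varrho}{4\tau}(y_i^2-2)$ solves a uniformly parabolic equation with small smooth coefficients and $L^2$-inhomogeneity $o(1/\tau)$; interior parabolic Schauder estimates bootstrap this to $\|w(\cdot,\tau)\|_{C^1(\cC_{n,k}\cap Q_{\mathbf{L}})}=o(1/\tau)$ (indeed to $C^m_{\loc}$ for every $m$).

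The hard part is Step 3: running the Merle--Zaag dichotomy in the \emph{degenerate} regime $\cI\subsetneq\{1,\dots,k\}$ while (i) pinning down the constant $\varrho/4$ from the quadratic nonlinearity of the RMCF on $\cC_{n,k}$, (ii) separating the slow ($\sim 1/\tau$) from the fast ($o(1/\tau)$) eigenvalues of the matrix ODE $\dot A=-\tfrac4\varrho A^2+\dots$ and controlling the cross terms between eigendirections, and (iii) showing the eigenframe $O(\tau)$ stabilizes, so that a single rotation $\mbfB$ works for all large $\tau$. The mechanism here is the same one that, in the non-degenerate case $\cI=\{1,\dots,k\}$, produces the isolated ``nondegenerate'' singularities studied in \cite{SunWangXue1_Passing}.
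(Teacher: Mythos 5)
This statement is not proved in the present paper: it is quoted verbatim as \cite[Theorem 1.4]{SunXue2022_generic_cylindrical}, so there is no internal proof to compare against. Your outline does follow the strategy of the cited reference (graphical reduction, Merle--Zaag trichotomy on the spectral decomposition of $-L_{n,k}$, and a matrix ODE for the degree-two Hermite coefficients), so the architecture is right. But as a proof it has genuine gaps at exactly the decisive points, and not all of them are the ones you flag.

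First, your treatment of the rotation-like neutral modes in Step 2 does not work as stated. Uniqueness of the tangent flow gives convergence of the spine, but with no rate better than the \L{}ojasiewicz rate; it does not let you ``choose $\mbfB$ so that the $\rmW_{\Rot}$-component is $o(1/\tau)$.'' Since $\rmW_{\Rot}\subset\rmW_0$ is neutral, its coefficient must be run through the same ODE analysis as $A(\tau)$: one has to show that the source term for the rotation coefficient (the projection of $\cQ[v]$ onto $\rmW_{\Rot}$) is $o(\tau^{-2})$ --- typically by a parity argument in $\theta$, since the dominant quadratic contribution from the Hermite modes is even in $\theta$ while $\psi_{\mbfA}$ is odd --- so that the coefficient converges and the residual after absorbing the limit into $\mbfB$ is $o(1/\tau)$ rather than merely $O(1/\tau)$. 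Without this the $C^1$ normal form fails at the claimed rate. Second, the Merle--Zaag alternative in Step 2 is misstated: the second branch is not ``$v$ vanishes eventually'' but exponential decay of $v$ (dominance of the stable part), which is precisely the degenerate case $\cS_k(\bM)_+$; the theorem then holds with $\cI=\emptyset$ because $e^{-c\tau}=o(\tau^{-1})$. Third, Step 3 asserts rather than establishes the three computations on which the result rests: that the projection of $\cQ[v_0^2]$ onto $\Span\{y_iy_j-2\delta_{ij}\}$ is exactly $-\tfrac{4}{\varrho}A^2$ up to admissible errors, that the off-diagonal couplings do not obstruct the separation of eigenvalues into the $\sim\varrho/(4\tau)$ group and the $o(1/\tau)$ group, and that the eigenframe $O(\tau)$ converges (this last point requires an argument, e.g.\ integrability of $\dot O$, since a priori the frame of a slowly varying symmetric matrix need not stabilize when eigenvalues cross or coincide). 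These are the substance of the cited theorem, so the proposal should be read as a correct road map rather than a proof.
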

	
	In particular, if $\cI=\{1,2,\cdots,k\}$, the singularity is called nondegenerate. In the following, we consider $|\cI|=m$, $1\leq m\leq k-1$. 
	We use $\cS_{k}(\bM)_0^m$ to denote the set of singularities of this type, and we call them \textbf{partially nondegenerate}. 
	For every $p\in\cS_{k}(\bM)_0^m$, let $\cC_p\in \Rot(\cC_{n,k})$ be the tangent flow of $\bM$ at $p$. We use $V^{\dg}_p$ to denote the $(k-m)$-dimensional subspace of $\R^{n+1}$ such that $\mbfB(V_p^\dg)=\text{span}\{\partial_{y_i}: i\in\{1,2,\cdots,k\}\backslash\cI\}$. In other words, $V_p^\dg$ denotes the subspace of the degenerate directions; also, we use $V^{\ndg}_p:= \spine(\cC_{p})\cap (V_p^\dg)^\perp$ to denote the subspace of the nondegenerate directions, and use $\by_p^\ndg$ to denote the projection of the vector $\by\in\spine(\cC_p)$ to $V_p^\ndg$.
	Since by definition, \[
	\cS_k(\bM)_0=\bigcup_{m=1}^{k-1} \cS_k(\bM)_0^m \,,
	\] 
	part \ref{Item_Intro_(cS_k)_0} of Theorem \ref{Thm_Intro_C^2 Reg} is a direct consequence of the following dimension estimate of $\cS_{k}(\bM)_0^m$. 
	\begin{theorem}\label{ThmPartial}
		The parabolic Hausdorff dimension of $\cS_{k}(\bM)_0^m$ is at most $(k-m)$.
	\end{theorem}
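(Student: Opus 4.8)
The plan is a dimension‑reduction argument by contradiction, using the parabolic density‑decay lemma, Lemma~\ref{lem:HausdorffMeasure}, together with the normal form Theorem~\ref{thm:PseduoLocality_Partial}. Suppose $\dim_\cP\cS_k(\bM)_0^m>k-m$, fix $\beta\in(k-m,\dim_\cP\cS_k(\bM)_0^m)$ so that $\cH^\beta_\cP(\cS_k(\bM)_0^m)>0$, and apply Lemma~\ref{lem:HausdorffMeasure} to get $p_\circ\in\cS_k(\bM)_0^m$ and scales $r_j\downarrow 0$ with $\cH^\beta_\cP(\mathcal A)>0$ for the accumulation set $\mathcal A:=\mathcal A_{\cS_k(\bM)_0^m,p_\circ}$. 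After a space‑time translation and rotation we may assume $p_\circ=(\orig,0)$ with tangent flow $\cC_{n,k}$; after a further rotation in $\rmG_{n,k}$, Theorem~\ref{thm:PseduoLocality_Partial} gives that $\cM:=\cM^{p_\circ}$ has index set $\cI=\{1,\dots,m\}$ and $v(\cdot,\tau)=\frac{\varrho}{4\tau}\sum_{i\in\cI}(y_i^2-2)+o(\tau^{-1})$ in every $Q_{\mathbf L}$. Put $V^{\dg}:=\{0\}^{n-k+1}\times\{0\}^m\times\R^{k-m}$, so that $\dim_\cP(V^{\dg}\times\{0\})=k-m<\beta$; the whole proof reduces to
\[
\mathcal A\subseteq V^{\dg}\times\{0\}\,,
\]
which contradicts $\cH^\beta_\cP(\mathcal A)>0$.

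The inclusion $\mathcal A\subseteq(\{0\}^{n-k+1}\times\R^k)\times\{0\}$ is White's stratification bound \cite{White97_Stratif}: the parabolic dilations $r_{j}^{-1}.\bM$ converge in the Brakke sense to the tangent flow $\bM_\infty$ of $\bM$ at $p_\circ$ (the shrinking cylinder $\sqrt{-t}\,\cC_{n,k}$, $t<0$, which is empty for $t>0$ and supported only on the static spine $\{0\}\times\R^k$ at $t=0$); a limit point $z=(Z,s)$ of rescaled singularities $z_\ell=r_{j_\ell}^{-1}.q_\ell$, $q_\ell\in\cS_k(\bM)_0^m$, lies in $\spt\bM_\infty$ and satisfies $\Theta_{\bM_\infty}(z)\ge\limsup_\ell\Theta_\bM(q_\ell)=\cF[\cC_{n,k}]>1$ by upper semicontinuity of Gaussian density, which forces $s=0$ and $Z\in\{0\}^{n-k+1}\times\R^k$.

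The substantive step is to rule out a nonzero nondegenerate spine component. Suppose $z=(0,w,0)\in\mathcal A$ with $w^\ndg:=\pi^{\ndg}w\neq 0$, realized by $q_\ell=(x_\ell,y_\ell,t_\ell)\in\cS_k(\bM)_0^m$ with $x_\ell/r_{j_\ell}\to 0$, $y_\ell/r_{j_\ell}\to w$, $t_\ell/r_{j_\ell}^2\to 0$. Write $\cM^{q_\ell}$ as a translation, dilation and time reparametrization of $\cM$,
\[
\cM^{q_\ell}(\tau)=\sqrt{1-t_\ell e^\tau}\,\cM\!\big(\tau-\log(1-t_\ell e^\tau)\big)-e^{\tau/2}(x_\ell,y_\ell)\,,
\]
and observe it at a time $\tau_\ell\sim-2\log r_{j_\ell}$ with a controlled slowly‑growing offset so that, writing $L_\ell:=e^{\tau_\ell/2}|y_\ell^{\ndg}|$, one has $L_\ell\to\infty$ while $e^{\tau_\ell/2}|x_\ell|\to 0$, $|t_\ell|e^{\tau_\ell}\to 0$, and $Q_{2L_\ell}$ is still within the validity range of the normal form at time $\sim\tau_\ell$; this is possible since $|x_\ell|,|t_\ell|^{1/2}=o(|y_\ell^{\ndg}|)$ and $\tau_\ell\to\infty$. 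By the graphical‑function transformation rule under translation, dilation and the small rotation provided by quantitative uniqueness of cylindrical tangent flows \cite{ColdingMinicozzi25_quantitativeMCF} (the same computation as in Section~\ref{Sec_C^(2,al) Reg}, via Lemma~\ref{Lem_App_Graph over Cylinder}), the graphical function $\bar v_\ell$ of $\cM^{q_\ell}(\tau_\ell)$ over $\cC_{n,k}$ equals, in a fixed cube $Q_\Lambda$,
\[
\bar v_\ell=\frac{\varrho}{4\tau_\ell}\Big(\sum_{i\in\cI}(y_i^2-2)+2\sum_{i\in\cI}c_{\ell,i}\,y_i+|\pi^{\ndg}c_\ell|^2\Big)+(\text{errors})\,,
\]
with $|\pi^{\ndg}c_\ell|=L_\ell(1+o(1))$, the errors coming from the $x_\ell$‑translation (of size $\lesssim e^{\tau_\ell/2}|x_\ell|$, lying in $\rmW_{-1/2}$), the rotation defect, and the $o(\tau^{-1})$ remainder of the normal form. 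Here the $\rmW_0$‑part $\sum_{i\in\cI}(y_i^2-2)$ has bounded $L^2$‑norm while the $\rmW_{-1/2}$‑ and $\rmW_{-1}$‑parts have norms comparable to $L_\ell$ and $L_\ell^2$; hence, once the errors are controlled (the delicate point, see below), $\Pi_{\le -1/2}\bar v_\ell$ carries a fraction $1-o(1)$ of $\|\bar v_\ell\|_{L^2}$ as $L_\ell\to\infty$, so by Corollary~\ref{Cor_L^2 Mono_RMCF w graphical eigenfunc has cN = spectrum} the decay order satisfies $\cN_{n,k}(\tau_\ell,\cM^{q_\ell})\le -\tfrac12+\eps$. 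On the other hand $q_\ell\in\cS_k(\bM)_0^m$ means $\cM^{q_\ell}$ detects a cylindrical singularity and is $\delta$‑$L^2$‑close to $\cC_{n,k}$ on a window around $\tau_\ell$ (closeness for $\tau$ up to $\sim\tau_\ell$ is read off the displayed formula together with the normal form for $\cM$, and for $\tau$ beyond it from $\cM^{q_\ell}\to\cC_{q_\ell}$ with Corollary~\ref{Cor_L^2 Mono_Discrete Growth Mono}), so Lemma~\ref{Lem_L^2 Mono_cN>-eps if singular} gives $\cN_{n,k}(\tau_\ell,\cM^{q_\ell})\ge -\eps$. For $\eps<\tfrac14$ this is a contradiction, so $w^{\ndg}=0$; hence $\mathcal A\subseteq V^{\dg}\times\{0\}$ and the theorem follows.

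The main obstacle is the error control in the substantive step: one must choose the observation window for $\cM^{q_\ell}$ so that the $\rmW_{\le-1/2}$‑content produced by the nondegenerate spine offset of $q_\ell$ (of relative size $\sim L_\ell^2\to\infty$) dominates simultaneously the $\R^{n-k+1}$‑translation error from $x_\ell$ — known a priori only to be $o(r_{j_\ell})$ — and, crucially, the rotation defect between the cylinders at $q_\ell$ and at $p_\circ$; controlling the latter against $L_\ell^2/\tau_\ell$ requires the polynomial (in the spacetime distance) convergence rate of cylindrical tangent flows from \cite{ColdingMinicozzi25_quantitativeMCF}, not merely qualitative uniqueness. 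One must also verify that the normal‑form remainder $o(\tau^{-1})$ for $\cM$ is uniform enough over the growing cube $Q_{2L_\ell}$ at time $\sim\tau_\ell$, which follows by letting $L_\ell\to\infty$ slowly relative to $\tau_\ell$ and invoking Theorem~\ref{thm:PseduoLocality_Partial} with $\mathbf L=2L_\ell$.
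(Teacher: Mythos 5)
Your proposal is correct and follows essentially the same route as the paper: reduce via Lemma \ref{lem:HausdorffMeasure} and a blow-up to showing that singularities can only accumulate at a partially nondegenerate point along the degenerate spine directions, and prove this by translating the RMCF to the nearby singularity, observing that the shifted normal form of Theorem \ref{thm:PseduoLocality_Partial} is dominated by eigenmodes of eigenvalue $\le -1/2$, so that Corollary \ref{Cor_L^2 Mono_RMCF w graphical eigenfunc has cN = spectrum} forces the decay order below $-1/4$, contradicting Lemma \ref{Lem_L^2 Mono_cN>-eps if singular}. The only cosmetic difference is that the paper rescales so the spine offset equals a large fixed $\mathbf{L}$ (with errors $\Psi(\mathbf{L}^{-1}|n)$, deferring the detailed error control to Claims 4.4--4.5 of \cite{SunWangXue1_Passing}) rather than letting $L_\ell\to\infty$ slowly as you do.
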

	
	The key to proving this theorem is the following lemma asserting that $k$-cylindrical singularities converging to some $p\in \cS_k(\bM)_0^m$ are approaching in the degenerate direction $V_p^\dg$.
	
	\begin{lemma}\label{lem:Partial_Nondeg_Nearby_Sing}
		Suppose $p=(\orig,0)\in \cS_k(\bM)_0^m$ is a partially nondegenerate singularity of $\bM$ with tangent flow $\cC_{n,k}$. 
		If $\{p_j=(x_j,y_j,t_j)\}_{j\geq 1}\subset \cS_k(\bM)$ is a sequence of singularities converging to $(\orig,0)$ as $j\to\infty$, such that $y_j\neq 0$ and $y_j/|y_j|\to \hat \by\in \R^k$. Then $\hat\by_p^\ndg = 0$.
	\end{lemma}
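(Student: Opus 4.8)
The plan is to argue by contradiction using the $C^1$-normal form (Theorem~\ref{thm:PseduoLocality_Partial}) applied at $p=(\orig,0)$, together with a blow-up of $\bM$ centered at the approaching singularities $p_j$. Suppose $\hat\by_p^{\ndg}\neq 0$. After a rotation we may assume $\cC_p=\cC_{n,k}$ and $V_p^{\dg}=\Span\{\partial_{y_i}:i\notin\cI\}$, $V_p^{\ndg}=\Span\{\partial_{y_i}:i\in\cI\}$, and that $\hat\by^{\ndg}$ has a nonzero component along some $\partial_{y_{i_0}}$, $i_0\in\cI$. First I would record the key quantitative input: by Theorem~\ref{thm:PseduoLocality_Partial}, for every $\mathbf{L}$ and all $\tau\geq\tau_0(\mathbf{L})$, the RMCF $\cM=\cM^{(\orig,0)}$ is a graph over $\cC_{n,k}\cap Q_{\mathbf L}$ of a function $v(\cdot,\tau)$ with $\|v(\theta,y,\tau)-\sum_{i\in\cI}\tfrac{\varrho}{4\tau}(y_i^2-2)\|_{C^1}=o(\tau^{-1})$. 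In particular, in the unrescaled picture, near the space-time origin the flow $\bM$ looks, at parabolic scale $e^{-\tau/2}$, like the cylinder $\sqrt{2(n-k)}\,\bS^{n-k}\times\R^k$ bent by a quantitatively controlled $O(1/\tau)$ correction whose leading part is concave in the nondegenerate directions $y_i$, $i\in\cI$.

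Next I would choose, for each $j$, the natural ``viewing scale'' $\tau_j$ for $p_j$. Set $r_j:=d_{\cP}(p_j,(\orig,0))=\max\{|x_j|,|y_j|,|t_j|^{1/2}\}\to 0$, so $e^{\tau_j/2}:=r_j^{-1}$, hence $\tau_j\to+\infty$. Because $p_j\in\cS_k(\bM)$ converges to the $k$-cylindrical singularity $(\orig,0)$ whose tangent flow is $\cC_{n,k}$, by upper semicontinuity of Gaussian density together with Brakke--White $\eps$-regularity, the rescaled flows $\tau\mapsto e^{\tau/2}(\bM(t_j e^{-\tau}\cdot)-\,\cdot\,)$ based at $p_j$ must also be close to $\cC_{n,k}$ at bounded scales for $j\gg 1$; moreover the spine of the tangent flow at $p_j$ must converge to $\{0\}\times\R^k$ (this is the standard ``nearby cylindrical singularities have nearby spines'' argument, e.g.\ via Colding--Minicozzi's quantitative uniqueness \cite{ColdingMinicozzi25_quantitativeMCF}, already invoked in Section~\ref{Sec_C^(2,al) Reg}). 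The point where these two descriptions collide is: the location of $p_j$, written in the coordinates adapted to $\cM$ at time $\tau_j$, is $(\,e^{\tau_j/2}x_j,\ e^{\tau_j/2}y_j,\ -e^{\tau_j}t_j\,)$, a point of $d_{\cP}$-size $1$ whose $\R^k$-part $\bz_j:=e^{\tau_j/2}y_j$ has $|\bz_j|\leq 1$ and $\bz_j/|\bz_j|\to\hat\by$, so its $V_p^{\ndg}$-component converges to $\hat\by^{\ndg}\neq 0$.

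The contradiction I would extract is that a singularity of $\bM$ cannot sit at such a location. Concretely: the rescaled flow $\cM$ near time $\tau_j$ and at bounded scale is, to leading order, the graph $v(\theta,y,\tau)\approx\sum_{i\in\cI}\tfrac{\varrho}{4\tau}(y_i^2-2)$, and a short computation with the RMCF equation shows that translating $\cM$ in the $y_{i_0}$-direction by an amount comparable to $1$ and re-centering produces a flow whose leading $1/\tau$-order term has a genuine \emph{linear} part in $y_{i_0}$ (the derivative of the concave quadratic), i.e.\ the tangent flow at $p_j$, after the correct rotation straightening its spine back to $\{0\}\times\R^k$, must absorb this linear term into a \emph{rotation-like} or \emph{translation-like} eigenmode of size $\gtrsim 1/\tau_j$ that does not decay — contradicting that at $p_j$ the flow is again asymptotically cylindrical in the polynomially-decaying regime of item~\ref{item_Intro_PolynDecay} (where the leading term is purely quadratic with no lower modes, by Theorem~\ref{thm:PseduoLocality_Partial} applied at $p_j$). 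Equivalently and more cleanly: blow up $\bM$ at $p_j$ at scale $r_j$; the limit is $\cC_{n,k}$ (after rotation), but by the normal form at $(\orig,0)$ the pre-limit flows, recentered at $\bz_j$, carry a uniform lower bound $\gtrsim \tau_j^{-1}$ on the $\rmW_{-1/2}$-projection (translation-like mode) coming from $\partial_{y_{i_0}}$ of the quadratic, forcing the tangent flow at $p_j$ to be a \emph{non-static} translating or rotating piece — impossible for a cylindrical tangent flow.

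\textbf{Main obstacle.} The delicate point is making the last step quantitative: one must show that the $O(1/\tau)$-accurate normal form at $(\orig,0)$, transported by a unit translation in a nondegenerate spine direction and a bounded rotation straightening the new spine, genuinely leaves a \emph{non-decaying} (constant-in-$\tau$ relative to the overall $1/\tau$ scale) obstruction in the eigenmodes of $-L_{n,k}$ with eigenvalue $<1/2$, rather than being reabsorbed into the allowed quadratic $\sum_{i\in\cI}\tfrac{\varrho}{4\tau}(y_i^2-2)$ after the rotation. This requires tracking how the rotation $e^{\bar\mbfA}$ that realigns the spine acts on the quadratic: $\partial_{y_{i_0}}$ of $\tfrac{\varrho}{4\tau}(y_{i_0}^2-2)$ at the translated center is $\tfrac{\varrho}{2\tau}(\bz_j)_{i_0}+o(\tau^{-1})$, and one checks this cannot be cancelled by any rotation-like mode because the rotation angles are themselves controlled (of size $\Psi(r_j)=o(1)$) by the quantitative tangent-flow uniqueness, so the net linear-in-$y_{i_0}$ term survives at order $\tau_j^{-1}\cdot|(\bz_j)_{i_0}|\gtrsim\tau_j^{-1}|\hat\by^{\ndg}|$. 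I expect this bookkeeping — essentially the same Whitney-data-style projection argument as in Section~\ref{Sec_C^(2,al) Reg}, but now detecting a lower-order ($\rmW_{-1/2}$) obstruction rather than the $\rmW_{1/2}$ curvature data — to be the technical heart of the proof, with the rest following from compactness and Brakke--White regularity.
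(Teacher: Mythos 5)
Your overall strategy is the one the paper uses: recentre the flow at $p_j$ at the scale where the displacement in the spine becomes order one, observe that translating the quadratic normal form $\sum_{i\in\cI}\tfrac{\varrho}{4\tau}(y_i^2-2)$ in a nondegenerate direction produces a dominant \emph{linear} term $\sim\tau_j^{-1}\,y\cdot\hat\by_p^{\ndg}$ lying in $\rmW_{-1/2}$, and derive a contradiction with $p_j$ being $k$-cylindrical. Two comments on where your sketch and the paper diverge. First, the obstacle you flag as the technical heart --- that the realigning rotation might reabsorb the linear term --- is not actually an issue: rotation-like modes live in $\rmW_0$ and quadratic Hermite modes in $\rmW_0\oplus\rmW_{1/2}$, both $L^2$-orthogonal to the linear mode in $\rmW_{-1/2}$, so no rotation or quadratic correction can cancel it; moreover the paper works with $\cM^{p_j}$ as a graph over the \emph{fixed} cylinder $\cC_{n,k}$ and never needs to straighten the spine at $p_j$ at all. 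Second, the step your sketch genuinely under-specifies is how a dominant $\rmW_{-1/2}$ mode \emph{at the single time} $\tau_j$ (of size $\tau_j^{-1}\to0$) yields a contradiction; your appeal to the tangent flow at $p_j$ being a ``non-static translating or rotating piece'' is not the right mechanism, since a mode of vanishing size says nothing directly about the $\tau\to\infty$ limit. The paper closes this with the decay-order machinery: dominance of $\Pi_{\leq-1/2}$ in the graphical function forces $\cN_{n,k}(\tau,\cM^{p_j})\leq -1/4$ near $\tau_j$ (Corollary \ref{Cor_L^2 Mono_RMCF w graphical eigenfunc has cN = spectrum}), hence by discrete monotonicity the $L^2$-distance to $\cC_{n,k}$ grows exponentially thereafter, contradicting Lemma \ref{Lem_L^2 Mono_cN>-eps if singular}, which (via Huisken monotonicity and quantitative uniqueness of tangent flows) forces $\cN_{n,k}\geq-\eps$ for any RMCF detecting a cylindrical singularity. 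Without this forward-in-time propagation your argument does not yet close.
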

	\begin{proof}
		Denote for simplicity $\tau\mapsto \cM_j(\tau):= \cM^{p_j}(\tau)$ to be the RMCF of $\bM$ based at $p_j$. The following facts have essentially been proved in \cite[Section 4]{SunWangXue1_Passing}, especially Claims 4.4 and 4.5. We refer the readers to the proof there. We fix $\mbfL\gg 2n$ to be determined. For $j\gg 1$, let \[
		a_j:= 2\log(|y_j|^{-1}\mbfL) \,.
		\] 
		\begin{Claim} \label{Claim_Time Transl cM_j small d_n,k and bd cN}
			There exists $\eps_2(n,\eps)\in (0, \eps)$ such that 
			\begin{align}
				\limsup_{j\to \infty} & \sup_{|\tau-a_j|\leq 3} \mbfd_{n,k}(\cM_j(\tau)) =0 \,; \label{Equ_Isol_Time Transl cM_j has d_n,k small} \\
				\limsup_{j\to \infty} & \sup_{|\tau-a_j|\leq 3} \cN_{n,k}(\tau, \cM_j) < \eps_2^{-1}\,. \label{Equ_Isol_Time Transl cM_j has bded cN}
			\end{align}
		\end{Claim}
		
		\begin{Claim} \label{Claim_u_j dominated by h_1}
			Let $v_j(\cdot, \tau)$ be the graphical function of $\cM_j(\tau)$ in $Q_\mbfL$ and $0$-extended to an $L^\infty$ function on $\cC_{n,k}$. Then 
			
			\begin{align}
				\limsup_{j\to \infty} \inf_{c>0,\ c'\in \RR} \|c^{-1}v_j(\cdot, a_j) - c' - y\cdot \by_p^\ndg\|_{L^2} & \leq \Psi(\mbfL^{-1}|n)\,.
				\label{Equ_Isol_u_j dominated by h_1} 
			\end{align}
			Moreover, if $\by_p^\ndg\neq 0$, then 
			\begin{align}
				\limsup_{j\to \infty} \|v_j(\cdot, a_j)\|_{L^2}^{-1}\cdot \|\Pi_{\leq -1/2} (v_j(\cdot, a_j))\|_{L^2} & \geq 1-\Psi(\mbfL^{-1}|n) \,. \label{Equ_Isol_u_j dominated by leq -1/2 modes}
			\end{align}
			Here recall $\Psi(\mbfL^{-1}|n)$ denotes a function that converges to $0$ as $\mbfL^{-1}\to 0$, and the speed converging to $0$ is dimensional.
		\end{Claim}
		With these Claims, let $\bar{\delta}\in(0,\eps_2)$ that is smaller than all the $\delta$'s throughout Section \ref{Sec_L^2 Noncon}, $\mbfL(n, \eps)\gg1$ be such that $\Psi(\mbfL^{-1}|n)< \bar\delta$. Conclude from \eqref{Equ_Isol_Time Transl cM_j has bded cN}, \eqref{Equ_Isol_u_j dominated by leq -1/2 modes}, and Corollary \ref{Cor_L^2 Mono_RMCF w graphical eigenfunc has cN = spectrum}, if $\hat\by_p^\ndg\neq 0$, we have \[
		\limsup_{j\to \infty} \sup_{\tau\in [a_j, a_j+3]} \cN(\tau, \cM_j) \leq -1/2+\eps_2 < -1/4\,.
		\]
		The lemma then follows from Lemma \ref{Lem_L^2 Mono_cN>-eps if singular}. 
	\end{proof}
	\begin{proof}[Proof of Theorem \ref{ThmPartial}]
		We prove by contradiction. Suppose $\cH^\beta_{\cP}(\cS_{k}(\bM)_0^m) >0$ for some $\beta>(k-m)$. Then by Lemma \ref{lem:HausdorffMeasure} with $E:= \cS_{k}(\bM)_0^m$, there exists $p_\circ\in E$ such that the accumulation set \[
		\mathcal A_{E, p_\circ} := \left\{z\in \overline{P_{1/2}} \,:\exists\,(j_\ell)_{\ell \ge 1}\nearrow +\infty, (z_\ell)_{\ell\ge 1} \text{ s.t. $z_{\ell}\in r_{j_\ell}^{-1}.(E-p_\circ) \cap P_{1/2}$ and $z_{\ell}\to z$} \right\},
		\] 
		has $\cH^\beta_{\cP}(\cA_{E, p_\circ})>0$. In the following, we shall prove that $\cA_{E, p_\circ}\subset V^\dg_{p_\circ}\times\{0\}$, which is a contradiction to $\cH_{\cP}^{\beta}(\cA_{E, p_\circ})>0$ since $\dim V^\dg_{p_\circ}=k-m$. By a translation and rotation, we assume without loss of generality that $p_\circ=(\orig, 0)$ and the tangent flow of $\bM$ at $p_\circ$ is $\cC_{n,k}$.
		
		From its definition above, for any $(Z, s)=(x_Z, y_Z, s)\in \cA_{E, p_\circ}\subset \R^{n-k+1}\times \R^k\times \R$, there exists a sequence of singular points $p_j=(x_j,y_j,t_j)$ in $\cS_{k}(\bM)_0^m\cap P_1((\orig,0))$ such that as $j\to\infty$, 
		\begin{itemize}
			\item $\max\{|(x_j,y_j)|,|t_j|^{1/2}\} =: r_j \searrow 0$;
			\item $(r_j^{-1} x_j, r_j^{-1} y_j,r_j^{-2}t_j)\to (x_Z,y_Z,s)$.
		\end{itemize}
		We use $r_j^{-1}\bM$ to denote the mean curvature flow parabolic dilated by the factor $r_j^{-1}$. Then since $\cC_{n,k}$ is the tangent flow of $\bM$ at $(\orig, 0)$, the sequence of mean curvature flow $\{r_j^{-1}\bM\}_{j=1}^\infty$ converges, in the sense of Brakke flow, to the shrinking cylinder $(t\mapsto\sqrt{-t}\, \cC_{n,k})$. By the lower semi-continuity of the Gaussian density of singularities, we must have that $(x_Z,y_Z,s)=\lim\limits_{j\to\infty}(r_j^{-1} x_j, r_j^{-1} y_j,r_j^{-2}t_j)$ belongs to the singular set of $(t\mapsto\sqrt{-t}\, \cC_{n,k})$, hence $x_Z=\orig$ and $s=0$. Finally, Lemma \ref{lem:Partial_Nondeg_Nearby_Sing} shows that $(y_Z)_{p_\circ}^\ndg = 0$ and hence $y_Z \in V_{p_\circ}^\dg$. This finises the proof that $\cA_{E, p_\circ}\subset V^\dg_{p_\circ}\times\{0\}$.    
	\end{proof}
	
	\begin{Rem}\label{rem_h_3_dominant}
		While the major goal of this section is to study partially nondegenerate singularities, the idea that analyzing the singular sets by slightly translating the flow in the spacetime can be used to study singularities with various asymptotics. For example, using the same method as this section, one can show that if the leading asymptotics of the RMCF at the origin is $h_3(y_j)=c(y_j^3-6y_j)$, for some $j\in\{1,2,\cdots,k\}$, the singular set can not approach the origin from the $y_j$-direction.

		More generally, if the tangent flow at a singularity $(\orig, 0)$ is $\cC_{n,k}$, and the leading asymptotics of the RMCF is given by a Hermitian polynomial $h$ of degree $l\geq 3$ with top degree part $q$ such that \[
		\cZ:=\{\by\in \R^k: \nabla q(\by) = 0\}\ \ \text{ is not a linear subspace}\,.
		\]
		(One such example is when $k=2$ and $h(y_1, y_2)=(y_1^2-2)(y_2^2-2)$, in which case $q(y_1, y_2)=y_1^2y_2^2$ and $\cZ=\{y_1y_2=0\}$ is a union of two orthogonal lines.) Then, our analysis above doesn't rule out the possibility that the accumulation set of $k$-cylindrical singularities near $(\orig, 0)$ is $\cZ$. An interesting question is to construct a MCF whose $2$-cylindrical singular set is a union of curves intersecting at $\orig$ whose accumulation set is $\{y_1y_2=0\}$.
	\end{Rem}

	\subsection{Closedness of degenerate singular set} \label{Subsec_cS_+ close}
	Given a $k$-cylindrical singularity $p$, we use $\mbfU_p$ to denote the arrival time function corresponding to the tangent flow at $p$. Note that $\mbfU_p$ could possibly be different from $\mbfU_{n,k}$ by a rotation. Recall that by the proof of Theorem \ref{thm:AsyProfile} in Section \ref{Subsec_Pf Main AsympProf}, $\cS_k(\bM)_+$ consists of $k$-cylindrical singularities $p$ with the limit of decay order $\cN_{\mbfU_p}(\infty, \cM^p)>0$. Therefore, the relative closedness of $\cS_{k}(\bM)_+$ in $\cS_k(\bM)$ follows directly from the lemma below, asserting that $\cN_{\mbfU_p}(\infty,\cM^p)$ is upper semi-continuous in $p$.
	\begin{proposition}\label{prop:semi_continuity_decay_order}
		Suppose $\lambda[\bM]<+\infty$, $p_j\in\cS_{k}(\bM)$, $1\leq j\leq \infty$, and $p_j\to p_\infty$ as $j\to\infty$. Then \[
		\limsup_{j\to\infty} \cN_{\mbfU_{p_j}}(\infty, \cM^{p_j})\leq \cN_{\mbfU_{p_\infty}}(\infty, \cM^{p_\infty}) \,.
		\]
	\end{proposition}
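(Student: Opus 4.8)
The plan is to establish upper semicontinuity of $\cN_{\mbfU_p}(\infty, \cM^p)$ by combining the discrete almost-monotonicity of the decay order (Corollary \ref{Cor_L^2 Mono_Discrete Growth Mono}) with the continuity of decay order at a \emph{fixed finite} scale under Brakke convergence. The main point is that $\cN_{\mbfU_{p_\infty}}(\infty, \cM^{p_\infty})$, being a limit of a discretely almost-monotone quantity, controls $\cN_{\mbfU_{p_\infty}}(\tau, \cM^{p_\infty})$ from above for $\tau$ large; and at any large but fixed $\tau$, the $j$-th flow is $C^\infty_{\loc}$ close to the limit, so its decay order at that scale is close.

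First I would fix $\eps>0$ and set $\gamma := \cN_{\mbfU_{p_\infty}}(\infty, \cM^{p_\infty})$, which by Corollary \ref{Cor_L^2 Mono_Def cN_u(infty)} lies in $(\sigma(\cC_{n,k})\cap\R_{\ge 0})\cup\{+\infty\}$. Passing to a subsequence, assume $\limsup_j \cN_{\mbfU_{p_j}}(\infty,\cM^{p_j})$ is realized as an honest limit; call it $\gamma_\infty$, and suppose for contradiction $\gamma_\infty > \gamma$. Choose a value $\gamma'\in(\gamma,\gamma_\infty)$ with $\dist_\R(\gamma',\sigma(\cC_{n,k}))\ge \eps_0$ for some $\eps_0=\eps_0(\gamma,\gamma_\infty)>0$. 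By the definition of $\cN_{\mbfU_{p_\infty}}(\infty,\cdot)$ and the fact that $\cM^{p_\infty}(\tau)$ converges $C^\infty_{\loc}$ to $\cC_{n,k}$, there is $\bar\tau$ so that $\cM^{p_\infty}$ is $\delta$-$L^2$ close to $\cC_{n,k}$ on $[\bar\tau,+\infty)$ (here $\delta = \delta_{\ref{Cor_L^2 Mono_Discrete Growth Mono}}(n,\eps_0)$) and $\cN_{\mbfU_{p_\infty}}(\tau,\cM^{p_\infty}) \le \gamma + \eps_0/2 < \gamma'$ for all $\tau\ge\bar\tau$. This uses Corollary \ref{Cor_L^2 Mono_Def cN_u(infty)}: once the decay order dips below $\gamma'-\eps_0$ at some scale it stays below for all later scales (Remark \ref{Rem_L^2 Mono_cN<gamma pass to next scale}).

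Next I would transfer this to the approximating flows at a fixed scale. Since $p_j\to p_\infty$ and $\lambda[\bM]<+\infty$, the RMCFs $\cM^{p_j}$ converge to $\cM^{p_\infty}$ in the Brakke sense on compact time intervals, and by Brakke--White $\eps$-regularity the convergence is $C^\infty_{\loc}$ on a spacetime neighborhood where $\cM^{p_\infty}$ is a smooth graph over $\cC_{n,k}$; moreover $\mbfU_{p_j}\to\mbfU_{p_\infty}$ (after suitable rotations, using the quantitative uniqueness of the cylindrical tangent flow, \cite{ColdingMinicozzi25_quantitativeMCF}). Because $\mbfd_{\mbfU_p}(\tau,\cM)$ is (for fixed finite $\tau$, on flows $\delta$-$L^2$ close to $\cC_{n,k}$) a continuous functional of the flow and the arrival-time data—this is where one invokes the definition \eqref{Equ_L^2 Noncon_Def L^2 dist to C_u(tau)} together with Proposition \ref{prop:entropy control outside ball of radius R} to truncate the Gaussian integral at a large radius and then use local smooth convergence—we get $\cN_{\mbfU_{p_j}}(\tau,\cM^{p_j})\to \cN_{\mbfU_{p_\infty}}(\tau,\cM^{p_\infty})$ for each fixed $\tau\ge\bar\tau$. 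In particular there is a large fixed $\tau_*\ge\bar\tau$ and $j_0$ with $\cN_{\mbfU_{p_j}}(\tau_*,\cM^{p_j}) < \gamma' - \eps_0/2$ for all $j\ge j_0$. By Remark \ref{Rem_L^2 Mono_cN<gamma pass to next scale} applied to each $\cM^{p_j}$ (which is $\delta$-$L^2$ close to $\cC_{n,k}$ from $\tau_*$ onward, by White's regularity and the Brakke convergence), $\cN_{\mbfU_{p_j}}(\tau,\cM^{p_j}) < \gamma'$ for \emph{all} $\tau\ge\tau_*$, hence $\cN_{\mbfU_{p_j}}(\infty,\cM^{p_j})\le\gamma' < \gamma_\infty$, contradicting $\cN_{\mbfU_{p_j}}(\infty,\cM^{p_j})\to\gamma_\infty$.

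The main obstacle I anticipate is the continuity statement $\cN_{\mbfU_{p_j}}(\tau_*,\cM^{p_j})\to\cN_{\mbfU_{p_\infty}}(\tau_*,\cM^{p_\infty})$ at the fixed scale: one must handle simultaneously the convergence of the flows, the convergence of the (rotated) arrival-time functions $\mbfU_{p_j}$, and the fact that $\mbfd_{\mbfU_p}$ is a weighted-$L^2$ quantity over a noncompact cylinder, so the tail outside a large ball must be controlled uniformly. The entropy bound $\lambda[\bM]<+\infty$ and Proposition \ref{prop:entropy control outside ball of radius R} give the uniform tail estimate, after which the local $C^\infty$ convergence handles the bulk; still, care is needed because $\mbfd_{\mbfU_p}(\tau_*,\cM^{p_j})$ could in principle be zero for some $j$ (if $\cM^{p_j}$ coincides with the cylinder near that scale), but such flows are regular there and can be excluded since the $p_j$ are genuine singularities with $\mbfd_{\mbfU_{p_j}}(\tau,\cdot)\ne 0$ for all $\tau$. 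One also needs the choice of $\gamma'$ to avoid $\sigma(\cC_{n,k})$, which is possible because $\sigma(\cC_{n,k})$ is discrete; if $\gamma_\infty=+\infty$ the argument is the same with $\gamma'$ any large number off the spectrum.
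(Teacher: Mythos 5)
Your overall strategy is the same as the paper's: fix a large but finite scale $\tau_*$ at which the limiting flow's decay order is already below the target value, use continuity of the decay order at that fixed scale under Brakke convergence (with the Gaussian tail controlled by the entropy bound) to transfer the bound to $\cM^{p_j}$, and then propagate it to $\tau=\infty$ by the discrete monotonicity of Corollary \ref{Cor_L^2 Mono_Discrete Growth Mono} / Remark \ref{Rem_L^2 Mono_cN<gamma pass to next scale}. The contradiction setup with an off-spectrum $\gamma'\in(\gamma,\gamma_\infty)$ is a cosmetic variant of the paper's direct argument.

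There is, however, one step whose justification as written would fail: the parenthetical claim that each $\cM^{p_j}$ is $\delta$-$L^2$ close to $\cC_{n,k}$ \emph{on all of} $[\tau_*,\infty)$ ``by White's regularity and the Brakke convergence.'' Brakke convergence of $\cM^{p_j}$ to $\cM^{p_\infty}$ together with Brakke--White regularity only yields closeness on compact time intervals $[\tau_*,T]$, for $j$ large depending on $T$; it says nothing about $\tau\to\infty$ for a fixed $j$, which is exactly the regime you need in order to iterate Remark \ref{Rem_L^2 Mono_cN<gamma pass to next scale} indefinitely (each application of Corollary \ref{Cor_L^2 Mono_Discrete Growth Mono} requires $\delta$-$L^2$ closeness on the next unit time interval). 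Note also that the upper bound $\cN_{\mbfU_{p_j}}(\tau,\cdot)\le\gamma'$ you are propagating is a \emph{lower} bound on the ratio $\mbfd(\tau+1)/\mbfd(\tau)$, so it cannot by itself keep $\mbfd$ small. The missing input is Colding--Minicozzi's quantitative uniqueness \cite[Theorem 0.5]{ColdingMinicozzi25_quantitativeMCF}: since $p_j\in\cS_k(\bM)$ is a genuine $k$-cylindrical singularity, Huisken's monotonicity pins $\cF[\cM^{p_j}(\tau)]$ near $\cF[\cC_{n,k}]$ for all $\tau\ge\tau_*$ once it is close at $\tau_*$, and then smallness of $\mbfd_{\mbfU_{p_j}}(\tau_*,\cM^{p_j})$ propagates to all $\tau\ge\tau_*$. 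You invoke \cite{ColdingMinicozzi25_quantitativeMCF} only to control the rotations $\mbfU_{p_j}\to\mbfU_{p_\infty}$; it is also needed here, and with that insertion your argument closes.
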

	
	\begin{proof}
		Without loss of generality, we assume $p_\infty=(\orig,0)$, $\mbfU_{p_\infty}=\mbfU_{n,k}$, and assume $\cM$ is the associated RMCF with $\lambda[\cM]<\eps^{-1}$ for some $\eps\in(0,1)$. If $\cN_{n,k}(\infty,\cM)=+\infty$ then the inequality is automatically true, and in the following we assume $\cN_{n,k}(\infty,\cM)<+\infty$. Then for any $\eps>0$, we can find $T>0$ such that for $\tau\geq T$, $|\cN_{n,k}(\tau,\cM)-\cN_{n,k}(\infty,\cM)|<\eps/2$, and $\mbfd_{n,k}(\cM(\tau))<\eps/2$. Because $p\to\mbfd_{\mbfU_{p}}(\tau,\cM^p)$ is continuous for any fixed $\tau$, so does $p\to\cN_{n,k}(\tau,\cM^{p_j})$, and hence we have $|\cN_{\mbfU_{p_j}}(T,\cM^{p_j})-\cN_{n,k}(T,\cM)|<\eps/2$ for sufficiently large $p$. This implies that for any $\eps>0$, there exist $T>0$ and $j$ sufficiently large, such that $|\cN_{\mbfU_{p_j}}(T,\cM^{p_j})-\cN_{n,k}(\infty,\cM)|<\eps$, and $\mbfd_{\mbfU_{p_j}}(\cM^{p_j}(T))<\eps$. By \cite[Theorem 0.5]{ColdingMinicozzi25_quantitativeMCF}, if $\eps$ is chosen sufficiently small, when $j$ is sufficiently large, $\mbfd_{\mbfU_{p_j}}(\cM^{p_j}(\tau))$ is small for all $\tau\geq T$, and hence Corollary \ref{Cor_L^2 Mono_Discrete Growth Mono} can be applied to $\cM^{p_j}$. As a consequence of the discrete monotonicity formula Corollary \ref{Cor_L^2 Mono_Discrete Growth Mono}, $\cN_{\mbfU_{p_j}}(\infty,\cM^{p_j})\leq \cN_{n,k}(\infty,\cM)$.
	\end{proof}

	\subsection{A $C^{2,\alpha}$ Reifenberg-type Theorem} \label{Subsec_cS_+ reg}
	In this subsection, we use the location estimate in Lemma \ref{Lem_C^2 Reg_Main} to prove part \ref{Item_Intro_(cS_k)_+ reg} and \ref{Item_Intro_t|(cS_k)_+ Holder} of Theorem \ref{Thm_Intro_C^2 Reg}. In fact, we shall prove a general $C^{2,\al}$ Reifenberg-type result asserting that a compact subset satisfying a reasonable location estimate is locally contained in a $C^{2,\al}$ submanifold. 
	
	We first introduce some notations.  Let $\Jet_{n,k}^2$ be the collection of the data $(\bL, \bQ)$, where
	\begin{itemize}
		\item $\bL: \RR^{n+1}\to \RR^{n+1}$ is a self-adjoint linear homomorphism given by orthogonal projection onto a $k$-dimensional linear subspace of $\RR^{n+1}$, denoted by $|\bL|$. Let $\bL^\perp:= \id-\bL$ be the orthogonal projection onto $|\bL|^\perp$;
		\item $\bQ:= \bq\circ\bL$ is the quadratic polynomial $\RR^{n+1}\to |\bL|^\perp$ extended from $\bq: |\bL|\to |\bL|^\perp$, a $|\bL|^\perp$-valued quadratic polynomial on $|\bL|$.
	\end{itemize}
	
	\begin{proposition} \label{Prop_C^2 Reifenberg}
		Let $K\subset \RR^{n+1}\times \RR$ be a compact subset, $\Lambda>0$ and $\alpha\in (0, 1)$. Suppose there's a map $J: K\to \Jet^2_{n,k}$ satisfying that for every $r\in (0, 1]$ and every $p_i =: (X_i,  t_i)\in K$ with $d_{\cP}(p_1,  p_2)\leq r$ and $J(p_i) =: (\bL_i, \bQ_i)$, we have $\|\bL_1^\perp - \bL_2^\perp\|\leq \Lambda r$; $\|\bQ_1\|,\, \|\bQ_2\|\leq \Lambda$, and
		\begin{align}
			\begin{split}
				r^{-2}| t_2- t_1| & + r^{-1}|\mbfL_1^\perp ( X_2- X_1) - \mbfQ_1( X_2- X_1)|  \\
				& + \left\|\bL_1^\perp \circ \left((\bL_1^\perp - \bL_2^\perp) - \nabla \mbfQ_1( X_2- X_1)\right) \right\| + r\|\bQ_2 - \bQ_1\| \leq \Lambda r^{1+\alpha} \,; 
			\end{split} \label{Equ_Whitney_Location est assump}
		\end{align}
		where $\|\cdot\|$ in the last two terms stands for the operator norm of linear or quadratic $\R^{n+1}$-valued maps from $\R^{n+1}$. Then, 
		\begin{enumerate} [label={\normalfont(\roman*)}]
			\item\label{Item_Whitney_Concl_C^2Reg} $K$ is locally contained in a $k$-dimensional $C^{2,\alpha}$-submanifold in $\RR^{n+1}\times \RR$; and if $K$ is a $k$-dimensional submanifold near $p\in K$, then its tangent space is $T_pK = \bL_p$ and its second fundamental form at $p$ is given by $2\bq_p$.
			\item\label{Item_Whitney_Concl_t|_K Holder} $\dim_H \mathfrak{t}(K) \leq k/(3+\alpha)$.
		\end{enumerate}
	\end{proposition}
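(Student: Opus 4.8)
The plan is to reduce, near an arbitrary $p_0\in K$, the containment statement \ref{Item_Whitney_Concl_C^2Reg} to an application of the $C^{2,\alpha}$ Whitney extension theorem, and to deduce \ref{Item_Whitney_Concl_t|_K Holder} from the observation that the time coordinate restricts to $K$ as a Hölder map of exponent $3+\alpha$ on a $k$-dimensional set.

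\textbf{Step 1 (local graph structure over a spatial $k$-plane).} Fix $p_0=(X_0,t_0)\in K$ and, after a translation and an ambient rotation of $\R^{n+1}$, assume $X_0=0$, $t_0=0$ and $|\bL_{p_0}|=\R^k\times\{0\}$, writing $\R^{n+1}=\R^k_y\times\R^{n+1-k}_z$. If $d_\cP(p_1,p_2)=r\leq 1$ with $X_1=X_2$ then $r=|t_1-t_2|^{1/2}$, and \eqref{Equ_Whitney_Location est assump} forces $1\leq\Lambda r^{1+\alpha}$; hence on a small neighborhood $U=U(n,\Lambda,\alpha)$ of $p_0$ one has $d_\cP(p_1,p_2)=|X_1-X_2|$ for all $p_1,p_2\in K\cap U$, and the spatial projection is injective there. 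Using $\|\bQ_{p_0}\|\leq\Lambda$ in \eqref{Equ_Whitney_Location est assump} gives $|\bL_{p_0}^\perp(X_1-X_2)|\lesssim_\Lambda|X_1-X_2|^2$, so writing $X_i=(y_i,z_i)$ we get $|z_1-z_2|\lesssim|y_1-y_2|^2$, $|y_1-y_2|\sim|X_1-X_2|=d_\cP(p_1,p_2)$, and $|t_1-t_2|\leq\Lambda|X_1-X_2|^{3+\alpha}\lesssim|y_1-y_2|^{3+\alpha}$. Therefore $\pi\colon K\cap\overline U\to\R^k$, $\pi(p):=y_p$, is a bi-Lipschitz homeomorphism onto a compact set $E\subset\R^k$, and $K\cap\overline U$ is the graph over $E$ of two functions $z(\cdot)\colon E\to\R^{n+1-k}$ and $t(\cdot)\colon E\to\R$. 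Note the tangent data is purely spatial: the $t$-direction will contribute no linear or quadratic terms.

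\textbf{Step 2 (coordinate jets and Whitney compatibility).} Since $\|\bL_p^\perp-\bL_{p_0}^\perp\|\leq\Lambda\operatorname{diam}(U)$ is uniformly small, each $|\bL_p|$ is the graph of a unique linear map $A_p\colon\R^k\to\R^{n+1-k}$, with $p\mapsto A_p$ Lipschitz by the bound $\|\bL_1^\perp-\bL_2^\perp\|\leq\Lambda r$; likewise $\bq_p$ pushes forward, through the identifications of $|\bL_p|$ with $\R^k$ and $|\bL_p|^\perp$ with $\R^{n+1-k}$ (both uniformly close to the standard ones), to a symmetric bilinear map $H_p$ on $\R^k$ with $H_p=2\bq_p$ in the limit. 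To each $y=y_p\in E$ assign the order-$2$ jet data: for $z$, the triple $(z_p,A_p,H_p)$; for $t$, the triple $(t_p,0,0)$. Rewriting each term of \eqref{Equ_Whitney_Location est assump} in these coordinates — the first term controls $|t_{p'}-t_p|\lesssim|y_{p'}-y_p|^{3+\alpha}$, the second $|z_{p'}-z_p-A_p(y_{p'}-y_p)-\tfrac12 H_p(y_{p'}-y_p,y_{p'}-y_p)|\lesssim|y_{p'}-y_p|^{2+\alpha}$, the third $\|A_{p'}-A_p-H_p(y_{p'}-y_p)\|\lesssim|y_{p'}-y_p|^{1+\alpha}$, the fourth $\|H_{p'}-H_p\|\lesssim|y_{p'}-y_p|^\alpha$ — one checks (the nonlinear corrections from the graph representation of $|\bL_p|$ and from pushing $\bq_p$ through these identifications being of strictly higher order) that the prescribed jets satisfy exactly the compatibility conditions of the $C^{2,\alpha}$ Whitney extension theorem on the compact set $E$; the zero $1$- and $2$-jets of $t$ satisfy theirs trivially.

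\textbf{Step 3 (extension; Hölder image; conclusion).} By the $C^{2,\alpha}$ Whitney extension theorem (cf. its use in \cite{FigalliSerra19_FineStructObstable}), applied componentwise, there exist $F\in C^{2,\alpha}(\R^k;\R^{n+1-k})$ and $G\in C^{2,\alpha}(\R^k;\R)$ realizing these jets on $E$. For a small ball $V\ni y_0$ in $\R^k$, the set $\Gamma_{p_0}:=\{(y,F(y),G(y)):y\in V\}$ (after undoing the rotation) is a properly embedded $k$-dimensional $C^{2,\alpha}$ submanifold of a neighborhood of $p_0$ in $\R^{n+1}\times\R$ containing $K\cap U'$ for some smaller $U'\ni p_0$; covering $K$ by finitely many such neighborhoods proves \ref{Item_Whitney_Concl_C^2Reg}. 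If $K$ is a submanifold near $p$, then $E$ contains a neighborhood of $y_p$ in $\R^k$, so $F,G$ there are the genuine defining functions of $K$; since $DF(y_p)=A_p$, $D^2F(y_p)=2\bq_p$ (after the identification) and $DG(y_p)=D^2G(y_p)=0$, we read off $T_pK=\bL_p$ and second fundamental form $2\bq_p$. For \ref{Item_Whitney_Concl_t|_K Holder}, Step 1 gives $\mathfrak{t}(K\cap U')=t(E')$ with $E'=\pi(K\cap U')\subset\R^k$ and $|t(y')-t(y)|\lesssim_\Lambda|y'-y|^{3+\alpha}$; covering $E'$ by $\sim\delta^{-k}$ balls of radius $\delta$, the images need $\sim\delta^{-k}$ balls of radius $\lesssim\delta^{3+\alpha}$, so $\cH^s_{C\delta^{3+\alpha}}(t(E'))\to 0$ whenever $s>k/(3+\alpha)$, whence $\dim_H t(E')\leq k/(3+\alpha)$; taking the finite union over the covering of $K$ yields $\dim_H\mathfrak{t}(K)\leq k/(3+\alpha)$.

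The main obstacle is Step 2: faithfully converting the ambient, $p$-dependent jet data $(\bL_p,\bQ_p)\in\Jet^2_{n,k}$ into coordinate derivatives of functions over the fixed plane $\R^k$, and verifying that the single inequality \eqref{Equ_Whitney_Location est assump} is equivalent, up to dimensional constants, to the four Whitney compatibility conditions with exactly the exponents $\alpha,1+\alpha,2+\alpha$ (and $3+\alpha$ for $t$). One must in particular keep track that the inversion/composition implicit in representing $|\bL_p|$ as a graph, and the pushforward of the quadratic form $\bq_p$, only produce errors that are genuinely of higher Hölder order and hence harmless. Everything else is routine: the graph reduction in Step 1, the invocation of Whitney extension, and the elementary covering estimate for the Hausdorff dimension of a Hölder image.
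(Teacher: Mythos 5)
Your strategy is correct, but it takes a genuinely different route from the paper, and the step you yourself flag as ``the main obstacle'' is exactly where all the work lies. The paper does not project to the $k$-plane at all: to each $p_\circ=(X_\circ,t_\circ)\in K$ it attaches the single scalar quartic polynomial $\mbf_{p_\circ}(X)=t_\circ-|\bL_{p_\circ}^\perp(X-X_\circ)-\bQ_{p_\circ}(X-X_\circ)|^2$ on the ambient $\R^{n+1}$, verifies the $C^{3,\al}$ Whitney compatibility $|D^j\mbf_{p_1}(X_2)-D^j\mbf_{p_2}(X_2)|\lesssim|X_2-X_1|^{3+\al-j}$ by a direct algebraic Taylor expansion (clean, because everything stays expressed intrinsically in terms of $\bL^\perp$ and $\bQ$ and \eqref{Equ_Whitney_Location est assump} plugs in term by term), applies Whitney extension once on $\R^{n+1}$, and then reads off the submanifold as the graph of the extension $\mbf$ over the zero set of $\nabla_{\bL_1^\perp}\mbf$; the time estimate and the spatial paraboloid are packaged into the same function. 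Your route — bi-Lipschitz projection onto $|\bL_{p_0}|$, conversion of $(\bL_p,\bQ_p)$ into coordinate $2$-jets of $z(\cdot)$ and $t(\cdot)$ over a compact subset of $\R^k$, Whitney on $\R^k$ componentwise — is the more standard Figalli--Serra-style argument and does work, but it forces you to do the jet conversion that the paper's device is designed to avoid.

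Be aware that in Step 2 the ``strictly higher order'' claim is not automatic from crude bounds. If you only use $\|\bL_p^\perp-\bL_{p_0}^\perp\|\lesssim\delta$ additively, the conversion errors come out as $O(\delta^2 r)$ and $O(\delta^2 r^2)$, which are \emph{not} dominated by $r^{1+\al}$ and $r^{2+\al}$ once $r\ll\delta$. What saves the argument is the exact structure: writing $|\bL_p|$ as the graph of $A_p$, one has identically $\bL_p^\perp(V)=M_p\bigl(z'-z-A_p(y'-y)\bigr)$ for an isomorphism $M_p:\R^{n+1-k}\to|\bL_p|^\perp$ with $\|M_p^{-1}\|\le 1+C\delta^2$, so the zeroth-order condition transfers with a multiplicative (not additive) $1+O(\delta^2)$ loss; and the residual corrections in the first- and second-order conditions are all of the form $\delta\cdot|w|$ or $\delta\cdot\|A_2-A_1\|$ with $|w|\lesssim r^2$ and $\|A_2-A_1\|\lesssim r$, hence $O(\delta r^2)\le O(r^{1+\al})$ and $O(r)\le O(r^\al)$ respectively. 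You must also define $H_p$ as the exact conjugate $u\mapsto 2M_p^{-1}\bq_p\bigl((u,A_pu)\bigr)$ rather than ``$2\bq_p$ up to higher order''. With these points made explicit your proof closes; as written, the decisive verification is asserted rather than carried out. The remaining steps (the initial reduction showing $r=|X_1-X_2|$ and $|t_1-t_2|\lesssim r^{3+\al}$, the covering bound giving $\dim_H\mathfrak{t}(K)\le k/(3+\al)$, and the identification of $T_pK$ and the second fundamental form at the base point) agree with the paper.
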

	
	To prove this proposition, recall the following Whitney's Extension Theorem in \cite[Page 5]{Fefferman09_Whitney}.
	\begin{lemma}[Whitney's Extension Theorem]\label{Lem:Whitney}
		Let $\al\in (0,1]$, $\ell \in \mathbb Z_+$, $K\subset \R^N$ be a compact set, and $f: K\rightarrow \R$ be a given mapping.
		Suppose that for any $x_\circ \in K$ there exists a polynomial $P_{x_\circ}$ such that:
		\begin{itemize}
			\item $P_{x_\circ} (x_\circ) = f(x_\circ)$;
			\item $|D^jP_{x_\circ} (x) -D^j P_x(x)| \leq  \Lambda |x-x_\circ|^{\ell+\al-j}$ for all $x\in K$ and $j\in\{0,1,\dots,\ell\}$, where $\Lambda>0$ is independent of $x_\circ$.
		\end{itemize}
		Then there exists $F:\R^N\to \R$ of class $C^{\ell,\al}$ such that \[
		F|_{K}\equiv f \qquad \text{and}\qquad F(x) = P_{x_\circ}(x) + O(|x-x_\circ|^{\ell+\al}), \quad \forall\, x_\circ, x \in K \,. 
		\]
	\end{lemma}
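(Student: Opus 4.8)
The plan is to prove Lemma \ref{Lem:Whitney} by the classical Whitney construction (see, e.g., \cite{Fefferman09_Whitney}): one glues the prescribed polynomials $\{P_{x_\circ}\}_{x_\circ\in K}$ together on the open complement $U:=\R^N\setminus K$ by means of a Whitney partition of unity, and then verifies $C^{\ell,\al}$-regularity and the jet-matching $D^\beta F(x_\circ)=D^\beta P_{x_\circ}(x_\circ)$ along $K$ using Taylor expansion together with the compatibility hypothesis. Throughout, $|D^jg(x)|$ abbreviates $\max_{|\gamma|=j}|D^\gamma g(x)|$, and all constants will depend only on $N,\ell,\al$ and linearly on $\Lambda$.

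\textbf{Construction of $F$.} First I would fix a Whitney decomposition of $U$ into closed dyadic cubes $\{Q_i\}$ with pairwise disjoint interiors obeying $\diam(Q_i)\le\dist(Q_i,K)\le 4\sqrt N\,\diam(Q_i)$, whose $\tfrac98$-dilates $Q_i^*$ have bounded overlap, together with a subordinate partition of unity $\varphi_i\in C_c^\infty(Q_i^*)$, $0\le\varphi_i\le1$, $\sum_i\varphi_i\equiv1$ on $U$, $|D^\beta\varphi_i|\le C(N,|\beta|)\,\diam(Q_i)^{-|\beta|}$; for each $i$ I would also choose $p_i\in K$ realizing $\dist(Q_i,K)$. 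The extension is then $F:=f$ on $K$ and $F:=\sum_i\varphi_iP_{p_i}$ on $U$; on $U$ the sum is locally finite, so $F\in C^\infty(U)$, and since $K$ is compact the coefficients of the $P_{p_i}$ are uniformly bounded (via the hypothesis with a fixed base point), so one may, if desired, multiply $F$ by a fixed cutoff equal to $1$ near $K$.

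\textbf{The key estimate and its consequences.} The heart of the argument is a single bound near $K$. If $x\in U$ and $\varphi_i(x)\ne0$ then $x\in Q_i^*$, whence $\dist(x,K)\sim_N\diam(Q_i)\sim_N|x-p_i|$, and hence $|x_\circ-p_i|\lesssim_N|x-x_\circ|$ for every $x_\circ\in K$. Since each $P_a$ has degree $\le\ell$, I would Taylor-expand $D^j(P_{p_i}-P_{x_\circ})$ about $x_\circ\in K$ and apply the hypothesis (with base points $p_i,x_\circ$, the derivatives evaluated at $x_\circ$), together with $\ell+\al-j-|\gamma|\ge\al>0$ and $|x_\circ-p_i|\lesssim_N|x-x_\circ|$, to obtain $|D^j(P_{p_i}-P_{x_\circ})(x)|\lesssim_{N,\ell}\Lambda|x-x_\circ|^{\ell+\al-j}$ for $0\le j\le\ell$. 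Now, because $\sum_i\varphi_i\equiv1$ on $U$, I may write $F-P_{x_\circ}=\sum_i\varphi_i(P_{p_i}-P_{x_\circ})$ \emph{before} differentiating; expanding $D^j$ of each product by the Leibniz rule, bounding a factor $D^\beta\varphi_i$ ($|\beta|\ge1$) by $\lesssim\diam(Q_i)^{-|\beta|}\lesssim\dist(x,K)^{-|\beta|}\lesssim|x-x_\circ|^{-|\beta|}$ and inserting the polynomial estimate with exponent $j-|\beta|$ in the remaining factor, each of the $O_N(1)$ nonzero terms is $\lesssim\Lambda|x-x_\circ|^{\ell+\al-j}$. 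This yields the master bound
\[
|D^jF(x)-D^jP_{x_\circ}(x)|\ \lesssim_{N,\ell}\ \Lambda\,|x-x_\circ|^{\ell+\al-j},\qquad x\in U,\ x_\circ\in K,\ 0\le j\le\ell .
\]
An induction on $|\beta|\le\ell$, using this bound with $x_\circ$ taken to be the nearest point of $K$ to $x$ and the difference-quotient definition of derivatives, then shows that $D^\beta F$ exists and is continuous on all of $\R^N$ and satisfies $D^\beta F(x_\circ)=D^\beta P_{x_\circ}(x_\circ)$ for $x_\circ\in K$; in particular $F|_K\equiv f$ and $F(x)=P_{x_\circ}(x)+O(|x-x_\circ|^{\ell+\al})$. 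Finally, $D^\beta F$ with $|\beta|=\ell$ is $\al$-H\"older by the usual three-case split: for $x,x'\in U$ with $|x-x'|\le\tfrac12\dist(x,K)$ one integrates the bound $|D^{\ell+1}F|\lesssim\Lambda\dist(\cdot,K)^{\al-1}$ (obtained by taking $j=\ell+1$ in the master bound with the nearest-point choice, the $|\beta|=0$ Leibniz term vanishing since $\deg P_{p_i}\le\ell$); for the remaining pairs in $U$, or with one endpoint in $U$, one reduces to the master bound at a nearby point of $K$; and for $x,x'\in K$ this is precisely the hypothesis with $j=\ell$. Combining these gives $F\in C^{\ell,\al}$.

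\textbf{Main obstacle.} I expect the only genuinely delicate point to be the displayed master bound. The derivatives of the partition of unity produce factors $\dist(x,K)^{-|\beta|}$ that blow up as $x\to K$, and they are tamed only because one subtracts the fixed polynomial $P_{x_\circ}$ before differentiating (equivalently, because $\sum_iD^\beta\varphi_i\equiv0$ for $|\beta|\ge1$), so that the compensating smallness $|x-x_\circ|^{\ell+\al-j+|\beta|}$ is supplied by the polynomial differences rather than by the cutoffs. Carrying all the multi-indices through the induction establishing the derivatives on $K$, and arranging the three-case H\"older estimate so that every constant stays uniform, is the part that requires care; the Whitney decomposition, the partition of unity, and their standard estimates are classical and purely bookkeeping.
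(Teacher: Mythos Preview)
Your sketch is the classical Whitney construction and is correct in outline; the identification of the ``master bound'' as the crux, and the use of $\sum_i D^\beta\varphi_i\equiv 0$ (equivalently, subtracting the fixed $P_{x_\circ}$ before differentiating) to tame the blow-up of the cutoff derivatives, is exactly the right mechanism. One small point worth making explicit: the statement does not literally require $\deg P_{x_\circ}\le\ell$, but the hypothesis only constrains $D^jP_{x_\circ}$ for $j\le\ell$, so one may (and you implicitly do) replace each $P_{x_\circ}$ by its degree-$\ell$ Taylor polynomial at $x_\circ$ without loss.

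As for comparison with the paper: the paper does not give a proof of this lemma at all. It is stated as a classical result and attributed to \cite[Page 5]{Fefferman09_Whitney}, then used as a black box in the proof of Proposition~\ref{Prop_C^2 Reifenberg}. So your write-up supplies strictly more than the paper does here.
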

	The basic strategy is then to construct a polynomial from the data $(p, J(p))$ and check that it satisfies the second bullet point above.

	\begin{proof}[Proof of Proposition \ref{Prop_C^2 Reifenberg}]
		Associated to every $p_\circ = (X_\circ, t_\circ)\in K$ and $J_\circ(p_\circ) = (\bL_\circ, \bq_\circ)\in \Jet_{n,k}^2$, we define a $4$-th order polynomial on $\RR^{n+1}$ by 
		\begin{equation}
			\mbf_{p_\circ}(X) := t_\circ - \left|\bL_\circ^\perp(X-X_\circ) - \bQ_\circ(X-X_\circ) \right|^2 \,.
		\end{equation}
		\textbf{Claim.} There exists $\delta_\Lambda>0$ depending only on $\Lambda$ such that for every $p_1=(X_1, t_1)\in K$ and $p_2=(X_2, t_2)\in K\cap P_{2\delta_\Lambda}(p_1)$, we have $X_2 \neq X_1$ and 
		\begin{align}
			|D^j\mbf_{p_1}(X_2) - D^j\mbf_{p_2}(X_2)| \lesssim_{n, \Lambda} |X_2 - X_1|^{3+\al-j}, \qquad \forall\, 0\leq j\leq 3 \,.  \label{Equ_Whitney_Compatible Condition}
		\end{align}
		
		Let's first finish the proof assuming this Claim. By Lemma \ref{Lem:Whitney}, for every $p_1=(X_1, t_1)\in K$, there exists $\mbf\in C^{3,\alpha}(\RR^{n+1})$ such that for every $\bar p = (\bar X, \bar t)\in K\cap P_{\delta_\Lambda}(p_1)$, we have 
		\begin{align}
			D^j \mbf(\bar X) = D^j\mbf_{\bar p}(\bar X), \qquad \forall\, 0\leq j\leq 3 \,. \label{Equ_Whitney_f coincide w f_p}
		\end{align}
		Hence, there's a neighborhood $U_1\subset B^{n+1}_{\delta_\Lambda}(X_1)$ of $X_1$ such that $\nabla_{\bL_1^\perp} \mbf: U_1\to \RR^{n-k+1}$ is a $C^{2,\alpha}$ map with non-degenerate differential at $X_1$. Thus $\breve\Gamma:=\{X\in U_1:\nabla_{\bL_1^\perp} \mbf(X) = 0\}$ is a $k$-dimensional $C^{2,\alpha}$-submanifold of $\RR^{n+1}$. Also, by taking $j=0, 1$ in \eqref{Equ_Whitney_f coincide w f_p}, it's easy to check that \[
		K\cap (U_1\times [t_\circ-\delta_\Lambda^2, t_\circ+\delta_\Lambda^2]) \subset \{(\bar X, \mbf(\bar X)): \bar X\in U_1\cap \breve\Gamma\}=: \Gamma\,,
		\]
		where $\Gamma$ is the graph of a $C^{3,\al}$ function over a $C^{2,\al}$ submanifold $\breve\Gamma$ in $\RR^{n+1}\times\RR$, hence is a $k$-dimensional $C^{2,\alpha}$-submanifold in $\RR^{n+1}\times \RR$. Now suppose $K$ is a submanifold, and without loss of generality, assume $X_1=\orig$, $\mbfL_{1}=\{(0,y):y\in\R^{k}\}$, we may assume $G:\R^k\to\R^{n-k+1}$ is locally defined such that $\nabla_{\bL_1^\perp} \mbf(G(y),y) = 0$ in a neighborhood of $\orig$. Then, by direct calculations, at $y=0$, we have
		\begin{align}
			\nabla_y G=\nabla_y \bq,&\quad 
			\nabla^2_y G=\nabla^2_y \bq=2\bq.
		\end{align}
		This shows the criterion for the second fundamental form of $K$.
		
		Moreover, the estimate \eqref{Equ_Whitney_Location est assump} on $|t_2-t_1|$ term implies that $\mfk{t}|_{K\cap P_{\delta_\Lambda}(p_1)}$ is $(3+\al)$-H\"older. The Hausdorff dimension estimate of $\mathfrak{t}(K)$ then follows by a standard covering argument, see \cite[Proposition 7.7]{FigalliRos-OtonSerra_20_Generic}. 
		\begin{proof}[Proof of the Claim.]
			Let us do the Taylor expansion of $(\mbf_{p_1}-\mbf_{p_2})$ at $X_2$. Since this a polynomial, it suffices to rewrite $(\mbf_{p_1}-\mbf_{p_2})(X_2+X)$ into sum of homogeneous polynomials in $X$:
			\begin{align*}
				& (\mbf_{p_1}-\mbf_{p_2})(X_2+X) \\
				& \quad = (t_1-t_2) + |\bL_2^\perp(X)-\bQ_2(X)|^2 - \left| \bL_1^\perp((X_2-X_1)+X) - \bQ_1((X_2-X_1)+X) \right|^2 \\
				& \quad = (t_1-t_2) + |\bL_2^\perp(X)-\bQ_2(X)|^2 \\
				& \quad\ - \Big| \underbrace{\left(\bL_1^\perp(X_2-X_1) - \bQ_1(X_2-X_1)\right)}_{=:\, a} + \underbrace{\left(\bL_1^\perp(X) - \nabla_X\bQ_1(X_2-X_1)\right)}_{=:\, b(X)} - \bQ_1(X) \Big|^2 \\
				& \quad =: \cT_0(X) + \cT_1(X) + \cT_2(X) + \cT_3(X) + \cT_4(X) \,;
			\end{align*}
			where 
			\begin{align*}
				\cT_0(X) & = (t_1-t_2) - |a|^2\,; & 
				\cT_1(X) & = -2\left\langle a,\ b(X)\right\rangle \,; \\
				\cT_2(X) & = |\bL_2^\perp(X)|^2 - |b(X)|^2 + 2\left\langle a,\ \bQ_1(X)\right\rangle \,; \\
				\cT_3(X) & = -2\langle \bL_2^\perp(X),\ \bQ_2(X)\rangle + 2\langle b(X),\ \bQ_1(X) \rangle \,; &
				\cT_4(X) & = |\bQ_2(X)|^2 - |\bQ_1(X)|^2 \,.
			\end{align*}
			If we set $r:= d_\cP(p_1, p_2)$, then by \eqref{Equ_Whitney_Location est assump}, when $r\leq 2\delta_\Lambda \ll 1$, we have
			\begin{align*}
				|X_2-X_1| \sim_n r\,, & &
				|a|\lesssim_{n, \Lambda} r^{2+\al}\,, & &
				\left|\bL_1^\perp\left(b(X)-\bL_2^\perp(X)\right) \right| \lesssim_{n, \Lambda} r^{1+\al}|X| \,.
			\end{align*}
			Also note that $(\bL_1^\perp)^2=\bL_1^\perp$, $\bL_1^\perp\circ \bQ_1 = \bQ_1$, and
			\begin{align*}
				|\bL_i^\perp(X)|\lesssim_n |X|, & &
				|(\bL_1^\perp - \bL_2^\perp)(X)|+ |\nabla_X \bQ_1(X_2-X_1)|\lesssim_{n,\Lambda} r|X|, & &
				|\bQ_i(X)|\lesssim_{n, \Lambda} |X|^2\,.
			\end{align*}
			Hence \eqref{Equ_Whitney_Location est assump} and estimates above imply that for every $X\in \R^{n+1}$ and every $0\leq j\leq 3$,
			\begin{align*}
				|\cT_j(X)| \lesssim_{n, \Lambda} r^{3+\al-j}|X|^j \,.
			\end{align*}
			While since $\cT_j$ is the $j$-th order Taylor polynomial of $(\mbf_{p_1}-\mbf_{p_2})$ at $X_2$, we then have \[
			|D^j(\mbf_{p_1}-\mbf_{p_2})(X_2)| \sim_{n, j} \|\cT_j\|_{C^0(B_1^{n+1})} \lesssim_{n, \Lambda} r^{3+\al-j} \,.
			\]
		\end{proof}
	\end{proof}
	
	\begin{proof}[Proof of Theorem \ref{Thm_Intro_C^2 Reg} \ref{Item_Intro_(cS_k)_+ reg} and \ref{Item_Intro_t|(cS_k)_+ Holder}.]
		Let $\al\in (0, \min\{1, \frac{2}{n-k}\})$ and we choose $\eps>0$ smaller such that $\gamma^+_{n,k}-10\eps\geq \al$. For every $p\in \cS_k(\bM)_+$, we first claim that there exists $s_p>0$ such that for every $r\in (0, s_p]$, $\cS_k(\bM)\cap \overline{P_r(p)}$ is compact (note that by item \ref{Item_Intro_(cS_k)_+ close} proved in Section \ref{Subsec_cS_+ close}, this implies the compactness of $\cS_k(\bM)_+\cap \overline{P_r(p)}$). Suppose for contradiction that there exist sequences of radius $s_j\searrow 0$ and limit points $p_j\in \overline{P_{s_j}(p)}\setminus \cS_k(\bM)$ of $k$-cylindrical singularities. Then by the upper semi-continuity of the Gaussian density, we have \[
		\Theta_\bM(p_j) \geq \cF[\cC_{n,k}] = \Theta_{\bM}(p), \quad \forall\, j\geq 1\,; \qquad 
		\liminf_{j\to \infty} \Theta_\bM(p_j) \leq \Theta_{\bM}(p) \,.
		\]
		Hence as $j\to \infty$, $\Theta_\bM(p_j) \to \Theta_{\bM}(p)$, and since the tangent flow of $\bM$ at $p$ is a rotation of round cylinder $\cC_{n,k}$ with multiplicity $1$, from the rigidity of round cylinders by Colding-Ilmanen-Minicozzi \cite{ColdingIlmanenMinicozzi15} and the quantitative uniqueness of tangent flow by Colding-Minicozzi \cite{ColdingMinicozzi25_quantitativeMCF}, when $j\gg 1$, the tangent flow of $\bM$ at $p_j$ is also a rotation of $\cC_{n,k}$. This contradicts the assumption $p_j\notin \cS_k(\bM)$. 
		
		Now for every $p\in \cS_k(\bM)_+$, again by the quantitative uniqueness of tangent flow by Colding-Minicozzi \cite{ColdingMinicozzi25_quantitativeMCF}, we can take $r_p\in (0, s_p]$ such that for every $p'\in \cS_k(\bM)_+\cap \overline{P_{r_p}(p)}$, after a rotation, the RMCF of $r_p^{-1}\cdot (\bM-p')$ is $\delta_{\ref{Lem_C^2 Reg_Main}}(n, \eps)$-$L^2$ close to $\cC_{n,k}$ over $\tau\in [-1, +\infty)$, and converges to $\cC_{n,k}$ as $\tau\to +\infty$. By an a priori rescaling of $\bM$, we may assume without loss of generality that $r_p=1$. We shall conclude item \ref{Item_Intro_(cS_k)_+ reg} by applying Proposition \ref{Prop_C^2 Reifenberg} to $K:= \cS_k(\bM)_+\cap \overline{P_{\delta_{\ref{Lem_C^2 Reg_Main}}(n, \eps)}(p)}$, where to each $p\in K$, the Whitney data assigned to $p$ is $(\bL_p, \bQ_p)$ as specified at the beginning of Section \ref{Sec_C^(2,al) Reg}. For every $p_1, p_2\in K$, the upper bound on $\bQ_i$ holds by Lemma \ref{Lem_C^2 Reg_Main}, hence it suffices to verify \eqref{Equ_Whitney_Location est assump}. By a translation and rotation, we assume without loss of generality that $p_1=(\orig, 0)$ and $\bL_1=\{0\}\times \R^k$. Hence by writing $p_2=(x_2, y_2, t_2)$ and applying \eqref{Equ_C^2 Reg_Main} with $\bL_1, \bq_1, p_2, \bL_2, \bq_2$ in place of $\bL_\circ, \bq_\circ, \bar p, \bar\bL, \bar\bq$, we get desired estimates on terms $|t_2-t_1|$, $|\bL_1^\perp(X_2-X_1)-\bQ_1(X_2-X_1)|$ and $|\bQ_2-\bQ_1|$. We also derive that for some $\bar\mbfA = \begin{bmatrix}
			0 & \bar\bl \\ -\bar\bl^\top & 0
		\end{bmatrix}$ satisfying $\|\bar\bl\|\lesssim_{n, \eps} r$, we have 
		\begin{align*}
			\bL_2^\perp = e^{\mbfA}\circ \bL_1^\perp\circ e^{-\mbfA} = (I_n + \mbfA + O(|\mbfA|^2))\circ \bL_1^\perp \circ (I_n - \mbfA + O(|\mbfA|^2)) \,, 
		\end{align*}
		hence \[
		\left|\bL_1^\perp - \bL_2^\perp - \begin{bmatrix}
			0 & \bar\bl \\ \bar\bl^\top & 0
		\end{bmatrix} \right| \lesssim_n |\bar\mbfA|^2 \lesssim_{n, \eps} r^2
		\]
		While since $\nabla\bQ_1(X_2-X_1) = \begin{bmatrix}
			0 & \nabla\bq_1(y_2) \\ 0 & 0
		\end{bmatrix}$, we thus conclude the estimates on $\|\bL_1^\perp-\bL_2^\perp\|$ and $\left\|\bL_1^\perp \circ \left((\bL_1^\perp - \bL_2^\perp) - \nabla \mbfQ_1( X_2- X_1)\right) \right\|$ from these estimates above and \eqref{Equ_C^2 Reg_Main} again.
		
		Finally, item \ref{Item_Intro_t|(cS_k)_+ Holder} follows from item \ref{Item_Whitney_Concl_t|_K Holder} by letting $\al$ approach $\min\{1, \frac{2}{n-k}\}$.
	\end{proof}
	
	\medskip
	
	\appendix
	
	\section{A family of convex mean curvature flow with slowest asymptotics} \label{App:ExistenceSphericalArrival}
	
	In this section, we will construct rescaled mean curvature flows that are asymptotic to the sphere with prescribed decay. Such flows have been constructed in \cite{Sesum08_Rate_of_Convergence, Strehlke20_Asym_Max}, but we need a variant version for our purpose. Let us first recall some of the results by Strehlke from \cite{Strehlke20_Asym_Max}. For $m\geq 1$, $\bS^m:=S^m(\sqrt{2m})\subset\R^{m+1}$, a rescaled mean curvature flow as a graph of a function $(u(\tau))_{\tau\geq 0}$ over it satisfies the equation
	\begin{equation} \label{Equ_App_LowSpher_RMCF equ}
		\pr_\tau u=L_{\bS^m}u +\sN(u,\nabla u,\nabla^2 u),
	\end{equation}
	where $\sN(u,\nabla u,\nabla^2 u)$ is a quadratic nonlinear term which can be written as $f(u,\nabla u)+\tr (B(u,\nabla u)\nabla^2 u)$ for some smooth function $f,B$ with $f(0,0)=0$, $df(0,0)=0$ and $B(0,0)=0$. For simplicity, we write $\sN(u):=\sN(u,\nabla u,\nabla^2 u)$.

	$-L_{\bS^m}:=-(\Delta_{\SSp^m}+1)$ has eigenvalues $\frac{i(i+m-1)}{2m}-1$ for $i=0,1,2,\cdots$, and first several eigenvalues are $\lambda_1=-1$, $\lambda_2=-\frac12$, $\lambda_3=\frac1m$, and $\lambda_4=\frac{m+6}{2m}$. Let $\mfk{X}_m$ be the space of $L^2$-eigenfunctions of the $-L_{\bS^m}$ with eigenvalue $1/m$. We use $\|\psi\|$ to denote the $L^2$-norm of $\psi\in \mfk{X}_m$, and use $B^{\mfk{X}_m}_r$ to denote the closed ball of radius $r$ in $(\mfk{X}_m, \|\cdot\|)$ centered at $0$. Note that since $\mfk{X}_m$ is a finite-dimensional vector space, all the norms are equivalent over $\mfk{X}_m$. 
	
	Let us state the main results of this section.
	
	\begin{Lem}\label{Lem:AppCMainSpherical}
		Let $m\geq 1$, $\mfk{X}_m$ be defined as above. Then for any $\sigma\in(1/m,2/m)$, there exist a $\varkappa = \varkappa(m, \sigma)>0$ and a $C^1$ map $\Xi: B_\varkappa^{\mfk{X}_m}\to C^2(\bS^m\times \R_{\geq 0})$ such that 
		\begin{enumerate} [label={\normalfont(\roman*)}]
			\item\label{Item_App_LowSph_Xi(0)=0} $\Xi(0) = 0$;
			\item\label{Item_App_LowSph_Xi RMCF} for every $\psi\in B_\varkappa^{\mfk{X}_m}$, $w:=\Xi(\psi)$ satisfies $\|w\|_{C^2}\leq 1$ and $\tau\mapsto \graph_{\bS^m}(w(\cdot, \tau))$ forms a rescaled mean curvature flow.
			\item\label{Item_App_LowSph_Xi-psi decay} for every $\varkappa'\in (0, \varkappa]$ and $\psi_1, \psi_2\in B_{\varkappa'}^{\mfk{X}_m}$, let $w_i:= \Xi(\psi_i)$, $i=1,2$. Then we have, \[
			\|(w_1 - w_2)(\cdot, \tau) - e^{-\frac{\tau}{m}}(\psi_1-\psi_2)\|_{C^4(\bS^m)} \lesssim_{m, \sigma} \varkappa'e^{-\sigma\tau}\|\psi_1 - \psi_2\|_{L^2(\bS^m)}, \qquad \forall\, \tau\geq 0 \,.
			\]
		\end{enumerate}
	\end{Lem}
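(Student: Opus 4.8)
The plan is to construct $\Xi$ by a fixed-point/contraction argument for the inhomogeneous rescaled MCF equation \eqref{Equ_App_LowSpher_RMCF equ}, treating the quadratic term $\sN(u)$ as a perturbation and using the spectral gap of $-L_{\bS^m}$ around the eigenvalue $1/m$. First I would fix $\sigma\in(1/m,2/m)$ and note that $2/m$ is the ``doubled'' rate: if $u\sim e^{-\tau/m}$ then $\sN(u)\sim e^{-2\tau/m}$, which decays strictly faster than $e^{-\sigma\tau}$. Let $P$ denote the $L^2$-orthogonal projection onto $\mfk{X}_m=\rmW_{1/m}(\bS^m)$ and $P^\perp=\id-P$. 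For $\psi\in\mfk{X}_m$ I would look for a solution of the form $u(\cdot,\tau)=e^{-\tau/m}\psi + v(\cdot,\tau)$ where $v$ solves
\begin{equation*}
\partial_\tau v - L_{\bS^m} v = \sN\!\left(e^{-\tau/m}\psi + v\right),
\end{equation*}
together with the asymptotic/decay normalization that $v$ decays like $e^{-\sigma\tau}$ (in particular $v$ carries no $e^{-\tau/m}$ component, so $\psi$ is genuinely the leading coefficient). I would set up the Banach space $X_\sigma$ of functions $v\in C^{2,\alpha}_{\loc}(\bS^m\times\R_{\geq0})$ with $\|v\|_{X_\sigma}:=\sup_{\tau\geq 0} e^{\sigma\tau}\|v(\cdot,\tau)\|_{C^{2,\alpha}(\bS^m)}<\infty$, and define a solution operator $\mathcal{S}$ sending a forcing term $g$ with $\sup_\tau e^{2\tau/m}\|g(\cdot,\tau)\|_{C^\alpha}<\infty$ to the unique solution $v=\mathcal{S}g$ of $\partial_\tau v-L_{\bS^m}v=g$ lying in $X_\sigma$; this is where the spectral decomposition enters — on $\ker(-L_{\bS^m}-1/m)$ one propagates forward and on the complementary modes one uses either forward propagation (for eigenvalues $>1/m$, giving decay at least $e^{-(m+6)\tau/2m}$, faster than $e^{-\sigma\tau}$) or backward Duhamel integration (for eigenvalues $<1/m$, i.e. $-1$ and $-1/2$, where one must integrate from $+\infty$ to kill the growing modes). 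The map $\Phi_\psi(v):=\mathcal{S}\big(\sN(e^{-\tau/m}\psi+v)\big)$ is then shown to be a contraction on a small ball of $X_\sigma$ once $\|\psi\|$ and the ball radius are small, using $\sN(0)=0$, $d\sN(0)=0$ so that $\|\sN(a)-\sN(b)\|\lesssim(\|a\|_{C^{2,\alpha}}+\|b\|_{C^{2,\alpha}})\|a-b\|_{C^{2,\alpha}}$.

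Once the fixed point $v=v_\psi$ exists, I would define $\Xi(\psi):=e^{-\tau/m}\psi+v_\psi$ and verify the three claimed properties. Property \ref{Item_App_LowSph_Xi(0)=0} is immediate since $\psi=0$ forces $v=0$ by uniqueness. Property \ref{Item_App_LowSph_Xi RMCF} requires: (a) that $w:=\Xi(\psi)$ solves the full RMCF graph equation \eqref{Equ_App_LowSpher_RMCF equ} — true by construction, since adding $e^{-\tau/m}\psi$ (which lies in the kernel of $\partial_\tau-L_{\bS^m}$) converts the $v$-equation back to the $u$-equation; and (b) the $C^2$ bound $\|w\|_{C^2}\leq 1$, which follows from $\|e^{-\tau/m}\psi\|_{C^2}\lesssim\|\psi\|$ (finite-dimensional, all norms equivalent) plus $\|v_\psi\|_{X_\sigma}$ small, provided $\varkappa$ is chosen small enough. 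One should also upgrade $w$ from $C^{2,\alpha}_{\loc}$ to $C^\infty$ via standard parabolic bootstrapping so that ``rescaled mean curvature flow'' is meant in the classical smooth sense; and one needs the resulting graph to remain a valid graph (the $C^2$-smallness handles this). For property \ref{Item_App_LowSph_Xi-psi decay}, I would subtract the equations for $w_1=\Xi(\psi_1)$ and $w_2=\Xi(\psi_2)$: writing $w_i=e^{-\tau/m}\psi_i+v_{\psi_i}$, the difference $(w_1-w_2)-e^{-\tau/m}(\psi_1-\psi_2)=v_{\psi_1}-v_{\psi_2}$, and this difference solves a linear inhomogeneous equation whose forcing is $\sN(w_1)-\sN(w_2)$, bounded by $(\|w_1\|_{C^{2,\alpha}}+\|w_2\|_{C^{2,\alpha}})\|w_1-w_2\|_{C^{2,\alpha}}\lesssim\varkappa'\, e^{-2\tau/m}\,\|\psi_1-\psi_2\|$ (using the a priori $e^{-\tau/m}$-scale bound on $w_i$ and Lipschitz dependence of the fixed point on $\psi$). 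Feeding this through $\mathcal{S}$ and then running a final parabolic Schauder bootstrap to pass from $C^{2,\alpha}$ to $C^4$ gives the stated $C^4(\bS^m)$ bound with rate $e^{-\sigma\tau}$; the $C^1$ dependence of $\Xi$ on $\psi$ comes from differentiating the fixed-point equation (implicit function theorem in Banach spaces, using that $d_v\Phi_\psi$ has small operator norm).

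The main obstacle I anticipate is the precise bookkeeping in the solution operator $\mathcal{S}$ for the low modes (eigenvalues $-1$ and $-1/2$, i.e. the constants and the coordinate functions $x_j|_{\bS^m}$): on those modes the homogeneous flow grows like $e^{\tau}$ and $e^{\tau/2}$, so one cannot simply propagate the projected forcing forward — one must integrate the Duhamel formula backward from $\tau=+\infty$, which is legitimate precisely because the forcing decays like $e^{-2\tau/m}$ and $2/m>1\geq$ those growth rates... wait, that inequality is false for $m=1,2$, so in fact one needs $2/m > 1$ to hold, i.e. $m=1$, or more carefully one needs $e^{-2\tau/m}$ to beat $e^{\tau}$, i.e. always true since $2/m>0>-1$ — the backward integral $\int_\tau^\infty e^{(\tau-s)\cdot 1}\cdot e^{-2s/m}ds$ converges and is $O(e^{-2\tau/m})$ for any $m$. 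So the genuine subtlety is just organizing this mode-by-mode and checking that the resulting $v$ satisfies the correct decay $e^{-\sigma\tau}$ uniformly — the constant mode contributes $O(e^{-2\tau/m})$, faster than $e^{-\sigma\tau}$ since $\sigma<2/m$; the kernel mode $\mfk{X}_m$ itself must be handled so that $Pv$ stays $O(e^{-\sigma\tau})$, which requires choosing the integration constant on $\mfk{X}_m$ to be zero (forward propagation of $P$-projected forcing from $\tau=0$ gives $O(\tau e^{-2\tau/m})$-type terms, still $o(e^{-\sigma\tau})$); and the higher modes decay at rate $e^{-(m+6)\tau/2m}$. Verifying $\frac{m+6}{2m}>\sigma$ and $\frac{2}{m}>\sigma$ simultaneously (both hold since $\sigma<2/m<(m+6)/2m$) closes the estimates. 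A secondary obstacle is making the a priori $C^{2,\alpha}$-norm bounds on $w_i$ (needed to control $\sN(w_i)$) uniform in $\tau$; this is handled by interior parabolic estimates on unit time-intervals combined with the weighted sup bound already built into $X_\sigma$.
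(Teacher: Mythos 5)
Your overall architecture is the same as the paper's: a fixed-point argument for the perturbation $v$ of the leading mode $e^{-\tau/m}\psi$, with a Duhamel solution operator that propagates high modes forward from $\tau=0$ and integrates low modes backward from $\tau=+\infty$. The paper (following Strehlke) works in mixed Sobolev norms $\int e^{2\eta\tau}\|\cdot\|_{H^{\ell+1}}^2\,d\tau+\sup_\tau e^{\sigma\tau}\|\cdot\|_{H^\ell}$ rather than your weighted $C^{2,\alpha}$ sup norm; this choice matters because $\sN$ depends on $\nabla^2 u$ and so loses a derivative, which the integrated $H^{\ell+1}$ piece recovers via an energy estimate — your Schauder-based alternative is plausible but you would need to track that derivative loss carefully.

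There is, however, a genuine error in your treatment of the kernel mode $\mfk{X}_m$ (eigenvalue exactly $1/m$). You propose to propagate the $P$-projected forcing forward from $\tau=0$ with zero initial data and claim this yields $O(\tau e^{-2\tau/m})$, hence $o(e^{-\sigma\tau})$. That computation would be correct only if the eigenvalue were resonant with the forcing rate $2/m$; here the eigenvalue is $1/m$, so forward integration gives
\[
\Bigl|\int_0^\tau e^{-\frac{\tau-s}{m}}\,O(e^{-\frac{2s}{m}})\,ds\Bigr|\;\lesssim\; e^{-\frac{\tau}{m}}\int_0^\tau e^{-\frac{s}{m}}\,ds\;=\;O(e^{-\frac{\tau}{m}})\,,
\]
which is \emph{not} $o(e^{-\sigma\tau})$ for $\sigma>1/m$. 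With this choice your map $\Phi_\psi$ does not send $X_\sigma$ into itself, so the contraction does not close, and even in a weaker norm the resulting $O(\varkappa^2 e^{-\tau/m})$ contamination of the kernel direction would destroy item \ref{Item_App_LowSph_Xi-psi decay} ($\psi$ would no longer be the exact coefficient of $e^{-\tau/m}$). The fix is to treat $\mfk{X}_m$ like the modes $-1$ and $-1/2$: integrate it backward from $+\infty$, i.e.\ $-\int_\tau^\infty e^{-\frac{\tau-s}{m}}g(s)\,ds=O(e^{-2\tau/m})$. This is exactly what the paper does — its projection $\Pi$ is onto eigenvalues $\geq\lambda_4>1/m$, so the complementary backward integral in \eqref{eq:AppCExpressionU} includes the eigenvalue-$1/m$ modes, and the normalization is $\Pi\,U(\cdot,0)=0$ (the kernel component of $U(\cdot,0)$ is then a determined, generally nonzero, value rather than zero). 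With that single correction the rest of your argument goes through.
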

	Throughout this section, every constant in our estimate is also allowed to depend on the dimension $m$ of the sphere. For notational simplicity, we shall not indicate it in the statements.
	
	\subsection{Estimates of the nonlinearity and the solution}
	
	The following Lemma of Strehlke \cite{Strehlke20_Asym_Max} estimates the nonlinear term $\sN$ in the equation. 
	\begin{Lem}[{\cite[Lemma 3.5]{Strehlke20_Asym_Max}}] \label{Lem:AppC_Nonlinear}
		For $\ell>m/2+1$ and $u, v\in H^\ell(\SSp^m)$ with $\|u\|_{H^{\ell}}\leq 1$, $\|v\|_{H^{\ell}}\leq 1$,
		\begin{equation}
			\begin{split}
				\| \sN(u) - \sN(v)\|_{H^{\ell-1}}
				\lesssim_\ell (\|u\|_{H^{\ell+1}}\|u-v\|_{H^{\ell}}
				+
				\|u\|_{H^{\ell}}\|u-v\|_{H^{\ell+1}}
				).
			\end{split}
		\end{equation}
		
	\end{Lem}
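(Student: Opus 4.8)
The plan is to reduce everything to two standard harmonic–analytic facts on the closed manifold $\SSp^m$, both of which are available precisely because the hypothesis $\ell>m/2+1$ gives $\ell-1>m/2$: first, the Sobolev embedding $H^{\ell-1}(\SSp^m)\hookrightarrow C^0(\SSp^m)$ together with the Banach-algebra estimate $\|gh\|_{H^{\ell-1}}\lesssim_\ell \|g\|_{H^{\ell-1}}\|h\|_{C^0}+\|g\|_{C^0}\|h\|_{H^{\ell-1}}$; and second, the Moser-type composition estimate: if $F\in C^\infty$ with $F(0)=0$ then for $\|w_i\|_{C^0}\le M$ one has $\|F(w)\|_{H^{\ell-1}}\lesssim_{\ell,M}\|w\|_{H^{\ell-1}}$ and $\|F(w_1)-F(w_2)\|_{H^{\ell-1}}\lesssim_{\ell,M}\|w_1-w_2\|_{H^{\ell-1}}+\|w_1-w_2\|_{C^0}\big(\|w_1\|_{H^{\ell-1}}+\|w_2\|_{H^{\ell-1}}\big)$, applied with vector-valued argument $w=(u,\nabla u)$. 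Granting these, I would split $\sN(u)-\sN(v)=\big(f(u,\nabla u)-f(v,\nabla v)\big)+\big(\tr(B(u,\nabla u)\nabla^2u)-\tr(B(v,\nabla v)\nabla^2v)\big)$ and estimate each part, noting throughout that $\|u\|_{H^\ell},\|v\|_{H^\ell}\le1$ and hence $\|u\|_{C^1}+\|v\|_{C^1}\lesssim_\ell 1$.

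\textbf{The $f$-part.} Write, along the segment $\gamma_s:=s(u,\nabla u)+(1-s)(v,\nabla v)$,
\[
f(u,\nabla u)-f(v,\nabla v)=\int_0^1 Df(\gamma_s)\big[(u-v,\nabla(u-v))\big]\,ds\,.
\]
Since $df(0,0)=0$ we have $Df(0,0)=0$, so the composition estimate gives $\|Df(\gamma_s)\|_{H^{\ell-1}}\lesssim_\ell\|\gamma_s\|_{H^{\ell-1}}\lesssim_\ell\|u\|_{H^\ell}+\|v\|_{H^\ell}$ and $\|Df(\gamma_s)\|_{C^0}\lesssim\|\gamma_s\|_{C^0}\lesssim_\ell 1$. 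Applying the algebra estimate to $g=Df(\gamma_s)$ and $h=(u-v,\nabla(u-v))$, whose $H^{\ell-1}$ norm is $\lesssim\|u-v\|_{H^\ell}$ and whose $C^0$ norm is $\lesssim_\ell\|u-v\|_{H^\ell}$, and integrating in $s$, I get $\|f(u,\nabla u)-f(v,\nabla v)\|_{H^{\ell-1}}\lesssim_\ell\big(\|u\|_{H^\ell}+\|v\|_{H^\ell}\big)\|u-v\|_{H^\ell}$, which is dominated by the right-hand side of the claimed estimate (using $\|u\|_{H^\ell}\le\|u\|_{H^{\ell+1}}$).

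\textbf{The $B$-part.} Here the second derivatives force the $H^{\ell+1}$ norms. Split $\tr(B(u,\nabla u)\nabla^2u)-\tr(B(v,\nabla v)\nabla^2v)=\tr\big([B(u,\nabla u)-B(v,\nabla v)]\nabla^2u\big)+\tr\big(B(v,\nabla v)[\nabla^2u-\nabla^2v]\big)$. For the first term, apply the algebra estimate with $g=B(u,\nabla u)-B(v,\nabla v)$ and $h=\nabla^2 u$: by the Moser difference estimate and the $H^\ell$ unit bounds, $\|g\|_{H^{\ell-1}}\lesssim_\ell\|u-v\|_{H^\ell}$, and by the Lipschitz bound on $B$ and $H^\ell\hookrightarrow C^1$, $\|g\|_{C^0}\lesssim_\ell\|u-v\|_{C^1}\lesssim_\ell\|u-v\|_{H^\ell}$; meanwhile $\|\nabla^2u\|_{H^{\ell-1}}\lesssim\|u\|_{H^{\ell+1}}$ and $\|\nabla^2u\|_{C^0}\lesssim\|\nabla^2u\|_{H^{\ell-1}}\lesssim\|u\|_{H^{\ell+1}}$ by Sobolev since $\ell-1>m/2$; multiplying out gives $\lesssim_\ell\|u\|_{H^{\ell+1}}\|u-v\|_{H^\ell}$. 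For the second term, take $g=B(v,\nabla v)$ (so $\|g\|_{H^{\ell-1}}\lesssim_\ell\|v\|_{H^\ell}$ by the composition estimate with $B(0,0)=0$, and $\|g\|_{C^0}\lesssim_\ell\|v\|_{H^\ell}$) and $h=\nabla^2(u-v)$ (so $\|h\|_{H^{\ell-1}}+\|h\|_{C^0}\lesssim\|u-v\|_{H^{\ell+1}}$), obtaining $\lesssim_\ell\|v\|_{H^\ell}\|u-v\|_{H^{\ell+1}}$; since $\|v\|_{H^\ell}\le1$ and, in the regime of the intended application, $u$ and $v$ are comparable, this is absorbed into $\|u\|_{H^\ell}\|u-v\|_{H^{\ell+1}}$ (more honestly, one may carry the harmless extra term $\|v\|_{H^\ell}\|u-v\|_{H^{\ell+1}}$ on the right). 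Summing the three contributions yields the lemma.

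\textbf{Main obstacle.} Essentially all the content is the Moser composition estimate on $\SSp^m$ for a smooth nonlinearity vanishing at the origin, with \emph{vector-valued} argument $w=(u,\nabla u)$: this is where one must (i) localize via a partition of unity to reduce to $\mathbb R^m$ and control the contributions of derivatives of the metric and of the transition functions, and (ii) track carefully the gain coming from $F(0)=0$ (and, for $f$, from $DF(0)=0$) so that the final bounds are genuinely bilinear in $(\text{size of }u,v)$ and $(\text{size of }u-v)$ rather than merely Lipschitz — this is what produces the product structure on the right-hand side. Once that estimate and the $H^{\ell-1}$ algebra/Sobolev facts are in hand, the remaining work is purely the bookkeeping above, namely keeping track of which factor carries $\nabla^2$ (hence needs the $H^{\ell+1}$ norm) and which carries only first derivatives.
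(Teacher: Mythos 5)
The paper does not prove this lemma at all --- it is quoted verbatim from \cite[Lemma 3.5]{Strehlke20_Asym_Max} --- so there is no in-paper argument to compare against; your proof is the standard one (tame product estimate in $H^{\ell-1}$ for $\ell-1>m/2$, plus Moser-type composition estimates exploiting $f(0,0)=0$, $df(0,0)=0$, $B(0,0)=0$), and the bookkeeping in both the $f$-part and the $B$-part is sound. The one substantive point, which you noticed and handled honestly, is that the inequality as printed cannot be literally correct: taking $u=0$ makes the right-hand side vanish while the left-hand side is $\|\sN(v)\|_{H^{\ell-1}}\neq 0$ in general. What your argument actually produces is
\[
\| \sN(u) - \sN(v)\|_{H^{\ell-1}}\lesssim_\ell \|u\|_{H^{\ell+1}}\|u-v\|_{H^{\ell}}+\|v\|_{H^{\ell}}\|u-v\|_{H^{\ell+1}}\,,
\]
with $\|v\|_{H^\ell}$ (not $\|u\|_{H^\ell}$) in the second term, and this is exactly the form invoked later in the proof of Lemma \ref{Lem:AppCContraction}, where the bound $\|v(s)\|_{H^\ell}\cdot\|u(s)-v(s)\|_{H^{\ell+1}}$ appears; so the printed statement contains a typo and your version is the correct one. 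Your aside that ``$u$ and $v$ are comparable in the intended application'' is unnecessary and should be dropped in favor of the honest alternative you already give, namely carrying the $\|v\|_{H^\ell}\|u-v\|_{H^{\ell+1}}$ term. The remaining ingredients you defer to (localization of the vector-valued Moser estimate to $\R^m$ via a partition of unity, and the embeddings $H^{\ell-1}(\SSp^m)\hookrightarrow C^0$, $H^{\ell}(\SSp^m)\hookrightarrow C^1$ under $\ell>m/2+1$) are standard and correctly identified as the only real content.
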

	
	Now let us fix $1/m=\lambda_3<\eta<\sigma<2/m<\lambda_4$. For any integer $\ell>0$, we use $\mfk{Y}_{\ell,\sigma,\eta}$ to denote the space of functions $u:[0,\infty)\to H^{\ell+1}(\bS^m)$ such that 
	\begin{equation} \label{Equ_App_LowSpher_Def ||_l,sigma,eta}
		\|u\|_{\ell,\sigma,\eta}:=\left(\int_0^\infty e^{2\eta\tau}\|u(\tau)\|_{H^{\ell+1}(\bS^m)}^2\ d\tau\right)^{1/2}
		+\sup_{\tau\geq 0}e^{\sigma \tau}\|u(\tau)\|_{H^\ell(\bS^m)}<+\infty.
	\end{equation}
	%Denote for simplicity $\mfk{Y}:=\mfk{Y}_{\ell,\sigma,\eta}$. 
	Note that our defined norm is slightly different from \cite{Strehlke20_Asym_Max} because we need to treat different decay rates. We use $\Pi: L^2(\SSp^m)\to L^2(\SSp^m)$ to denote the $L^2$-orthogonal projection onto the space of eigenfunctions with eigenvalue $\geq\lambda_4$.  We need the following two estimates that are variant versions of Corollaries 3.3 and 3.4 in \cite{Strehlke20_Asym_Max}.
	\begin{Lem}\label{Lem:AppCNormEst}
		If $v(s)$ is a continuously differentiable path in $H^{\ell}$ with $\Pi v(s)=v(s)$ for all $s\geq 0$, then for $1/m<\eta<\sigma<2/m$ and integer $\ell\geq 1$,
		\begin{equation} \label{Equ_App_LowSph_C^0_tau est by Strehk}
			e^{2\sigma \tau}\|v(\tau)\|_{H^\ell}^2
			\leq 
			\|v(0)\|_{H^{\ell}}^2+\frac{\lambda_4}{2(\lambda_4-\sigma)}\int_0^\tau e^{2\sigma s}\|(\pr_s-L)v(s)\|_{H^{\ell-1}}^2\ ds,
		\end{equation}
		and if there is a sequence of $\tau_j\to\infty$ such that $e^{\eta\tau}\|v(\tau_j)\|_{H^{\ell}}\to 0$, then
		\begin{equation} \label{Equ_App_LowSph_L^2_tau est by Strehk}
			\int_0^\infty e^{2\eta\tau}\|v(\tau)\|_{H^{\ell+1}}^2\  d\tau
			\lesssim_{\ell, \eta} \|v(0)\|_{H^{\ell}}^2
			+ \int_0^\infty e^{2\eta\tau}\|(\pr_\tau-L)v(\tau)\|_{H^{\ell-1}}^2 \ d\tau.
		\end{equation}
	\end{Lem}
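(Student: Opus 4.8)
The plan is to prove the two estimates \eqref{Equ_App_LowSph_C^0_tau est by Strehk} and \eqref{Equ_App_LowSph_L^2_tau est by Strehk} by a spectral/energy argument entirely parallel to Strehlke's Corollaries 3.3--3.4, the only new feature being that we carry two distinct exponential weights $\eta<\sigma$ in the window $(\lambda_3,\lambda_4)=(1/m,(m+6)/2m)$, rather than a single one. Since $\Pi v = v$ throughout, $v(\tau)$ is supported on eigenmodes of $-L_{\bS^m}$ with eigenvalue $\geq\lambda_4$; writing $v(\tau)=\sum_{\lambda\geq\lambda_4}v_\lambda(\tau)$ in the $L^2(\bS^m)$-eigenbasis and setting $g:=(\partial_\tau-L)v$, each Fourier coefficient solves the scalar ODE $\dot v_\lambda = -\lambda v_\lambda + g_\lambda$. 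The $H^\ell$-norm is comparable to the weighted $\ell^2$-sum $\sum_\lambda(1+\lambda)^{\ell}|v_\lambda|^2$, so both inequalities reduce to mode-by-mode ODE estimates with constants uniform in $\lambda\geq\lambda_4$.

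For \eqref{Equ_App_LowSph_C^0_tau est by Strehk}: multiply $\tfrac{d}{d\tau}\bigl(e^{2\sigma\tau}|v_\lambda|^2\bigr)=e^{2\sigma\tau}\bigl(2(\sigma-\lambda)|v_\lambda|^2+2\langle v_\lambda,g_\lambda\rangle\bigr)$, use $\sigma<\lambda_4\leq\lambda$ to drop the first term after Young's inequality $2\langle v_\lambda,g_\lambda\rangle\leq 2(\lambda-\sigma)|v_\lambda|^2+\tfrac{1}{2(\lambda-\sigma)}|g_\lambda|^2$, and integrate in $\tau$; the worst coefficient $\tfrac{1}{2(\lambda-\sigma)}$ over $\lambda\geq\lambda_4$ is $\tfrac{1}{2(\lambda_4-\sigma)}$, and after inserting the $(1+\lambda)^\ell$ weights and noting $(1+\lambda)^\ell\leq (1+\lambda)^{\ell-1}(1+\lambda)$ while $|g_\lambda|$ is weighted at level $\ell-1$, one lands exactly on the stated bound with constant $\tfrac{\lambda_4}{2(\lambda_4-\sigma)}$ (the extra factor $\lambda_4/(\lambda_4-\cdot)$ absorbing the shift between $(1+\lambda)^{\ell}$ and $(1+\lambda)^{\ell-1}$ against $\lambda-\sigma\geq(\lambda_4-\sigma)\lambda/\lambda_4$). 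For \eqref{Equ_App_LowSph_L^2_tau est by Strehk}: test the same identity with weight $e^{2\eta\tau}$, use instead the full coercivity $2(\eta-\lambda)\leq -2(\lambda_4-\eta)<0$ to obtain, after integrating over $[0,T]$ and discarding the nonnegative boundary term at $T$ along the subsequence $\tau_j\to\infty$ (this is where the hypothesis $e^{\eta\tau_j}\|v(\tau_j)\|_{H^\ell}\to 0$ enters, to kill $e^{2\eta T}|v_\lambda(T)|^2$ in the limit), the bound $2(\lambda_4-\eta)\int_0^\infty e^{2\eta\tau}|v_\lambda|^2\leq |v_\lambda(0)|^2+\tfrac{1}{2(\lambda_4-\eta)}\int_0^\infty e^{2\eta\tau}|g_\lambda|^2$; summing against $(1+\lambda)^{\ell+1}$ and again trading one power of $(1+\lambda)$ against $\lambda_4-\eta\leq\lambda-\eta$ gives \eqref{Equ_App_LowSph_L^2_tau est by Strehk} with a constant depending only on $\ell,\eta$ (and $m$).

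The only genuinely delicate point — and the one I expect to be the main obstacle to write cleanly — is the bookkeeping that turns the per-mode inequalities into Sobolev-norm inequalities with the \emph{precise} constant $\tfrac{\lambda_4}{2(\lambda_4-\sigma)}$ claimed in \eqref{Equ_App_LowSph_C^0_tau est by Strehk}: one must check that the loss of a derivative on the right ($H^{\ell-1}$ for $g$ versus $H^\ell$ for $v$) is exactly compensated by the spectral gap, i.e. that $(1+\lambda)\le \tfrac{\lambda_4}{\lambda_4-\sigma}(\lambda-\sigma)$ for all $\lambda\ge\lambda_4$, which is immediate since the function $\lambda\mapsto(\lambda-\sigma)/(1+\lambda)$ is increasing. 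For \eqref{Equ_App_LowSph_L^2_tau est by Strehk} the constant is not tracked sharply, so there one simply absorbs all such factors into $\lesssim_{\ell,\eta}$. I would present the scalar ODE lemma first, then the summation, citing \cite[Lemma~3.5]{Strehlke20_Asym_Max} only where the nonlinearity reappears later (it does not enter these two linear estimates). Everything else is routine and can be compressed to a few lines mirroring \cite{Strehlke20_Asym_Max}.
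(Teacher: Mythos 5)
The paper does not actually prove this lemma---it defers to \cite[Corollaries 3.3, 3.4 and Lemma 3.2]{Strehlke20_Asym_Max}---and your mode-by-mode energy argument is exactly the intended route. However, the two places where you do the weighted summation (the step you yourself flag as the delicate one) both contain errors as written. For \eqref{Equ_App_LowSph_C^0_tau est by Strehk}, your key verification ``$(1+\lambda)\le \tfrac{\lambda_4}{\lambda_4-\sigma}(\lambda-\sigma)$ for all $\lambda\ge\lambda_4$'' is false: at $\lambda=\lambda_4$ it reads $1+\lambda_4\le\lambda_4$. The monotonicity of $\lambda\mapsto(\lambda-\sigma)/(1+\lambda)$ works \emph{against} you here, since the infimum over $\lambda\ge\lambda_4$ is attained at the left endpoint, where the inequality fails. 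The sharp constant $\tfrac{\lambda_4}{2(\lambda_4-\sigma)}$ comes out only if the Sobolev norms are realized spectrally as $\|v\|_{H^\ell}\sim\|(-L)^{\ell/2}v\|_{L^2}$ on the range of $\Pi$ (legitimate there since $-L\ge\lambda_4>0$), for which the needed inequality is $\tfrac{\lambda}{\lambda-\sigma}\le\tfrac{\lambda_4}{\lambda_4-\sigma}$, true with equality at $\lambda=\lambda_4$ because $\lambda\mapsto\lambda/(\lambda-\sigma)$ is decreasing. With $(1+\lambda)^\ell$ weights you instead get the constant $\tfrac{1+\lambda_4}{2(\lambda_4-\sigma)}$; harmless for the applications in this paper, but then you should not claim the stated constant.

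For \eqref{Equ_App_LowSph_L^2_tau est by Strehk} the bookkeeping does not close as written. You first replace the coercive coefficient $\lambda-\eta$ by the constant $\lambda_4-\eta$ in the per-mode bound, and only then ``sum against $(1+\lambda)^{\ell+1}$.'' Two problems: (i) once the left-hand coefficient is a constant, there is no power of $\lambda$ left to trade, so the gain from $H^\ell$ to $H^{\ell+1}$ is lost; (ii) multiplying the per-mode inequality by $(1+\lambda)^{\ell+1}$ produces the initial-data term $(1+\lambda)^{\ell+1}|v_\lambda(0)|^2$, which is \emph{not} controlled by $\|v(0)\|_{H^\ell}^2$. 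The correct order of operations is to keep $(\lambda-\eta)|v_\lambda|^2\ge(1-\eta/\lambda_4)\,\lambda|v_\lambda|^2$ and weight by $\lambda^{\ell}$ (not $\lambda^{\ell+1}$), so the left side becomes $\sim\lambda^{\ell+1}|v_\lambda|^2$ while the right side stays at levels $\ell$ (initial data) and $\ell-1$ (forcing). Also, your Young splitting for the first estimate, $2\langle v_\lambda,g_\lambda\rangle\le 2(\lambda-\sigma)|v_\lambda|^2+\tfrac{1}{2(\lambda-\sigma)}|g_\lambda|^2$, uses all the coercivity, which is fine there; but for the second estimate you must use a strictly smaller $\epsilon$ so that a positive multiple of $(\lambda-\eta)|v_\lambda|^2$ survives---your displayed constants ($2(\lambda_4-\eta)$ on the left and $\tfrac{1}{2(\lambda_4-\eta)}$ on the right) are not consistent with any single choice of $\epsilon$. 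All of these are repairable, and the repaired argument is exactly Strehlke's, but as written the proof contains a false inequality and a summation that does not yield the claimed right-hand side.
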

	The proof is essentially the same as Corollaries 3.3 and 3.4 in \cite{Strehlke20_Asym_Max}, based on Lemma 3.2 therein, so we skip the proof here.
	
	For any $(u,\phi)\in \mfk{Y}_{\ell,\sigma,\eta}\times \mfk{X}_m$ with $\|u\|_{\ell, \sigma, \eta}+\|\phi\|_{H^\ell}\leq 1$, we define $U(\cdot;u,\phi)\in \mfk{Y}_{\ell,\sigma,\eta}$ to be the unique solution to the equation
	\begin{equation} \label{Equ_App_LowSpher_Def U}
		\begin{cases}
			(\pr_\tau-L)U(\cdot; u,\phi)=\sN(u+e^{-\frac{\tau}{m}}\phi),
			\\
			\Pi\ U(\cdot, 0;u,\phi)=0.
		\end{cases}
	\end{equation}
	The existence of the solution in $\mfk{Y}_{\ell,\sigma,\eta}$ is a direct consequence of the semigroup argument together with Lemma \ref{Lem:AppC_Nonlinear}. In fact, we have an explicit expression of $U$:
	\begin{equation}\label{eq:AppCExpressionU}
		U(\cdot, \tau;u,\phi)
		= \int_0^\tau e^{(\tau-s)L}\Pi \sN(u+e^{-\frac{s}{m}}\phi)\ ds 
		- \int_\tau^\infty e^{(\tau-s)L}(1-\Pi )\sN(u+e^{-\frac{s}{m}}\phi)\ ds \,,
	\end{equation}
	where $e^{\tau L}$ denotes the heat semi-group for $L=L_{\SSp^m}$, and we can write down $U(0;u,\phi)$ explicitly:
	\[
	U(\cdot, 0;u,\phi)=-\int_0^\infty e^{-sL}(1-\Pi)\sN(u+e^{-\frac{s}{m}}\phi)\ ds.
	\]
	
	It follows directly from \eqref{eq:AppCExpressionU} that $U(\cdot; 0, 0)=0$. The following stronger estimate for $U$ allows us to conclude that $U(\cdot, u, \phi)\in \mfk{Y}_{\ell,\sigma, \eta}$ by taking $(v, \psi)=(0,0)$ therein.
	
	\begin{Lem}\label{Lem:AppCContraction}
		For $\ell>m/2+1$, there exist $r_{\ref{Lem:AppCContraction}}=r_{\ref{Lem:AppCContraction}}(\ell, \sigma, \eta)>0$ such that when $u, v\in \mfk{Y}_{\ell, \sigma, \eta}$ and $\phi, \psi\in \mfk{X}_m$ with $(\|u\|_{\ell,\sigma,\eta}+\|v\|_{\ell,\sigma,\eta}+\|\phi\|+\|\psi\|)\leq r_{\ref{Lem:AppCContraction}}$, we have
		\begin{equation}
			\| U(\cdot;u,\phi)-U(\cdot;v,\psi) \|_{\ell,\sigma,\eta}
			\lesssim_{\ell, \sigma, \eta} (\|u\|_{\ell,\sigma,\eta}+\|v\|_{\ell,\sigma,\eta}+\|\phi\|+\|\psi\|)(\|u-v\|_{\ell,\sigma,\eta}+\|\phi-\psi\|) \,.
		\end{equation}
	\end{Lem}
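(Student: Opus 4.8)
The plan is to prove Lemma \ref{Lem:AppCContraction} by a direct estimate on the explicit expression \eqref{eq:AppCExpressionU} for $U$, combining the nonlinearity bound Lemma \ref{Lem:AppC_Nonlinear} with the semigroup estimates Lemma \ref{Lem:AppCNormEst}. The key algebraic observation is that the map $(u,\phi) \mapsto \sN(u + e^{-\tau/m}\phi)$ is, modulo the nonlinearity estimate, a ``bilinear'' object, so the difference $\sN(u+e^{-\tau/m}\phi) - \sN(v+e^{-\tau/m}\psi)$ can be controlled by $(\|u\|+\|v\|+\|\phi\|+\|\psi\|)$ times $(\|u-v\| + \|\phi-\psi\|)$ in the appropriate norm.

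First I would set $w := u + e^{-\tau/m}\phi$, $w' := v + e^{-\tau/m}\psi$, and apply Lemma \ref{Lem:AppC_Nonlinear} pointwise in $\tau$ to get, for $\ell > m/2+1$,
\[
\|\sN(w(\tau)) - \sN(w'(\tau))\|_{H^{\ell-1}} \lesssim_\ell \big(\|w(\tau)\|_{H^{\ell+1}}\|(w-w')(\tau)\|_{H^\ell} + \|w(\tau)\|_{H^\ell}\|(w-w')(\tau)\|_{H^{\ell+1}}\big),
\]
provided $\|w(\tau)\|_{H^\ell}, \|w'(\tau)\|_{H^\ell} \leq 1$, which holds when $r_{\ref{Lem:AppCContraction}}$ is small since $\|\phi\|_{H^\ell} \lesssim \|\phi\|$ on the finite-dimensional space $\mfk{X}_m$ and $\|u(\tau)\|_{H^\ell}\leq e^{-\sigma\tau}\|u\|_{\ell,\sigma,\eta}$. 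Since $e^{-\tau/m}\phi$ decays at rate $1/m = \lambda_3 < \eta < \sigma$, the terms $e^{-\tau/m}\phi$ contribute to the $\mfk{Y}_{\ell,\sigma,\eta}$-type norms with the weaker weight; crucially $2/m > \sigma$ so the product $e^{-\tau/m}\phi \cdot e^{-\tau/m}\psi$ decays at rate $2/m$, faster than both $\sigma$ and $\eta$, hence is harmlessly absorbed. I would then split into the $H^\ell$-sup part and the $H^{\ell+1}$-$L^2$ part of $\|\cdot\|_{\ell,\sigma,\eta}$: for the sup part apply \eqref{Equ_App_LowSph_C^0_tau est by Strehk} to $v := U(\cdot;u,\phi) - U(\cdot;v,\psi)$ (which satisfies $\Pi v = v$ and $(\pr_s - L)v = \sN(w)-\sN(w')$, with $v(0)$ the $H^\ell$-projection part, bounded via the integral term of \eqref{eq:AppCExpressionU}); for the $L^2$ part apply \eqref{Equ_App_LowSph_L^2_tau est by Strehk}, noting that $e^{\eta\tau}\|v(\tau)\|_{H^\ell}\to 0$ follows from the $e^{\sigma\tau}$-decay already established with $\sigma > \eta$. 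In both cases the right-hand side is an integral $\int_0^\infty e^{2\sigma s}\|\sN(w)-\sN(w')\|_{H^{\ell-1}}^2\,ds$ (or with weight $e^{2\eta s}$), which by the pointwise bound and Cauchy–Schwarz factors as claimed: one factor of $e^{-\sigma s}\|w\|_{H^\ell}$ type pulls out $\|u\|_{\ell,\sigma,\eta}+\|v\|_{\ell,\sigma,\eta}+\|\phi\|+\|\psi\|$ in sup-norm, the remaining $\int e^{2\eta s}\|w-w'\|_{H^{\ell+1}}^2$ is bounded by $(\|u-v\|_{\ell,\sigma,\eta}+\|\phi-\psi\|)^2$ (using $\eta < 2/m$ to absorb the $\phi,\psi$ contributions).

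The main obstacle — or at least the point requiring the most care — is bookkeeping the two different weights $\sigma$ and $\eta$ simultaneously: the nonlinearity estimate Lemma \ref{Lem:AppC_Nonlinear} costs one derivative ($H^{\ell+1}\times H^\ell \to H^{\ell-1}$), and the semigroup smoothing in \eqref{Equ_App_LowSph_L^2_tau est by Strehk} recovers it, but only with the $L^2$-in-time $e^{2\eta\tau}$ weight, whereas the $H^\ell$-sup estimate \eqref{Equ_App_LowSph_C^0_tau est by Strehk} keeps the $e^{2\sigma\tau}$ weight but no smoothing; one must check the chain $\lambda_3 = 1/m < \eta < \sigma < 2/m < \lambda_4$ is used consistently so that (a) $\lambda_4 - \sigma > 0$ makes the constant in \eqref{Equ_App_LowSph_C^0_tau est by Strehk} finite, (b) $2/m > \sigma$ kills the pure-$\phi\psi$ cross term, and (c) $\eta < \sigma$ lets the $L^2$ weight be dominated by pieces controlled in the sup norm when pairing the two factors. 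Once this dictionary is fixed the estimate is a routine application of Cauchy–Schwarz and the triangle inequality, so I do not expect genuine difficulty, only careful choice of which factor absorbs which weight.
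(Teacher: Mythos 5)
Your overall strategy (the nonlinearity bound of Lemma \ref{Lem:AppC_Nonlinear}, the energy estimates of Lemma \ref{Lem:AppCNormEst}, Cauchy--Schwarz to distribute the weights $e^{\sigma\tau}$ and $e^{\eta\tau}$) matches the paper's, and your attention to the chain $1/m<\eta<\sigma<2/m<\lambda_4$ is directed at the right place. But there is one genuine gap: you apply Lemma \ref{Lem:AppCNormEst} to the full difference $D:=U(\cdot;u,\phi)-U(\cdot;v,\psi)$ under the assertion that $\Pi D=D$. That is false. The definition \eqref{Equ_App_LowSpher_Def U} only imposes $\Pi U(\cdot,0;u,\phi)=0$ at time zero; by \eqref{eq:AppCExpressionU} the low-mode component $(1-\Pi)U(\tau)=-\int_\tau^\infty e^{(\tau-s)L}(1-\Pi)\sN(\cdots)\,ds$ is generically nonzero for every $\tau$. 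The hypothesis $\Pi v(s)=v(s)$ in Lemma \ref{Lem:AppCNormEst} is not cosmetic: \eqref{Equ_App_LowSph_C^0_tau est by Strehk} is a forward-in-time energy estimate that relies on $-L\geq\lambda_4>\sigma$ on the range of $\Pi$, whereas on the range of $1-\Pi$ the operator $-L$ has eigenvalues $-1,-1/2,1/m$, all below $\sigma$, so no forward estimate can yield $e^{-\sigma\tau}$ decay there and the conclusion of Lemma \ref{Lem:AppCNormEst} simply fails for $D$. (Your parenthetical ``with $v(0)$ the $H^\ell$-projection part, bounded via the integral term'' is also internally inconsistent with $\Pi v=v$, which would force $v(0)=\Pi v(0)=0$.)

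The missing step is the spectral splitting $D=\Pi D+(1-\Pi)D$, which is how the paper proceeds. For $\Pi D$ one has $\Pi D(0)=0$ and Lemma \ref{Lem:AppCNormEst} applies exactly as you describe. For $(1-\Pi)D$ one must instead estimate the backward Duhamel integral $-\int_\tau^\infty e^{(\tau-s)L}(1-\Pi)\bigl(\sN(u+e^{-s/m}\phi)-\sN(v+e^{-s/m}\psi)\bigr)\,ds$ directly: the semigroup grows like $e^{(s-\tau)/m}$ on the low modes, and both the convergence of the integral and the $e^{-\sigma\tau}$ decay come from the quadratic decay of the forcing together with $\sigma<2/m$ (this inequality is needed here as well, not only to control the pure $\phi$--$\psi$ cross term); the finite rank of $1-\Pi$ then upgrades the $H^{\ell-1}$ control of the forcing to $H^{\ell}$ and $H^{\ell+1}$ control of this component. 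With this splitting inserted, the remainder of your argument goes through.
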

	\begin{proof}
		Let $D(\tau)=U(\cdot, \tau;u,\phi)-U(\cdot, \tau;v,\psi)$, then $D$ satisfies the equations
		\[
		\begin{split}
			(\pr_\tau-L)D & = \sN(u+e^{-\frac{\tau}{m}}\phi) - \sN(v+e^{-\frac{\tau}{m}}\psi) \,,  \\
			D(\cdot, 0) & = -\int_0^\infty e^{-Ls}(1-\Pi)\big(
			\sN(u+e^{-\frac{\tau}{m}}\phi) - \sN(v+e^{-\frac{\tau}{m}}\psi)
			\big)\ ds \,.
		\end{split}
		\]
		Then the proof is similar to the proof of Theorem 3.7 in \cite{Strehlke20_Asym_Max}\footnote{In \cite{Strehlke20_Asym_Max}, it was labeled as ``proof of Theorem 3.3'', and it seems to be a typo.}, but we need to single out the terms $e^{-\frac{\tau}{m}}\phi$ and $e^{-\frac{\tau}{m}}\psi$, as they are not in $\mfk{Y}_{\ell,\sigma,\eta}$. 
		For the readers' convenience, we sketch the proof as follows. From the expression of $D$, 
		\[
		\begin{split}
			D(\tau) = &\, \int_0^\tau e^{(\tau-s)L}\Pi \big(\sN(u+e^{-\frac{s}{m}}\phi)-\sN(v+e^{-\frac{s}{m}}\psi)\big) \ ds \\
			& - \int_\tau^\infty e^{(\tau-s)L}(1-\Pi )\big(\sN(u+e^{-\frac{s}{m}}\phi)-\sN(v+e^{-\frac{s}{m}}\psi)\big) \ ds \,.
		\end{split}
		\]
		When $\tau\geq s$, $\|e^{(\tau-s)L}\|\leq e^{-\lambda_4(\tau-s)}\leq e^{-\frac{2}{m}(\tau-s)}$ on $\Pi (H^{\ell})$, and when $\tau\leq s$, $\|e^{(\tau-s)L}\|\leq e^{-\frac{1}{m}(\tau-s)}=e^{\frac{1}{m}(s-\tau)}$ on $(1-\Pi)(H^{\ell})$.
		Now we estimate $\|(1-\Pi )D(\tau)\|_{\ell,\sigma,\eta}$ and $\|\Pi D(\tau)\|_{\ell,\sigma,\eta}$ separately. 
		
		\noindent {\bf $\bullet$ Estimate of $\|(1-\Pi) D(\tau)\|_{{\ell,\sigma,\eta}}$.} By the expression of the solution,
		\[
		(1-\Pi)D(\tau)
		= -\int_\tau^\infty e^{(\tau-s)L}(1-\Pi)\big(\sN(u+e^{-\frac{s}{m}}\phi)-\sN(v+e^{-\frac{s}{m}}\psi)\big)ds.
		\]
		Therefore, if we denote for simplicity $\phi_s:= e^{-\frac{s}{m}}\phi$ and $\psi_s:=e^{-\frac{s}{m}}\psi$, then
		\begin{equation}
			\begin{split}
				e^{\sigma\tau}\|(1-\Pi)D(\tau)\|_{H^{\ell}}
				& \lesssim_\ell \int_\tau^\infty e^{\frac{1}{m}(s-\tau)+\sigma\tau}\|(1-\Pi)\big(\sN(u+\phi_s)-\sN(v+\psi_s)\big)\|_{H^{\ell-1}}\ ds    \\ 
				& \lesssim_\ell \int_\tau^\infty e^{\frac{1}{m}(s-\tau)+\sigma\tau}\|\sN(u+\phi_s)-\sN(v+\psi_s)\|_{H^{\ell-1}}ds.
			\end{split}
		\end{equation}
		Here we use the fact that $(1-\Pi)$ is a finite rank operator and all the norms are equivalent. Then by Lemma \ref{Lem:AppC_Nonlinear} and triangle inquality,
		\[
		\begin{split}
			& \int_\tau^\infty e^{\frac{1}{m}(s-\tau)+\sigma\tau}\|\sN(u+\phi_s)-\sN(v+\psi_s)\|_{H^{\ell-1}}ds \\
			&\quad \lesssim_\ell \int_\tau^\infty e^{\frac{1}{m}(s-\tau)+\sigma\tau} \Big(
			(\|u(s)\|_{H^{\ell+1}} + e^{-\frac{s}{m}}\|\phi\|)\cdot
			(\|u(s)-v(s)\|_{H^{\ell}}+e^{-\frac{s}{m}}\|\phi-\psi\|) \\ 
			& \qquad\qquad\qquad\qquad\;\;\; + (\|v(s)\|_{H^{\ell}}+e^{-\frac{s}{m}}\|\psi\|)\cdot (\|u(s)-v(s)\|_{H^{\ell+1}}+e^{-\frac{s}{m}}\|\phi-\psi\|)
			\Big)\ ds.
		\end{split}
		\]
		We can bound it by $(\|u\|_{\ell,\sigma,\eta}+\|v\|_{\ell,\sigma,\eta}+\|\phi\|+\|\psi\|)(\|u-v\|_{\ell,\sigma,\eta}+\|\phi-\psi\|)$. For example:
		\[
		\begin{split}
			\int_\tau^\infty e^{\frac{1}{m}(s-\tau)+\sigma\tau} \|u(s)\|_{H^{\ell+1}}\|u(s)-v(s)\|_{H^{\ell}} \ ds
			\leq \|u(s)-v(s)\|_{\ell,\sigma,\eta} \int_\tau^\infty e^{(\frac{1}{m}-\sigma)(s-\tau)} \|u(s)\|_{H^{\ell+1}}\ ds\,,
		\end{split}
		\]
		and by the Cauchy-Schwarz inequality,
		\[
		\begin{split}
			\int_\tau^\infty e^{(\frac{1}{m}-\sigma)(s-\tau)} \|u(s)\|_{H^{\ell+1}} \ ds
			& \leq \left(
			\int_\tau^\infty e^{2(\frac{1}{m}-\sigma)(s-\tau)-2\eta s}\ ds 
			\right)^{1/2} \left(
			\int_\tau^\infty e^{2\eta s} \|u(s)\|^2_{H^{\ell+1}}\ ds
			\right)^{1/2}  \\
			& \lesssim_{\sigma, \eta}\|u(s)\|_{\ell,\sigma,\eta}.
		\end{split}
		\]
		Another term can be estimated as follows:
		\[
		\begin{split}
			\int_\tau^\infty e^{\frac{1}{m}(s-\tau)+\sigma\tau} \|u(s)\|_{H^{\ell+1}}\cdot e^{-\frac{s}{m}}\|\phi-\psi\|_{H^{\ell}}\ ds
			\lesssim_\ell \|\phi-\psi\|\int_\tau^\infty e^{-\frac{1}{m}\tau+\sigma\tau} \|u(s)\|_{H^{\ell+1}}\ ds \,,
		\end{split}
		\]
		and similarly, Cauchy-Schwarz inequality yields the desired estimate. All the rest terms can be estimated similarly.
		
		Once we bound $e^{\sigma\tau}\|(1-\Pi)D(\tau)\|_{H^{\ell}}$ as above, because $(1-\Pi)$ has finite rank, we also have \[
		e^{\eta \tau}\|(1-\Pi)D(\tau)\|_{H^{\ell+1}}\lesssim_{\ell, \sigma, \eta} e^{-(\sigma-\eta)\tau}(\|u\|_{\ell,\sigma,\eta}+\|v\|_{\ell,\sigma,\eta}+\|\phi\|+\|\psi\|)(\|u-v\|_{\ell,\sigma,\eta}+\|\phi-\psi\|) \,,
		\]
		for every $\tau\geq 0$. Integrating over $(0, +\infty)$ and using $\eta<\sigma$ show that
		\[
		\|(1-\Pi)D\|_{\ell,\sigma,\eta}\lesssim_{\ell, \sigma, \eta} (\|u\|_{\ell,\sigma,\eta}+\|v\|_{\ell,\sigma,\eta}+\|\phi\|+\|\psi\|)(\|u-v\|_{\ell,\sigma,\eta}+\|\phi-\psi\|).
		\]
		{\bf $\bullet$ Estimate of $\|\Pi D(\tau)\|_{\ell,\sigma,\eta}$.} We observe that $\Pi D(0)=0$. Then by \eqref{Equ_App_LowSph_C^0_tau est by Strehk} of Lemma \ref{Lem:AppCNormEst},
		\[
		\begin{split}
			e^{2\sigma \tau}\|\Pi D(\tau)\|_{H^{\ell}}^2
			&\leq  
			\frac{\lambda_4}{2(\lambda_4-\sigma)}\int_0^\tau e^{2\sigma s}
			\left\|
			\Pi \big(\sN(u+e^{-\frac{s}{m}}\phi)-\sN(v+e^{-\frac{s}{m}}\psi)\big)
			\right\|^2_{H^{\ell-1}}ds
			\\
			&\leq 
			\frac{\lambda_4}{2(\lambda_4-\sigma)}\int_0^\tau e^{2\sigma s}
			\left\| 
			\sN(u+e^{-\frac{s}{m}}\phi)-\sN(v+e^{-\frac{s}{m}}\psi) 
			\right\|^2_{H^{\ell-1}}ds.
		\end{split}
		\]
		Then Lemma \ref{Lem:AppC_Nonlinear} shows that
		\[
		\begin{split}
			e^{2\sigma \tau}\|\Pi D(\tau)\|_{H^{\ell}}^2 
			& \lesssim_{\ell, \sigma} \int_0^\tau e^{2\sigma s} \Big(
			\|u+e^{-\frac{s}{m}}\phi\|^2_{H^{\ell+1}}\cdot \|(u-v) + e^{-\frac{s}{m}}(\phi-\psi)\|^2_{H^{\ell}} \\
			& \qquad \qquad \quad + \|v+e^{-\frac{s}{m}}\psi\|^2_{H^{\ell}}\cdot \|(u-v)+e^{-\frac{s}{m}}(\phi-\psi)\|^2_{H^{\ell+1}}
			\Big)\ ds \,.
		\end{split}
		\]
		which is bounded by
		\[\begin{split}
			& \int_0^\infty e^{2\sigma s}
			\Big(
			(\|u\|^2_{H^{\ell+1}}+e^{-\frac{2s}{m}}\|\phi\|^2) \cdot (\|u-v\|^2_{H^{\ell}}+e^{-\frac{2s}{m}}\|\phi-\psi\|^2) \\\
			& \qquad\quad\; +
			(\|v\|^2_{H^{\ell}} + e^{-\frac{2s}{m}}\|\psi\|^2) \cdot (\|u-v\|^2_{H^{\ell+1}} + e^{-\frac{2s}{m}}\|\phi-\psi\|^2)
			\Big)\ ds \\
			& \lesssim_{\ell, \sigma, \eta} (\|u\|_{\ell,\sigma,\eta}+\|v\|_{\ell,\sigma,\eta}+\|\phi\|+\|\psi\|)^2(\|u-v\|_{\ell,\sigma,\eta}+\|\phi-\psi\|)^2 \,.
		\end{split}
		\]
		Here we use the fact that $e^{2\sigma s}=e^{2\eta s}e^{2(\sigma-\eta)s}$, and $\sigma-\eta-1/m<\sigma-2/m<0$.
		
		Similarly, by \eqref{Equ_App_LowSph_L^2_tau est by Strehk} of Lemma \ref{Lem:AppCNormEst}, \[
		\int_0^\infty e^{2\eta\tau}\|\Pi D(\tau)\|_{H^{\ell+1}}^2 \ d\tau
		\lesssim_{\ell, \eta} \int_0^\infty e^{2\eta\tau}\|\sN(u+e^{-\frac{s}{m}}\phi)-\sN(v+e^{-\frac{s}{m}}\psi)\|_{H^{\ell-1}}^2\ d\tau.
		\]
		Because $e^{2\eta\tau}\leq e^{2\sigma\tau}$ when $\tau\geq 0$, we obtain similar bound as above. Therefore, we have \[
		\|\Pi D\|_{\ell,\sigma,\eta} 
		\lesssim_{\ell, \sigma, \eta}(\|u\|_{\ell,\sigma,\eta} + \|v\|_{\ell,\sigma,\eta}+\|\phi\|+\|\psi\|)(\|u-v\|_{\ell,\sigma,\eta}+\|\phi-\psi\|) \,.
		\]
		Combining the estimates of $\|(1-\Pi )D\|_{\ell,\sigma,\eta}$ and $\|\Pi D\|_{\ell,\sigma,\eta}$ above gives the desired inequality.
	\end{proof}

	\subsection{Proof of Lemma \ref{Lem:AppCMainSpherical}}
	\begin{proof}
		We first construct the map $\Xi$. Fix a sufficiently large integer $\ell=\ell(m)$ so that $H^\ell(\SSp^m)$ is embedded in $C^4(\SSp^m)$; and fix $1/m<\eta<\sigma<2/m$. Let $\mfk{P}:=\mfk{Y}_{\ell,\sigma,\eta}\times \mfk{X}_n$, equipped with the norm $\|(u,\phi)\|_{\mfk{P}}=\|u\|_{\ell,\sigma,\eta}+\|\phi\|$. We define a map $\mfk T: \mfk{P}\to \mfk{P}$ by 
		\begin{align}
			\mfk T(u,\phi) := (U(\cdot; u,\phi),0) \,. \label{Equ_App_LowSpher_Define mfkT}
		\end{align}
		\textbf{Claim.} {\it
			There exist $0<\varkappa<\kappa<r_{\ref{Lem:AppCContraction}}(\ell, \sigma, \eta)$ such that $\id-\mfk T$ is a $C^1$ diffeomorphism from a subdomain of $B^{\mfk{P}}_{\kappa}$ onto $B_{\varkappa}^{\mfk{P}}$.
		}
		\begin{proof}[Proof of the Claim.]
			First note that $\id-\mfk T$ is $C^1$ from the explicit expression of $U$ in \eqref{eq:AppCExpressionU}. Also by Lemma \ref{Lem:AppCContraction}, for every $(u_i, \phi_i)\in \mfk{P}$ with $\|(u_i, \phi_i)\|_{\mfk{P}}\leq r_{\ref{Lem:AppCContraction}}/2$, we have 
			\begin{align}
				\|U(\tau;u_1,\phi_1)-U(\tau;u_2,\phi_2)\|_{\ell,\sigma,\eta} \leq \bar C(\|(u_1,\phi_1)\|_{\mfk{P}}+\|(u_2,\phi_2)\|_{\mfk{P}})\cdot \|(u_1,\phi_1)-(u_2,\phi_2)\|_{\mfk{P}} \,.  \label{Equ_App_LowSpher_|U_1-U_2| est}
			\end{align}
			where $\bar C=\bar C(\ell, \sigma, \eta)$. Therefore, by fixing the choice of $\varkappa\in (0, \min\{r_{\ref{Lem:AppCContraction}}/2,1/(8\bar C)\})$ and $\kappa=2\varkappa$, we have $D(\id-\mfk T)$ is invertible at every point in $B^{\mfk{P}}_\kappa$.
			
			We are left to show that for every $(v, \psi)\in B^{\mfk{P}}_\varkappa$, there's a unique element in $B^{\mfk{P}}_\kappa\cap (\id-\mfk T)^{-1}(v, \psi)$. To see this, we define a map \[
			\mfk F_{v, \psi}: \mfk{P} \to \mfk{P}, \quad \mfk F(u,\phi):=(v,\psi) + \mfk T(u,\phi) \,.
			\]
			By the choice of $\kappa, \varkappa$ above (and recall that $U(\cdot;0,0)=0$), one can easily check that $\mfk F_{v, \psi}$ is a contraction map from $B^{\mfk{P}}_\kappa$ to itself, and thus has a unique fixed point $(\bar v, \bar \psi)\in B^{\mfk{P}}_\kappa$. This shows that $B^{\mfk{P}}_\kappa\cap (\id-\mfk T)^{-1}(v, \psi) = \{(\bar v, \bar \psi)\}$ consists of a single element.
		\end{proof}
		Now for every $\psi\in \mfk{X}_m$ with $\|\psi\|\leq \varkappa$, we define \[
		\Xi(\psi)=\xi(\psi)+e^{-\frac{\tau}{m}}\psi\,,
		\]
		where $(\xi(\psi),\psi):=(\id-\mfk T)^{-1}(0,\psi)$. Also by the Claim, $\Xi$ is a $C^1$-map into $\mfk{Y}_{\ell, \sigma, \eta}$. Let us verify that $\Xi$ satisfies all the items in Lemma \ref{Lem:AppCMainSpherical}.
		
		Item \ref{Item_App_LowSph_Xi(0)=0}: Because $(\id-\mfk T)(0,0)=(0,0)$, we have $(\xi(0), 0)=(\id-\mfk T)^{-1}(0,0)=(0, 0)$. Thus $\Xi(0)=\xi(0)+0=0$.
		
		Item \ref{Item_App_LowSph_Xi RMCF}: Since by \eqref{Equ_App_LowSpher_Define mfkT}, \[
		(0, \psi) = (\id - \mfk T)(\xi(\psi), \psi) = (\xi(\psi)- U(\cdot; \xi(\psi), \psi), \psi)\,,
		\] 
		we see that $\Xi(\psi)=U(\cdot;\xi(\psi);\psi)+e^{-\frac{\tau}m}\psi$ satisfies \[
		(\partial_\tau-L)\Xi(\psi) = (\partial_\tau-L)U(\cdot; \xi(\psi);\psi) = \sN(\xi(\psi)+e^{-\frac{\tau}{m}}\psi) = \sN(\Xi(\psi))\,,
		\]
		where the second equality follows from \eqref{Equ_App_LowSpher_Def U}. Thus $\Xi(\psi)$ solves the rescaled mean curvature flow equation \eqref{Equ_App_LowSpher_RMCF equ}. By the choice of $\ell$ and the Sobolev embedding theorem, $\mfk{Y}_{\ell,\sigma,\eta}$ is embedded in $L^\infty((0,+\infty), C^4(\bS^m))$. Hence the fact that $\Xi(\psi)\in \mfk{Y}_{\ell, \sigma, \eta}$ solves the rescaled mean curvature flow equation implies that $\Xi$ is a $C^1$-map into $C^2(\bS^m\times\R_{\geq 0})$ as well; and when $\varkappa=\varkappa(m, \sigma)$ is chosen sufficiently small, by Sobolev embedding, $\|\Xi(\psi)\|_{C^2}\leq \|\xi(\psi)\|_{C^2}+\|e^{-\frac{\tau}{m}}\psi\|_{C^2}\leq 1$.
		
		Item \ref{Item_App_LowSph_Xi-psi decay}: Let $w_i=\Xi(\psi_i)$, then $u_i:=w_i-e^{-\frac{\tau}{m}}\psi_i=U(\cdot;u_i,\psi)$, and by Lemma \ref{Lem:AppCContraction},
		\[
		\begin{split}
			\|u_1 - u_2\|_{\ell,\sigma,\eta} 
			& \lesssim_{\ell, \sigma, \eta} (\|u_1\|_{\ell,\sigma,\eta}+\|u_2\|_{\ell,\sigma,\eta}+\|\psi_1\|+\|\psi_2\|)(\|u_1-u_2\|_{\ell,\sigma,\eta}+\|\psi_1-\psi_2\|) \\
			& \lesssim_{\ell, \sigma, \eta} \varkappa (\|u_1-u_2\|_{\ell,\sigma,\eta}+\|\psi_1-\psi_2\|) \,.
		\end{split}
		\]
		Since $\ell, \eta$ are fixed with respect to $m, \sigma$, by choosing $\varkappa(m, \sigma)>0$ sufficiently small, $\|u_1-u_2\|_{\ell,\sigma,\eta}$ on the right-hand side can be absorbed by the left-hand side, and hence \[
		\|u_1-u_2\|_{\ell,\sigma,\eta}\lesssim_{m, \sigma} \varkappa\|\psi_1-\psi_2\| \,.
		\]
		Recalling the definition of $\|\cdot\|_{\ell, \sigma, \eta}$ in \eqref{Equ_App_LowSpher_Def ||_l,sigma,eta}, combine this with Sobolev embedding and notice that $u_i=w_i-e^{-\frac{\tau}{m}}\psi_i$, we get
		\[
		\|(w_1 - w_2)(\cdot, \tau) - e^{-\frac{\tau}m}(\psi_1-\psi_2)\|_{C^4(\bS^m)} \lesssim_{m, \sigma} \varkappa e^{-\sigma\tau}\|\psi_1 - \psi_2\|_{L^2(\bS^m)} \,.
		\]
		The discussion above yields the same estimate if $\varkappa$ is replaced by any $\varkappa'\in (0, \varkappa]$, and $\|\psi_i\|\leq \varkappa'$.
	\end{proof}
	
	\begin{remark}\label{rmk_App_spherical_rot}
		Suppose $\mbfB\in \rmO(m+1)$ is an element in the rotation group of the sphere, and $\psi\in B_r^{\mfk{X}_m}$. Note that the expression of rescaled mean curvature flow and the linearized operator is invariant under rotation. Therefore, the expression of $\sN$ is also invariant under rotation, i.e., $\sN(f\circ \mbfB)=\sN(f)\circ \mbfB$. This implies that $U(\tau;u\circ \mbfB,\phi\circ \mbfB)=U(\tau;u,\phi)\circ \mbfB$. As a consequence, $\mfk T(u\circ \mbfB,\phi\circ \mbfB)=\mfk T(u,\phi)\circ\mbfB$, which implies that $\Xi(\psi\circ\mbfB)=\Xi(\psi)\circ\mbfB$. In particular, by the uniqueness in the fixed point theorem, this implies that if $\psi\circ\mbfB=\psi$, then $\Xi(\psi)\circ\mbfB=\Xi(\psi)$.
	\end{remark}
	
	\subsection{Proof of Lemma \ref{Lem_L^2Noncon_LowSphericalMode}.}
	We first construct a family of spherical arrival time functions $\scV$ in $\R^{n-k+1}$, and then define \[
	\scU:=\{\bu:\bu(x,y)=\bu_\circ(x),\, \bu_\circ\in\scV\}.
	\]
	As a family of rescaled mean curvature flows, $\scV$ is explicitly constructed in Appendix \ref{App:ExistenceSphericalArrival} as the image of the closed ball of radius $\kappa_{\ref{Lem_L^2Noncon_LowSphericalMode}}(n)\leq \varkappa$ in Lemma \ref{Lem:AppCMainSpherical} with $m=n-k$ and $\sigma=3/(2m)=\frac32 \gamma_{n,k}$. In fact, any graphical function constructed in Appendix \ref{App:ExistenceSphericalArrival}, denoted by $\varphi_{\bu_\circ}$, is the rescaled mean curvature flow for some spherical arrival time function $\bu_\circ$ in $\R^{n-k+1}$. We check that when $\kappa_{\ref{Lem_L^2Noncon_LowSphericalMode}}(n)$ is chosen sufficiently small, $\scU$ satisfies the properties of this lemma.
	
	To prove item \ref{item_0_Lem_L^2Noncon_LowSphericalMode}, because we extend $\bu$ based on the spherical part, we only need to consider those $\mbfB$ that rotate the spherical part of $\cC_{n,k}$. Then the proof of item \ref{item_0_Lem_L^2Noncon_LowSphericalMode} is a consequence of Remark \ref{rmk_App_spherical_rot}.
	
	To prove item \ref{item_1_Lem_L^2Noncon_LowSphericalMode}, we only need to check that there exists $\beta(n)>0$ so that \ref{Item_ArrivalT2} holds for all $\bu_\circ\in\scV$. Let us estimate $-\nabla^2\bu_\circ$. In \cite[Page 189]{Huisken93_ArrivalTimeC2}, Huisken showed that at the singular point, $D^2\bu_{\circ}=-I/(n-k)$; at any regular point, for any unit vector $\nu$, 
	\[
	D_\nu D_\nu \bu_{\circ}=-\frac{1}{(n-k)}-\left(\frac{\Delta H}{H^3}+\frac{|A|^2-H^2/(n-k)}{H^2}\right).
	\]
	Note that the above quantity is exactly $-1/(n-k)$ for the shrinking sphere, and in general, $D^2\bu_\circ+I/(n-k)$ is uniformly bounded by the $C^4$-norm of $\varphi_{\bu_\circ}$, which is small whenever $\kappa_{\ref{Lem_L^2Noncon_LowSphericalMode}}$ small, in terms of item \ref{item_2_Lem_L^2Noncon_LowSphericalMode}. This gives the uniform Hessian bound for $\bu_\circ$. 
	
	To prove item \ref{item_2_Lem_L^2Noncon_LowSphericalMode}, because $\varphi_\bu(\theta,y)=\varphi_{\bu_\circ}(\theta)$, we only need to prove the desired estimates for $\bu_\circ$. The estimates are direct results of items \ref{Item_App_LowSph_Xi(0)=0} and \ref{Item_App_LowSph_Xi-psi decay} of Lemma \ref{Lem:AppCMainSpherical} by the choice of $\sigma$ above, and possibly choosing $\kappa_{\ref{Lem_L^2Noncon_LowSphericalMode}}(n)$ smaller.

	\section{Analysis of parabolic Jacobi equation} \label{Append_Ana Parab Jac}
	
	\begin{Prop}\label{Prop_App_Solve Parab Jac equ w spacetime C^0 est}
		Suppose $\eps\in (0, 1)$, $\lambda\in \RR\setminus \BB_\eps(\sigma(\cC_{n,k}))$, $\lambda'\geq \lambda+1+\eps$, $|\lambda|, |\lambda'|\leq \eps^{-1}$. Suppose $h\in C^\al(\cC_{n,k}\times \RR_{\geq 0})$ satisfies the pointwise bound, \[
		|h(\theta, y, \tau)| \leq e^{-\lambda \tau}(1+|y|^2)^{\lambda'}\,.
		\]
		Then there exists a solution $w\in C^{2,\al}(\cC_{n,k}\times \RR_{\geq 0})$ to 
		\begin{align*}
			\begin{cases}
				(\partial_\tau - L_{n,k})w = h\,, & \text{ on }\cC_{n,k}\times \RR_{>0}\,, \\
				\Pi_{\geq \lambda}w(\cdot, 0) = 0\,, & \text{ on } \cC_{n,k} \,.
			\end{cases}
		\end{align*}
		with pointwise estimate, \[
		|w(\theta, y, \tau)| \lesssim_{n,\eps} e^{-\lambda \tau}(1+|y|^2)^{\lambda'} \,.
		\]
	\end{Prop}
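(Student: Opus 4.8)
The plan is to split $h$ along the spectral modes of $-L_{n,k}$, solve the ``low'' and ``high'' parts separately, and then upgrade a Gaussian-weighted $L^2$ decay estimate for the high part to the stated pointwise bound by combining interior parabolic regularity on a bounded region with a supersolution comparison near $|y|=\infty$. Since $\lambda\notin\sigma(\cC_{n,k})$ one has $\Pi_{\geq\lambda}=\Pi_{>\lambda}$, and since the eigenvalues of $-L_{n,k}$ form a discrete set bounded below by $-1$ with finite-dimensional eigenspaces, $\Pi_{<\lambda}$ has finite rank, its range spanned by the finitely many $\phi_i(\theta)h_j(y)$ with $\mu_i+j/2-1<\lambda$, for each of which $j/2<\lambda+1\le\lambda'-\eps$. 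Writing $h=h_<+h_>$ with $h_<:=\Pi_{<\lambda}h$, $h_>:=\Pi_{>\lambda}h$, the coefficients of $h_<(\cdot,\tau)=\sum c_{ij}(\tau)\phi_ih_j$ satisfy $|c_{ij}(\tau)|=\|\phi_ih_j\|_{L^2}^{-2}|\langle h(\cdot,\tau),\phi_ih_j\rangle_{L^2}|\lesssim_{n,\lambda'}e^{-\lambda\tau}$ (pairing the pointwise bound against the Gaussian weight), and since the $h_j$ occurring have degree $<2\lambda'$ we get $|h_<|,|h_>|\lesssim e^{-\lambda\tau}(1+|y|^2)^{\lambda'}$.

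For the low modes, write $w_<=\sum b_{ij}(\tau)\phi_ih_j$; this reduces $(\partial_\tau-L_{n,k})w_<=h_<$ to the scalar ODEs $b_{ij}'+\gamma_{ij}b_{ij}=c_{ij}$ with $\gamma_{ij}:=\mu_i+j/2-1<\lambda$, and since $\gamma_{ij}<\lambda$ one takes the backward solution $b_{ij}(\tau):=-\int_\tau^\infty e^{\gamma_{ij}(s-\tau)}c_{ij}(s)\,ds$, which converges ($\gamma_{ij}-\lambda\le-\eps$) with $|b_{ij}(\tau)|\lesssim_\eps e^{-\lambda\tau}$; so $w_<\in C^{2,\al}$, $|w_<|\lesssim_{n,\eps}e^{-\lambda\tau}(1+|y|^2)^{\lambda'}$, and $w_<(\cdot,0)$ lies in $\mathrm{range}(\Pi_{<\lambda})$ (if $\lambda\le-1$ there are no such modes and $w_<=0$). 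For the high modes, take the Duhamel solution with zero initial data $w_>(\cdot,\tau):=\int_0^\tau e^{(\tau-s)L_{n,k}}h_>(s)\,ds$; since $h_>(\cdot,s)\in L^2(\cC_{n,k})$ lies in $\mathrm{range}(\Pi_{>\lambda})$ and $\|e^{tL_{n,k}}\Pi_{>\lambda}\|_{L^2\to L^2}\le e^{-\lambda^+ t}$ with $\lambda^+:=\min(\sigma(\cC_{n,k})\cap(\lambda,\infty))\ge\lambda+\eps$, the integral converges, solves $(\partial_\tau-L_{n,k})w_>=h_>$ with $w_>(\cdot,0)=0$, stays in $\mathrm{range}(\Pi_{>\lambda})$, is $C^{2,\al}_{\loc}$ by interior Schauder, and obeys $\|w_>(\cdot,\tau)\|_{L^2}\le\int_0^\tau e^{-\lambda^+(\tau-s)}\|h_>(s)\|_{L^2}\,ds\lesssim_{n,\lambda',\eps}e^{-\lambda\tau}$. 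Separately, since $e^{tL_{n,k}}$ is $e^t$ times the product of the Markov heat semigroup on $\bS^{n-k}$ and the Ornstein--Uhlenbeck (Mehler) semigroup on $\R^k$ -- so $|e^{tL_{n,k}}f|(\theta,y)\le e^{t}\,\mathbb{E}\big[|f|(\theta',e^{-t/2}y+\sqrt{2(1-e^{-t})}\,G)\big]$ -- Peetre's inequality and Gaussian moments yield the crude bound $|w_>(\theta,y,\tau)|\lesssim_{n,\lambda',\eps}e^{\tau}(1+|y|^2)^{\lambda'}$, which has at most polynomial growth in $|y|$, all the spatial control needed in the comparison step below.

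The upgrade (the Gaussian $L^2$ norm being blind to polynomial weights) proceeds on two regions. Fix $R_0=R_0(n,\lambda',\eps)$ large. On $\{|y|\le 3R_0\}$ the drift $-\tfrac12 y\cdot\nabla_y$ has bounded coefficients, so interior parabolic estimates for $(\partial_\tau-L_{n,k})w_>=h_>$ together with the $L^2$ decay give $|w_>|\lesssim_{n,R_0,\eps}e^{-\lambda\tau}$ on $\{|y|\le 2R_0\}$, hence $|w|\lesssim_{n,R_0,\eps}e^{-\lambda\tau}\le C e^{-\lambda\tau}(1+|y|^2)^{\lambda'}$ there. For $\{|y|>R_0\}$ use the supersolution $\bar w:=A e^{-\lambda\tau}(1+|y|^2)^{\lambda'}$: from $L_{n,k}[(1+|y|^2)^{\lambda'}]=(1-\lambda')(1+|y|^2)^{\lambda'}+O_{k,\lambda'}((1+|y|^2)^{\lambda'-1})$ one gets
\[
(\partial_\tau-L_{n,k})\bar w = A e^{-\lambda\tau}(1+|y|^2)^{\lambda'}\Big[(\lambda'-1-\lambda)+O_{k,\lambda'}\big((1+|y|^2)^{-1}\big)\Big]\ \ge\ \tfrac{\eps}{2}A e^{-\lambda\tau}(1+|y|^2)^{\lambda'}\,,
\]
on $\{|y|\ge R_0\}$ once $R_0$ is large, using the hypothesis $\lambda'-1-\lambda\ge\eps$; thus $(\partial_\tau-L_{n,k})\bar w\ge|h|$ for $A\ge2/\eps$. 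Choosing $A=A(n,\lambda',\eps)$ large, $\bar w$ -- augmented by an auxiliary barrier $\delta(1+|y|^2)^N$, $N=N(\lambda',\eps)>\max\{\lambda',0\}$, itself a supersolution on $\{|y|\ge R_0\}$ after enlarging $R_0$ -- dominates $\pm w$ on the parabolic boundary of $\{|y|>R_0,\tau>0\}$: on $\{|y|=R_0\}$ by the bounded-region bound, on $\{\tau=0\}$ since $w(\cdot,0)\in\mathrm{range}(\Pi_{<\lambda})$ (or $=0$), and at $|y|=\infty$ since $\delta(1+|y|^2)^N$ outgrows the crude bound on $w$. The minimum principle for $\partial_\tau-L_{n,k}$ -- conjugating by $e^{-\tau}$ to remove the zeroth order term, using that the drift points inward and that all functions grow at most polynomially in $|y|$ (a Phragm\'en--Lindel\"of argument for the Ornstein--Uhlenbeck drift) -- then gives $|w|\le\bar w+\delta(1+|y|^2)^N$ on $\{|y|>R_0\}$; letting $\delta\to 0$ and combining with the bounded-region bound gives $|w(\theta,y,\tau)|\lesssim_{n,\eps}e^{-\lambda\tau}(1+|y|^2)^{\lambda'}$ (the constant depending only on $n,\eps$ as $|\lambda|,|\lambda'|\le\eps^{-1}$), while $w:=w_<+w_>\in C^{2,\al}$ solves $(\partial_\tau-L_{n,k})w=h$ with $\Pi_{\geq\lambda}w(\cdot,0)=0$.

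The main obstacle is this last upgrade: the spectral gap producing the $e^{-\lambda\tau}$ decay is only seen by the Gaussian-weighted $L^2$ norm, which cannot detect the polynomial weight $(1+|y|^2)^{\lambda'}$; one therefore has to feed the \emph{pointwise} polynomial bound on the source into a comparison argument, and the fact that the natural supersolution $e^{-\lambda\tau}(1+|y|^2)^{\lambda'}$ works at all is exactly where the hypothesis $\lambda'\ge\lambda+1+\eps$ enters. Running the comparison principle on the unbounded slab $\{|y|>R_0\}$ for the Ornstein--Uhlenbeck-type operator then requires the growth bookkeeping -- the auxiliary $\delta$-barrier, the $e^{-\tau}$ conjugation -- to be carried out with some care. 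By contrast, the finite-rank reduction of the low modes, the Duhamel construction of the high modes, and the Schauder regularity are routine.
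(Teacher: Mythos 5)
Your proof is correct and follows essentially the same route as the paper: spectral splitting at $\lambda$, backward-in-time integration of the finitely many low modes, Duhamel for the high modes, and an upgrade of the Gaussian-$L^2$ decay to the weighted pointwise bound via interior parabolic estimates on a bounded region plus a comparison with the supersolution $e^{-\lambda\tau}(1+|y|^2)^{\lambda'}$ for $|y|$ large --- exactly where the hypothesis $\lambda'\geq \lambda+1+\eps$ enters, as in the paper's Lemma \ref{lem:growth_parabolic_Jacobi_proj_large}. The only organizational differences are that the paper runs the comparison at the level of the homogeneous semigroup $e^{\tau L}$ acting on $\Pi_{>\lambda}h$ and then integrates in Duhamel's formula, and that it justifies the maximum principle on the unbounded region $\cC_{n,k}\setminus Q_R$ by a Gaussian-weighted energy argument rather than your polynomial-barrier Phragm\'en--Lindel\"of argument; both variants are sound.
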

	
	\begin{Prop} \label{Prop_App_Parab equ Spacetime C^0 bd from initial data}
		Suppose $\eps\in (0, 1)$, $\eps'\in (0, \eps]$; $\lambda, \lambda', \mu>0$ satisfy 
		\begin{align*}
			&  [\mu - \eps', \mu + \eps']\cap \sigma(\cC_{n,k}) \subset \{\mu\}\,, & &
			\mu+\eps\leq \lambda\leq \min(\sigma(\cC_{n,k})\cap \RR_{>\mu})-\eps\,, \\
			&
			|\lambda|, |\lambda'|\leq \eps^{-1}\,, & &
			\lambda'\geq \lambda+1+\eps\,.
		\end{align*}  
		Let $w\in C^2_{loc}\cap L^2(\cC_{n, k}\times \RR_{\geq 0})$ be a solution to 
		\begin{align*}
			(\partial_\tau - L_{n,k})w = 0\,,   
		\end{align*}
		on $\cC_{n,k}\times \RR_{>0}$, satisfying 
		\begin{align*}
			\limsup_{\tau\to +\infty} e^{(\mu-\eps')\tau}\|w(\cdot, \tau)\|_{L^2} < +\infty\,, & &
			\liminf_{\tau\to +\infty} e^{(\mu+\eps')\tau}\|w(\cdot, \tau)\|_{L^2} > 0\,,
		\end{align*}
		and pointwise bound on initial data, \[
		|w(\theta, y, 0)| \leq (1+|y|^2)^{\lambda'}\,.
		\]
		Then $\mu\in \sigma(\cC_{n,k})$, $\psi:= \Pi_{=\mu}(w(\cdot, 0))$ is non-zero, and the following pointwise estimate holds:
		\begin{align*}
			|w(\theta, y, \tau) - e^{-\mu\tau}\psi(\theta, y)| \lesssim_{n, \eps} e^{-\lambda\tau}(1+|y|^2)^{\lambda'}\,.
		\end{align*}
	\end{Prop}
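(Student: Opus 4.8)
The plan is to combine a spectral decomposition of $w$ along the eigenbasis of $-L_{n,k}$ (which handles the $L^2$ hypotheses and isolates $\mu$ and $\psi$) with a maximum-principle barrier argument for the pointwise bound, the key being that the gap hypothesis $\lambda'\geq\lambda+1+\eps$ makes $e^{-\lambda\tau}(1+|y|^2)^{\lambda'}$ a supersolution outside a bounded cylinder.

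First, since $w\in C^2_{loc}\cap L^2$ solves $(\partial_\tau-L_{n,k})w=0$ on $\cC_{n,k}\times\RR_{>0}$, interior parabolic regularity makes $w$ smooth there, and a crude barrier $Ce^{C_*\tau}(1+|y|^2)^{\lambda'}$ (a supersolution once $C_*=C_*(n,\eps)$ is large, compare with the computation in the last paragraph) bounds $|w|$ and shows $w(\cdot,\tau)$ has polynomial growth in $y$ for every $\tau$. Picking an $L^2(\cC_{n,k})$-orthonormal eigenbasis $\{e_{\gamma,m}\}$ of $-L_{n,k}$, self-adjointness of $L_{n,k}$ (boundary terms vanish by the Gaussian weight against the polynomial growth of $w$) gives $\langle w(\cdot,\tau),e_{\gamma,m}\rangle=e^{-\gamma\tau}\langle w(\cdot,0),e_{\gamma,m}\rangle$, hence $\|w(\cdot,\tau)\|_{L^2}^2=\sum_{\gamma\in\sigma(\cC_{n,k}),\,m}e^{-2\gamma\tau}\langle w(\cdot,0),e_{\gamma,m}\rangle^2$. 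The $\limsup$ hypothesis then forces $\langle w(\cdot,0),e_{\gamma,m}\rangle=0$ for all $\gamma\leq\mu-\eps'$, and the gap condition $[\mu-\eps',\mu+\eps']\cap\sigma(\cC_{n,k})\subset\{\mu\}$ upgrades this to all $\gamma<\mu$. The $\liminf$ hypothesis forces $\Pi_{=\mu}w(\cdot,0)\neq0$, since otherwise $\|w(\cdot,\tau)\|_{L^2}\leq e^{-\gamma_{\mathrm{next}}\tau}\|w(\cdot,0)\|_{L^2}$ with $\gamma_{\mathrm{next}}:=\min(\sigma(\cC_{n,k})\cap\RR_{>\mu})\geq\lambda+\eps>\mu+\eps'$. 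Thus $\mu\in\sigma(\cC_{n,k})$ and $\psi:=\Pi_{=\mu}w(\cdot,0)$ is the desired non-zero eigenfunction.

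Next, set $w^{h}:=w-e^{-\mu\tau}\psi$, which solves the homogeneous equation with all modes at eigenvalue $\geq\gamma_{\mathrm{next}}$. The expansion above gives $\|w^{h}(\cdot,\tau)\|_{L^2}\lesssim_{n,\eps}e^{-(\lambda+\eps)\tau}$, using $\gamma_{\mathrm{next}}\geq\lambda+\eps$ and $\|w(\cdot,0)\|_{L^2}\lesssim_{n,\eps}1$ (from the pointwise bound on $w(\cdot,0)$ and Proposition \ref{prop:entropy control outside ball of radius R}). Also $|w^{h}(\cdot,0)|\leq|w(\cdot,0)|+|\psi|\lesssim_{n,\eps}(1+|y|^2)^{\lambda'}$, since the sup estimate for elements of $\rmW_\mu$ with controlled $L^2$-norm (as in the proof of Lemma \ref{Lem_Effect of Cutoff}) yields $|\psi|\lesssim_{n,\eps}(1+|y|^2)^{\mu+1}$ and $\mu+1<\lambda'$. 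Finally, local parabolic estimates (Gaussian-weighted $L^2$ dominates ordinary $L^2$ on any fixed ball) convert the $L^2$-decay into $\|w^{h}(\cdot,\tau)\|_{C^0(Q_R)}\lesssim_{n,\eps,R}e^{-(\lambda+\eps)\tau}$ for $\tau\geq1$.

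The last and main step is the barrier estimate. A direct computation gives $L_{n,k}[(1+|y|^2)^{\lambda'}]=(1-\lambda')(1+|y|^2)^{\lambda'}+O((1+|y|^2)^{\lambda'-1})$, so $\Phi:=Ae^{-\lambda\tau}(1+|y|^2)^{\lambda'}$ satisfies $(\partial_\tau-L_{n,k})\Phi\geq Ae^{-\lambda\tau}(1+|y|^2)^{\lambda'-1}\big((\lambda'-1-\lambda)(1+|y|^2)-c_1\big)\geq0$ on $\{|y|^2\geq R_*^2\}$ for a suitable $R_*=R_*(n,\eps)$ — this is exactly where $\lambda'-1-\lambda\geq\eps>0$ enters. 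Comparing $w^{h}$ with $\Phi$ on $\{\tau\geq1,\ |y|^2\geq R_*^2\}$, with boundary data at $\tau=1$ controlled by the crude barrier of Step 1, boundary data on $\{|y|=R_*\}$ controlled by the $C^0$-decay of Step 2, and the behavior as $|y|\to\infty$ handled in the usual way (add $\delta e^{C_{**}\tau}(1+|y|^2)^{\lambda'+1}$, which is also a supersolution, and let $\delta\to0$), the maximum principle yields $|w^{h}|\leq Ae^{-\lambda\tau}(1+|y|^2)^{\lambda'}$ there with $A\lesssim_{n,\eps}1$. On $\{|y|^2\leq R_*^2\}$ and on $\{\tau\leq1\}$ the bound is immediate from Step 2 and the crude barrier. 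Hence $|w-e^{-\mu\tau}\psi|=|w^{h}|\lesssim_{n,\eps}e^{-\lambda\tau}(1+|y|^2)^{\lambda'}$, completing the proof. I expect the main difficulty to be precisely this matching: one must choose the barrier so that it respects both the sharp weight $(1+|y|^2)^{\lambda'}$ and the sharp rate $e^{-\lambda\tau}$ simultaneously, which is only possible because of the gap $\lambda'\geq\lambda+1+\eps$, and then verify that the bounded region $\{|y|\leq R_*\}$ left uncovered by the supersolution is exactly the one where the $L^2$-to-$C^0$ estimate from the spectral decomposition applies.
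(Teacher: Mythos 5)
Your proposal is correct in outline and follows essentially the same route as the paper: the spectral decomposition handles the identification of $\mu$ and the non-vanishing of $\psi$ exactly as in the paper's proof, and the pointwise bound on $w-e^{-\mu\tau}\psi$ is obtained from the same supersolution $e^{-\lambda\tau}(1+|y|^2)^{\lambda'}$, whose supersolution property outside a bounded cylinder rests on precisely the gap $\lambda'-\lambda-1\geq\eps$ that you isolate. The only structural difference is that the paper delegates this barrier step to Lemma \ref{lem:growth_parabolic_Jacobi_proj_large}, applied to $h:=w(\cdot,0)-\psi$ (which satisfies $\Pi_{<\lambda}h=0$ since all surviving modes other than $\mu$ have eigenvalue at least $\lambda+\eps$), whereas you re-derive its content inline.

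One technical point in your barrier step deserves attention. To run the comparison on the unbounded region $\{|y|\geq R_*\}$ you propose the classical Phragm\'en--Lindel\"of device of adding $\delta e^{C_{**}\tau}(1+|y|^2)^{\lambda'+1}$ and letting $\delta\to 0$; but this requires knowing a priori that $w$ (hence $w^h$) is $o\bigl((1+|y|^2)^{\lambda'+1}\bigr)$ as $|y|\to\infty$ for $\tau>0$. The hypotheses only give the pointwise polynomial bound at $\tau=0$ together with membership in the Gaussian-weighted $L^2$ space for $\tau>0$, and Gaussian $L^2$ control combined with interior parabolic estimates yields pointwise bounds of order $e^{c|y|^2}$, not polynomial ones — so your "crude barrier" in Step 1 and the $\delta\to 0$ limit in the last step are not yet justified as written. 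The paper avoids this by proving a maximum principle on $\cC_{n,k}\setminus Q_R$ in energy form with respect to the Gaussian weight (the unnamed lemma inside the proof of Lemma \ref{lem:growth_parabolic_Jacobi_proj_large}), which needs only $w\in L^2$; alternatively you could take the auxiliary barrier to be $\delta e^{C\tau}e^{a|y|^2}$ with $a\in(0,1/4)$, which is a supersolution of $\partial_\tau-L_{n,k}$ and does dominate $w^h$ at infinity. Either patch closes the argument; as stated, this is the one place where your write-up leans on "the usual way" without the hypotheses to support it.
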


	In the following, we fix $n,k$, and let us denote $e^{\tau L}$ as the heat semi-group for $L_{n,k}$. The proof relies on the following estimates of the parabolic Jacobi fields. First, we prove a crude bound on the parabolic Jacobi fields.

	\begin{lemma}\label{lem:growth_parabolic_Jacobi_proj_large}
		Suppose $\lambda'\geq \lambda+1+\eps$, $|\lambda|, |\lambda'|\leq \eps^{-1}$; $|h(\theta,y)|\leq (1+|y|^2)^{\lambda'}$ and $\Pi_{<\lambda}h=0$, then
		\begin{equation}
			|e^{\tau L}h(\theta,y)|\lesssim_{n, \eps} e^{-\lambda\tau}(1+|y|^2)^{\lambda'}.
		\end{equation}
	\end{lemma}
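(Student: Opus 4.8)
The plan is to exploit the product structure $L_{n,k}=\Delta_{\SSp^{n-k}}+(\cL_{\R^k}+1)$. The two summands act on disjoint variables and commute, so $e^{\tau L_{n,k}}=e^{\tau\Delta_{\SSp^{n-k}}}\circ e^{\tau(\cL_{\R^k}+1)}$. First I would expand $h=\sum_{i\ge 0}h_i$ into spherical harmonics, where $h_i(\theta,y)$ lies, for each fixed $y$, in the $\mu_i$-eigenspace of $-\Delta_{\SSp^{n-k}}$; then
\[
e^{\tau L_{n,k}}h=\sum_{i\ge 0}e^{-\mu_i\tau}\,e^{\tau(\cL_{\R^k}+1)}h_i
\]
(the exponential of $\cL_{\R^k}+1$ acting in the $y$-variable). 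From $|h|\le(1+|y|^2)^{\lambda'}$ one gets $|h_i(\theta,y)|\le C_i(1+|y|^2)^{\lambda'}$ with $C_i$ polynomial in $i$ (the $L^1$-norm of the zonal reproducing kernel of the $i$-th eigenspace, together with dimension bounds). The hypothesis $\Pi_{<\lambda}h=0$ becomes, component by component: the Hermite expansion of $h_i(\theta,\cdot)$ (with respect to the Gaussian on $\R^k$ with density $\propto e^{-|y|^2/4}$) is supported on polynomials of degree $j$ with $j/2\ge\lambda+1-\mu_i$. Only finitely many $\mu_i$ lie below $\lambda+1$, since $\mu_i\to\infty$ and $\lambda\le\eps^{-1}$.

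The engine of the proof is the following statement about the Ornstein--Uhlenbeck semigroup on $\R^k$, which I would isolate and prove separately: \emph{if $g$ is a function on $\R^k$ with $|g(y)|\le(1+|y|^2)^{N}$ whose Hermite expansion is supported on degree $\ge d$, and $N\ge d/2$, then $|e^{\tau\cL_{\R^k}}g(y)|\lesssim_{k,N,d}e^{-d\tau/2}(1+|y|^2)^{N}$ for all $\tau\ge0$.} Granting this, for each index with $\mu_i<\lambda+1$ I apply it with $d=d_i:=2(\lambda+1-\mu_i)>0$ and $N=\lambda'$ --- legitimate because $\lambda'\ge\lambda+1>d_i/2$, so both $N\ge d_i/2$ and $d_i\le 2N$ hold --- and, using $e^{\tau(\cL_{\R^k}+1)}=e^{\tau}e^{\tau\cL_{\R^k}}$, the factors $e^{-\mu_i\tau}\cdot e^{\tau}\cdot e^{-d_i\tau/2}$ cancel to exactly $e^{-\lambda\tau}$, yielding $e^{-\mu_i\tau}|e^{\tau(\cL_{\R^k}+1)}h_i|\lesssim e^{-\lambda\tau}(1+|y|^2)^{\lambda'}$. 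For the remaining indices $\mu_i\ge\lambda+1$, the crude bound $|e^{\tau(\cL_{\R^k}+1)}h_i|\lesssim_{\lambda'}C_i e^{\tau}(1+|y|^2)^{\lambda'}$ (immediate from Mehler's formula and $|h_i|\le C_i(1+|y|^2)^{\lambda'}$) gives $e^{-\mu_i\tau}|e^{\tau(\cL_{\R^k}+1)}h_i|\le e^{-(\mu_i-1)\tau}C_i(1+|y|^2)^{\lambda'}\le e^{-\lambda\tau}C_i(1+|y|^2)^{\lambda'}$, and summing in $i$ (boundedly many $\mu_i$ in any compact set, $C_i$ polynomial, $\mu_i\to\infty$) produces a total $\lesssim_{n,\eps}e^{-\lambda\tau}(1+|y|^2)^{\lambda'}$ for $\tau$ bounded below. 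Finally, the crude bound $|e^{\tau L_{n,k}}h|\lesssim_{\lambda'}e^{\tau}(1+|y|^2)^{\lambda'}$ (Mehler in $y$, $L^\infty$-contractivity of the heat flow in $\theta$) dispatches both the case $\lambda\le-1$ and the range $\tau\le\tau_0$, so it is enough to run the above for $\tau\ge\tau_0(n,\eps)$.

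The main obstacle is the sub-lemma. Naively upgrading the $L^2$-decay $\|e^{\tau\cL_{\R^k}}g\|_{L^2}\le e^{-d\tau/2}\|g\|_{L^2}$ to a pointwise estimate fails, since the diagonal of the Gaussian heat kernel grows like $e^{c|y|^2}$, so one cannot discard the pointwise polynomial control on $g$. I would instead set $\tilde g:=e^{\cL_{\R^k}}g$, which is smooth, still $(1+|y|^2)^{N}$-bounded together with all its derivatives, and still of minimal Hermite degree $\ge d$; then for $\tau\ge1$ write $e^{\tau\cL_{\R^k}}g=e^{(\tau-1)\cL_{\R^k}}\tilde g$ and use Mehler's formula in the form $e^{\sigma\cL_{\R^k}}\tilde g(y)=E\bigl[\tilde g(sV+b)\bigr]$ with $s=\sqrt{1-e^{-\sigma}}$, $b=e^{-\sigma/2}y$, and $V$ Gaussian with density $\propto e^{-|v|^2/4}$. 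Taylor-expanding $F(s,b):=E[\tilde g(sV+b)]$ about $(s,b)=(1,0)$ and integrating by parts against the Gaussian, one finds that every mixed Taylor coefficient of $F$ of total order $2a+|\alpha|<d$ (the $\partial_s$-derivatives costing two degrees) is a linear combination of Hermite coefficients of $\tilde g$ of degree at most $2a+|\alpha|$, hence vanishes; this yields the gain $|F(s,b)|\lesssim(|s-1|+|b|)^{d}\lesssim e^{-d\sigma/2}(1+|y|^2)^{d/2}$ on the region $|y|\le e^{\sigma/2}$ (where $|b|\le1$ keeps the Taylor remainder --- controlled by derivatives of $\tilde g$ over a bounded set against Gaussian moments --- under control, and $(1+|y|^2)^{d/2}\le(1+|y|^2)^{N}$ since $d\le2N$), while on $|y|>e^{\sigma/2}$ the crude bound $|F(s,b)|\lesssim_{N}(1+e^{-\sigma}|y|^2)^{N}\lesssim e^{-N\sigma}(1+|y|^2)^{N}$ already dominates $e^{-d\sigma/2}(1+|y|^2)^{N}$ because $N\ge d/2$; the range $\tau\in[0,1]$ is trivial from the crude bound. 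Making the Taylor remainder estimate precise and tracking constants through it is the one genuinely technical step; everything else is bookkeeping.
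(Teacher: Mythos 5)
Your proposal is correct in strategy, but it takes a genuinely different route from the paper. The paper's proof never decomposes $h$: it combines (a) the spectral decay $\|e^{\tau L}h\|_{L^2}\leq e^{-\lambda\tau}\|h\|_{L^2}$ upgraded to a sup bound on $\{|y|\leq R\}$ by the local parabolic maximum principle, with (b) the observation that the hypothesis $\lambda'\geq \lambda+1+\eps$ makes $e^{-\lambda\tau}(1+|y|^2)^{\lambda'}$ an explicit supersolution of $\partial_\tau-L$ on $\{|y|\geq R(n,\eps)\}$, so that an energy-based maximum principle on the unbounded exterior region finishes the proof by comparison with $\pm e^{\tau L}h-\bar C e^{-\lambda\tau}(1+|y|^2)^{\lambda'}$. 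Your approach instead exploits the product structure $L_{n,k}=\Delta_{\SSp^{n-k}}+(\cL_{\R^k}+1)$, reduces to a quantitative Mehler-formula estimate for the Ornstein--Uhlenbeck semigroup, and recovers the decay rate term by term. The bookkeeping you flag as technical does close: the point to check is that surviving Taylor terms $|s-1|^a|b|^{|\alpha|}$ with $2a+|\alpha|\geq d$ but $|\alpha|>2N$ are still admissible, which follows from $|b|\leq 1$ on the region $|y|\leq e^{\sigma/2}$ together with $N\geq d/2$ (one gets the exponent $-(a+N)\leq -d/2$ there), and the spherical sum converges for $\tau\geq\tau_0>0$ because $\mu_i\to\infty$ beats the polynomially growing projection constants $C_i$. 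What the paper's argument buys is brevity and softness --- no series, no Mehler kernel, and the hypothesis $\lambda'\geq\lambda+1+\eps$ is used exactly once, in the supersolution computation; what yours buys is an explicit, semigroup-level mechanism for the decay that would also yield derivative estimates and sharper constants if needed.
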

	
	\begin{proof}
		Recall that $L=L_{n,k}=\Delta_{\cC_{n,k}}-\frac{y}{2}\cdot \nabla_y+ 1$. We first observe that,  
		\begin{align*}
			& (\pr_\tau-L)\left(e^{-\lambda\tau}(1+|y|^2)^{\lambda'}\right) \\
			& = e^{-\lambda\tau}(1+|y|^2)^{\lambda'-1}
			\left(
			(-\lambda-1-2k\lambda')+(\lambda'-\lambda-1)|y|^2
			-\frac{2\lambda'(2\lambda'-2)|y|^2}{1+|y|^2}
			\right) \,.
		\end{align*}
		This implies that $\exists\, R=R(n, \eps)>0$ such that when $|y|\geq R$, $(\pr_\tau-L)(e^{-\lambda\tau}(1+|y|^2)^{\lambda'})>0$.
		
		Now we will estimate $|e^{\tau L}h(\theta,y)|$ for $|y|\leq R$ and $|y|>R$ separately. First, by the Fourier decomposition and $\Pi_{<\lambda}h=0$,  \[
		\|e^{\tau L}h\|_{L^2} \leq e^{-\lambda\tau}\|h\|_{L^2}\lesssim_{n, \eps} e^{-\lambda\tau} \,.
		\]
		Then by the local weak maximum principle of parabolic PDE (see e.g. \cite[Chapter VII Section 9]{Lieberman96_ParabolicPDE}), we have 
		\begin{equation}\label{eq:Parabolic_Jacobi_Less_R}
			\sup_{|y|\leq R}|e^{\tau L}h(\theta,y)|
			\lesssim_{n, \eps} \|e^{\tau L}h\|_{L^2_c(B_{2R})} 
			\lesssim_{n, \eps} \|e^{\tau L}h\|_{L^2}
			\lesssim_{n, \eps} e^{-\lambda\tau} \,.
		\end{equation}
		Here $\|\cdot\|_{L^2_c}$ is the classical un-weighted $L^2$ norm. We used the fact that $R=R(n, \eps)$.
		
		Next we estimate $|e^{\tau L}h(\theta,y)|$ for $|y|>R$. We need the following maximum principle on the unbounded region. Recall that $Q_R=B^{n-k+1}_R\times B_R^k$.
		
		\begin{lemma}
			Suppose $R>2n$, $w(\cdot,\tau)\in L^2(\cC_{n,k}\backslash Q_R)$ satisfies \[
			\begin{cases}
				(\pr_\tau-L)w(\theta,y,\tau)\leq 0,& \text{on $(\cC_{n,k}\backslash Q_R)\times\R_{>0}$}, \\
				w(\theta,y,\tau)\leq 0,& \text{on $\partial(\cC_{n,k}\backslash Q_R)\times\R_{>0}$}, \\
				w(\theta,y,0)\leq 0,&\text{on $\cC_{n,k}\backslash Q_R$}.
			\end{cases}  \]
			Then $w(\theta,y,\tau)\leq 0$ on $(\cC_{n,k}\backslash Q_R)\times\R_{>0}$.
		\end{lemma}
		\begin{proof}
			Let $w_+=\max\{0,w\}$ and $d\mu:= e^{-|y|^2/4}dyd\theta$. Then 
			\begin{align*}
				0 & \geq \int_{\cC_{n,k}\backslash Q_R}(\pr_\tau w-Lw)w_+d\mu \\
				& = \frac{1}{2}\pr_\tau\int_{\cC_{n,k}\backslash Q_R}w_+^2
				+ \int_{\cC_{n,k}\backslash Q_R}|\nabla w_+|^2 d\mu
				- \int_{\cC_{n,k}\backslash Q_R}ww_+d\mu
				+ \int_{\cC_{n, k}\cap\pr B_R}\pr_{\nu}w\, w_+d\mu \\
				& \geq \frac{1}{2}\pr_\tau\int_{\cC_{n,k}\backslash B_R}w_+^2
				- \int_{\cC_{n,k}\backslash B_R}w_+^2d\mu \,.
			\end{align*}
			where $\nu$ denotes the outward unit normal of $\cC_{n,k}\cap Q_R$ in $\cC_{n,k}$, and note that $\nabla w_+=0$ a.e. on $\{w_+=0\}$.
			Here in the last inequality, we use the fact that $w_+=0$ on $\pr Q_R$. Therefore we have $\pr_\tau [e^{-2\tau}\int_{\cC_{n,k}\backslash Q_R} w_+^2d\mu]\leq 0$, which implies that $w(\theta,y,\tau)\leq 0$ on $(\cC_{n,k}\backslash Q_R)\times\R_{>0}$.
		\end{proof}
		Now we apply this lemma to $\pm e^{\tau L}h(\theta,y) - \bar C e^{-\lambda\tau}(1+|y|^2)^{\lambda'}$, where here $\bar C=\bar C(n, \eps)>1$ is chosen large enough such that $|e^{\tau L}h(\theta,y)|\leq \bar C e^{-\lambda\tau}(1+|R|^2)^{\lambda'}$ for $|y|=R$, according to \eqref{eq:Parabolic_Jacobi_Less_R}. Then we obtain that $|e^{\tau L}h(\theta,y)|\leq \bar C(n,\eps) e^{-\lambda\tau}(1+|y|^2)^{\lambda'}$ when $|y|>R$.
	\end{proof}

	\begin{proof}[Proof of Proposition \ref{Prop_App_Solve Parab Jac equ w spacetime C^0 est}]
		The existence of the solution can be easily seen from the Fourier decomposition. In fact, suppose $h(\cdot,\tau)=\sum_{j\geq 0}a_j(\tau)\phi_j$ is the $L^2$-decomposition, then we can construct
		\begin{equation}
			w(\cdot,\tau)=\sum_{\lambda_j< \lambda}\phi_j e^{-\lambda_j \tau}
			\left(-\int_\tau^\infty a_j(s)e^{\lambda_j s}ds\right)
			+ \sum_{\lambda_j> \lambda}\phi_j e^{-\lambda_j \tau}
			\left(\int_0^\tau a_j(s)e^{\lambda_j s}ds\right) \,.
		\end{equation} 
		We first show that this solution is well-defined. The pointwise bound of $|h(\cdot,\tau)|$ implies that weighted-$L^2$ norm of $h(\cdot,\tau)$ is also bounded by $e^{-\lambda\tau}$, therefore $|a_j(\tau)|\lesssim_{n, \eps} e^{-\lambda\tau}$, and hence \[
		\left|\int_\tau^{\tau'} a_j(s)e^{\lambda_j s}\ ds\right|
		\lesssim_{n, \eps} \left| e^{(\lambda_j - \lambda)\tau}-e^{(\lambda_j - \lambda)\tau'} \right| \,,  \]
		So $w$ is well-defined. In particular,  \[
		\left|\Pi_{<\lambda}w(\cdot,\tau)\right| 
		= \left| \sum_{\lambda_j< \lambda}\phi_j e^{-\lambda_j \tau}
		\left( -\int_\tau^\infty a_j(s) e^{\lambda_j s}\ ds\right) \right|
		\lesssim_{n, \eps} e^{-\lambda\tau} (1+|y|^2)^{\lambda'-\eps}.
		\]
		Here we use the fact that $|\phi_j(\theta,y)|\lesssim_\eps(1+|y|^2)^{\lambda_j +1}\lesssim_\eps(1+|y|^2)^{\lambda+1}\lesssim_\eps (1+|y|^2)^{\lambda'-\eps}$ when $\lambda_j<\lambda$.
		
		Next we estimate $\left|\Pi_{>\lambda}w(\cdot,\tau)\right|$. By Duhamel's principle and the assumption of $\lambda$, \[
		\Pi_{>\lambda}w(\cdot,\tau) = \Pi_{>\lambda+\eps}w(\cdot,\tau)
		= \int_0^\tau e^{(\tau-s)L}\Pi_{>\lambda+\eps}h(\cdot,s)\ ds.
		\]
		Then we use Lemma \ref{lem:growth_parabolic_Jacobi_proj_large} and the fact that $|\Pi_{>\lambda+\eps}h(\cdot,s)|\lesssim_{n, \eps} e^{-\lambda s}(1+|y|^2)^{\lambda'}$ to obtain,
		\begin{align*}
			\left|\Pi_{>\lambda} w(\cdot,\tau)\right|
			\lesssim_{n,\eps} \int_0^{\tau} e^{-(\lambda+\eps)(\tau-s)}\cdot e^{-\lambda s}(1+|y|^2)^{\lambda'}\ ds
			\lesssim_{n, \eps} e^{-\lambda\tau}(1+|y|^2)^{\lambda'}.
		\end{align*}
		Combining the estimates of $\left|\Pi_{>\lambda}w(\cdot,\tau)\right|$ and $\left|\Pi_{<\lambda}w(\cdot,\tau)\right|$ gives the desired bound.
	\end{proof}
	
	\begin{proof}[Proof of Proposition \ref{Prop_App_Parab equ Spacetime C^0 bd from initial data}]
		First of all, because $w$ is an $L^2$-solution, we can write it down by the Fourier decomposition
		\begin{equation}
			w(\cdot,\tau)=\sum_{j\geq 0}a_j e^{-\lambda_j\tau}\phi_j.
		\end{equation}
		Suppose $K$ is the smallest integer such that $a_K\neq 0$. Then $\|w(\cdot,\tau)\|^2_{L^2}=\sum_{j\geq K}|a_j|^2 e^{-2\lambda_j\tau}$. In particular, we have \[
		e^{(\mu-\eps')\tau}\|w(\cdot, \tau)\|_{L^2}
		\geq e^{(\mu-\eps')\tau}|a_K|e^{-\lambda_K\tau}, \qquad 
		e^{(\mu+\eps')\tau}\|w(\cdot, \tau)\|_{L^2}
		\leq e^{(\mu+\eps')\tau}e^{-\lambda_K\tau} \|w(\cdot,0)\|_{L^2}.
		\]
		This implies that $\lambda_K\in[\mu-\eps',\mu+\eps']$, and hence, $\lambda_K=\mu$. This proves that $\mu\in\sigma(\cC_{n,k})$.
		
		Suppose $\psi:= \Pi_{=\mu}(w(\cdot, 0))$. Because $\mu+\eps\leq \lambda\leq \lambda'-1-\eps<\eps^{-1}$, we have $|\psi(\theta,y)|\lesssim_{n, \eps} (1+|y|^2)^{\lambda'}$. This implies that $|w(\theta, y, 0) - \psi(\theta, y)|\lesssim_{n,\eps} (1+|y|^2)^{\lambda'}$. So we can apply Lemma \ref{lem:growth_parabolic_Jacobi_proj_large} to get the desired bound.
	\end{proof}
	
	We also recall some basic properties of parabolic Jacobi fields over $\cC_{n,k}$. The proof can be found in \cite[Appendix A]{SunWangXue1_Passing}.
	
	\begin{Lem} \label{Lem_App_Analysis of Parab Jacob field}
		Let $a+1< b$, $v\in C^2_{loc}(\cC_{n,k}\times (a,b))$ be a non-zero function so that $\|v(\cdot, \tau)\|_{L^2}<+\infty$ for every $\tau\in (a, b)$ and it solves
		\begin{align*}
			(\partial_\tau - L_{n,k})v = 0
		\end{align*}
		on $\cC_{n,k}\times (a,b)$. Define the \textbf{linear decay order} of $v$ at time $\tau$ by \[
		N_{n,k}(\tau; v) := \log\left( \frac{\|v(\cdot, \tau)\|_{L^2}}{\|v(\cdot, \tau + 1)\|_{L^2}}\right).
		\]
		Then we have,
		\begin{enumerate}[label={\normalfont(\roman*)}] 
			\item $N_{n,k}(\tau; v)\geq -1$ and is monotone non-increasing in $\tau\in (a, b-1)$.
			\item If for some $\tau_0\in (a, b)$, $\gamma\in \RR$ and $\sim\in \{\geq, >, =, <, \leq\}$, we have $\|\Pi_{\sim \gamma} (v(\cdot, \tau_0))\|_{L^2} = \|v(\cdot, \tau_0)\|_{L^2}$, where $\Pi_{\sim \gamma}$ is defined in \eqref{Equ_Pre_Proj onto sum of eigenspace}. Then for every $\tau\in (a,b-1)$, \[
			N_{n,k}(\tau; v)\sim \gamma\,.
			\] 
			\item If for some $\tau_1<\tau_2\in (a, b-1)$, $N_{n,k}(\tau_1; v) = N_{n,k}(\tau_2; v) = \gamma$, then \[
			v(X, \tau) = e^{-\gamma\tau}\psi(X)
			\] 
			for some non-zero eigenfunction $\psi\in \rmW_\gamma(\cC_{n,k})$ of $-L_{n,k}$.
		\end{enumerate}
	\end{Lem}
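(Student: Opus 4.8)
The plan is to reduce every assertion to a statement about the single real function $F(\tau):=\|v(\cdot,\tau)\|_{L^2(\cC_{n,k})}^2$, for which $N_{n,k}(\tau;v)=\tfrac12\log\bigl(F(\tau)/F(\tau+1)\bigr)$. First I would record two preliminaries. Since $v$ is a non-zero solution of $\partial_\tau v=L_{n,k}v$ with $v(\cdot,\tau)\in L^2(\cC_{n,k})$ for every $\tau\in(a,b)$, backward uniqueness for this (weighted, uniformly) parabolic equation forces $v(\cdot,\tau)\neq 0$ for all such $\tau$, so $F>0$ on $(a,b)$ and $N_{n,k}(\cdot;v)$ is well defined. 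Expanding in an $L^2$-orthonormal basis $\{\phi_j\}$ of eigenfunctions of $-L_{n,k}$ (with $-L_{n,k}\phi_j=\lambda_j\phi_j$), the coefficients $a_j(\tau):=\langle v(\cdot,\tau),\phi_j\rangle$ satisfy $a_j'=-\lambda_j a_j$, hence $a_j(\tau)=a_j(\tau_0)e^{-\lambda_j(\tau-\tau_0)}$ and $F(\tau)=\sum_j c_j e^{-2\lambda_j\tau}$ with $c_j:=|a_j(\tau_0)|^2 e^{2\lambda_j\tau_0}\geq 0$; the series converges for every $\tau\in(a,b)$ precisely because $v(\cdot,\tau)\in L^2$ there, and the index set $\{j:c_j>0\}$ is independent of $\tau$, so $\Pi_{\sim\gamma}v(\cdot,\tau_0)=v(\cdot,\tau_0)$ implies $\Pi_{\sim\gamma}v(\cdot,\tau)=v(\cdot,\tau)$ for all $\tau$. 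Finally, by interior parabolic regularity (equivalently, analyticity of the semigroup generated by the self-adjoint, bounded-above operator $L_{n,k}$), $\tau\mapsto v(\cdot,\tau)$ is smooth into every Sobolev space, so $F$ is smooth with $F'=2\langle L_{n,k}v,v\rangle$ and $F''=4\|L_{n,k}v\|_{L^2}^2$.

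For (i): the bound $N_{n,k}(\tau;v)\geq -1$ is equivalent to $F(\tau+1)\leq e^2 F(\tau)$, which holds because $\min\sigma(\cC_{n,k})=-1$: in $F(\tau+1)=\sum_j c_j e^{-2\lambda_j(\tau+1)}$ every factor $e^{-2\lambda_j}$ is $\leq e^2$ since $\lambda_j\geq -1$ (alternatively, $\|e^{L_{n,k}}\|_{L^2\to L^2}=e$). For monotonicity it suffices to prove $\log F$ is convex, since then $N_{n,k}(\tau;v)=\tfrac12\bigl(\log F(\tau)-\log F(\tau+1)\bigr)$ has non-positive $\tau$-derivative $\tfrac12\bigl((\log F)'(\tau)-(\log F)'(\tau+1)\bigr)$. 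Convexity of $\log F$ is the Almgren-type inequality $FF''\geq (F')^2$, i.e. $4\|v\|^2\|L_{n,k}v\|^2\geq 4\langle L_{n,k}v,v\rangle^2$, which is Cauchy--Schwarz.

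For (ii): if $\Pi_{\sim\gamma}v(\cdot,\tau_0)=v(\cdot,\tau_0)$ then $c_j=0$ whenever $\lambda_j\not\sim\gamma$, so $F(\tau)=\sum_{\lambda_j\sim\gamma}c_j e^{-2\lambda_j\tau}$. When $\sim$ is $=$, this equals $Ce^{-2\gamma\tau}$ and $N_{n,k}(\cdot;v)\equiv\gamma$. When $\sim$ is $\geq$, the ratio $F(\tau+1)/F(\tau)$ is a $c_je^{-2\lambda_j\tau}$-weighted average of the numbers $e^{-2\lambda_j}\leq e^{-2\gamma}$, hence $\leq e^{-2\gamma}$, giving $N_{n,k}(\tau;v)\geq\gamma$; the case $\leq$ is symmetric; and for $>$ (resp. $<$) one replaces $\gamma$ by $\min(\sigma(\cC_{n,k})\cap(\gamma,\infty))>\gamma$ (resp. $\max(\sigma(\cC_{n,k})\cap(-\infty,\gamma))<\gamma$), which exists by discreteness of the spectrum. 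For (iii): if $N_{n,k}(\tau_1;v)=N_{n,k}(\tau_2;v)=\gamma$ with $\tau_1<\tau_2$, monotonicity from (i) gives $N_{n,k}(\cdot;v)\equiv\gamma$ on $[\tau_1,\tau_2]$, i.e. $\log F(\tau)-\log F(\tau+1)\equiv 2\gamma$ there; differentiating, $(\log F)'(\tau)=(\log F)'(\tau+1)$ on $[\tau_1,\tau_2]$, and since $(\log F)'$ is non-decreasing this forces $(\log F)'\equiv -2\gamma$ on $[\tau_1,\tau_2+1]$, so $F(\tau)=Ce^{-2\gamma\tau}$ there. Now set $\widetilde F(\tau):=e^{2\gamma\tau}F(\tau)$; then $\widetilde F''=e^{2\gamma\tau}\bigl(F''+4\gamma F'+4\gamma^2 F\bigr)=4\,\|e^{\gamma\tau}(L_{n,k}+\gamma)v(\cdot,\tau)\|_{L^2}^2\geq 0$, and $\widetilde F$ constant on $(\tau_1,\tau_2+1)$ forces $(L_{n,k}+\gamma)v(\cdot,\tau)=0$ there, so $v(\cdot,\tau)\in\rmW_\gamma(\cC_{n,k})$ and $\partial_\tau v=-\gamma v$ on that subinterval. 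Unravelling this via $a_j(\tau)=a_j(\tau_0)e^{-\lambda_j(\tau-\tau_0)}$ shows $a_j\equiv 0$ for $\lambda_j\neq\gamma$, whence $v(X,\tau)=e^{-\gamma\tau}\psi(X)$ with $\psi:=e^{\gamma\tau_0}v(\cdot,\tau_0)\in\rmW_\gamma(\cC_{n,k})$, non-zero because $v\not\equiv 0$.

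The elementary part is the convexity/Cauchy--Schwarz engine above; the genuinely load-bearing points, and thus the main things requiring care, are the analytic inputs: justifying that $\tau\mapsto v(\cdot,\tau)\in L^2$ is smooth with $F'=2\langle L_{n,k}v,v\rangle$, $F''=4\|L_{n,k}v\|_{L^2}^2$ (equivalently, that the Dirichlet-type series for $F$ may be differentiated termwise on $(a,b)$), which one extracts from interior parabolic $L^2$/Schauder estimates together with the weighted-$L^2$ structure that makes $-L_{n,k}$ essentially self-adjoint with discrete spectrum $\{\mu_i+j/2-1\}$ bounded below by $-1$; and the backward-uniqueness input guaranteeing $F>0$, so that $\log F$ and $N_{n,k}(\cdot;v)$ are defined on all of $(a,b)$. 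Once these are in place, (i)--(iii) follow as sketched.
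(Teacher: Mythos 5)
Your argument is correct, and it is essentially the standard spectral-decomposition proof: note that the paper does not prove this lemma itself but defers it to \cite[Appendix A]{SunWangXue1_Passing}, where the same route (eigenfunction expansion $F(\tau)=\sum_j c_j e^{-2\lambda_j\tau}$, log-convexity via Cauchy--Schwarz, and the lower bound $\min\sigma(\cC_{n,k})=-1$) is used. The only point requiring genuine care is the one you correctly flag as load-bearing: justifying $a_j'=-\lambda_j a_j$ (i.e.\ the integration by parts $\langle L_{n,k}v,\phi_j\rangle=\langle v,L_{n,k}\phi_j\rangle$ in the Gaussian-weighted space and termwise differentiation of the series), which follows from interior parabolic estimates giving weighted $W^{2,2}$ control of $v$ on compact time subintervals; with that in place, backward non-vanishing of $F$ is automatic from the expansion and all three items follow as you sketch.
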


	\section{Graphs over generalized cylinders} \label{Append_Graph over Cylinder}
	Throughout this section, we parametrize $\cC_{n,k}=\SSp^{n-k}\times \RR^k \subset \RR^{n+1}$ by $(\theta, y)$ as before. Note that for every $(\theta, y)\in \cC_{n,k}$, $|\theta|^2=2(n-k)=:\varrho^2$. Let $\hat\theta := \theta/|\theta|$.
	
	Let $\Omega = \SSp^{n-k}\times \Omega_\circ\subset \cC_{n,k}$ be a subdomain, $v\in C^1(\Omega)$ such that $\inf v>-\sqrt{2(n-k)}$.   We use $\nabla_\theta v$ and $\nabla_y v$ to denote the components of $\nabla v$ parallel to $\SSp^{n-k}$ factor and $\RR^k$ factor correspondingly. For later reference, we also denote 
	\begin{align*}
		\hat\nabla_\theta v := \left(1+\frac{v}{|\theta|}\right)^{-1}\cdot\nabla_\theta v\,, & &
		\hat\nabla v := \hat\nabla_\theta v + \nabla_y v \,.
	\end{align*}
	
	In the following, $\bu\in \scU$ is an arrival time function that we constructed in Lemma \ref{Lem_L^2Noncon_LowSphericalMode}.  
	\begin{Lem} \label{Lem_App_Graph over Cylinder}
		Let $\Omega, v$ be specified as above, let \[
		\Sigma = \graph_{\cC_{n,k}}(v) = \{ ( \theta + v(\theta, y)\hat\theta, y): (\theta, y)\in \Omega \} \,.
		\]
		And we parametrized $\Sigma$ by \[
		\Phi_v: \Omega \to \Sigma\,, \quad (\theta, y)\mapsto ( \theta + v(\theta, y)\hat\theta, y) \,.
		\]
		Then $\Sigma$ is a hypersurface in $\RR^{n+1}$, $\Phi_v$ is a diffeomorphism and we have the following for $\tau\geq 0$.
		\begin{enumerate} [label={\normalfont(\roman*)}]
			\item\label{Item_GraphCyliner_odist} For every $(\theta, y)\in \Omega$, \[
			\odist_{\bu}(\tau, \Phi_v(\theta, y)) = \chi(v(\theta, y)-\varphi_\bu(\theta, \tau)),   \]
			\item\label{Item_GraphCyliner_normal} The unit normal field of $\Sigma$ pointing away from $\{\orig\}\times \RR^k$ is \[
			\nu_\Sigma|_{\Phi_v(\theta, y)} = \left.(1+|\hat\nabla v|^2)^{-1/2}\cdot \left( \hat\theta - \hat\nabla v\right) \right|_{(\theta, y)} \,,
			\] 
			\item\label{Item_GraphCyliner_d_u vs |v-varphi_u|_L^2} For every function $f\in C^0(\Sigma)$, \[
			\int_\Sigma f(x)\ dx = \int_\Omega f\circ\Phi_v(X) \cA[v]\ dX\,,
			\]
			where \[
			\cA[v](\theta, y) := \left(1+\frac{v}{|\theta|}\right)^{n-k}\cdot\sqrt{1+|\hat\nabla v|^2} \,.
			\]
			In particular, there exists $\kappa_{\ref{Lem_App_Graph over Cylinder}}(n)\in (0, 1/2)$ such that if $\|\varphi_\bu(\cdot, \tau)\|_{C^1},\,\|v\|_{C^1}\leq \kappa_{\ref{Lem_App_Graph over Cylinder}}(n)$, then \[
			\left|\mbfd_{\bu}(\tau, \Sigma)^{-1}\cdot \|v-\varphi_\bu(\cdot, \tau)\|_{L^2} - 1\right| \lesssim_n \|v\|_{C^1} \,.
			\]
			\item\label{Item_GraphCylinder_Rot} 
			$\exists\,\kappa'_{\ref{Lem_App_Graph over Cylinder}}(n)\in (0, 1/2)$ such that if $(\bx, \by)\in \RR^{n-k+1}\times \RR^k$, $\mbfA\in \mfk g_{n,k}^\perp$, $\lambda>0$, $R>2n$ satisfy, \[
			\delta:=\|v\|_{C^2(Q_R(0, \lambda^{-1}\by))} + |\bx| + R|\lambda - 1| + R|\mbfA| \leq \kappa'_{\ref{Lem_App_Graph over Cylinder}} \,,
			\]
			then in $Q_{R - C_n\delta}$, $e^\mbfA(\lambda\Sigma - (\bx, \by))$ is also a graph over $\cC_{n,k}$, and the graphical function $\bar v$ satisfies, 
			\begin{equation}\label{eq:AppE(v)}
				\begin{split}
					\sup_{(\theta', y')\in \cC_{n,k}\cap Q_{R-C_n\delta}} &\, \left|\bar v(\theta', y') - v(\theta', \lambda^{-1}\by +y') - \varrho(\lambda-1) + \psi_\bx(\theta') - \psi_\mbfA(\theta', y')\right| \\
					& \lesssim_n \delta (|\bx| + R|\lambda-1| + R|\mbfA|)\,. 
				\end{split}
			\end{equation}
			where $\psi_\mbfx, \psi_\mbfA$ are eigenfunctions of $-L_{n,k}$ defined in \eqref{Equ_Pre_Transl-like eigen} and \eqref{Equ_Pre_Rot-like eigen} in the Preliminary.  
		\end{enumerate}
	\end{Lem}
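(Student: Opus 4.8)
The four items are of rather different natures: \ref{Item_GraphCyliner_odist} and \ref{Item_GraphCyliner_normal} are direct computations, \ref{Item_GraphCyliner_d_u vs |v-varphi_u|_L^2} is a first-order Taylor expansion of a Jacobian, and \ref{Item_GraphCylinder_Rot} — the transformation law of the graphical function under dilation, translation, and rotation — is the real content. I would organize the proof as follows. First, I would check that $\Phi_v$ is a diffeomorphism onto its image and hence $\Sigma$ is an embedded hypersurface: since $\inf v>-\varrho$, the map $(\theta,y)\mapsto(\theta+v\hat\theta,y)$ has injective differential (the $\R^k$-component is the identity, and on the sphere factor the Jacobian factor is $(1+v/|\theta|)^{n-k}>0$), and injectivity follows because distinct $(\theta,y)$ land in distinct fibers over $\R^k$ or on distinct rays in the sphere slice. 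For \ref{Item_GraphCyliner_odist}, I would just unwind the definition $\odist_\bu(X,\tau)=\chi(|x|-\varrho-\varphi_\bu(\varrho|x|^{-1}x))$ from \eqref{Equ_L^2 Mono_odist to C_(n,k)}: at $X=\Phi_v(\theta,y)=(\theta+v\hat\theta,y)$ one has $|x|=|\theta|+v=\varrho+v$ and $\varrho|x|^{-1}x=\theta$, so $\odist_\bu=\chi(v-\varphi_\bu(\theta,\tau))$ as claimed. For \ref{Item_GraphCyliner_normal}, I would compute the tangent space $d\Phi_v(T_{(\theta,y)}\cC_{n,k})$ — spanned by $\partial_{\theta_a}\theta+(\partial_{\theta_a}v)\hat\theta+v\,\partial_{\theta_a}\hat\theta$ and $\partial_{y_i}+(\partial_{y_i}v)\hat\theta$ — and verify that $\hat\theta-\hat\nabla v$ is orthogonal to all of these using $\langle\theta,\partial_{\theta_a}\theta\rangle=0$, $|\hat\theta|=1$, and the definition $\hat\nabla_\theta v=(1+v/|\theta|)^{-1}\nabla_\theta v$ which exactly compensates the factor $(1+v/|\theta|)$ appearing in the $\SSp^{n-k}$-tangent vectors; normalizing gives the stated formula.

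For \ref{Item_GraphCyliner_d_u vs |v-varphi_u|_L^2}, the area-element identity $\int_\Sigma f\,dx=\int_\Omega (f\circ\Phi_v)\,\cA[v]\,dX$ with $\cA[v]=(1+v/|\theta|)^{n-k}\sqrt{1+|\hat\nabla v|^2}$ is the standard Jacobian computation: the pullback metric splits, contributing $(1+v/|\theta|)^{2}$ on each sphere direction (hence $(1+v/|\theta|)^{2(n-k)}$ to the determinant) times the graph factor $1+|\hat\nabla v|^2$ coming from the normal-direction stretching, and the square root gives $\cA[v]$. The comparison $|\mbfd_\bu(\tau,\Sigma)^{-1}\|v-\varphi_\bu(\cdot,\tau)\|_{L^2}-1|\lesssim_n\|v\|_{C^1}$ then follows by combining \ref{Item_GraphCyliner_odist} with $\cA[v]=1+O(\|v\|_{C^1})$: indeed $\chi(s)=s+O(s^2)$ near $0$ and the Gaussian weight evaluated at $\Phi_v(\theta,y)$ versus at $(\theta,y)$ differ by $O(\|v\|_{C^0})$, so $\mbfd_\bu(\tau,\Sigma)^2=\int_{\cC_{n,k}}|v-\varphi_\bu|^2\,d\mu\cdot(1+O(\|v\|_{C^1}))$ once $\|\varphi_\bu(\cdot,\tau)\|_{C^1},\|v\|_{C^1}\le\kappa_{\ref{Lem_App_Graph over Cylinder}}(n)$ so that all the error terms are genuinely controlled.

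The main work is \ref{Item_GraphCylinder_Rot}. The strategy is to apply the three transformations one at a time and track the graphical function to first order in the small parameter $\delta$. Dilation by $\lambda$: $\lambda\Sigma$ is the graph of $\lambda v(\lambda^{-1}\theta,\lambda^{-1}y)$ over $\lambda\cC_{n,k}=\mathbb S^{n-k}(\lambda\varrho)\times\R^k$; re-expressing this as a graph over $\cC_{n,k}=\mathbb S^{n-k}(\varrho)\times\R^k$ produces a new radial offset $(\lambda-1)\varrho$ plus $v(\theta,\lambda^{-1}y)+O(|\lambda-1|\|v\|_{C^1})$, where the $(\lambda-1)\varrho$ term is exactly $\varrho(\lambda-1)$. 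Translation by $-(\bx,\by)$: the $\by$-part just shifts the $\R^k$-argument of $v$ by $\lambda^{-1}\by$ (after accounting for the already-performed rescaling, hence $v(\theta',\lambda^{-1}\by+y')$), while the $\bx$-part, to first order, subtracts the linear functional $\langle\bx,\hat\theta\rangle=\psi_\bx(\theta')$ from the graphical function with a quadratic-in-$|\bx|$ correction plus a $|\bx|\cdot(\text{other params})$ cross term. Rotation by $e^\mbfA$ with $\mbfA\in\mfk g_{n,k}^\perp$: writing $e^\mbfA=\Id+\mbfA+O(|\mbfA|^2)$, the infinitesimal rotation $\mbfA$ moves $\cC_{n,k}$ by the normal displacement $\langle\mbfA(0,y),(\hat\theta,0)\rangle=\psi_\mbfA(\theta',y')$ (using the formula \eqref{Equ_Pre_Rot-like eigen} and that $\mbfA$ is block-off-diagonal), and the $O(|\mbfA|^2)$ and mixed terms are absorbed into the error. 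The domain shrinks from $Q_R$ to $Q_{R-C_n\delta}$ because each transformation moves points by at most $C_n\delta$ and we need the composed graph to remain well-defined; the error bound $\lesssim_n\delta(|\bx|+R|\lambda-1|+R|\mbfA|)$ comes from the fact that every error term is a product of one "small displacement" factor ($|\bx|,R|\lambda-1|,R|\mbfA|$, all $\lesssim\delta$) with either another such factor or with $\|v\|_{C^1}\le\delta$, after noting that on $Q_R$ one has $|y'|\le R$ so the $|\mbfA|$-induced normal displacement $\psi_\mbfA$ is genuinely of size $\le R|\mbfA|$.

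\textbf{Main obstacle.} The hard part is the bookkeeping in \ref{Item_GraphCylinder_Rot}: one must compose three nonlinear reparametrizations of a graph and show that the deviation from the naive linear superposition $v(\theta',\lambda^{-1}\by+y')+\varrho(\lambda-1)-\psi_\bx+\psi_\mbfA$ is controlled by $\delta$ times the displacement parameters — in particular that there is no error term that is merely $O(\delta^2)$ without the extra displacement factor, and that $\|v\|_{C^2}$ (not just $\|v\|_{C^1}$) suffices, which is why the hypothesis uses the $C^2$-norm. The cleanest way to make this rigorous is to set up, for a graph with small graphical function, an implicit-function-theorem style "re-graphing" operator and to estimate its derivative, rather than to chase individual Taylor terms; alternatively one composes the three elementary cases, each proven by a one-variable Taylor expansion, and tracks how the error of an earlier step feeds the domain-shrinkage and error of the next. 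I expect the latter, more explicit route to be what is carried out, with the cross-terms organized exactly as in the product structure of the right-hand side of \eqref{eq:AppE(v)}.
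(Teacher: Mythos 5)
Your plan is correct, and items \ref{Item_GraphCyliner_odist}--\ref{Item_GraphCyliner_d_u vs |v-varphi_u|_L^2} are handled exactly as the paper does (it declares them routine). For item \ref{Item_GraphCylinder_Rot} you guess that the paper composes the three elementary transformations one at a time; it does not. Instead it writes the full composite map in one shot: it sets $X'=(\theta'+\bar v(\theta',y')\hat\theta',y')=e^{\mbfA}\bigl(\lambda(\theta+v\hat\theta,y)-(\bx,\by)\bigr)$, expands $e^{\mbfA}=\Id+\mbfA+O(|\mbfA|^2)$ to get the two component equations for $\theta'+\bar v\hat\theta'$ and for $y'$, and then extracts $\bar v$ by taking the inner product of the first equation with $\hat\theta'$, which makes $\varrho(\lambda-1)$, $-\psi_{\bx}(\theta')$ and $\psi_{\mbfA}$ appear simultaneously as the linear terms and collects all cross terms (e.g. $\lambda v\,\hat\theta\cdot\hat\theta'-v(\theta',\lambda^{-1}\by+y')$ and $\psi_{\mbfA}(\theta',\lambda y-\by-y')$) into a single error expansion, with $|1-\hat\theta\cdot\hat\theta'|\lesssim|\bx|+R|\mbfA|$ and the $y$-shift identity $\lambda y-\by=y'+\bl^{\top}(\cdots)+O(|\mbfA|^2R)$ supplying the bounds; the implicit function theorem is invoked only to know the image is a graph. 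Your sequential route also works and yields the stated bound (each step's error is a product of one displacement factor with either another displacement factor or the running $C^1$-norm of the intermediate graphical function, all $\lesssim\delta$), but it forces you to re-graph twice and to verify at each stage that the intermediate graphical function still has $C^1$-norm $\lesssim\delta$ so that its error feeds correctly into the next step; the paper's simultaneous computation avoids this bookkeeping entirely, which is precisely the "main obstacle" you identified. To turn your proposal into a complete proof you would need to carry out that intermediate-norm tracking explicitly, but there is no gap in principle.
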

	
	\begin{proof}
		The items \ref{Item_GraphCyliner_odist}, \ref{Item_GraphCyliner_normal} and \ref{Item_GraphCyliner_d_u vs |v-varphi_u|_L^2} are routine. In \ref{Item_GraphCylinder_Rot}, the fact the $e^\mbfA(\lambda\Sigma-(\bx, \by))$ is a graph over $\cC_{n,k}$ in $Q_{R-C_n\delta}$ follows from implicit function theorem. So we only focus on the proof of \eqref{eq:AppE(v)}. 
		
		Given a point $(\theta+v(\theta,y)\hat\theta,y)$ in the graph over $\cC_{n,k}\cap Q_R(0, \lambda^{-1}\by)$, we suppose \[
		X':= \left(\theta'+\bar v(\theta', y')\hat\theta',\, y'\right) = e^{\mbfA}\left(\lambda \left(\theta + v(\theta,y)\hat\theta,\, y \right)-(\bx,\by)\right)
		\] 
		be the point after the transformation. Note that $\mbfA\in \mfk g_{n,k}^\perp$, we may assume $\mbfA=\begin{bmatrix}
			0 & \bl \\ -\bl^\top & 0
		\end{bmatrix}$. Also note that we have an a priori bound $|X'|\lesssim_n R$. Then we can write
		\begin{align} 
			\label{eq:AppE_theta}
			\theta'+\bar v(\theta',y')\hat\theta'
			& = \lambda \left(\theta+v(\theta,y)\hat\theta \right) -\bx + \bl (\lambda y-\by)  +  O(|\mbfA|^2R) \,,  \\
			\label{eq:AppE_y}
			y' & = \lambda y - \by - \bl^\top\left(\lambda \left(\theta+v(\theta,y)\hat\theta\right)-\bx \right)
			+ O(|\mbfA|^2R) \,.
		\end{align}
		Also, by the expression $e^{-\mbfA}(\theta'+\bar v(\theta',y')\hat\theta',\, y')=\lambda(\theta+v(\theta,y)\hat\theta,\, y)-(\bx,\by)$, we can write 
		\begin{align}
			\lambda y-\by = y' + \bl^\top (\theta'+\bar v(\theta',y')\hat\theta') + O(|\mbfA|^2R) \,. \label{eq:AppE_lambda y-by}
		\end{align} 
		
		If we take the inner product of \eqref{eq:AppE_theta} with $\hat\theta'$, by \eqref{Equ_Pre_Transl-like eigen} and \eqref{Equ_Pre_Rot-like eigen} we get \[
		\varrho + \bar v(\theta',y') =
		\lambda \varrho +\lambda v(\theta,y)\hat\theta\cdot\hat\theta' - \psi_\bx(\theta') + \psi_\mbfA(\theta', \lambda y-\by) + O(|\mbfA|^2R) \,,
		\]
		which implies that $|\bar v(\theta', y')|\lesssim_n |\lambda-1|+|v(\theta, y)|+|\bx|+R|\mbfA| \lesssim_n \delta$ and,
		\begin{align}
			\begin{split}
				& \bar v(\theta', y') - v(\theta', \lambda^{-1}\by +y') - \varrho(\lambda-1) + \psi_\bx(\theta') - \psi_\mbfA(\theta', y') \\
				& \quad = \left(\lambda v(\theta,y)\hat\theta\cdot\hat\theta'  -  v(\theta', \lambda^{-1}\by +y') \right)  +  \psi_\mbfA(\theta', \lambda y-\by-y') + O(|\mbfA|^2R) \,. 
			\end{split} \label{Equ_App_GraphCylind_Pf_bar v-v-psi_x+psi_A}
		\end{align}
		
		Now it suffices to estimate the terms on the right-hand side. If we project \eqref{eq:AppE_theta} onto $\SSp^{n-k}$ and take $\kappa'_{\ref{Lem_App_Graph over Cylinder}}(n)\ll 1$, we get \[
		|\hat\theta'-\hat\theta| \lesssim_n \left|-\bx + \bl (\lambda y-\by)  +  O(|\mbfA|^2R) \right| \lesssim_n |\bx| + |\mbfA|R
		\]
		And hence \[
		|1-\hat\theta\cdot \hat\theta'| = \frac12|\hat\theta'-\hat\theta|^2 \lesssim_n |\bx| + |\mbfA|R \,.
		\]
		Combining these with \eqref{eq:AppE_lambda y-by} yields, 
		\begin{align*}
			\left|\lambda v(\theta,y)\hat\theta\cdot\hat\theta'  -  v(\theta', \lambda^{-1}\by +y') \right| 
			& \leq (|\lambda-1|+|\bx|+|\mbfA|R)|v(\theta, y)| + |v(\theta, y)-v(\theta', \lambda^{-1}\by +y')| \\
			& \lesssim_n (R|\lambda-1|+|\bx|+R|\mbfA|)\cdot \|v\|_{C^1(Q_R(0, \lambda^{-1}\by))} \,;
		\end{align*}
		and \[
		|\psi_\mbfA(\theta', \lambda y-\by-y')| \lesssim_n |\mbfA||\lambda y-\by-y'| \lesssim_n |\mbfA|^2 \,.
		\]
		Together with \eqref{Equ_App_GraphCylind_Pf_bar v-v-psi_x+psi_A}, these proves \eqref{eq:AppE(v)}.    
	\end{proof}
	
	The item \ref{Item_GraphCyliner_d_u vs |v-varphi_u|_L^2} of Lemma \ref{Lem_App_Graph over Cylinder} shows that $\mbfd_\bu(\tau, \Sigma)$ is comparable to $\|v-\varphi_\bu(\cdot, \tau)\|_{L^2}$, if $\Sigma$ is a graph of function $v$ over $\cC_{n,k}$. Therefore, by the classical Schauder theory, we can use $\mbfd_\bu(\Sigma)$ to bound the $C^2$-norm of $v-\varphi_\bu$. We first recall the equation of the graph function of a rescaled mean curvature flow over $\cC_{n,k}$. Suppose $v(\cdot,\tau)$ is the graph function of a rescaled mean curvature flow over $\cC_{n,k}$, then it satisfies the equation
	\begin{equation}\label{eq_RMCF_graph_over_cyl}
		\pr_\tau v = L_{n,k}v + \cQ(v,\nabla v,\nabla^2 v) =: L_{n,k}v + \cQ[v],
	\end{equation}
	where $\cQ$ is a smooth function in $(z, \xi, \eta)\in \RR\times T\cC_{n,k}\times T^{\otimes 2}\cC_{n,k}$ with \[
	\cQ(0, 0, 0) = \cQ_z(0, 0, 0) = \cQ_\xi(0,0,0) = \cQ_\eta(0,0,0) = 0\,.
	\]
	The precise calculations in various formulations can be found in Appendix A of \cite{CM15_Lojasiewicz}, Appendix A of \cite{SunXue2022_generic_cylindrical}, among others.

	\begin{Lem} \label{Lem_L^2 clos => C^2 close}
		For every $\eps\in (0, 1/2)$ and $R>2n$, there exists $\delta_{\ref{Lem_L^2 clos => C^2 close}}(\eps, n, R)\in (0, \eps)$ such that the following hold.  If $T\geq 0$, $\delta\in (0, \delta_{\ref{Lem_L^2 clos => C^2 close}}]$, $\bu\in \scU$ satisfies $\|\varphi_\bu(\cdot,\tau)\|\leq\delta$ for $\tau\in[T,T+1]$ (see the notations in Section \ref{Subsec_Mod Low SphericalMode}), and $\cM$ is a RMCF in $\RR^{n+1}$ over $[T, T+1]$ such that \[
		\lambda[\cM]\leq \eps^{-1}\,, \qquad
		\text{$\cM$ is $\delta$-$L^2$ close to $\cC_{n,k}$} \,.
		\]
		Then $\cM(T+1)$ is a $C^2$ graph over $\cC_{n,k}$ in $Q_{R}$ with graphical function $v(\cdot, T+1)$ satisfying \[
		\|(v-\varphi_\bu)(\cdot, T+1)\|_{C^2(Q_{R}\cap \cC_{n,k})} \lesssim_{n, \eps, R}\mbfd_\bu(T, \cM) \,.
		\]
	\end{Lem}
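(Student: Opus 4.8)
The plan is to combine the $\varepsilon$-regularity theorem of Brakke--White with interior parabolic Schauder estimates applied to the graphical rescaled MCF equation \eqref{eq_RMCF_graph_over_cyl}. First I would argue that on the one-step interval $[T,T+1]$, the $\delta$-$L^2$ closeness hypothesis \eqref{Equ_L^2 Mono_delta L^2 close to cylinder C_(n,k)} together with the entropy bound $\lambda[\cM]\leq\varepsilon^{-1}$ forces $\cM$ to be graphical over $\cC_{n,k}$ in $Q_{2R}$ with small $C^2$-norm. Indeed, the $L^2$-bound $\mbfd_{n,k}(s,\cM(s))\leq\delta$ plus the Gaussian density lower bound $\cF[\cM(s)]\leq\tfrac32\cF[\cC_{n,k}]$ implies via Huisken's monotonicity and Brakke--White's $\varepsilon$-regularity \cite{White05_MCFReg} that the Gaussian density ratios of $\cM$ are everywhere close to $1$ in $Q_{2R}$; hence, by $\varepsilon$-regularity, $\cM$ restricted to $Q_{2R}$ over a slightly shorter time interval is a classical smooth MCF converging graphically to $\cC_{n,k}$, with $\|v(\cdot,s)\|_{C^2(Q_{2R}\cap\cC_{n,k})}=\Psi(\delta\mid n,\varepsilon,R)$ for $s\in[T+\tfrac12,T+1]$. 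This is essentially the same mechanism already used in the proof of Lemma \ref{lem_delta_L2_close_implies_Gaussian_lower_bound} and Lemma \ref{Lem_Prelim_M cap Q_R in Q_2n}.

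Next I would set $w:=v-\varphi_\bu$ and rewrite \eqref{eq_RMCF_graph_over_cyl}. Since both $v$ and $\varphi_\bu$ solve the RMCF graph equation over $\cC_{n,k}$ (the latter by Remark \ref{Rem_L^2 Noncon_Scal Invar of (T2)(T3)} and Definition \ref{Def_LowSphericalFlow}), subtracting the two equations gives $(\partial_\tau-L_{n,k})w=\cQ[v]-\cQ[\varphi_\bu]$, and because $\cQ$ vanishes to second order at the origin and $\|v\|_{C^2},\|\varphi_\bu\|_{C^2}=\Psi(\delta)$ are small, the right-hand side is a linear operator in $(w,\nabla w,\nabla^2 w)$ with coefficients of size $\Psi(\delta)$ and a zeroth-order-in-$w$ term absent. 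Thus $w$ solves a uniformly parabolic linear equation $(\partial_\tau-L_{n,k})w=\sum a^{ij}\partial_{ij}w+\sum b^i\partial_i w+cw$ on $Q_{2R}\cap\cC_{n,k}\times[T+\tfrac12,T+1]$ with $\|a^{ij}\|_{C^0},\|b^i\|_{C^0},\|c\|_{C^0}=\Psi(\delta)$ (and the Hölder norms of these coefficients are bounded using the smooth convergence), and with $\|w\|_{C^0}$ small. Interior parabolic Schauder estimates then bound $\|w(\cdot,T+1)\|_{C^2(Q_R\cap\cC_{n,k})}$ by a constant $C(n,\varepsilon,R)$ times $\|w\|_{L^2(Q_{2R}\cap\cC_{n,k}\times[T+\tfrac12,T+1])}$ (one may first upgrade the $L^2$ bound to an $L^\infty$ bound via a local parabolic mean-value inequality, then run Schauder, or directly invoke an $L^2$-to-$C^{2,\alpha}$ parabolic estimate).

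Finally I would convert the space-time $L^2$-norm of $w$ into $\mbfd_\bu(T,\cM)$. By item \ref{Item_GraphCyliner_d_u vs |v-varphi_u|_L^2} of Lemma \ref{Lem_App_Graph over Cylinder}, for each $s$ in the interval $\|w(\cdot,s)\|_{L^2(\cC_{n,k})}\lesssim_n\mbfd_\bu(s,\cM)$ (the tail outside $Q_{2R}$ contributing only a term $\Psi(\delta\mid n,\varepsilon)\cdot e^{-R^2/16}$, harmless for the stated estimate by Proposition \ref{prop:entropy control outside ball of radius R}), and by the $L^2$-nonconcentration Corollary \ref{Cor_L^2 Mono_cN>-1} (or directly Lemma \ref{Lem_L^2 Noncon_for RMCF}) applied on $[T,T+1]$, $\mbfd_\bu(s,\cM)\lesssim_{n,\varepsilon}\mbfd_\bu(T,\cM)$ for all $s\in[T,T+1]$. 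Integrating over $s\in[T+\tfrac12,T+1]$ gives $\|w\|_{L^2(\cdots\times[T+\frac12,T+1])}\lesssim_{n,\varepsilon}\mbfd_\bu(T,\cM)$, and combined with the Schauder bound this yields $\|(v-\varphi_\bu)(\cdot,T+1)\|_{C^2(Q_R\cap\cC_{n,k})}\lesssim_{n,\varepsilon,R}\mbfd_\bu(T,\cM)$, as desired. The main obstacle is the first step: ensuring that the $\delta$-$L^2$ closeness over a single unit time interval is genuinely enough to bootstrap to $C^2$-graphicality with quantitatively small norm on $Q_R$ — this requires carefully pairing Huisken monotonicity (to propagate the density bound forward in time from the hypothesis slices) with Brakke--White $\varepsilon$-regularity, exactly the kind of argument isolated in Lemmas \ref{lem_delta_L2_close_implies_Gaussian_lower_bound} and \ref{Lem_Prelim_M cap Q_R in Q_2n}; once graphicality with small norm is in hand, the linear parabolic estimate is routine.
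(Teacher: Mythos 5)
Your proposal is correct and follows essentially the same route as the paper's proof: Brakke--White $\eps$-regularity plus compactness to obtain graphicality with quantitatively small $C^2$ (in fact $C^4$) norm on a slightly larger cube, subtraction of the two graph equations to get a uniformly parabolic linear equation for $w=v-\varphi_\bu$ with small coefficients, an interior $L^2$-to-$C^0$ estimate followed by Schauder, and finally Lemma \ref{Lem_App_Graph over Cylinder}~\ref{Item_GraphCyliner_d_u vs |v-varphi_u|_L^2} together with the nonconcentration Lemma \ref{Lem_L^2 Noncon_for RMCF} to control the $L^2$ norm on the interval by $\mbfd_\bu(T,\cM)$.
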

	
	\begin{proof}
		Let $\eps_1\in(0, \delta_0(n))$ to be determined. First by Brakke-White's $\eps$-regularity and a compactness argument, there exists $\delta'(\eps, n, R, \eps_1)\ll 1$ such that if $\cM, \bu$ are specified as in the Lemma with $\delta\leq \delta'(\eps, n, R, \eps_1)$, then for all $\tau\in [1/2,1]$, $\cM(T+\tau)$ is a $C^2$ graph over $\cC_{n,k}$ in $Q_{3R}$ with graphical function $v(\cdot, T+\tau)$ satisfies $\|v(\cdot, T+\tau)\|_{C^4(Q_{3R}\cap \cC_{n,k})}\leq \eps_1$, and $v$ solves the RMCF equation \eqref{eq_RMCF_graph_over_cyl}. 
		
		Now we prove the $C^2$ estimate of $v-\varphi_{\bu}$. Let $w(\cdot, \tau):=(v-\varphi_\bu)(\cdot, T+\tau)$, then inside $(Q_{3R}\cap \cC_{n,k})\times[1/2,1]$, $w$ satisfies the equation \[
		\pr_\tau w=Lw+\cQ[v]-\cQ[\varphi_\bu] 
		= \pr_\tau w=Lw+aw+b_j\pr_jw+c_{ij} \pr_{ij}w,
		\]
		where $a,b_j,c_{ij}$ are terms of the partial derivatives of $\cQ$ with variables involving $v$, $\nabla v$, $\nabla^2 v$, $\varphi_\bu$, $\nabla\varphi_\bu$, $\nabla^2 \varphi_\bu$. Therefore, if we choose $\eps_1=\eps_1(n, R)$ to be sufficiently small and $\delta_{\ref{Lem_L^2 clos => C^2 close}}=\delta'(\eps, n, R, \eps_1)$, we can make $|a|$, $|b_j|$, $|c_{ij}|$ sufficiently small, such that $\pr_\tau w=Lw+aw+b_j\pr_jw+c_{ij} \pr_{ij}w$ is a strictly parabolic equation with uniformly bounded coefficients in $(Q_{3R}\cap \cC_{n,k})\times[1/2,1]$. Then the standard interior estimate of parabolic equation, e.g. \cite[Lemma 7.38]{Lieberman96_ParabolicPDE}, shows that \[
		\|w\|_{C^0((Q_{2R}\cap \cC_{n,k})\times[3/4,1])}
		\lesssim_{n, \eps,R} \sup_{\tau \in[1/2,1]}\|w\|_{L^2(Q_{3R}\cap \cC_{n,k})}
		\lesssim_{n, \eps,R} \mbfd_\bu(T, \cM)\,,
		\]
		where the last inequality follows from Lemma \ref{Lem_App_Graph over Cylinder} \ref{Item_GraphCyliner_d_u vs |v-varphi_u|_L^2} and Corollary \ref{Lem_L^2 Noncon_for RMCF}.
		Finally, the interior Schauder estimate shows that \[
		\|w\|_{C^{2,\alpha}((Q_{R}\cap \cC_{n,k})\times[7/8,1])} \lesssim_{n, \eps,R} \|w\|_{C^0((Q_{2R}\cap \cC_{n,k})\times[3/4,1])}\lesssim_{n, \eps,R} \mbfd_\bu(T, \cM).
		\]
	\end{proof}

	\bibliographystyle{alpha}
	\bibliography{GMT}
\end{document}